\providecommand{\U}[1]{\protect\rule{.1in}{.1in}}
\numberwithin{equation}{section}
\providecommand{\U}[1]{\protect\rule{.1in}{.1in}}
\newtheorem{theorem}{Theorem}
\newtheorem{corollary}[theorem]{Corollary}
\newtheorem{definition}[theorem]{Definition}
\newtheorem{lemma}[theorem]{Lemma}
\newtheorem{proposition}[theorem]{Proposition}
\newtheorem{remark}[theorem]{Remark}
\newenvironment{proof}[1][Proof]{\textbf{#1.} }{\ \rule{0.5em}{0.5em}}
\begin{document}
\bigskip

\bigskip

\begin{center}
\bigskip

\bigskip

{\Large Measure valued solutions of the 2D Keller-Segel system.}

\bigskip

\bigskip

S. Luckhaus\footnote{Department of Mathematics and Computer Science,
Universit\"{a}t Leipzig, Leipzig D-04109, Germany.}, Y.
Sugiyama\footnote{Department of Mathematics. Tsuda University, Tokyo 187-8577,
Japan.}, J. J. L. Vel\'{a}zquez\footnote{ICMAT (CSIC-UAM-UC3M-UCM), Facultad
de Matem\'{a}ticas, Universidad Complutense, Madrid 28040, Spain.}

\bigskip

\bigskip
\end{center}

\section{Introduction}

In this paper we study the solutions of the following two-dimensional
Keller-Segel system describing chemotaxis:
\begin{align}
\partial_{t}u-\Delta u+\nabla\left(  u\nabla v\right)   &  =0\;\;\text{in\ \ }%
\Omega\;,\;\left.  \partial_{\nu}u\right|  _{\partial\Omega}=0\label{S1E1}\\
-\Delta v  &  =u-\frac{1}{\left|  \Omega\right|  }\int udx\;\;\text{in\ \ }%
\Omega\;,\;\left.  \partial_{\nu}v\right|  _{\partial\Omega}=0\label{S1E2}\\
u\left(  x,0\right)   &  =u_{0}\left(  x\right)  \;\;\text{in\ \ }%
\Omega\label{S1E3}%
\end{align}
where $\Omega\subset\mathbb{R}^{2}$ is a bounded domain with boundary
$\partial\Omega\in C^{4}$ and $u_{0}$ is a bounded, nonnegative function.

It is well known that the solutions of (\ref{S1E1})-(\ref{S1E3}) blow-up in
finite time, i.e.
\[
\overline{\lim_{t\rightarrow T}}\int_{\Omega}u^{p}dx=\infty\;\;\text{for all
}p>1
\]
for some $T<\infty$ (cf. \cite{JL}).

The Keller-Segel system as well as the properties of the blow-up set has been
extensively studied. An idea that was introduced in \cite{Nagai} to prove
discreteness of the blow-up set is the symmetrization of the nonlinear term in
(\ref{S1E1}) in the equation that describes the evolution of the mass of $u.$
The symmetrization idea has been used in a more general form in
\cite{SenbaSuzuki} to show that the solutions of a system analogous to
(\ref{S1E1})-(\ref{S1E3}) but with (\ref{S1E2}) replaced by
\[
-\Delta v+\gamma v=u\;\;\text{in}\ \ \Omega\;,\;\left.  \partial_{\nu
}v\right\vert _{\partial\Omega}=0
\]
blow-up in a finite set of points. The method used in \cite{SenbaSuzuki}
relies heavily in the symmetry properties of the operator $u\nabla v$. Similar
ideas to the ones in \cite{SenbaSuzuki} can be applied to prove discreteness
of the blow-up set for the solutions of (\ref{S1E1})-(\ref{S1E3}).\bigskip

Continuation beyond blow-up has been considered from several points of view.
The usual approach used to extend the solutions of Keller-Segel systems beyond
the blow-up time consists in regularizing some of the nonlinearities in the
equations by means of a sequence of problems depending on a parameter
$\varepsilon>0.\;$The regularization is chosen in order to obtain a problem
with global solutions in time and also to recover formally the original
Keller-Segel system as the parameter $\varepsilon\rightarrow0$. The papers
\cite{V1}, \cite{V2} study in detail one of these regularizations using
matched asymptotics. In particular, it was obtained in those papers that
formal limits of solutions of the system (\ref{S1E1})-(\ref{S1E3}) with
$\Omega=\mathbb{R}^{2}$ can be described by a set of Dirac measures whose
positions and masses evolve according to a system of ODEs. A different
regularization was considered in the papers \cite{Poupaud}, \cite{DS} where it
was introduced a concept of weak solutions for systems analogous to
(\ref{S1E1})-(\ref{S1E3}) as the limit of regularized problems, different from
the ones considered in \cite{V1}, \cite{V2}. A key idea in \cite{Poupaud},
\cite{DS} is the use of a symmetrization procedure for the nonlinear term
similar to the one in \cite{SenbaSuzuki}. It was also seen in \cite{DS} that
assuming that the measures were Dirac masses concentrated in smoothly moving
curves, the masses and positions of the Dirac masses would evolve according to
a system of ODEs, closely related to the one obtained in \cite{V1}, but
exhibiting some differences due to the different choice of regularization used.

In this paper we obtain generalized solutions for (\ref{S1E1})-(\ref{S1E3}) in
the sense of measures as the limit of two-different regularizations of it. We
will show that the resulting limit measures, that are in some suitable sense
global weak solutions of (\ref{S1E1})-(\ref{S1E3}), depend in the regularization.

\bigskip

The plan of the paper is the following. In Section 2 we introduce two
different regularizations of the Keller-Segel system. Section 3 contains some
properties of the fundamental solution for the Laplace equation in bounded
domains that will be used throughout the whole paper. In Section 4 we describe
a key argument that allows to control the local change of mass in a given
region. Section 5 contains an estimate for the solutions of the second
regularization obtained using an entropy inequality. Local regularity
estimates for the solutions of both regularized problems in the regions where
the mass is small are obtained in Section 6. Section 7 describes how to obtain
limit measure solutions for both limit problems, as well as the fact that such
measures consist in a finite number of atoms plus a regular part. Section 8
describes the limit problems satisfied by such measures. Section 9 proves that
both regularization yield different limit measures for masses above the
critical value. Finally, Section 10 contains a formalism to describe the form
of the nonlinear terms arising in the limit weak formulation using measured
valued Young measures, since some fast oscillations could take place near the singularities.

\section{Two regularizations of (\ref{S1E1})-(\ref{S1E3}).}

\bigskip

We will use in this paper two regularizations of the system (\ref{S1E1}%
)-(\ref{S1E3}). The first one is:
\begin{align}
\partial_{t}u-\Delta u+\nabla\left(  f_{\varepsilon}\left(  u\right)  \nabla
v\right)   &  =0\;\;\text{in\ \ }\Omega\;\;,\;\;\left.  \partial_{\nu
}u\right|  _{\partial\Omega}=0\label{S2E1}\\
-\Delta v  &  =f_{\varepsilon}\left(  u\right)  -\frac{1}{\left|
\Omega\right|  }\int f_{\varepsilon}\left(  u\right)  dx\;\;\text{in\ \ }%
\Omega\;,\;\left.  \partial_{\nu}v\right|  _{\partial\Omega}=0 \label{S2E2}%
\end{align}
where:
\begin{equation}
f_{\varepsilon}\left(  u\right)  =\int_{0}^{u}\min\left\{  1,\left(  \frac
{1}{\varepsilon}-s\right)  _{+}\right\}  ds\;\; \label{S2E3}%
\end{equation}

The second regularization that we will use is:
\begin{align}
\partial_{t}u-\Delta\left(  u+\varepsilon u^{\frac{7}{6}}\right)
+\nabla\left(  u\nabla v\right)   &  =0\;\;\text{in\ \ }\Omega\;\;,\;\;\left.
\partial_{\nu}u\right|  _{\partial\Omega}=0\label{S2E4}\\
-\Delta v  &  =u-\frac{1}{\left|  \Omega\right|  }\int udx\;\;\text{in\ \ }%
\Omega\;,\;\left.  \partial_{\nu}v\right|  _{\partial\Omega}=0 \label{S2E5}%
\end{align}

The choice of the exponent $\frac{7}{6}$ in (\ref{S2E4}) is not essential. The
arguments could be made in a similar manner for any number greater than one.
However, some computations in Section \ref{RegL2} will become slightly simpler
with the particular exponent $\frac{7}{6}.$

We assume in both cases $\varepsilon>0.$ It would be possible to use in
(\ref{S2E1}), (\ref{S2E2}) $f_{\varepsilon}\left(  u\right)  =\frac
{u}{1+\varepsilon u}$ or similar cutoff functions. A key feature of these
regularizations is the symmetry of the nonlinear terms ($f_{\varepsilon
}\left(  u\right)  \nabla v,\;u\nabla v$ respectively) on the function $u.$
This restriction is needed, because the idea\ used in \cite{SenbaSuzuki} to
control the motion of the mass relies heavily on these symmetry properties.

\bigskip

It is trivially seen that the classical solution of the problems (\ref{S2E1}),
(\ref{S2E2}) or (\ref{S2E4}), (\ref{S2E5}) with initial data $u\left(
x,0\right)  =u_{0}\left(  x\right)  $ is globally defined in time for any
$\varepsilon>0.$ In particular, $\left\Vert u\right\Vert _{L^{\infty}\left(
\Omega\right)  }$ is bounded in any interval $0\leq t\leq T<\infty,$ although
the resulting estimate depends on $\varepsilon$ and it can be expected to
blow-up as $\varepsilon\rightarrow0^{+}.$ The choice of boundary conditions
imply:
\begin{equation}
\int_{\Omega}u\left(  x,t\right)  dx=\int_{\Omega}u_{0}\left(  x\right)
dx\ \ . \label{S2E6}%
\end{equation}
The rest of the paper is devoted to characterize the limit of the solutions of
these two problems as $\varepsilon\rightarrow0.$

\bigskip

\section{A local approximation of the Green's function for the Laplace
operator with Neumann boundary conditions.}

We will use the a detailed description of the Green's function associated to
the Laplace equation with Neumann boundary conditions near the
boundary.\bigskip

The following Lemma collects some basic geometrical results. The proof is
elementary and it will be omitted.

\begin{lemma}
\label{Lgeom}Suppose that $\Omega\subset\mathbb{R}^{2}$ is an open set with
$\partial\Omega\in C^{4}.$ Let us denote as $d\left(  y\right)
=\operatorname*{dist}\left(  y,\partial\Omega\right)  $ the distance of $y$ to
$\partial\Omega.$ There exists $\sigma_{0}>0$ depending only on $\partial
\Omega$ such that the function $d\left(  y\right)  $ is uniquely defined and
it has two continuous derivatives in the set $\mathcal{U}=\left\{  y\in
\Omega:\operatorname*{dist}\left(  y,\partial\Omega\right)  \leq2\sigma
_{0}\right\}  $. For any $D\in\left[  0,2\sigma_{0}\right]  $ we define the
curves $\Gamma_{D}=\left\{  y\in\mathbb{R}^{2};\ d\left(  y\right)
=D\right\}  .$ For any $y\in\Gamma_{D}$ the vector $\nu\left(  y\right)
=-\nabla d\left(  y\right)  $ is the normal unit vector to the curve at the
point $y$ and $h\left(  y\right)  =\nabla\cdot\left(  \nu\left(  y\right)
\right)  $ is the curvature of $\Gamma_{D}$ at $y$. Moreover, suppose that we
denote as $t\left(  y\right)  $ the unit tangent vector to $\Gamma_{D}$ at
$y.$ Then:
\begin{equation}
\nabla\nu\left(  y\right)  =h\left(  y\right)  t\left(  y\right)  \otimes
t\left(  y\right)  \ . \label{G1E1f}%
\end{equation}

\end{lemma}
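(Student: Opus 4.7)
The plan is to build a tubular neighborhood of $\partial\Omega$ using the $C^4$ assumption, obtaining $d\in C^{2}(\mathcal{U})$ and the Eikonal equation $|\nabla d|=1$, and then to read off (\ref{G1E1f}) from the symmetry of the Hessian of $d$ together with the fact that $\nu$ lies in its kernel.

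For the tubular neighborhood, I would parameterize each connected component of $\partial\Omega$ by arc length $s\mapsto\gamma(s)$, denote by $n(s)$ the inward unit normal at $\gamma(s)$, and introduce the map $\Phi(s,D)=\gamma(s)+D\,n(s)$. Since $\partial\Omega\in C^{4}$, the map $\Phi$ is $C^{3}$. A direct computation gives $\partial_{D}\Phi=n(s)$ and $\partial_{s}\Phi=(1-D\,h_{\partial\Omega}(s))\gamma'(s)$, where $h_{\partial\Omega}$ is the curvature of $\partial\Omega$; in particular at $D=0$ the differential is an isometry. By the inverse function theorem, uniform continuity of $h_{\partial\Omega}$, and compactness of $\partial\Omega$, there is $\sigma_{0}>0$ (depending only on $\|\partial\Omega\|_{C^{2}}$) such that $\Phi$ is a $C^{3}$ diffeomorphism from $\partial\Omega\times[0,2\sigma_{0}]$ onto $\mathcal{U}$. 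The unique preimage of $y$ gives the unique nearest boundary point, so $d(\Phi(s,D))=D$, proving uniqueness of the nearest point and $d\in C^{3}(\mathcal{U})$.

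Writing $y=\Phi(s,D)$ and using orthogonality of $\partial_{s}\Phi,\partial_{D}\Phi$ together with $d\circ\Phi=D$, one gets $\nabla d\cdot\partial_{D}\Phi=1$ and $\nabla d\cdot\partial_{s}\Phi=0$, hence $\nabla d=n(s)$, so $\nu(y)=-\nabla d(y)$ is a unit vector field normal to $\Gamma_{D}$ at $y$, which also identifies the tangent direction $t(y)$ with $\pm\gamma'(s)$. Differentiating $|\nabla d|^{2}=1$ yields $(\nabla^{2}d)\nabla d=0$, i.e.\ $(\nabla\nu)\nu=0$; in particular $\nu\in\ker(\nabla\nu)$.

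For the final identity, observe that $\nabla\nu=-\nabla^{2}d$ is a symmetric $2\times2$ matrix. Since $\nu$ is in its kernel, the image of $\nabla\nu$ is contained in $\mathrm{span}\{t\}$; writing $(\nabla\nu)t=\alpha t+\beta\nu$, symmetry together with $(\nabla\nu)\nu=0$ forces $\beta=\langle t,(\nabla\nu)\nu\rangle=0$, hence $\nabla\nu=\alpha\,t\otimes t$. Taking the trace gives $\alpha=\mathrm{tr}(\nabla\nu)=\nabla\cdot\nu=h(y)$, which is precisely (\ref{G1E1f}). The main obstacle in writing out the full argument is the construction of the tubular neighborhood with quantitative control of $\sigma_{0}$; once that is in place, the remaining steps are a short exercise in the calculus of the distance function and the symmetry of its Hessian.
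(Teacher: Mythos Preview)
The paper does not give a proof of this lemma; it states that ``the proof is elementary and it will be omitted.'' Your argument is a correct and standard way to supply the omitted details: the tubular-neighborhood construction via $\Phi(s,D)=\gamma(s)+D\,n(s)$ yields the regularity of $d$ and the Eikonal equation, and the key identity (\ref{G1E1f}) then follows cleanly from the symmetry of $\nabla\nu=-\nabla^{2}d$ together with $(\nabla\nu)\nu=0$. One minor remark: the lemma also asserts that $h=\nabla\cdot\nu$ is the curvature of $\Gamma_{D}$, which you do not verify separately; but this is immediate from your final identity, since $(\nabla\nu)t=h\,t$ says precisely that the derivative of the unit normal along $\Gamma_{D}$ has magnitude $|h|$.
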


The following general property of the Green's function of the Laplace equation
with Neumann boundary conditions will be useful:

\begin{lemma}
\label{Lsymm}Suppose that $G\left(  y,x\right)  $ is the unique solution of:
\begin{align}
-\Delta_{y}G\left(  y,x\right)   &  =\delta_{x}\left(  y\right)  -\frac
{1}{\left|  \Omega\right|  }\;\;,\;\;y\in\Omega\;\;,\;\;x\in\Omega
\label{N1E1}\\
\frac{\partial G}{\partial\nu_{y}}\left(  y,x\right)   &  =0\;\;,\;\;y\in
\partial\Omega\label{N1E2}\\
\int_{\Omega}G\left(  y,x\right)  dy  &  =0\;\;,\;\;x\in\Omega\label{N1E3}%
\end{align}
where $\nu_{y}$ denotes the outer normal to $\partial\Omega$ at $y\in
\partial\Omega.$ Suppose that $x,x_{0}\in\Omega,\;x\neq x_{0}.$ Then:
\[
G\left(  x,x_{0}\right)  =G\left(  x_{0},x\right)
\]

\end{lemma}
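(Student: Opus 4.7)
The plan is to use the classical symmetry argument for Green's functions, based on Green's second identity applied on a twice-punctured domain. Specifically, I would set $u(y):=G(y,x)$ and $w(y):=G(y,x_{0})$, and work on $\Omega_{\varepsilon}:=\Omega\setminus\bigl(\overline{B_{\varepsilon}(x)}\cup\overline{B_{\varepsilon}(x_{0})}\bigr)$ for $\varepsilon$ small enough that the two balls are disjoint and contained in $\Omega$. Because $\partial\Omega\in C^{4}$, standard elliptic regularity gives that the only singularities of $u$ and $w$ in $\overline{\Omega}$ are the logarithmic poles at $x$ and $x_{0}$ respectively, with local expansions
\begin{equation*}
G(y,\xi)=-\frac{1}{2\pi}\log|y-\xi|+H(y,\xi),
\end{equation*}
where $H(\cdot,\xi)$ is $C^{2}$ in a neighbourhood of $\xi$.

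On $\Omega_{\varepsilon}$ one has $\Delta u=\Delta w=1/|\Omega|$ by (\ref{N1E1}), so $u\,\Delta w-w\,\Delta u=(u-w)/|\Omega|$, and Green's second identity yields
\begin{equation*}
\int_{\Omega_{\varepsilon}}\frac{u-w}{|\Omega|}\,dy=\int_{\partial\Omega_{\varepsilon}}\bigl(u\,\partial_{\nu}w-w\,\partial_{\nu}u\bigr)\,dS,
\end{equation*}
with $\nu$ the outer normal of $\Omega_{\varepsilon}$. The next step is to let $\varepsilon\to 0^{+}$. The left-hand side tends to $|\Omega|^{-1}\int_{\Omega}(u-w)\,dy=0$ by the normalization (\ref{N1E3}), and on $\partial\Omega$ the Neumann condition (\ref{N1E2}) makes the boundary integrand vanish.

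The core of the argument is the asymptotic analysis of the boundary integrals on the two small circles. On $\partial B_{\varepsilon}(x)$ the outer normal of $\Omega_{\varepsilon}$ is $\nu(y)=-(y-x)/\varepsilon$; the logarithmic part of $u$ contributes $\partial_{\nu}u=(2\pi\varepsilon)^{-1}+O(1)$, while $w$ is smooth at $x$ because $x\neq x_{0}$. A direct computation then gives
\begin{equation*}
\int_{\partial B_{\varepsilon}(x)}w\,\partial_{\nu}u\,dS\longrightarrow w(x)=G(x,x_{0}),\qquad \int_{\partial B_{\varepsilon}(x)}u\,\partial_{\nu}w\,dS=O(\varepsilon|\log\varepsilon|)\to 0,
\end{equation*}
and the symmetric analysis on $\partial B_{\varepsilon}(x_{0})$ produces the limit $G(x_{0},x)$. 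Assembling the three contributions gives $0=-G(x,x_{0})+G(x_{0},x)$, which is exactly the claimed identity.

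I do not foresee a genuine obstacle. The only items requiring care are the validity of the logarithmic expansion of $G$ up to the boundary, which is standard given the $C^{4}$ regularity of $\partial\Omega$, and the bookkeeping of signs in the boundary asymptotics on the two small circles. The symmetry of the Euclidean Laplacian built into Green's identity is what ultimately delivers the symmetry of $G$.
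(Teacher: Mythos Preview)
Your argument is correct and is the classical textbook route via Green's second identity on a twice-punctured domain. The paper, however, takes a shorter path: it multiplies the equation $-\Delta_y G(y,x)=\delta_x(y)-|\Omega|^{-1}$ by $G(y,x_0)$, integrates once by parts, and uses the Neumann condition (\ref{N1E2}) and the normalization (\ref{N1E3}) to obtain
\[
\int_{\Omega}\nabla_y G(y,x_0)\cdot\nabla_y G(y,x)\,dy=G(x,x_0).
\]
Since the left-hand side is manifestly symmetric in $x$ and $x_0$, the conclusion follows immediately. Your approach makes the handling of the logarithmic singularities completely explicit (the excision and the $\varepsilon\to 0$ asymptotics), whereas the paper's one-line integration by parts implicitly relies on the same excision (or, equivalently, on a distributional reading) to justify picking up the value $G(x,x_0)$ from the delta; note that the Dirichlet integral on the left is indeed absolutely convergent because near each pole only one of the two gradients is singular, of order $|y-\xi|^{-1}$, which is locally integrable in two dimensions. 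In short: same ingredients, but the paper packages them as the symmetry of a bilinear Dirichlet form rather than as a boundary-asymptotics computation.
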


\begin{proof}
Multiplying (\ref{N1E1})-(\ref{N1E3}) by $G\left(  y,x_{0}\right)  ,$ and
integrating by parts we obtain:
\[
\int_{\Omega}\nabla_{y}G\left(  y,x_{0}\right)  \nabla_{y}G\left(  y,x\right)
dy=G\left(  x,x_{0}\right)
\]

Exchanging the role of $x,x_{0}$ the result follows from the symmetry of the
left-hand side.
\end{proof}

\subsection{Uniform regularity estimates near the boundary.}

We now describe the above mentioned Green's function. The main content of the
next lemma is the uniform continuity of the remainder function $K\left(
y,x\right)  .$

\begin{lemma}
\label{L1} Suppose that $G\left(  y,x\right)  $ is as in Lemma \ref{Lsymm}.
Let $\sigma_{0}$ be as in Lemma \ref{Lgeom} and let $Z\in C^{\infty}\left(
\bar{\Omega}\right)  $ be a cutoff function satisfying with $Z\left(
y\right)  =1$ for $d\left(  y\right)  \leq\sigma_{0},$ $Z\left(  y\right)  =0$
for $d\left(  y\right)  \geq2\sigma_{0}.$

Then, there exists $K\in C\left(  \bar{\Omega}\times\bar{\Omega}\right)  $
with $\nabla_{y}K,\nabla_{x}K\in C\left(  \bar{\Omega}\times\bar{\Omega
}\right)  $ such that:
\begin{equation}
G\left(  y,x\right)  =-\frac{1}{2\pi}\left[  \log\left(  \left|  y-x\right|
\right)  +Z\left(  y\right)  \log\left(  \left|  \tau\left(  y\right)
-x\right|  \right)  \right]  +K\left(  y,x\right)  \label{M1E1}%
\end{equation}
where
\[
\tau\left(  y\right)  =y+f\left(  y\right)  \nu\left(  y\right)  ,\;f\left(
y\right)  =2d\left(  y\right)  +h\left(  y\right)  \left(  d\left(  y\right)
\right)  ^{2}.
\]

\end{lemma}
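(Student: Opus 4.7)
My plan is to take (\ref{M1E1}) as the \emph{definition} of $K$ and verify that $K$ so defined solves a Neumann problem whose data are continuous on $\bar\Omega\times\bar\Omega$, so that standard elliptic regularity gives $K,\nabla_y K\in C(\bar\Omega\times\bar\Omega)$; continuity of $\nabla_x K$ will follow from the symmetry $G(y,x)=G(x,y)$ of Lemma \ref{Lsymm}.

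Write $H(y,x)=-\frac{1}{2\pi}\bigl[\log|y-x|+Z(y)\log|\tau(y)-x|\bigr]$ and set $K:=G-H$. First I would use Lemma \ref{Lgeom} to check that for $y\in\Omega$ with $d(y)\le 2\sigma_0$ the reflected point $\tau(y)=y+f(y)\nu(y)$ lies outside $\bar\Omega$, so $|\tau(y)-x|>0$ for $x\in\bar\Omega$; equality $\tau(y)=y$ holds only on $\partial\Omega$, where $f$ vanishes. Using $-\Delta_y\log|y-x|=2\pi\delta_x$, the Dirac mass on the right of (\ref{N1E1}) cancels, and since $Z\equiv 1$ near $\partial\Omega$ the function $K$ solves
\[
-\Delta_y K(\cdot,x)=-\frac{1}{|\Omega|}+\frac{1}{2\pi}\Delta_y\bigl[Z(y)\log|\tau(y)-x|\bigr]\quad\text{in }\Omega,
\]
\[
\partial_{\nu_y}K(y_0,x)=\frac{1}{2\pi}\partial_{\nu_y}\bigl[\log|y-x|+\log|\tau(y)-x|\bigr]\big|_{y=y_0}\quad\text{on }\partial\Omega,
\]
together with the normalisation $\int_\Omega K(y,x)\,dy=-\int_\Omega H(y,x)\,dy$ inherited from (\ref{N1E3}).

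The crux of the lemma is showing that both the PDE right-hand side and the Neumann datum extend continuously across the boundary diagonal $\{y=x\in\partial\Omega\}$; this is exactly what forces the choice $f(y)=2d(y)+h(y)d(y)^2$. The leading-order cancellation on $\partial\Omega$ is direct: using $\partial_\nu d=-1$ and $d(y_0)=0$ one finds $\partial_{\nu_y}f(y_0)=-2$, hence $\partial_{\nu_y}\tau(y_0)=-\nu(y_0)$, and consequently
\[
\partial_{\nu_y}\log|\tau(y)-x|\big|_{y_0}=\frac{(y_0-x)\cdot(-\nu(y_0))}{|y_0-x|^2}=-\partial_{\nu_y}\log|y-x|\big|_{y_0},
\]
so the Neumann datum vanishes pointwise for $x\neq y_0$. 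The main technical obstacle is to verify that the next-order geometric errors cancel analogously, namely that the curvature correction $h(y)d(y)^2$ is precisely tuned to absorb the $O(d^2)$ discrepancies produced by the curved boundary. Expanding $y-x$ and $\tau(y)-x$ in Fermi (tangential--normal) coordinates around a boundary point, using (\ref{G1E1f}) and $\nabla\nu=h\,t\otimes t$, one tracks these contributions through $\Delta_y[Z\log|\tau(y)-x|]$ and through the Neumann trace, and checks that both extend continuously to $\bar\Omega\times\bar\Omega$. This second-order expansion around the boundary diagonal is where the bulk of the work lies.

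Once these continuity statements are established, for fixed $x\in\bar\Omega$ the function $K(\cdot,x)$ solves a Neumann problem with continuous data on the $C^4$ domain $\Omega$, so $L^p$ theory gives $K(\cdot,x)\in W^{2,p}(\Omega)$ for every $p<\infty$; Sobolev embedding then yields $K(\cdot,x),\nabla_y K(\cdot,x)\in C(\bar\Omega)$, and continuous dependence of the data on the parameter upgrades this to joint continuity on $\bar\Omega\times\bar\Omega$. Continuity of $\nabla_x K$ is recovered by repeating the entire argument with the roles of $y$ and $x$ exchanged, which is permissible by Lemma \ref{Lsymm}: the identity $K(y,x)-K(x,y)=H(x,y)-H(y,x)$ reduces the question to the smoothness of an explicit difference whose $\log|y-x|$ singularities cancel pairwise by the very mechanism used to handle the Neumann datum above.
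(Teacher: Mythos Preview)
Your overall strategy---define $K=G-H$ and show it solves a Neumann problem with bounded data---is the paper's strategy too. The paper also verifies that $\partial_{\nu_y}G^*=0$ exactly on $\partial\Omega$ (your computation) and then carries out the second-order Fermi expansion you sketch to show $|\Delta_y\log|\tau(y)-x||\le C$ uniformly; this is where the curvature correction $h(y)d(y)^2$ in $f$ is essential. One small correction: the Neumann datum is \emph{identically zero} already from the leading-order identity $\partial_{\nu_y}\tau(y_0)=-\nu(y_0)$; the curvature term in $f$ is needed only for the interior Laplacian bound, not for the boundary trace. Also, the paper only obtains that the source is \emph{bounded}, not continuous; this still suffices for $K(\cdot,x)\in W^{2,p}$ for all $p<\infty$.

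Where you genuinely diverge is in handling $\nabla_xK$. The paper differentiates the equation for $K$ with respect to $x$, computes $\nabla_x\Delta_y\log|\tau(y)-x|$ explicitly (another round of expansions showing it is $O(|\tau(y)-x|^{-1})$, hence in $L^q$ for $q<2$), and then applies $W^{2,q}$ regularity to $\nabla_xK(\cdot,x)$ followed by a Green-representation argument for continuity in $x$. Your symmetry route---write $K(y,x)=K(x,y)+[H(x,y)-H(y,x)]$ and use the already-established continuity of $\nabla_1K$---is a legitimate and arguably cleaner alternative, but it trades one computation for another: you must show that $\nabla_x[Z(x)\log|\tau(x)-y|-Z(y)\log|\tau(y)-x|]$ extends continuously across the boundary diagonal. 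In the flat half-plane $|\tau(x)-y|=|\tau(y)-x|$ identically, so the asymmetry vanishes; in the curved case the discrepancy is a curvature effect one order milder than the leading singularity, and checking this is comparable in effort to the paper's $\nabla_x\Delta_y$ estimate. Note, though, that this is \emph{not} literally ``the very mechanism used for the Neumann datum'': that cancellation was between $\log|y-x|$ and $\log|\tau(y)-x|$ under $\partial_{\nu_y}$, whereas here you need $|\tau(x)-y|\approx|\tau(y)-x|$, a symmetry between reflections from opposite sides. Both cancellations are driven by the same geometry, but they require separate verifications.
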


\begin{proof}
Existence and uniqueness of the function $G$ is standard (cf. \cite{TS}).
Since the result is inmediate for $d\left(  y\right)  \geq\sigma_{0}$ we
restrict our analysis to the case $d\left(  y\right)  \leq\sigma_{0}.$ We
define a function
\[
G^{\ast}\left(  y,x\right)  =-\frac{1}{2\pi}\left[  \log\left(  \left\vert
y-x\right\vert \right)  +\log\left(  \left\vert \tau\left(  y\right)
-x\right\vert \right)  \right]  \ \ .
\]

Notice that $G^{\ast}$ satisfies:
\begin{equation}
\frac{\partial G^{\ast}}{\partial\nu_{y}}=0\ \ ,\ \ y\in\partial
\Omega\label{G1E1}%
\end{equation}
as well as:
\begin{equation}
-\Delta_{y}G^{\ast}\left(  y,x\right)  =-\frac{1}{4\pi}\Delta_{y}\left(
\log\left(  \left\vert \tau\left(  y\right)  -x\right\vert ^{2}\right)
\right)  \;\;\text{in\ \ }\Omega\label{G1E1a}%
\end{equation}

In order to compute the right hand side of (\ref{G1E1a}) we write:
\begin{equation}
\Delta_{y}\left(  \log\left(  \left\vert \tau\left(  y\right)  -x\right\vert
^{2}\right)  \right)  =\frac{1}{\left\vert \tau\left(  y\right)  -x\right\vert
^{2}}\Delta_{y}\left(  \left\vert \tau\left(  y\right)  -x\right\vert
^{2}\right)  -\frac{1}{\left\vert \tau\left(  y\right)  -x\right\vert ^{4}%
}\left(  \nabla_{y}\left(  \left\vert \tau\left(  y\right)  -x\right\vert
^{2}\right)  \right)  ^{2}\ \ . \label{G1E1b}%
\end{equation}

Using Lemma \ref{Lgeom} we obtain:
\begin{equation}
\nabla_{y}\left(  \left\vert \tau\left(  y\right)  -x\right\vert ^{2}\right)
=2\left(  \tau\left(  y\right)  -x\right)  +2\left[  \nu\left(  y\right)
\cdot\left(  \tau\left(  y\right)  -x\right)  \right]  \nabla_{y}f+2hf\left[
t\left(  y\right)  \cdot\left(  \tau\left(  y\right)  -x\right)  \right]
t\left(  y\right)  \ \ . \label{G1E1c}%
\end{equation}
We will use also the equivalent formula:
\begin{align}
\nabla_{y}\left(  \left\vert \tau\left(  y\right)  -x\right\vert ^{2}\right)
&  =2\left[  \left(  \tau\left(  y\right)  -x\right)  -2\left[  \nu\left(
y\right)  \cdot\left(  \tau\left(  y\right)  -x\right)  \right]  \nu\left(
y\right)  \right]  +\label{G1E1H}\\
&  +2\left[  \nu\left(  y\right)  \cdot\left(  \tau\left(  y\right)
-x\right)  \right]  \left[  \nabla_{y}f+2\nu\left(  y\right)  \right]
+2hf\left[  t\left(  y\right)  \cdot\left(  \tau\left(  y\right)  -x\right)
\right]  t\left(  y\right)  \ .\nonumber
\end{align}

Applying the operator $\nabla\cdot$ to (\ref{G1E1c}), using that $d\leq
C\left\vert \tau\left(  y\right)  -x\right\vert ,\;f\left(  d\right)  \leq Cd$
where $C$ is a constant depending only on the regularity bounds of $\Omega
$\ and neglecting terms that are smaller than $\left\vert \tau\left(
y\right)  -x\right\vert ^{2}$ we obtain:
\begin{align}
\Delta_{y}\left(  \left\vert \tau\left(  y\right)  -x\right\vert ^{2}\right)
&  =2\nabla_{y}\cdot\left(  \tau\left(  y\right)  \right)  +2\left[
\nu\left(  y\right)  \cdot\left(  \tau\left(  y\right)  -x\right)  \right]
\Delta_{y}f+2\left(  \nabla_{y}\left[  \nu\left(  y\right)  \cdot\left(
\tau\left(  y\right)  -x\right)  \right]  \right)  \cdot\nabla_{y}%
f+\nonumber\\
&  +2\left[  t\left(  y\right)  \cdot\left(  \tau\left(  y\right)  -x\right)
\right]  \left(  \nabla_{y}\left(  hf\right)  \cdot t\left(  y\right)
\right)  +2hf\left(  \nabla_{y}\left[  t\left(  y\right)  \cdot\left(
\tau\left(  y\right)  -x\right)  \right]  \cdot t\left(  y\right)  \right)
+\nonumber\\
&  +O\left(  \left\vert \tau\left(  y\right)  -x\right\vert ^{2}\right)  \ .
\label{G1E1d}%
\end{align}

The first term on the right-hand side of (\ref{G1E1d}) can be approximated,
using the definition of $f,$ as:
\begin{equation}
2\nabla_{y}\cdot\left(  \tau\left(  y\right)  \right)  =2fh-4dh+O\left(
d^{2}\right)  =O\left(  d^{2}\right)  \ . \label{Gsup1}%
\end{equation}

The second term on the right-hand side of (\ref{G1E1d}) satisfies:
\begin{align}
2\left[  \nu\left(  y\right)  \cdot\left(  \tau\left(  y\right)  -x\right)
\right]  \Delta_{y}f  &  =2\left[  \nu\left(  y\right)  \cdot\left(
\tau\left(  y\right)  -x\right)  \right]  \left[  -2\nabla_{y}\cdot\left(
\nu\left(  y\right)  \right)  +2h\left(  \nabla_{y}d\left(  y\right)  \right)
^{2}+O\left(  d\right)  \right] \nonumber\\
&  =O\left(  \left\vert \tau\left(  y\right)  -x\right\vert ^{2}\right)  \ .
\label{Gsup2}%
\end{align}

The third term on the right-hand side of (\ref{G1E1d}) can be written, using
Lemma \ref{Lgeom} as:
\begin{equation}
2\left(  \nabla_{y}\left[  \nu\left(  y\right)  \cdot\left(  \tau\left(
y\right)  -x\right)  \right]  \right)  \cdot\nabla_{y}f=-4-4hd+2\left[
2+2hd\right]  ^{2}+O\left(  d^{2}\right)  =4+12dh+O\left(  d^{2}\right)  \ .
\label{Gsup3}%
\end{equation}

We can estimate the fourth term on the right-hand side of (\ref{G1E1d}), using
the orthogonality of $t\left(  y\right)  $ and $\nu\left(  y\right)  $ as:
\begin{equation}
2\left[  t\left(  y\right)  \cdot\left(  \tau\left(  y\right)  -x\right)
\right]  \left(  \nabla_{y}\left(  hf\right)  \cdot t\left(  y\right)
\right)  =O\left(  \left\vert \tau\left(  y\right)  -x\right\vert ^{2}\right)
\ . \label{Gsup4}%
\end{equation}

Moreover, using (\ref{G1E1f}) and the orthogonality of $t\left(  y\right)  $
and $\nu\left(  y\right)  $ we estimate the fifth term on the right-hand side
of (\ref{G1E1d}) as:
\begin{equation}
2hf\left(  \nabla_{y}\left[  t\left(  y\right)  \cdot\left(  \tau\left(
y\right)  -x\right)  \right]  \cdot t\left(  y\right)  \right)  =2hf+O\left(
\left\vert \tau\left(  y\right)  -x\right\vert ^{2}\right)  =4dh+O\left(
\left\vert \tau\left(  y\right)  -x\right\vert ^{2}\right)  \ . \label{Gsup5}%
\end{equation}

Therefore, combining (\ref{G1E1d})-(\ref{Gsup5}):
\begin{equation}
\Delta_{y}\left(  \left\vert \tau\left(  y\right)  -x\right\vert ^{2}\right)
=4+16dh+O\left(  \left\vert \tau\left(  y\right)  -x\right\vert ^{2}\right)
\ . \label{Q1E1}%
\end{equation}

On the other hand we compute $\left(  \nabla_{y}\left(  \left\vert \tau\left(
y\right)  -x\right\vert ^{2}\right)  \right)  ^{2}.$ To this end we use
(\ref{G1E1H}). Using that $\left[  \nabla_{y}f+2\nu\left(  y\right)  \right]
=O\left(  d\right)  $ as well as the fact that $\left(  I-2\nu\otimes
\nu\right)  $ is an isometry we obtain, after some computations:
\begin{equation}
\left(  \nabla_{y}\left(  \left\vert \tau\left(  y\right)  -x\right\vert
^{2}\right)  \right)  ^{2}=4\left\vert \tau\left(  y\right)  -x\right\vert
^{2}+16dh\left\vert \tau\left(  y\right)  -x\right\vert ^{2}+O\left(
\left\vert \tau\left(  y\right)  -x\right\vert ^{4}\right)  \ . \label{Q1E2}%
\end{equation}

Combining\ (\ref{G1E1b}), (\ref{Q1E1}), (\ref{Q1E2}) we obtain:
\begin{equation}
\left|  \Delta_{y}\left(  \log\left(  \left|  \tau\left(  y\right)  -x\right|
\right)  \right)  \right|  \leq C\;\;\text{in\ \ }\Omega\label{LapEstimate}%
\end{equation}
for some constant $C$ depending only on $\Omega.$

We now derive an estimate for $\nabla_{x}\left(  \Delta_{y}\left(  \log\left(
\left\vert \tau\left(  y\right)  -x\right\vert \right)  \right)  \right)  $
with respect to $x.$ To this end we differentiate (\ref{G1E1b}):
\begin{align}
&  \nabla_{x}\left(  \Delta_{y}\left(  \log\left(  \left\vert \tau\left(
y\right)  -x\right\vert ^{2}\right)  \right)  \right) \nonumber\\
&  =\frac{2\left(  \tau\left(  y\right)  -x\right)  }{\left\vert \tau\left(
y\right)  -x\right\vert ^{4}}\Delta_{y}\left(  \left\vert \tau\left(
y\right)  -x\right\vert ^{2}\right)  -\frac{2}{\left\vert \tau\left(
y\right)  -x\right\vert ^{2}}\Delta_{y}\left(  \tau\left(  y\right)
-x\right)  -\nonumber\\
&  -\frac{4\left(  \tau\left(  y\right)  -x\right)  }{\left\vert \tau\left(
y\right)  -x\right\vert ^{6}}\left(  \nabla_{y}\left(  \left\vert \tau\left(
y\right)  -x\right\vert ^{2}\right)  \right)  ^{2}+\frac{4}{\left\vert
\tau\left(  y\right)  -x\right\vert ^{4}}\nabla_{y}\left(  \left\vert
\tau\left(  y\right)  -x\right\vert ^{2}\right)  \cdot\nabla_{y}\left(
\tau\left(  y\right)  -x\right)  \label{Q1E6bis}%
\end{align}
for $d\left(  y\right)  \leq\sigma_{0}.$ Notice that, using Lemma \ref{Lgeom}
and the definition of $\tau\left(  y\right)  $ we obtain, after some
computations:
\begin{equation}
\Delta_{y}\left(  \tau_{i}\left(  y\right)  -x_{i}\right)  =\nabla_{y}%
\cdot\left(  \nabla_{y}f\nu_{i}\left(  y\right)  +f\nabla_{y}\nu_{i}\left(
y\right)  \right)  =O\left(  d\right)  \label{Q1E4}%
\end{equation}

On the other hand:
\begin{equation}
\nabla_{y}\left(  \tau_{i}\left(  y\right)  -x_{i}\right)  =\nabla_{y}\left(
\tau_{i}\left(  y\right)  \right)  =e_{i}-\left[  2+2dh\right]  \nu\left(
y\right)  \nu_{i}\left(  y\right)  +fht_{i}\left(  y\right)  t\left(
y\right)  +O\left(  d^{2}\right)  \label{Q1E3}%
\end{equation}
where $e_{i}$ is a unit vector in the direction of the $y_{i}$ axis. Combining
(\ref{G1E1c}) and (\ref{Q1E3}) it then follows that:
\begin{equation}
\nabla_{y}\left(  \left\vert \tau\left(  y\right)  -x\right\vert ^{2}\right)
\cdot\nabla_{y}\left(  \tau\left(  y\right)  -x\right)  =\left[  2+8dh\right]
\left(  \tau\left(  y\right)  -x\right)  +O\left(  \left\vert \tau\left(
y\right)  -x\right\vert ^{3}\right)  \label{Q1E5}%
\end{equation}

It then follows from (\ref{Q1E1}), (\ref{Q1E2}), (\ref{Q1E6bis}),
(\ref{Q1E4}), (\ref{Q1E5}):
\begin{align}
&  \nabla_{x}\left(  \Delta_{y}\left(  \log\left(  \left\vert \tau\left(
y\right)  -x\right\vert \right)  \right)  \right) \nonumber\\
&  =\frac{2\left(  \tau\left(  y\right)  -x\right)  }{\left\vert \tau\left(
y\right)  -x\right\vert ^{4}}\left[  4+16dh\right]  -\frac{4\left(
\tau\left(  y\right)  -x\right)  }{\left\vert \tau\left(  y\right)
-x\right\vert ^{6}}\left[  4\left\vert \tau\left(  y\right)  -x\right\vert
^{2}+16dh\left\vert \tau\left(  y\right)  -x\right\vert ^{2}\right]
+\nonumber\\
&  +\frac{4\left(  \tau\left(  y\right)  -x\right)  }{\left\vert \tau\left(
y\right)  -x\right\vert ^{4}}\left[  2+8dh\right]  +O\left(  \frac
{1}{\left\vert \tau\left(  y\right)  -x\right\vert }\right) \label{Q1E5a}\\
&  =O\left(  \frac{1}{\left\vert \tau\left(  y\right)  -x\right\vert }\right)
\nonumber
\end{align}

Combining (\ref{LapEstimate}), (\ref{Q1E5a}) we obtain:
\begin{align}
\left\vert \Delta_{y}\left(  Z\left(  y\right)  \log\left(  \left\vert
\tau\left(  y\right)  -x\right\vert \right)  \right)  \right\vert  &  \leq
C\label{Q1E7a}\\
\left\vert \nabla_{x}\left(  \Delta_{y}\left(  Z\left(  y\right)  \log\left(
\left\vert \tau\left(  y\right)  -x\right\vert \right)  \right)  \right)
\right\vert  &  \leq C\left[  \frac{Z\left(  y\right)  }{\left\vert
\tau\left(  y\right)  -x\right\vert }+1\right]  \label{Q1E7}%
\end{align}
uniformly on $\left(  x,y\right)  \in\Omega\times\Omega.$

We can now prove the continuity of the function $K\left(  x,y\right)  $
defined in (\ref{M1E1}). Notice that (\ref{N1E1})-(\ref{N1E3}), (\ref{Q1E7a}),
(\ref{Q1E7}) imply:
\begin{align}
-\Delta_{y}\left(  K\left(  y,x\right)  \right)   &  =R_{1}\left(  y,x\right)
\;\;,\;\;\left\vert R_{1}\left(  y,x\right)  \right\vert \leq C\nonumber\\
\frac{\partial K}{\partial\nu_{y}}\left(  y,x\right)   &  =0\;\;,\;\;y\in
\partial\Omega\ \ ,\ \ x\in\Omega\nonumber\\
\left\vert \int_{\Omega}K\left(  y,x\right)  dy\right\vert  &  \leq
C_{1}\;\;,\;\;x\in\Omega\label{Q1E7b}%
\end{align}
with $C_{1}$ independent on $x.$%
\begin{equation}
-\Delta_{y}\left(  \nabla_{x}K\left(  y,x\right)  \right)  =R_{2}\left(
y,x\right)  \text{\ \ ,\ \ }\left\vert R_{2}\left(  y,x\right)  \right\vert
\leq C\left[  \frac{Z\left(  y\right)  }{\left\vert \tau\left(  y\right)
-x\right\vert }+1\right]  \label{Q1E8}%
\end{equation}%
\begin{align*}
\frac{\partial\left(  \nabla_{x}K\right)  }{\partial\nu_{y}}\left(
y,x\right)   &  =0\;\;,\;\;y\in\partial\Omega\ \ ,\ \ x\in\Omega\\
\left\vert \int_{\Omega}\nabla_{x}K\left(  y,x\right)  dy\right\vert  &  \leq
C_{1}\;\;,\;\;x\in\Omega
\end{align*}
with $C_{1}$ independent on $x.$

Therefore, multiplying (\ref{Q1E8}) by $\nabla_{x}K\left(  y,x\right)  ,$
integrating with respect to the $y$ variable and integrating by parts and
using Sobolev and H\"{o}lder inequalities we obtain:
\[
\int_{\Omega}\left\vert \nabla_{x}K\left(  y,x\right)  \right\vert ^{p}%
dy+\int_{\Omega}\left\vert \nabla_{y}\left(  \nabla_{x}K\left(  y,x\right)
\right)  \right\vert ^{2}dy\leq C\;\;,\;\text{for}\;\text{any }p<\infty
\]
uniformly on $x\in\Omega.\;$On the other hand, multiplying (\ref{Q1E7b}) by
$K\left(  y,x\right)  $ and using a similar argument we obtain:
\[
\int_{\Omega}\left\vert K\left(  y,x\right)  \right\vert ^{p}dy+\int_{\Omega
}\left\vert \nabla_{y}\left(  K\left(  y,x\right)  \right)  \right\vert
^{2}dy\leq C\;\;,\;\text{for}\;\text{any}\;p<\infty
\]
uniformly on $x\in\Omega.$ Classical regularity theory for (\ref{Q1E7b}) then
shows that $K\left(  y,x\right)  ,\;\nabla_{y}K\left(  y,x\right)  $ are
uniformly bounded in $\Omega\times\Omega.$

Since $\frac{1}{\left\vert \tau\left(  \cdot\right)  -x\right\vert }\in
L^{q}\left(  \Omega\right)  $ for any $q<2,$ it follows from classical
regularity theory that $\nabla_{x}K\left(  \cdot,x\right)  \in W^{2,q}\left(
\Omega\right)  $ for any $q<2$ with uniform bounds on $x\in\Omega.$ Therefore,
Sobolev embeddings yield $\nabla_{x}K\left(  \cdot,x\right)  \in W^{1,\sigma
}\left(  \Omega\right)  $ for any $\sigma<\infty$ and then $\nabla_{x}K\left(
\cdot,x\right)  $ is H\"{o}lder in the $y$ variable, uniformly on $x\in
\Omega.$ It remains to prove continuity in the $x$ variable. To this end we
use (\ref{Q1E8}) as well as (\ref{N1E1})-(\ref{N1E3}) to write the
representation formula:
\begin{equation}
\nabla_{x}K\left(  y,x\right)  =\int_{\Omega}G\left(  y,z\right)  R_{2}\left(
z,x\right)  dz=\int_{\Omega}G^{\ast}\left(  y,z\right)  R_{2}\left(
z,x\right)  dz+\int_{\Omega}K\left(  y,z\right)  R_{2}\left(  z,x\right)  dz
\label{Q1E9}%
\end{equation}

Using the inequalities:
\begin{align*}
\left\vert G^{\ast}\left(  y,z\right)  R_{2}\left(  z,x\right)  \right\vert
&  \leq C\left[  \frac{Z\left(  z\right)  }{\left\vert \tau\left(  z\right)
-x\right\vert }+1\right]  \left\vert \log\left(  \left\vert z-y\right\vert
\right)  \right\vert \\
\left\vert K\left(  y,z\right)  R_{2}\left(  z,x\right)  \right\vert  &  \leq
C\left[  \frac{Z\left(  z\right)  }{\left\vert \tau\left(  z\right)
-x\right\vert }+1\right]
\end{align*}
as well as the continuity of the function $R_{2}\left(  x,z\right)  $ with
respect to the $x$ variable for $x\neq z$ it then follows that the right-hand
side of (\ref{Q1E9}) is continuous and the result follows.
\end{proof}

\bigskip

\subsection{A geometric representation formula for $\nabla_{x}G.$}

We write the Green's function $\nabla_{x}G\left(  y,x\right)  $ in a more
convenient form. To this end we introduce the following notation to denote the
closest point to $x$ at the boundary $\partial\Omega.$
\begin{equation}
P_{\partial}\left(  x\right)  =x+d\left(  x\right)  \nu\left(  x\right)
\;\;,\;\;\operatorname*{dist}\left(  x,\partial\Omega\right)  \leq\sigma_{0}
\label{M2E0}%
\end{equation}

The next lemma provides a suitable approximation for $\nabla_{x}G$ near
$\partial\Omega.$

\bigskip

\begin{lemma}
\label{LRep}Assume that $G$ is as in Lemma \ref{L1}. Then we can write:
\begin{align}
&  \nabla_{x}G\left(  y,x\right) \nonumber\\
&  =-\frac{1}{2\pi}\frac{\left(  x-y\right)  }{\left\vert x-y\right\vert ^{2}%
}-\frac{Z\left(  y\right)  }{2\pi}\frac{P_{\partial}\left(  x\right)
-P_{\partial}\left(  y\right)  -\left[  d\left(  x\right)  \nu\left(
x\right)  +d\left(  y\right)  \nu\left(  y\right)  \right]  }{D}-\nonumber\\
&  -\frac{Z\left(  y\right)  h\left(  y\right)  }{2\pi}\left[  \mathcal{G}%
_{t}\left(  Y\left(  x,y\right)  ,\lambda_{1}\left(  x,y\right)  ,\lambda
_{2}\left(  x,y\right)  \right)  +\mathit{g}_{n}\left(  Y\left(  x,y\right)
,\lambda_{1}\left(  x,y\right)  ,\lambda_{2}\left(  x,y\right)  \right)
\nu\left(  y\right)  \right]  +W\left(  x,y\right)  \label{M2E3}%
\end{align}
where the operator $P_{\partial}$ is defined in (\ref{M2E0}), $d\left(
\cdot\right)  $ is as in Lemma \ref{Lgeom},$\ W\left(  x,y\right)  $ is
continuous in $\bar{\Omega}\times\bar{\Omega}$, and the functions
$\mathcal{G}_{t},\;\mathit{g}_{n},\;Y,\;\lambda_{1},\;\lambda_{2}$ are given
as:
\begin{align}
\mathcal{G}_{t}\left(  Y,\lambda_{1},\lambda_{2}\right)   &  =-2\left(
\lambda_{1}+\lambda_{2}\right)  \lambda_{2}^{2}Y+\left(  \lambda_{1}%
-\lambda_{2}\right)  \left\vert Y\right\vert ^{2}Y\label{M2E3a}\\
\mathit{g}_{n}\left(  Y,\lambda_{1},\lambda_{2}\right)   &  =\left[
-\lambda_{2}^{2}+2\lambda_{2}^{2}\left(  \lambda_{1}+\lambda_{2}\right)
^{2}+\left(  \lambda_{2}^{2}-\lambda_{1}^{2}\right)  \left\vert Y\right\vert
^{2}\right] \label{M2E3b}\\
D  &  =\left\vert P_{\partial}\left(  x\right)  -P_{\partial}\left(  y\right)
\right\vert ^{2}+\left(  d\left(  x\right)  +d\left(  y\right)  \right)  ^{2}
\label{M2E3c}%
\end{align}%
\begin{align*}
Y\left(  x,y\right)   &  =\frac{P_{\partial}\left(  x\right)  -P_{\partial
}\left(  y\right)  }{\sqrt{\left\vert P_{\partial}\left(  x\right)
-P_{\partial}\left(  y\right)  \right\vert ^{2}+\left(  d\left(  y\right)
+d\left(  y\right)  \right)  ^{2}}}\;\\
\lambda_{1}\left(  x,y\right)   &  =\frac{d\left(  x\right)  }{\sqrt
{\left\vert P_{\partial}\left(  x\right)  -P_{\partial}\left(  y\right)
\right\vert ^{2}+\left(  d\left(  y\right)  +d\left(  y\right)  \right)  ^{2}%
}}\;\\
\lambda_{2}\left(  x,y\right)   &  =\frac{d\left(  y\right)  }{\sqrt
{\left\vert P_{\partial}\left(  x\right)  -P_{\partial}\left(  y\right)
\right\vert ^{2}+\left(  d\left(  y\right)  +d\left(  y\right)  \right)  ^{2}%
}}\;.
\end{align*}

\end{lemma}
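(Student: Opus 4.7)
The plan is to start from the explicit representation in Lemma \ref{L1},
\[
G(y,x)=-\frac{1}{2\pi}\bigl[\log|y-x|+Z(y)\log|\tau(y)-x|\bigr]+K(y,x),
\]
and differentiate in $x$. The term $\nabla_x K$ is continuous on $\bar\Omega\times\bar\Omega$ by Lemma \ref{L1}, so it is absorbed into $W(x,y)$. The Newton piece $-\tfrac{1}{2\pi}(x-y)/|x-y|^2$ already agrees with the first summand in (\ref{M2E3}), so the entire content of the lemma lies in Taylor-expanding
\[
-\frac{Z(y)}{2\pi}\,\frac{x-\tau(y)}{|x-\tau(y)|^2}
\]
about the ``reflection'' configuration.

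First I would rewrite the numerator using $\tau(y)=P_\partial(y)+(d(y)+h(y)d(y)^2)\nu(y)$ (from the definition of $\tau$ in Lemma \ref{L1} together with $y=P_\partial(y)-d(y)\nu(y)$), and for $d(x)\le\sigma_0$ the representation $x=P_\partial(x)-d(x)\nu(x)$, to obtain
\[
x-\tau(y)=\bigl[P_\partial(x)-P_\partial(y)\bigr]-d(x)\nu(x)-d(y)\nu(y)-h(y)d(y)^2\nu(y).
\]
For the denominator I would use the standard $C^4$ boundary-curvature expansions
\[
\bigl(P_\partial(x)-P_\partial(y)\bigr)\cdot\nu(y)=-\tfrac{h(y)}{2}\bigl|P_\partial(x)-P_\partial(y)\bigr|^2+O(|P_\partial(x)-P_\partial(y)|^3),
\]
together with $\nu(x)\cdot\nu(y)=1+O(h^2|P_\partial(x)-P_\partial(y)|^2)$ and $\nu(x)-\nu(y)=O(h|P_\partial(x)-P_\partial(y)|)$, to get
\[
|x-\tau(y)|^2=D+2h(y)d(y)^2(d(x)+d(y))-2\bigl[P_\partial(x)-P_\partial(y)\bigr]\cdot\bigl[d(x)\nu(x)+d(y)\nu(y)\bigr]+O(h^2).
\]
Expanding $1/|x-\tau(y)|^2$ to first order in these corrections and multiplying by $x-\tau(y)$, then rescaling via the dimensionless variables $Y,\lambda_1,\lambda_2$ (so that $P_\partial(x)-P_\partial(y)=\sqrt{D}Y$, $d(x)=\sqrt{D}\lambda_1$, $d(y)=\sqrt{D}\lambda_2$, with $|Y|^2+(\lambda_1+\lambda_2)^2=1$), I expect the leading term to yield precisely the second summand in (\ref{M2E3}), and the $O(h(y))$ correction to collapse to $h(y)[\mathcal{G}_t+\mathit{g}_n\nu(y)]$.

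The main obstacle will be matching the precise polynomial forms of $\mathcal{G}_t$ and $\mathit{g}_n$. The piece $-2(\lambda_1+\lambda_2)\lambda_2^2 Y$ of $\mathcal{G}_t$ and the piece $-\lambda_2^2+2\lambda_2^2(\lambda_1+\lambda_2)^2$ of $\mathit{g}_n$ arise from combining the explicit normal term $-h(y)d(y)^2\nu(y)$ in the numerator with the $2h(y)d(y)^2(d(x)+d(y))/D^2$ correction from the denominator, after replacing $\nu(x)$ by $\nu(y)$ (the discrepancy $\nu(x)-\nu(y)=O(h|P_\partial(x)-P_\partial(y)|)$ is absorbed into $W$). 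The remaining pieces $(\lambda_1-\lambda_2)|Y|^2 Y$ and $(\lambda_2^2-\lambda_1^2)|Y|^2$ come from the curvature correction $[P_\partial(x)-P_\partial(y)]\cdot\nu(y)\approx-\tfrac{h(y)}{2}|P_\partial(x)-P_\partial(y)|^2$ together with its counterpart at $x$, which to leading order has the opposite sign and produces the skew-symmetric factor $(\lambda_1-\lambda_2)$. Finally, continuity of $W$ on $\bar\Omega\times\bar\Omega$, including at the boundary diagonal where $D\to 0$, follows from the continuity of $\nabla_x K$ in Lemma \ref{L1} together with the fact that every neglected error carries an extra factor of $d$ or of $|P_\partial(x)-P_\partial(y)|/\sqrt{D}$ multiplied by a bounded function of the geometric data, hence extends continuously up to the diagonal.
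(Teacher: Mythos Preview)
Your proposal is correct and follows essentially the same route as the paper: start from the formula $\nabla_x G(y,x)=-\tfrac{1}{2\pi}(x-y)/|x-y|^2-\tfrac{Z(y)}{2\pi}(x-\tau(y))/|x-\tau(y)|^2+\nabla_x K$ coming from Lemma~\ref{L1}, write $x-\tau(y)=P_\partial(x)-P_\partial(y)-[d(x)\nu(x)+d(y)\nu(y)]-h(y)d(y)^2\nu(y)$, expand $|x-\tau(y)|^2$ about $D$ using the curvature relations for $(P_\partial(x)-P_\partial(y))\cdot\nu$, and Taylor-expand the quotient to first order in $h(y)$, absorbing the $O(d+|x-y|)$ remainder together with $\nabla_x K$ into $W$. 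Your identification of which corrections produce the individual monomials in $\mathcal{G}_t$ and $\mathit{g}_n$ matches the computation the paper carries out (the paper packages the mixed curvature term as $2(P_\partial(x)-P_\partial(y))\cdot(d(x)\nu(x)+d(y)\nu(y))=-h(y)(d(y)-d(x))\ell^2+\text{l.o.t.}$, which is exactly your ``skew-symmetric factor $(\lambda_1-\lambda_2)$'' observation).
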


\begin{remark}
The first two terms in (\ref{M2E3}) are homogeneous functions of order $-1.$
The terms $\mathcal{G}_{t},\;\mathit{g}_{n}$\texttt{\ }are homogeneous
functions of order zero that in the limit $\left\vert x-\tau\left(  y\right)
\right\vert \rightarrow0$ yield respectively a tangential component and a
normal component to $\partial\Omega.$
\end{remark}

\begin{proof}
Our goal is to approximate $\nabla_{x}G$ that is given as (cf. (\ref{M1E1})):
\begin{equation}
\nabla_{x}G\left(  y,x\right)  =-\frac{1}{2\pi}\frac{\left(  x-y\right)
}{\left\vert x-y\right\vert ^{2}}-\frac{Z\left(  y\right)  }{2\pi}\frac
{x-\tau\left(  y\right)  }{\left\vert x-\tau\left(  y\right)  \right\vert
^{2}}+\nabla_{x}K\left(  y,x\right)  \ . \label{M2E1}%
\end{equation}

Using (\ref{M2E0}) we obtain:
\begin{equation}
x-\tau\left(  y\right)  =P_{\partial}\left(  x\right)  -P_{\partial}\left(
y\right)  -\left[  d\left(  x\right)  \nu\left(  x\right)  +d\left(  y\right)
\nu\left(  y\right)  \right]  -h\left(  y\right)  \left(  d\left(  y\right)
\right)  ^{2}\nu\left(  y\right)  \ , \label{M2E1aa}%
\end{equation}
whence it follows, after some computations:
\begin{align}
\left\vert x-\tau\left(  y\right)  \right\vert ^{2}  &  =\left\vert
P_{\partial}\left(  x\right)  -P_{\partial}\left(  y\right)  \right\vert
^{2}+\left(  d\left(  x\right)  +d\left(  y\right)  \right)  ^{2}-2\left(
P_{\partial}\left(  x\right)  -P_{\partial}\left(  y\right)  \right)
\cdot\left(  d\left(  x\right)  \nu\left(  x\right)  +d\left(  y\right)
\nu\left(  y\right)  \right)  +\nonumber\\
&  +2h\left(  y\right)  \left(  d\left(  y\right)  \right)  ^{2}\left(
d\left(  x\right)  +d\left(  y\right)  \right)  +O\left(  \left(  d\left(
x\right)  \right)  ^{4}+\left(  d\left(  y\right)  \right)  ^{4}+\left\vert
x-y\right\vert ^{4}\right)  \ . \label{M2E1a}%
\end{align}

Let us define $\ell\left(  x,y\right)  =\left(  P_{\partial}\left(  x\right)
-P_{\partial}\left(  y\right)  \right)  \cdot t\left(  y\right)  .$ Using also
$\nu\left(  y\right)  =\nu\left(  P_{\partial}\left(  y\right)  \right)
,$\ $t\left(  y\right)  =t\left(  P_{\partial}\left(  y\right)  \right)  $ we
obtain:
\[
\left(  P_{\partial}\left(  x\right)  -P_{\partial}\left(  y\right)  \right)
=\ell\left(  x,y\right)  t\left(  y\right)  -\frac{h\left(  P_{\partial
}\left(  y\right)  \right)  \left(  \ell\left(  x,y\right)  \right)  ^{2}}%
{2}\nu\left(  y\right)  +O\left(  \left(  \ell\left(  x,y\right)  \right)
^{3}\right)  \ ,
\]%
\begin{align*}
d\left(  y\right)  \left(  P_{\partial}\left(  x\right)  -P_{\partial}\left(
y\right)  \right)  \cdot\nu\left(  y\right)   &  =-\frac{h\left(  P_{\partial
}\left(  y\right)  \right)  \left(  \ell\left(  x,y\right)  \right)
^{2}d\left(  y\right)  }{2}+O\left(  \left(  \ell\left(  x,y\right)  \right)
^{4}\right)  \ ,\\
d\left(  x\right)  \left(  P_{\partial}\left(  x\right)  -P_{\partial}\left(
y\right)  \right)  \cdot\nu\left(  x\right)   &  =\frac{h\left(  P_{\partial
}\left(  x\right)  \right)  \left(  \ell\left(  x,y\right)  \right)
^{2}d\left(  x\right)  }{2}+O\left(  \left(  \ell\left(  x,y\right)  \right)
^{4}\right)  \ ,\\
\left(  \ell\left(  x,y\right)  \right)  ^{2}  &  =\left\vert P_{\partial
}\left(  x\right)  -P_{\partial}\left(  y\right)  \right\vert ^{2}+O\left(
\left\vert x-y\right\vert ^{3}\right)  \ .
\end{align*}

Then:
\begin{align}
&  2\left(  P_{\partial}\left(  x\right)  -P_{\partial}\left(  y\right)
\right)  \cdot\left(  d\left(  x\right)  \nu\left(  x\right)  +d\left(
y\right)  \nu\left(  y\right)  \right) \label{M2E1b}\\
&  =-h\left(  y\right)  \left(  d\left(  y\right)  -d\left(  x\right)
\right)  \left(  \ell\left(  x,y\right)  \right)  ^{2}+O\left(  \left\vert
x-y\right\vert ^{4}+\left(  d\left(  x\right)  \right)  ^{4}+\left(  d\left(
y\right)  \right)  ^{4}\right)  \ .\nonumber
\end{align}

Plugging (\ref{M2E1b}) into (\ref{M2E1a}) and using Taylor's expansion we
obtain:
\begin{align}
\frac{x-\tau\left(  y\right)  }{\left\vert x-\tau\left(  y\right)  \right\vert
^{2}}  &  =\frac{P_{\partial}\left(  x\right)  -P_{\partial}\left(  y\right)
-\left[  d\left(  x\right)  \nu\left(  x\right)  +d\left(  y\right)
\nu\left(  y\right)  \right]  }{D}+\nonumber\\
&  +\left[  \mathcal{G}_{t}\left(  Y\left(  x,y\right)  ,\lambda_{1}\left(
x,y\right)  ,\lambda_{2}\left(  x,y\right)  \right)  +\mathit{g}_{n}\left(
Y\left(  x,y\right)  ,\lambda_{1}\left(  x,y\right)  ,\lambda_{2}\left(
x,y\right)  \right)  \nu\left(  y\right)  \right]  +\nonumber\\
&  +\tilde{W}\left(  x,y\right)  \ , \label{M2E1c}%
\end{align}
where $\mathcal{G}_{t},\;\mathit{g}_{n}$ are as in (\ref{M2E3a}),
(\ref{M2E3b}) and $\tilde{W}\left(  x,y\right)  $ is a continuous function in
$\bar{\Omega}\times\bar{\Omega}$ satisfying:
\[
\tilde{W}\left(  x,y\right)  =O\left(  d\left(  x\right)  +d\left(  y\right)
+\left\vert x-y\right\vert \right)  \ .
\]

Combining (\ref{M2E1}), (\ref{M2E1c}), (\ref{M2E3}) and Lemma \ref{LRep}
follows with $W\left(  x,y\right)  =-\frac{Z\left(  y\right)  }{2\pi}\tilde
{W}\left(  x,y\right)  +\nabla_{x}K\left(  x,y\right)  .$
\end{proof}

\section{Local mass change estimates.}

In this Section we derive some crucial estimates for the local change of mass
of $u$ for the solutions of (\ref{S2E1}), (\ref{S2E2}) or (\ref{S2E4}),
(\ref{S2E5}). To this end we use the symmetrization argument as introduced in
\cite{SenbaSuzuki} and used also in \cite{DS}, \cite{Poupaud}. We will
consider separately the cases of points $x_{0}$ that are at the interior of
$\Omega$ and the points that are close to the boundary.

\subsection{Interior estimates.}

We will use an auxiliary test function $\varphi\in C^{1,1}\left(
\mathbb{R}^{+}\right)  $ defined as:
\begin{align}
\varphi\left(  r\right)   &  =1-\frac{r^{2}}{2}\;\;,\;\;0\leq r\leq
1\;\;,\;\;\varphi\left(  r\right)  =\frac{1}{2}-\log\left(  r\right)
\;\;,\;\;1\leq r\leq e^{\frac{1}{4}}\nonumber\\
\varphi\left(  r\right)   &  =\frac{1}{e^{\frac{1}{2}}}\left(  \frac
{3e^{\frac{1}{4}}}{2}-r\right)  ^{2}\;\;,\;e^{\frac{1}{4}}\leq r\leq
\frac{3e^{\frac{1}{4}}}{2}\;\;,\;\;\varphi\left(  r\right)  =0\;\;,\;\;r\geq
\frac{3e^{\frac{1}{4}}}{2} \label{S3E2A}%
\end{align}

Given $\rho>0$ and $x_{0}\in\Omega,$ such that $d\left(  x_{0}\right)
\geq\frac{3e^{\frac{1}{4}}\rho}{2}$ we define:
\begin{equation}
\psi_{\rho}\left(  x\right)  =\varphi\left(  \frac{\left|  x-x_{0}\right|
}{\rho}\right)  \label{S3E3a}%
\end{equation}

Notice that (\ref{S3E2A}) implies:
\begin{align}
\Delta\psi_{\rho}\left(  x\right)   &  =-\frac{2}{\rho^{2}}\;\text{for}%
\;\left\vert x-x_{0}\right\vert <\rho\;,\;\Delta\psi_{\rho}\left(  x\right)
=0\;\text{for}\;\rho<\left\vert x-x_{0}\right\vert <e^{\frac{1}{4}}%
\rho\;,\label{S3E3}\\
\;\Delta\psi_{\rho}\left(  x\right)   &  \geq0\;\text{for}\;e^{\frac{1}{4}%
}\rho<\left\vert x-x_{0}\right\vert \nonumber
\end{align}

The letters $\varphi,\;\psi$ will denote generic test functions that will
change along the paper, but will be used consistently in each argument.

We have:

\begin{proposition}
\label{P1}Suppose that $u$ solves one of the problems (\ref{S2E1}),
(\ref{S2E2}) or (\ref{S2E4}), (\ref{S2E5}). Let us fix $\rho>0$ and let us
assume that $\operatorname*{dist}\left(  x_{0},\partial\Omega\right)
\geq2\rho.$ Let $\psi_{\rho}$ be as in (\ref{S3E3a}). Then:
\begin{equation}
\left\vert \partial_{t}\left(  \int_{\Omega}\psi_{\rho}u\right)  \right\vert
\leq\frac{\kappa}{\rho^{2}} \label{S3E1}%
\end{equation}
if $u$ solves (\ref{S2E1}), (\ref{S2E2}),\thinspace and:
\begin{equation}
\partial_{t}\left(  \int_{\Omega}\psi_{\rho}u\right)  \geq-\frac{\kappa}%
{\rho^{2}}-\frac{2\varepsilon}{\rho^{2}}\int_{B_{\rho}\left(  x_{0}\right)
}u^{\frac{7}{6}} \label{S3E2}%
\end{equation}
if $u$ solves (\ref{S2E4}), (\ref{S2E5}). The constant $\kappa$ depends on
$\left\Vert u_{0}\right\Vert _{L^{1}\left(  \Omega\right)  }$, but it is
independent on $\varepsilon$ and $\rho.$
\end{proposition}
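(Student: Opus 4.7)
The plan is to test each evolution equation against $\psi_\rho$ and bound the resulting integrals. Because $\frac{3e^{1/4}}{2}<2$ and $d(x_0)\geq 2\rho$, the support of $\psi_\rho$ lies strictly inside $\Omega$, so the integrations by parts produce no boundary contributions. Testing (\ref{S2E1}) against $\psi_\rho$ gives
\begin{equation*}
\partial_t\int_\Omega \psi_\rho u \;=\; \int_\Omega u\,\Delta\psi_\rho \;-\; \int_\Omega f_\varepsilon(u)\nabla v\cdot \nabla\psi_\rho,
\end{equation*}
and testing (\ref{S2E4}) gives the analogous identity with an additional term $\varepsilon\int_\Omega u^{7/6}\Delta\psi_\rho$ and with $u$ in place of $f_\varepsilon(u)$ in the drift.

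The diffusion contributions are straightforward. From (\ref{S3E2A}) one reads off $\|\Delta\psi_\rho\|_{L^\infty}\leq C\rho^{-2}$ and $\|\nabla\psi_\rho\|_{L^\infty}\leq C\rho^{-1}$, so combined with mass conservation (\ref{S2E6}) this yields $|\int u\,\Delta\psi_\rho|\leq C\|u_0\|_{L^1}\rho^{-2}$. For the porous-medium correction I would exploit the one-sided bound $\Delta\psi_\rho\geq -2\rho^{-2}\chi_{B_\rho(x_0)}$, which follows from (\ref{S3E3}) since $\Delta\psi_\rho=-2/\rho^2$ inside $B_\rho(x_0)$, vanishes on the logarithmic annulus $\rho<|x-x_0|<e^{1/4}\rho$, and is non-negative beyond. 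Thus $\varepsilon\int u^{7/6}\Delta\psi_\rho\geq -\frac{2\varepsilon}{\rho^2}\int_{B_\rho(x_0)} u^{7/6}$, producing precisely the second term in (\ref{S3E2}).

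The core of the argument is the Senba--Suzuki symmetrization of the drift. Writing $v(x)=\int_\Omega G(x,y)f(u)(y)\,dy$ (with $f=f_\varepsilon$ or $f=\mathrm{id}$, using (\ref{N1E3}) together with Lemma \ref{Lsymm}) and then averaging the resulting double integral with the one obtained by the interchange $x\leftrightarrow y$, which is permitted because of $G(x,y)=G(y,x)$, the drift integral rewrites as
\begin{equation*}
-\tfrac{1}{2}\int_\Omega\!\int_\Omega f(u)(x)f(u)(y)\bigl[\nabla_x G(x,y)\cdot\nabla\psi_\rho(x)+\nabla_y G(x,y)\cdot\nabla\psi_\rho(y)\bigr]dx\,dy.
\end{equation*}
The principal logarithmic piece $-\frac{1}{2\pi}\log|x-y|$ contributes to the bracketed kernel the manifestly symmetric expression $-\frac{1}{2\pi}\frac{(x-y)\cdot[\nabla\psi_\rho(x)-\nabla\psi_\rho(y)]}{|x-y|^2}$, and the mean-value theorem bounds it pointwise by $\|D^2\psi_\rho\|_{L^\infty}/(2\pi)\leq C/\rho^2$. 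Using $f_\varepsilon(u)\leq u$ and (\ref{S2E6}) then contributes $O(\|u_0\|_{L^1}^2/\rho^2)$.

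It remains to dispose of the image-charge term $-\frac{Z(x)}{2\pi}\log|\tau(x)-y|$ and the $C^1$-remainder $K$ from Lemma \ref{L1}. The decisive observation is that $\operatorname{supp}\nabla\psi_\rho\subset B_{(3e^{1/4}/2)\rho}(x_0)$ together with $d(x_0)\geq 2\rho$ forces $d(x)\geq(2-\tfrac{3e^{1/4}}{2})\rho\geq c\rho$ on the support of $\nabla\psi_\rho$; since $\tau(x)$ lies at distance $\gtrsim d(x)$ on the exterior side of $\partial\Omega$, one has the uniform lower bound $|\tau(x)-y|\geq c\rho$ for every $y\in\Omega$. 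Hence the image kernel and its $x$-gradient are pointwise dominated by $C\rho^{-1}$ (with at worst a $|\log\rho|$ factor, absorbed into $\kappa$ by bounding $\rho\leq\operatorname{diam}\Omega$), which multiplied by $|\nabla\psi_\rho|\leq C/\rho$ and paired with (\ref{S2E6}) again gives $O(\|u_0\|_{L^1}^2/\rho^2)$; the $K$-piece is even smaller since $\|\nabla K\|_{L^\infty}\leq C$. Collecting the four bounds yields (\ref{S3E1}) and (\ref{S3E2}) with $\kappa$ depending only on $\Omega$ and $\|u_0\|_{L^1}$. The main subtlety I anticipate is precisely this last step: the representation (\ref{M1E1}) is not itself symmetric in $x,y$, so the symmetrization cancellation genuinely helps only for the $\log|x-y|$ piece, and one must check by hand that the image-charge contribution does not spoil the $\rho^{-2}$ scaling in the regime where $\rho$ is comparable to $d(x_0)$; the pointwise estimate $|\tau(x)-y|\geq c\rho$ is the key that makes this routine.
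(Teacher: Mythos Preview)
Your proof is correct and follows essentially the same route as the paper: test with $\psi_\rho$, split the Green's function into the principal logarithm plus a remainder $G_0$, symmetrize the $\log|x-y|$ contribution \`a la Senba--Suzuki, bound $G_0$ directly via $|\nabla G_0|\le C/\rho$ on $\operatorname{supp}\nabla\psi_\rho$ (using $d\ge c\rho$ there so that $|\tau-\cdot|\ge c\rho$), and use the sign structure (\ref{S3E3}) for the porous-medium correction. The only cosmetic difference is that you symmetrize the full kernel first and then split, whereas the paper splits first and symmetrizes only the principal part; this is immaterial.

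One small correction: your parenthetical ``with at worst a $|\log\rho|$ factor, absorbed into $\kappa$ by bounding $\rho\le\operatorname{diam}\Omega$'' is both unnecessary and, as written, wrong. An upper bound on $\rho$ gives no upper bound on $|\log\rho|$ for small $\rho$. In fact there is no logarithmic loss: the term $\nabla Z(y)\log|\tau(y)-x|$ is supported where $\sigma_0\le d(y)\le 2\sigma_0$, and in that strip $|\tau(y)-x|\ge c\sigma_0$ is bounded below by a constant depending only on $\Omega$, so the logarithm is harmless. Where $Z\equiv 1$ one has $\nabla Z=0$ and only the $C/|\tau(y)-x|\le C/\rho$ piece survives. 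So you may simply drop the parenthetical.
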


\begin{proof}
Suppose that $u$ solves (\ref{S2E1}), (\ref{S2E2}). Then, integrating by parts
and using (\ref{S2E1}) we obtain:
\begin{equation}
\partial_{t}\left(  \int_{\Omega}\psi_{\rho}udy\right)  -\int_{\Omega}%
\Delta\psi_{\rho}\left(  y\right)  udy-\int_{\Omega}f_{\varepsilon}\left(
u\right)  \nabla\psi_{\rho}\left(  y\right)  \nabla v\left(  y,t\right)  dy=0
\label{S3E4}%
\end{equation}

We rewrite the fundamental solution $G\left(  x,y\right)  $ using Lemma
\ref{L1}. It then follows that:
\[
G\left(  y,x\right)  =-\frac{1}{2\pi}\log\left(  \left\vert x-y\right\vert
\right)  +G_{0}\left(  y,x\right)
\]
where, using that $\frac{3e^{\frac{1}{4}}}{2}<2:$%
\begin{equation}
\left\vert \nabla_{y}G_{0}\left(  y,x\right)  \right\vert \leq\frac{C}{\rho
}\ \ ,\ \ \left\vert y-x_{0}\right\vert \leq\frac{3e^{\frac{1}{4}}\rho}%
{2}\ \ . \label{S3E4a}%
\end{equation}

Then the following representation formula for $\nabla v$ follows from
(\ref{S2E2}) and Lemma \ref{L1}:
\[
\nabla_{y}v\left(  y,t\right)  =\frac{1}{2\pi}\int_{\Omega}\frac{\left(
x-y\right)  }{\left\vert x-y\right\vert ^{2}}f_{\varepsilon}\left(  u\left(
x,t\right)  \right)  dx+\int_{\Omega}\nabla_{y}G_{0}\left(  y,x\right)
f_{\varepsilon}\left(  u\left(  x,t\right)  \right)  dx
\]
and plugging this formula into (\ref{S3E4}) we obtain:
\begin{align}
&  \partial_{t}\left(  \int_{B_{2\rho}\left(  x_{0}\right)  }\psi_{\rho
}udy\right)  -\int_{B_{2\rho}\left(  x_{0}\right)  }\Delta\psi_{\rho
}udy-\label{S3E5}\\
&  -\frac{1}{2\pi}\int_{\Omega}\int_{\Omega}f_{\varepsilon}\left(  u\left(
x,t\right)  \right)  f_{\varepsilon}\left(  u\left(  y,t\right)  \right)
\frac{\left(  x-y\right)  }{\left\vert x-y\right\vert ^{2}}\nabla\psi_{\rho
}\left(  y\right)  dxdy-\nonumber\\
&  -\int_{\Omega}\int_{\Omega}f_{\varepsilon}\left(  u\left(  x,t\right)
\right)  f_{\varepsilon}\left(  u\left(  y,t\right)  \right)  \nabla_{y}%
G_{0}\left(  y,x\right)  \nabla\psi_{\rho}\left(  y\right)  dxdy\nonumber\\
&  =0\nonumber
\end{align}

The the third one in (\ref{S3E5}) can be estimated using the symmetrization
argument introduced in \cite{SenbaSuzuki}. Notice that:
\[
\left\vert \frac{\left(  x-y\right)  }{\left\vert x-y\right\vert ^{2}}%
\cdot\left[  \nabla\psi_{\rho}\left(  y\right)  -\nabla\psi_{\rho}\left(
x\right)  \right]  \right\vert \leq\frac{C}{\rho^{2}}\ .
\]

Then:
\begin{align}
&  \int_{\Omega}\int_{\Omega}f_{\varepsilon}\left(  u\left(  x,t\right)
\right)  f_{\varepsilon}\left(  u\left(  y,t\right)  \right)  \frac{\left(
x-y\right)  }{\left\vert x-y\right\vert ^{2}}\nabla\psi_{\rho}\left(
y\right)  dxdy\nonumber\\
&  =\frac{1}{2}\int_{\Omega}\int_{\Omega}f_{\varepsilon}\left(  u\left(
x,t\right)  \right)  f_{\varepsilon}\left(  u\left(  y,t\right)  \right)
\frac{\left(  x-y\right)  }{\left\vert x-y\right\vert ^{2}}\cdot\left[
\nabla\psi_{\rho}\left(  y\right)  -\nabla\psi_{\rho}\left(  x\right)
\right]  dxdy\leq\frac{C}{\rho^{2}}\ . \label{S3E8}%
\end{align}

On the other hand, the linear term due to the laplacian can be estimated as:%
\begin{equation}
\left\vert \int_{B_{2\rho}\left(  x_{0}\right)  }\Delta\psi_{\rho
}udy\right\vert \leq\frac{C}{\rho^{2}} \label{S3E8a}%
\end{equation}
and (\ref{S3E4a}) yields:
\begin{equation}
\left\vert \int_{\Omega}\int_{\Omega}f_{\varepsilon}\left(  u\left(
x,t\right)  \right)  f_{\varepsilon}\left(  u\left(  y,t\right)  \right)
\nabla_{y}G_{0}\left(  y,x\right)  \nabla\psi_{\rho}\left(  y\right)
dxdy\right\vert \leq\frac{C}{\rho^{2}}\ . \label{S3E9}%
\end{equation}

Combining (\ref{S3E5})-(\ref{S3E9}) we obtain (\ref{S3E1}). If $u$ solves
(\ref{S2E4}), (\ref{S2E5}) a similar computation yields:
\begin{align}
&  \partial_{t}\left(  \int_{\Omega}\psi_{\rho}udy\right)  -\int_{\Omega
}\Delta\psi_{\rho}udy-\varepsilon\int_{\Omega}\Delta\psi_{\rho}u^{\frac{7}{6}%
}dy+\label{S3E6}\\
&  +\frac{1}{2\pi}\int_{\Omega}\int_{\Omega}u\left(  x,t\right)  u\left(
y,t\right)  \frac{\left(  x-y\right)  }{\left\vert x-y\right\vert ^{2}}%
\nabla\psi_{\rho}\left(  y\right)  dxdy-\nonumber\\
&  -\int_{\Omega}\int_{\Omega}u\left(  x,t\right)  u\left(  y,t\right)
\nabla_{y}G_{0}\left(  y,x\right)  \nabla\psi_{\rho}\left(  y\right)
dxdy\nonumber\\
&  =0\ .\nonumber
\end{align}

The last two terms on the left-hand side of (\ref{S3E6}) can be estimated as
in the previous case. The main difference is in the nonlinear term in the
laplacian that can be estimated using (cf. (\ref{S2E6})):
\begin{equation}
\int_{\Omega}\Delta\psi_{\rho}u^{\frac{7}{6}}dy\geq-\frac{2}{\rho^{2}}%
\int_{B_{\rho}\left(  x_{0}\right)  }u^{\frac{7}{6}}dy \label{S3E7}%
\end{equation}
whence (\ref{S3E2}) and therefore Proposition \ref{P1} follows.
\end{proof}

\subsection{Boundary estimates.}

We now derive the local mass growth estimate if the point $x_{0}$ is near the
boundary. To this end we need to construct an auxiliary test function that
will play a role analogous to the function $\psi_{\rho}$ in Proposition
\ref{P1}. This will be made in the following lemma:

\begin{lemma}
\label{L2}Let $\sigma_{0},\ \nu\left(  x\right)  ,\ d\left(  x\right)  $ be as
in Lemma \ref{Lgeom}. There exists $\rho_{0}\leq\sigma_{0}$ small enough,
$\Lambda>1,$ $C>0$ depending only on $\Omega$ such that, for any $\rho
\in\left(  0,\rho_{0}\right)  $ and any $x_{0}\in\bar{\Omega}$ with
$\operatorname*{dist}\left(  x_{0},\partial\Omega\right)  \leq2\rho$ there
exists a function $\psi_{\rho}\in C^{1,1}\left(  \bar{\Omega}\right)  $ with
the following properties:
\begin{align*}
\Delta\psi_{\rho}  &  =-\frac{2}{\rho^{2}}\;\;\text{in\ \ }B_{\rho}\left(
x_{0}\right)  \cap\Omega\\
\Delta\psi_{\rho}  &  \geq0\;\;\text{in\ \ }\left[  B_{\Lambda\rho}\left(
x_{0}\right)  \setminus\overline{B_{\rho}\left(  x_{0}\right)  }\right]
\cap\Omega\\
\partial_{\nu}\psi_{\rho}  &  =0\;\text{in\ \ }B_{\Lambda\rho}\left(
x_{0}\right)  \cap\partial\Omega\ \ ,\ \ \psi_{\rho}=0\;\text{in\ }\bar
{\Omega}\setminus B_{\Lambda\rho}\left(  x_{0}\right) \\
\psi_{\rho}  &  \geq\frac{1}{2}\;\text{in\ }B_{\rho}\left(  x_{0}\right)
\cap\bar{\Omega}\ \ \ \ ,\ \ 0\leq\psi_{\rho}\leq1\;\text{in\ \ }%
B_{\Lambda\rho}\left(  x_{0}\right)  \cap\bar{\Omega}%
\end{align*}%
\[
\rho\left\vert \nabla\psi_{\rho}\right\vert +\rho^{2}\left\vert \nabla^{2}%
\psi_{\rho}\right\vert +\frac{\rho^{2}\left\vert \nu\left(  x\right)
\cdot\nabla\psi_{\rho}\left(  x\right)  \right\vert }{d\left(  x\right)  }\leq
C\;\text{in\ }B_{\Lambda\rho}\left(  x_{0}\right)  \cap\Omega
\]

\end{lemma}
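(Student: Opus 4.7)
The plan is to build $\psi_\rho$ as a boundary analogue of the radial interior function $\varphi(|x-x_0|/\rho)$ used in Proposition \ref{P1}, forcing the Neumann condition by the method of images. The mirror point will be the reflected point $\tau(x_0)$ of Lemma \ref{L1} (or just the simple reflection across the flattened boundary in local coordinates), so that to leading order the normal derivative on $\partial\Omega$ cancels; residual errors due to boundary curvature are controlled by choosing $\rho_0$ small relative to $\sigma_0$.

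First I would pass to the boundary-straightening coordinates $(s,\eta)$ near $z_0:=P_\partial(x_0)$, where $s$ is arclength along $\partial\Omega$ and $\eta=d(\cdot)$, which exist for $d\leq 2\sigma_0$ by Lemma \ref{Lgeom}. In these coordinates $\partial\Omega = \{\eta=0\}$, the point $x_0$ sits at $(0,\eta_0)$ with $\eta_0 \leq 2\rho$, and the Laplacian reads $\Delta = \partial_\eta^2 + g^{ss}(s,\eta)\partial_s^2 + b(s,\eta)\partial_\eta$ with $g^{ss}(s,0)=1$, $b(s,0)=-h(s)$ and smooth coefficients, agreeing with the flat Laplacian up to errors of order $h\eta$. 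The Neumann condition on $\partial\Omega\cap B_{\Lambda\rho}$ becomes $\partial_\eta = 0$ at $\eta = 0$.

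Next I would take as ansatz $\tilde\psi_\rho(s,\eta) := \varphi(r(s,\eta)/\rho)$, where $\varphi$ is the function defined in \eqref{S3E2A}, and where $r(s,\eta)$ is an even-in-$\eta$ smooth modification of the Euclidean distance $\sqrt{s^2+(\eta-\eta_0)^2}$ chosen so that $\partial_\eta r|_{\eta=0}=0$; in the flat case one may simply set $r$ to be the mean of the distances to $(0,\eta_0)$ and $(0,-\eta_0)$, or use the superposition $\varphi(\sqrt{s^2+(\eta-\eta_0)^2}/\rho)+\varphi(\sqrt{s^2+(\eta+\eta_0)^2}/\rho)$ (with a factor to keep values in $[0,1]$). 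By construction $\partial_\eta\tilde\psi_\rho|_{\eta=0}=0$, and in the flat Euclidean Laplacian $\tilde\psi_\rho$ inherits the piecewise structure of $\varphi$: it equals $-2/\rho^2$ in the appropriate inner region, vanishes in a log-type annulus, and is nonnegative in the outer matching region, with $\tilde\psi_\rho$ supported in a region contained in $B_{\Lambda\rho}(x_0)$ for an absolute $\Lambda$ (one may take $\Lambda$ slightly larger than $\tfrac{3}{2}e^{1/4}+2$ to accommodate $\eta_0\leq 2\rho$). Transferring back to the original coordinates, the true Laplacian then differs from $-2/\rho^2$ (or from the intended sign in the annulus) by curvature errors of size $O(h\rho^{-1})\leq C\rho_0\rho^{-2}$.

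Finally, to upgrade the approximate identities to the exact ones required by the statement, I would set $\psi_\rho:=\tilde\psi_\rho + w$, where $w$ solves a Neumann boundary-value problem on $B_{\Lambda\rho}(x_0)\cap\Omega$: $\Delta w$ equals minus the pointwise curvature error inside $B_\rho(x_0)\cap\Omega$ (so that $\Delta\psi_\rho=-2/\rho^2$ exactly there), equals a nonnegative function in the annulus chosen large enough to restore the compatibility condition, with $\partial_\nu w = -\partial_\nu\tilde\psi_\rho$ on $\partial\Omega\cap B_{\Lambda\rho}$ and $w=0$ on $\partial B_{\Lambda\rho}\cap\Omega$. Rescaling to the unit disk and applying standard elliptic Schauder theory on the bounded Lipschitz domain $B_{\Lambda\rho}\cap\Omega$, $w$ satisfies $\rho|\nabla w|+\rho^2|\nabla^2 w|\leq C\rho_0$, which is absorbed into the constant $C$. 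The pointwise estimates $\rho|\nabla\psi_\rho|+\rho^2|\nabla^2\psi_\rho|\leq C$ follow from the explicit form of $\tilde\psi_\rho$ plus this corrector, and the sharper bound $\rho^2|\nu\cdot\nabla\psi_\rho(x)|\leq Cd(x)$ follows from $\partial_\nu\psi_\rho=0$ on $\partial\Omega$ together with the $L^\infty$ bound on $\nabla^2\psi_\rho$.

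The main obstacle I anticipate is reconciling the \emph{exact} pointwise conditions (constant Laplacian in $B_\rho\cap\Omega$, exact Neumann on the curved $\partial\Omega$, and the prescribed sign of $\Delta\psi_\rho$ in the annulus) with the non-flat metric, since the image-point reflection only cancels normal derivatives to leading order and the extra curvature term in $\Delta$ perturbs the sign and value of $\Delta\tilde\psi_\rho$. Either one carries along the small Neumann corrector $w$ as above, or one refines the mirror point as in Lemma \ref{L1} by taking $\tau(x_0)=x_0+(2d(x_0)+h(z_0)d(x_0)^2)\nu(z_0)$ to push the normal-derivative error to higher order; both routes reduce the problem to a standard elliptic estimate on a domain of size $\rho$, and the remaining checks of the sign and bound conditions are routine.
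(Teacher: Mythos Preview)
Your reflection ansatz does not deliver the flat-half-plane identity you claim, and this is where the argument breaks. With $\eta_0=d(x_0)\le 2\rho$, the image ball $B_\rho(x_0')$ (where $x_0'$ is the mirror of $x_0$) overlaps $B_\rho(x_0)\cap\{\eta>0\}$ whenever $\eta_0<\rho$; in that overlap the superposition $\varphi(|x-x_0|/\rho)+\varphi(|x-x_0'|/\rho)$ has flat Laplacian $-4/\rho^2$, while in the part of $B_\rho(x_0)$ lying in the outer zone of the reflected copy it picks up an unwanted nonnegative contribution. Inserting a factor $\tfrac12$ does not help: then on $B_\rho(x_0)\setminus B_\rho(x_0')$ the Laplacian drops to $-1/\rho^2$. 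The ``mean of distances'' variant fails for a different reason: $\Delta(r^2)$ is not constant, so $\varphi(r/\rho)$ never has constant Laplacian on any region. In every case the discrepancy on $B_\rho(x_0)\cap\Omega$ is of size $O(\rho^{-2})$, not the curvature error $O(h\rho^{-1})$ you budget for, so your corrector $w$ cannot be small and you lose control of the sign $\Delta\psi_\rho\ge 0$ on the annulus and of the pointwise bounds $0\le\psi_\rho\le 1$.

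The paper sidesteps this by \emph{not} reflecting the profile $\varphi$. After rescaling $X=(x-P_\partial(x_0))/\rho$ it takes the Newtonian potential of the characteristic function of the \emph{union} $B_1(X_0)\cup B_1(X_0')$,
\[
\hat\Psi(X)=-\frac{1}{2\pi}\int_{B_1(X_0)\cup B_1(X_0')}\log|X-Y|\,dY,
\]
so that $\Delta_X\hat\Psi=-\chi_{B_1(X_0)\cup B_1(X_0')}$ equals exactly $-1$ on $B_1(X_0)\cap\{X_2>0\}$ with no double counting on the overlap, and the symmetry of the source gives the Neumann condition on $\{X_2=0\}$ for free. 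The price is that $\hat\Psi$ is only logarithmic at infinity, so the paper matches it at a large radius $|X|=\lambda_0$ to an explicit compactly supported quadratic $\Phi(X)=\frac{m}{4\pi\lambda_0}(\lambda_0+1-|X|)_+^2$, using the expansion $\hat\Psi(X)=-\frac{m}{2\pi}\log|X|+O(|X|^{-2})$; this gluing is what produces the annular region with $\Delta\ge 0$ and fixes $\Lambda$. Only after this exact half-plane construction does the paper perturb to the curved domain, and there the corrections genuinely are $O(\rho/\rho_0)$-small, so all signs and bounds survive. If you want to rescue your route, replace the reflected $\varphi$ by this potential-theoretic image and move your elliptic corrector to the very end.
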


\begin{proof}
The main idea is that for $\rho_{0}$ sufficiently small the problem can be
treated as a perturbation of the problem in the half-plane. We introduce a
rescaled system of coordinates:%
\[
X=\frac{x-P_{\partial}\left(  x_{0}\right)  }{\rho}\ \ ,\ \ X_{0}=\frac
{x_{0}-P_{\partial}\left(  x_{0}\right)  }{\rho}%
\]
where the operator $P_{\partial}\left(  x_{0}\right)  $ is defined as in
(\ref{M2E0}). Notice that the assumption $\operatorname*{dist}\left(
x_{0},\partial\Omega\right)  \leq2\rho$ implies $\left\vert X_{0}\right\vert
\leq2.$ Rotating the coordinate system we can assume that the normal vector
$\nu\left(  P_{\partial}\left(  x_{0}\right)  \right)  $ is $\left(
0,-1\right)  .$ We construct $\tilde{\Psi}\left(  X\right)  $ in the
half-plane solving the problem:%
\begin{align}
\Delta_{X}\tilde{\Psi}\left(  X\right)   &  =-1\ \ ,\ \ X\in B_{1}\left(
X_{0}\right)  \cap\left\{  X=\left(  X_{1},X_{2}\right)  :X_{2}>0\right\}
\label{U2E1}\\
\partial_{\nu_{0}}\tilde{\Psi}\left(  X\right)   &  =0\ \ ,\ \ X\in
\partial\left[  B_{1}\left(  X_{0}\right)  \cap\left\{  X=\left(  X_{1}%
,X_{2}\right)  :X_{2}>0\right\}  \right]  \label{U2E2}%
\end{align}
where $\nu_{0}=\left(  0,-1\right)  $. This problem can be solved using the
reflection method. We can obtain a family of solutions for it in the form:%
\begin{equation}
\tilde{\Psi}\left(  X\right)  =A+\hat{\Psi}\left(  X\right)  \ \ ,\ \ \hat
{\Psi}\left(  X\right)  =-\frac{1}{2\pi}\int_{B_{1}\left(  X_{0}\right)  \cup
B_{1}\left(  X_{0}+2\nu_{0}\right)  }\log\left(  \left\vert X-Y\right\vert
\right)  dY \label{U1E0}%
\end{equation}
where $A$ is an arbitrary constant to be precised. Notice that $\hat{\Psi
}\left(  X\right)  $ is bounded in $\left\vert X\right\vert \leq1$ and it
satisfies:%
\begin{align}
\left\vert \hat{\Psi}\left(  X\right)  +\frac{m}{2\pi}\log\left(  \left\vert
X\right\vert \right)  \right\vert  &  \leq\frac{C}{\left\vert X\right\vert
^{2}}\ \ \text{for\ \ }\left\vert X\right\vert =\lambda_{0}\label{U1E1}\\
\left\vert \nabla_{X}\hat{\Psi}\left(  X\right)  +\frac{m}{2\pi}\frac
{X}{\left\vert X\right\vert ^{2}}\right\vert  &  \leq\frac{C}{\left\vert
X\right\vert ^{3}}\ \ \text{for\ \ }\left\vert X\right\vert =\lambda_{0}
\label{U1E2}%
\end{align}
with $\lambda_{0}$ sufficiently large and $C$ independent on $\lambda_{0}$ and
where $m=\left\vert B_{1}\left(  X_{0}\right)  \cup B_{1}\left(  X_{0}%
+2\nu_{0}\right)  \right\vert $. Notice that $m$ is bounded above and below by
constants independent on $X_{0}.$ In the derivation of (\ref{U1E1}),
(\ref{U1E2}) we have used the fact that $\int_{B_{1}\left(  X_{0}\right)  \cup
B_{1}\left(  X_{0}+2\nu_{0}\right)  }YdY=0.$ We then define:%
\[
\Phi\left(  X\right)  =\frac{m}{4\pi\lambda_{0}}\left(  \lambda_{0}%
+1-\left\vert X\right\vert \right)  _{+}^{2}\ \ \ \ \ \text{for\ \ }\left\vert
X\right\vert \geq\lambda_{0}%
\]
and choose $A$ in (\ref{U1E0}) as:%
\[
A=\frac{m}{4\pi\lambda_{0}}+\frac{m}{2\pi}\log\left(  \lambda_{0}\right)
\]

It then follows from (\ref{U1E0})-(\ref{U1E2}) that:%
\begin{align*}
\left\vert \tilde{\Psi}\left(  X\right)  -\Phi\left(  X\right)  \right\vert
&  \leq\frac{C}{\lambda_{0}^{2}}\ \ \ ,\ \ \left\vert \nabla_{X}\tilde{\Psi
}\left(  X\right)  -\nabla_{X}\Phi\left(  X\right)  \right\vert \leq\frac
{C}{\lambda_{0}^{3}}\ \ \text{for\ \ }\left\vert X\right\vert =\lambda_{0}\\
\Delta_{X}\Phi\left(  X\right)   &  \geq\frac{m}{4\pi\lambda_{0}%
}\ \ \text{for\ \ }\lambda_{0}\leq\left\vert X\right\vert \leq\lambda_{0}+1\\
\tilde{\Psi}\left(  X\right)   &  \geq1\ \ \text{for\ \ }\left\vert
X\right\vert \leq1
\end{align*}
if $\lambda_{0}$ is sufficiently large. Let us consider an function $W\in
C^{2}\left(  \mathbb{R}^{2}\setminus B_{\lambda_{0}}\left(  0\right)  \right)
$ and satisfying:%
\begin{align*}
W\left(  X\right)   &  =\tilde{\Psi}\left(  X\right)  -\Phi\left(  X\right)
\ \ ,\ \ \nabla_{X}W\left(  X\right)  =\nabla_{X}\tilde{\Psi}\left(  X\right)
-\nabla_{X}\Phi\left(  X\right)  \ \ \ \text{for\ \ }\left\vert X\right\vert
=\lambda_{0},\ \\
\ \ W\left(  X\right)   &  =0\ \ \text{for }\left\vert X\right\vert
\geq\lambda_{0}+1\ \ ,\ \ \left\vert \Delta_{X}W\left(  X\right)  \right\vert
\leq\frac{C}{\lambda_{0}^{2}}%
\end{align*}

Then, the function $\Psi\in C^{1,1}\left(  \left\{  X_{2}\geq0\right\}
\right)  $ defined as:%
\begin{align*}
\Psi\left(  X\right)   &  =\tilde{\Psi}\left(  X\right)  \ \ \text{for
\ }\left\vert X\right\vert <\lambda_{0}\\
\Psi\left(  X\right)   &  =\Phi\left(  X\right)  +W\left(  X\right)
\ \ \text{for \ }\left\vert X\right\vert \geq\lambda_{0}%
\end{align*}
satisfies (\ref{U2E1}), (\ref{U2E2}) \ as well as:%
\begin{align}
\Delta_{X}\Psi\left(  X\right)   &  \geq\frac{m}{8\pi\lambda_{0}%
}\ \ \text{for\ }\lambda_{0}\leq\left\vert X\right\vert <\lambda
_{0}+1,\ \text{\ }\left\vert X\right\vert \neq\lambda_{0}\label{U1E3}\\
\Psi\left(  X\right)   &  =0\ \ \text{for }\left\vert X\right\vert \geq
\lambda_{0}+1\nonumber
\end{align}
if $\lambda_{0}$ is sufficiently large.

The function $\Psi$ would provide a solution of the desired problem for planar
$\partial\Omega.$ In order to take into account curvature effects we study the
family of problems:%
\begin{align*}
\Delta_{X}\bar{\Psi}  &  =\Delta_{X}\Psi\left(  X\right)  \ \ \text{in\ \ }%
\left(  \frac{\Omega-P_{\partial}\left(  x_{0}\right)  }{\rho}\right)  \cap
B_{\lambda_{0}+2}\left(  0\right) \\
\partial_{\nu}\bar{\Psi}  &  =0\ \ \text{in\ \ }\left[  \partial\left(
\frac{\Omega-P_{\partial}\left(  x_{0}\right)  }{\rho}\right)  \right]  \cap
B_{\lambda_{0}+2}\left(  0\right) \\
\bar{\Psi}  &  =0\ \ \text{in\ \ }\left(  \frac{\Omega-P_{\partial}\left(
x_{0}\right)  }{\rho}\right)  \cap\partial B_{\lambda_{0}+2}\left(  0\right)
\end{align*}

Classical continuous dependence results on the domain show that $\left\vert
\bar{\Psi}-\Psi\right\vert $ can be made arbitrarily small for $\rho\leq
\rho_{0}$ small. We now construct $\tilde{W}$ satisfying:%
\[
\tilde{W}=0\ \ \left(  \frac{\Omega-P_{\partial}\left(  x_{0}\right)  }{\rho
}\right)  \cap\partial B_{\lambda_{0}+2}\left(  0\right)  \ \ ,\ \ \partial
_{\nu}\tilde{W}=0\ \ \text{in\ \ }\left[  \partial\left(  \frac{\Omega
-P_{\partial}\left(  x_{0}\right)  }{\rho}\right)  \right]  \cap
B_{\lambda_{0}+2}\left(  0\right)
\]
and%
\[
\partial_{\nu}\tilde{W}=\partial_{\nu}\bar{\Psi}\ \ \text{in\ \ }\left(
\frac{\Omega-P_{\partial}\left(  x_{0}\right)  }{\rho}\right)  \cap\partial
B_{\lambda_{0}+2}\left(  0\right)  \ \ ,\ \ \tilde{W}=0\ \ \text{in\ \ }%
\left(  \frac{\Omega-P_{\partial}\left(  x_{0}\right)  }{\rho}\right)  \cap
B_{\lambda_{0}}\left(  0\right)
\]
as well as $\left\vert \Delta\tilde{W}\right\vert $ small in $\left(
\frac{\Omega-P_{\partial}\left(  x_{0}\right)  }{\rho}\right)  \cap
B_{\lambda_{0}+2}\left(  0\right)  ,$ something that it is possible for
$\rho\leq\rho_{0}$ small. Then the function $\Psi_{c}=\bar{\Psi}-\tilde{W}$
satisfies:%
\begin{align*}
\Delta_{X}\Psi_{c}  &  =-1\ \ \text{in\ \ }\left(  \frac{\Omega-P_{\partial
}\left(  x_{0}\right)  }{\rho}\right)  \cap B_{1}\left(  0\right)
\ \ ,\ \ \Delta_{X}\Psi_{c}\geq0\ \ \text{in\ \ }\left(  \frac{\Omega
-P_{\partial}\left(  x_{0}\right)  }{\rho}\right)  \setminus B_{1}\left(
0\right) \\
\partial_{\nu}\Psi_{c}  &  =0\ \ \text{in\ \ }\left[  \partial\left(
\frac{\Omega-P_{\partial}\left(  x_{0}\right)  }{\rho}\right)  \right] \\
\Psi_{c}  &  \in C^{1,1}\left(  \frac{\Omega-P_{\partial}\left(  x_{0}\right)
}{\rho}\right) \\
\Psi_{c}\left(  X\right)   &  =0\ \ \text{for \ \ }\left\vert X\right\vert
\geq\lambda_{0}+2
\end{align*}

The function $\psi_{\rho}\left(  x\right)  =\Psi_{c}\left(  \frac
{x-P_{\partial}\left(  x_{0}\right)  }{\rho}\right)  $ then satisfies all the
properties required in Lemma \ref{L2} for $\rho\leq\rho_{0}.$
\end{proof}

\begin{proposition}
\label{mBound} Suppose that $u$ solves one of the problems (\ref{S2E1}),
(\ref{S2E2}) or (\ref{S2E4}), (\ref{S2E5}). Let us fix $0<\rho<\rho_{0}$ with
$\rho_{0}$ as in Lemma \ref{L2} and let us assume that $\operatorname*{dist}%
\left(  x_{0},\partial\Omega\right)  \leq4\rho.$ Then:
\begin{equation}
\left\vert \partial_{t}\left(  \int_{\Omega}\psi_{\rho}u\right)  \right\vert
\leq\frac{\kappa}{\rho^{2}}\ \ ,\ \ 0<t<\infty\label{B1E1}%
\end{equation}
if $u$ solves (\ref{S2E1}), (\ref{S2E2}),\thinspace and:
\begin{equation}
\partial_{t}\left(  \int_{\Omega}\psi_{\rho}u\right)  \geq-\frac{\kappa}%
{\rho^{2}}-\frac{2\varepsilon}{\rho^{2}}\int_{B_{\rho}\left(  x_{0}\right)
}u^{\frac{7}{6}}\ \ ,\ \ 0<t<\infty\label{B1E2}%
\end{equation}
if $u$ solves (\ref{S2E4}), (\ref{S2E5}). The constant $\kappa$ depends only
on $\left\Vert u_{0}\right\Vert _{L^{1}\left(  \Omega\right)  }$, but it is
independent on $\varepsilon$ and $\rho.$
\end{proposition}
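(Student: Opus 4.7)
The argument mirrors Proposition \ref{P1}, using the boundary-adapted cutoff $\psi_\rho$ of Lemma \ref{L2} in place of the radial one. Its Neumann condition $\partial_\nu\psi_\rho=0$ on $B_{\Lambda\rho}(x_0)\cap\partial\Omega$ and vanishing outside $B_{\Lambda\rho}(x_0)$ annihilate every boundary contribution from integration by parts in (\ref{S2E1}) or (\ref{S2E4}). The weak identity reduces to
\[
\partial_t\!\int_\Omega\psi_\rho u\,dy=\int_\Omega\Delta\psi_\rho\,u\,dy+\int_\Omega f_\varepsilon(u)\nabla\psi_\rho\cdot\nabla v\,dy,
\]
with an extra $\varepsilon\int_\Omega\Delta\psi_\rho\,u^{7/6}dy$ term in the (\ref{S2E4}) case. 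The Laplacian contribution is $O(M/\rho^2)$ via $|\Delta\psi_\rho|\le C/\rho^2$, while the nonlinear diffusion piece provides the one-sided bound $-(2\varepsilon/\rho^2)\int_{B_\rho(x_0)}u^{7/6}$ by the same sign argument as (\ref{S3E7}), since $\Delta\psi_\rho=-2/\rho^2$ in $B_\rho(x_0)\cap\Omega$ and $\Delta\psi_\rho\ge 0$ outside.

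For the convective term we substitute $\nabla_yv(y)=\int_\Omega\nabla_yG(y,x)f_\varepsilon(u(x))\,dx$ (respectively with $u$ in place of $f_\varepsilon(u)$) and expand $\nabla_yG$ using Lemma \ref{LRep} together with the symmetry $G(y,x)=G(x,y)$ of Lemma \ref{Lsymm}. This produces four contributions: (i)\,the free-space singular kernel $-(y-x)/(2\pi|y-x|^2)$; (ii)\,the image-charge singular kernel $-\frac{Z}{2\pi}[P_\partial(y)-P_\partial(x)-d(y)\nu(y)-d(x)\nu(x)]/D$; (iii)\,the curvature correction $-\frac{Zh}{2\pi}[\mathcal G_t+\mathit g_n\nu]$; and (iv)\,the continuous remainder $W$. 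Piece (i) is antisymmetric in $(x,y)$ and is disposed of by the exact symmetrization of (\ref{S3E8}), producing $O(M^2/\rho^2)$ through $|\nabla\psi_\rho(y)-\nabla\psi_\rho(x)|\le C|x-y|/\rho^2$.

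Piece (ii) is the crux of the argument. Symmetrizing in $x\leftrightarrow y$ splits its double integral into a tangential piece proportional to $(P_\partial(x)-P_\partial(y))\cdot(\nabla\psi_\rho(y)-\nabla\psi_\rho(x))/D$, controlled by the Lipschitz estimate together with $|P_\partial(x)-P_\partial(y)|^2\le D$ and $|x-y|^2\le CD$; and a normal piece proportional to $(d(x)\nu(x)+d(y)\nu(y))\cdot(\nabla\psi_\rho(x)+\nabla\psi_\rho(y))/D$. For the latter, the sharpened Neumann bound $|\nu(x)\cdot\nabla\psi_\rho(x)|\le Cd(x)/\rho^2$ of Lemma \ref{L2} is decisive: it yields $d(x)|\nu(x)\cdot\nabla\psi_\rho(x)|\le Cd(x)^2/\rho^2\le CD/\rho^2$, while the cross terms $d(x)\nu(x)\cdot\nabla\psi_\rho(y)$ are treated by writing $\nabla\psi_\rho(y)=\nabla\psi_\rho(x)+[\nabla\psi_\rho(y)-\nabla\psi_\rho(x)]$ and using $d(x)\le\sqrt D$. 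Asymmetry in the cutoff $Z$ introduces a remainder whose integrand is bounded (by $C|Z(x)-Z(y)||A|/D$ with $|A|\le C\sqrt D$), contributing $O(M^2/\rho)$ after integration.

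Pieces (iii) and (iv) are zero-order. Since $\mathcal G_t,\mathit g_n$ are degree-zero homogeneous in $(Y,\lambda_1,\lambda_2)$, whose parameters satisfy $|Y|^2+(\lambda_1+\lambda_2)^2=1$ with $\lambda_1,\lambda_2\in[0,1]$, they are uniformly bounded; $W$ is continuous on $\bar\Omega\times\bar\Omega$. Multiplying by $|\nabla\psi_\rho|\le C/\rho$ and integrating over $\operatorname{supp}\nabla\psi_\rho$ (of area $O(\rho^2)$) bounds their contribution by $CM^2/\rho\le CM^2\rho_0/\rho^2$, which is absorbed into $\kappa/\rho^2$ since $\rho\le\rho_0$. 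Summing all four pieces together with the Laplacian contribution yields (\ref{B1E1}) and (\ref{B1E2}); the overlap range $2\rho<\operatorname{dist}(x_0,\partial\Omega)\le 4\rho$ not covered by Lemma \ref{L2} follows directly from Proposition \ref{P1}. The main obstacle is therefore the symmetrization in piece (ii), whose two sub-estimates require exactly the tangential Lipschitz bound and the normal Neumann-type bound built into the test function of Lemma \ref{L2}.
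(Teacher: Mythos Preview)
Your proof is correct. The essential ingredients---symmetrization of the free kernel, the Lipschitz bound $\rho^2|\nabla^2\psi_\rho|\le C$, and the sharpened Neumann estimate $|\nu\cdot\nabla\psi_\rho|\le Cd/\rho^2$ from Lemma~\ref{L2}---coincide with the paper's, but the organization is different. You decompose $\nabla G$ via Lemma~\ref{LRep} into the free kernel, the geometric image piece $[P_\partial(x)-P_\partial(y)-d(x)\nu(x)-d(y)\nu(y)]/D$, the bounded curvature correction $\mathcal G_t+g_n\nu$, and the continuous remainder $W$, and then handle the tangential and normal components of piece~(ii) separately. The paper instead uses the coarser decomposition of Lemma~\ref{L1} (the reflected point $\tau(y)$), splits off the far region $|x-\tau(y)|\ge\rho$ trivially, and on the near region applies the algebraic identity
\[
(x-\tau(y))\cdot\nabla\psi_\rho(x)+(y-\tau(x))\cdot\nabla\psi_\rho(y)
=(x-\tau(x))\cdot\nabla\psi_\rho(x)+(y-\tau(y))\cdot\nabla\psi_\rho(y)+(\tau(x)-\tau(y))\cdot\bigl[\nabla\psi_\rho(x)-\nabla\psi_\rho(y)\bigr],
\]
where $x-\tau(x)=-f(x)\nu(x)$ is purely normal (so the Neumann bound applies directly) and $|\tau(x)-\tau(y)|\le C|x-y|$ feeds the Lipschitz estimate. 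Your route via Lemma~\ref{LRep} is more systematic and avoids the near/far cut, at the price of carrying the extra zeroth-order pieces~(iii) and~(iv); the paper's route is shorter but leans on the somewhat ad~hoc regrouping above. One cosmetic slip: in your treatment of pieces~(iii) and~(iv), the parenthetical ``of area $O(\rho^2)$'' is irrelevant---the bound $CM^2/\rho$ comes from applying the mass estimate $\int f_\varepsilon(u)\le M$ (resp.\ $\int u\le M$) in each of the two variables, not from the measure of $\operatorname{supp}\nabla\psi_\rho$.
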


\begin{proof}
Arguing as in the derivation of (\ref{S3E5}), (\ref{S3E6}) and using Lemmas
\ref{Lsymm} and \ref{L1}\ we obtain:
\begin{align}
&  \partial_{t}\left(  \int_{\Omega}\psi_{\rho}udx\right)  -\int_{\Omega
}\Delta\psi_{\rho}udx+\label{S4E1}\\
&  +\frac{1}{2\pi}\int_{\Omega}\int_{\Omega}f_{\varepsilon}\left(  u\left(
x,t\right)  \right)  f_{\varepsilon}\left(  u\left(  y,t\right)  \right)
\frac{\left(  x-y\right)  }{\left\vert x-y\right\vert ^{2}}\nabla\psi_{\rho
}\left(  x\right)  dxdy-\nonumber\\
&  +\frac{1}{2\pi}\int_{\Omega}\int_{\Omega}f_{\varepsilon}\left(  u\left(
x,t\right)  \right)  f_{\varepsilon}\left(  u\left(  y,t\right)  \right)
\frac{\left(  x-\tau\left(  y\right)  \right)  }{\left\vert x-\tau\left(
y\right)  \right\vert ^{2}}\nabla\psi_{\rho}\left(  x\right)  dxdy-\nonumber\\
&  -\int_{\Omega}\int_{\Omega}f_{\varepsilon}\left(  u\left(  x,t\right)
\right)  f_{\varepsilon}\left(  u\left(  y,t\right)  \right)  \nabla
_{x}K\left(  y,x\right)  \nabla\psi_{\rho}\left(  x\right)  dxdy\nonumber\\
&  =0\nonumber
\end{align}
where $\psi_{\rho}$ is now chosen as in Lemma \ref{L2}. Using this lemma we
can estimate all the terms in (\ref{S4E1}) as in the proof of Proposition
\ref{P1} except the fourth term in (\ref{S4E1}). We estimate first the
contribution to this term of the region where $\left\vert x-\tau\left(
y\right)  \right\vert \geq\rho$ using Lemma \ref{L2} as well as the mass
conservation property (\ref{S2E6}):
\begin{equation}
\left\vert \int_{\Omega\times\Omega\cap\left\{  \left\vert x-\tau\left(
y\right)  \right\vert \geq\rho\right\}  }f_{\varepsilon}\left(  u\left(
x,t\right)  \right)  f_{\varepsilon}\left(  u\left(  y,t\right)  \right)
\frac{\left(  x-\tau\left(  y\right)  \right)  }{\left\vert x-\tau\left(
y\right)  \right\vert ^{2}}\nabla\psi_{\rho}\left(  x\right)  dxdy\right\vert
\leq\frac{C}{\rho^{2}}\label{S4E4}%
\end{equation}

In order to estimate the contribution of the region where $\left\vert
x-\tau\left(  y\right)  \right\vert \leq\rho$ we use the fact that for
$\rho_{0}$ sufficiently small
\begin{equation}
\frac{1}{4}\left[  \left\vert x-\tau\left(  x\right)  \right\vert +\left\vert
y-\tau\left(  y\right)  \right\vert \right]  \leq d\left(  x\right)  +d\left(
y\right)  \leq3\left\vert x-\tau\left(  y\right)  \right\vert . \label{S4E4a}%
\end{equation}

Symmetrizing (\ref{S4E4}) we obtain:
\begin{align}
&  f_{\varepsilon}\left(  u\left(  x,t\right)  \right)  f_{\varepsilon}\left(
u\left(  y,t\right)  \right)  \frac{\left(  x-\tau\left(  y\right)  \right)
}{\left\vert x-\tau\left(  y\right)  \right\vert ^{2}}\nabla\psi_{\rho}\left(
x\right) \label{S4E3}\\
&  =\frac{f_{\varepsilon}\left(  u\left(  x,t\right)  \right)  f_{\varepsilon
}\left(  u\left(  y,t\right)  \right)  }{2\left\vert x-\tau\left(  y\right)
\right\vert ^{2}}\left[  \left(  x-\tau\left(  y\right)  \right)  \nabla
\psi_{\rho}\left(  x\right)  +\left(  y-\tau\left(  x\right)  \right)
\nabla\psi_{\rho}\left(  y\right)  \right] \nonumber
\end{align}

Notice that:
\begin{align}
&  \left[  \left(  x-\tau\left(  y\right)  \right)  \nabla\psi_{\rho}\left(
x\right)  +\left(  y-\tau\left(  x\right)  \right)  \nabla\psi_{\rho}\left(
y\right)  \right] \label{S4E3a}\\
&  =\left(  x-\tau\left(  x\right)  \right)  \nabla\psi_{\rho}\left(
x\right)  +\left(  y-\tau\left(  y\right)  \right)  \nabla\psi_{\rho}\left(
y\right)  +\left(  \tau\left(  x\right)  -\tau\left(  y\right)  \right)
\left[  \nabla\psi_{\rho}\left(  x\right)  -\nabla\psi_{\rho}\left(  y\right)
\right] \nonumber
\end{align}

Lemma \ref{L2} as well as the fact that $\left\vert \tau\left(  x\right)
-\tau\left(  y\right)  \right\vert \leq2\left\vert x-y\right\vert
\leq3\left\vert x-\tau\left(  y\right)  \right\vert $ yields:
\begin{equation}
\left\vert \left(  \tau\left(  x\right)  -\tau\left(  y\right)  \right)
\left[  \nabla\psi_{\rho}\left(  x\right)  -\nabla\psi_{\rho}\left(  y\right)
\right]  \right\vert \leq\frac{C}{\rho^{2}}\left\vert x-\tau\left(  y\right)
\right\vert ^{2} \label{S4E3b}%
\end{equation}

On the other hand, using Lemma \ref{L2} we obtain:
\begin{equation}
\left\vert \left(  x-\tau\left(  x\right)  \right)  \nabla\psi_{\rho}\left(
x\right)  \right\vert +\left\vert \left(  y-\tau\left(  y\right)  \right)
\nabla\psi_{\rho}\left(  y\right)  \right\vert \leq\frac{C}{\rho^{2}}\left(
\left\vert x-\tau\left(  x\right)  \right\vert d\left(  x\right)  +\left\vert
y-\tau\left(  y\right)  \right\vert d\left(  y\right)  \right)  \label{S4E3c}%
\end{equation}

Combining (\ref{S4E4}) with (\ref{S4E3})-(\ref{S4E3c}) we obtain:
\[
\left\vert \int_{\Omega\times\Omega}f_{\varepsilon}\left(  u\left(
x,t\right)  \right)  f_{\varepsilon}\left(  u\left(  y,t\right)  \right)
\frac{\left(  x-\tau\left(  y\right)  \right)  }{\left\vert x-\tau\left(
y\right)  \right\vert ^{2}}\nabla\psi_{\rho}\left(  x\right)  dxdy\right\vert
\leq\frac{C}{\rho^{2}}%
\]

This concludes the proof of (\ref{B1E1}). The proof of (\ref{B1E2}) is similar.
\end{proof}

\section{An entropy estimate.}

Entropy estimates for the study of Keller-Segel models were introduced in
\cite{Gajewsky} and they have been extensively used for the analysis of
chemotaxis models. We will use the following estimate for the solutions of the
second regularization considered above (\ref{S2E4}), (\ref{S2E5}).

\begin{lemma}
Let us assume that $\left(  u,v\right)  $ solves (\ref{S2E4}), (\ref{S2E5})
with bounded initial data $u\left(  x,0\right)  =u_{0}\left(  x\right)  $ and
$\varepsilon>0.$ Then, for any $\alpha>0$ there exists $C$ depending only on
$\alpha,u_{0},\Omega$ such that:
\begin{equation}
\varepsilon^{1+\alpha}\int_{\Omega}u^{\frac{7}{6}}dx\leq C\ \ ,\ \ 0<t<\infty
\label{S7E1}%
\end{equation}

\end{lemma}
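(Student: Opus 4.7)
The plan is to derive and close a differential inequality for $E(t) := \int_\Omega u^{7/6}(\cdot,t)\,dx$ directly. Multiplying (\ref{S2E4}) by $u^{1/6}$, integrating over $\Omega$, and using the Neumann boundary conditions together with $-\Delta v = u - \bar u$, one obtains an entropy--dissipation equality of the form
\begin{equation*}
\tfrac{6}{7}\frac{dE}{dt} + c_1 \int_\Omega |\nabla u^{7/12}|^2\,dx + c_2\,\varepsilon \int_\Omega |\nabla u^{2/3}|^2\,dx \;=\; \tfrac{1}{7}\int_\Omega u^{13/6}\,dx - \tfrac{\bar u}{7}\,E,
\end{equation*}
with explicit positive constants $c_1,c_2$. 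The multiplier $u^{1/6}$ is chosen precisely so that both the $\varepsilon$--dissipation and the supercritical right-hand side are expressible through Sobolev norms of the single quantity $w:=u^{2/3}$.

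The main step is to control $\int u^{13/6}$ by the $\varepsilon$--dissipation. Writing $w=u^{2/3}$, one has $E = \|w\|_{L^{7/4}}^{7/4}$, $M := \|u_0\|_{L^1} = \|w\|_{L^{3/2}}^{3/2}$ thanks to (\ref{S2E6}), and $\int u^{13/6} = \|w\|_{L^{13/4}}^{13/4}$. The 2D Gagliardo--Nirenberg interpolation between $L^{7/4}(\Omega)$ and $\dot H^1(\Omega)$ yields
\begin{equation*}
\int_\Omega u^{13/6}\,dx \;\leq\; C\,\|\nabla u^{2/3}\|_{L^2}^{3/2}\,E \;+\; C\,E^{13/7}.
\end{equation*}
The gradient exponent $3/2 < 2$ is the decisive point; this is the arithmetic reason for the specific choice of exponent $7/6$ in (\ref{S2E4}). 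Young's inequality then gives
\begin{equation*}
C\,\|\nabla u^{2/3}\|_{L^2}^{3/2}\,E \;\leq\; \tfrac{c_2}{2}\,\varepsilon\,\|\nabla u^{2/3}\|_{L^2}^2 \;+\; \frac{C}{\varepsilon^{3}}\,E^{4},
\end{equation*}
so that half of the $\varepsilon$--dissipation on the left absorbs the leading term on the right.

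To close the inequality I will apply GNS once more, this time interpolating between the conserved mass $L^{3/2}$--norm of $w$ and $\dot H^1$; this yields $E \leq C\,M\,\|\nabla u^{2/3}\|_{L^2}^{1/4} + C\,M^{7/6}$ and hence a lower bound
\begin{equation*}
\int_\Omega |\nabla u^{2/3}|^2\,dx \;\geq\; c\,M^{-8}\,E^{8} - C_M .
\end{equation*}
Substituting back produces the autonomous scalar differential inequality
\begin{equation*}
\frac{dE}{dt} + c\,\varepsilon\,M^{-8}\,E^{8} \;\leq\; \frac{C}{\varepsilon^{3}}\,E^{4} + C\,E^{13/7} + C_M .
\end{equation*}
An elementary comparison argument then identifies the unique relevant steady state at $E\sim \varepsilon^{-1}$: once $E \gtrsim \varepsilon^{-1}$, the $E^{8}$ dissipation dominates every right-hand side term, so $\dot E < 0$, and therefore $E(t) \leq \max(E(0), C\varepsilon^{-1})$ for all $t>0$. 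Since $u_0\in L^\infty(\Omega)$, $E(0)$ is finite independently of $\varepsilon$, and this gives the bound $\varepsilon\,E(t) \leq C$ uniformly in $t$, which a fortiori implies (\ref{S7E1}) for every $\alpha > 0$.

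The main technical obstacle is simply the bookkeeping of the GNS exponents and the verification that the additive lower-order terms coming from the boundedness of $\Omega$ (absorbed into the single constant $C_M$) remain subdominant to the $E^{8}$ dissipation in the large-$E$ regime of the ODE; they do, because $8 > 13/7$ and $8 > 4$. A minor point is justifying the integration by parts against the non-smooth multiplier $u^{1/6}$: for each $\varepsilon > 0$ the problem (\ref{S2E4})--(\ref{S2E5}) admits a classical global-in-time solution, so the identity can be derived rigorously with the regularized multiplier $(u+\delta)^{1/6}$ and the limit $\delta \to 0^+$ then taken.
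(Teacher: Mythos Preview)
Your argument is correct and in fact yields the sharper bound $\varepsilon\int_\Omega u^{7/6}\,dx\leq C$ (no loss of $\varepsilon^{\alpha}$), which a fortiori gives the stated estimate. The exponents in both applications of Gagliardo--Nirenberg check out, the Young absorption step is correctly balanced, and the ODE comparison is sound since the $\varepsilon E^{8}$ dissipation dominates each right-hand side term precisely once $E\gtrsim\varepsilon^{-1}$.

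However, your route is genuinely different from the paper's. The paper does not work with the $L^{7/6}$ energy directly; instead it exploits the \emph{free energy} (entropy) functional
\[
\mathcal{F}[u,v]=\int_\Omega\Big[u(\log u-1)+6\varepsilon u^{7/6}-\tfrac{1}{2}|\nabla v|^{2}\Big]\,dx,
\]
which is monotonically nonincreasing along the flow. From $\mathcal{F}(t)\leq\mathcal{F}(0)$ and the lower bound $\int u\log u\geq -C$ one immediately gets $\varepsilon\int u^{7/6}\leq C+\tfrac12\int|\nabla v|^{2}$. The paper then bounds $\int|\nabla v|^{2}$ by $C(\int u^{p})^{2/p}$ via elliptic regularity for the Poisson equation, interpolates $\int u^{p}$ between $\int u$ and $\int u^{7/6}$, and closes with Young's inequality; the unavoidable small exponent mismatch in this last step is what produces the $\varepsilon^{\alpha}$ loss.

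So: the paper's proof leans on the Lyapunov structure of the problem and a short elliptic/H\"older argument, at the price of a slightly weaker bound. Your proof bypasses the entropy entirely, replaces elliptic regularity for $v$ by a pointwise substitution of $-\Delta v$, and closes through GNS and an explicit differential inequality --- more computation but a cleaner constant. Both are valid; yours is self-contained and sharper.
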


\begin{proof}
We use the following entropy formula that can be easily checked integrating by
parts for the solutions of (\ref{S2E4}), (\ref{S2E5}):
\[
\partial_{t}\left(  \int_{\Omega}\left[  u\left(  \log\left(  u\right)
-1\right)  +6\varepsilon u^{\frac{7}{6}}-\frac{\left\vert \nabla v\right\vert
^{2}}{2}\right]  dx\right)  =-\int u\left[  \nabla\left(  \log\left(
u\right)  +7\varepsilon u^{\frac{1}{6}}\right)  -\nabla v\right]  ^{2}dx\leq0
\]

Then, since $\int_{\Omega}u\log u\geq C:$
\[
\varepsilon\int_{\Omega}u^{\frac{7}{6}}dx\leq C+\frac{1}{2}\int_{\Omega
}\left\vert \nabla v\right\vert ^{2}dx
\]
where $C$ depends only on $u_{0}$ and $\Omega.$

Classical regularity theory for the Poisson equation yields:
\[
\frac{1}{2}\int_{\Omega}\left\vert \nabla v\right\vert ^{2}dx\leq C\left(
\int_{\Omega}u^{p}dx\right)  ^{\frac{2}{p}}%
\]
for any $p>1,$ with $C$ depending only on $p$ and $\Omega.$ Then:
\[
\frac{1}{2}\int_{\Omega}\left\vert \nabla v\right\vert ^{2}dx\leq C\left(
\int_{\Omega}u^{p}dx\right)  ^{\frac{2}{p}}\leq C\left(  \int_{\Omega}%
u^{\frac{pq-1}{q-1}}dx\right)  ^{\frac{2\left(  q-1\right)  }{pq}}\left(
\int_{\Omega}udx\right)  ^{\frac{2}{pq}}%
\]
for any $q>1.$ Choosing $1<p<\frac{7}{6}$ and $q=\frac{1}{7-6p}$ we obtain:
\[
\varepsilon\int_{\Omega}u^{\frac{7}{6}}dx\leq C\left(  \int_{\Omega}%
u^{\frac{7}{6}}dx\right)  ^{\frac{12\left(  p-1\right)  }{p}}\left(
\int_{\Omega}u_{0}dx\right)  ^{\frac{2\left(  7-6p\right)  }{p}}\leq C\left(
\int_{\Omega}u^{\frac{7}{6}}dx\right)  ^{\frac{12\left(  p-1\right)  }{p}}%
\]
where $p>1$ can be chosen arbitrarily close to one. Young's inequality then
implies:
\[
\varepsilon^{1+\alpha}\int_{\Omega}u^{\frac{7}{6}}dx\leq C
\]
where $C$ depends on $\alpha,$ $u_{0}$ and $\Omega.$
\end{proof}

\section{\label{RegL2}$L^{2}$ estimates.}

We now prove some estimates ensuring that the solutions of (\ref{S2E1}),
(\ref{S2E2}) or (\ref{S2E4}), (\ref{S2E5}) are smooth in regions where the
amount of mass of $u$ is small.

\bigskip

\subsection{Interior estimates: First regularization.}

\bigskip We consider first the regularization of Keller-Segel system in
(\ref{S2E1}), (\ref{S2E2}).

\begin{proposition}
\label{I1}Given $M>0,$ $\kappa>0\;$there exist $m_{0}>0,$ independent of
$M,\;\kappa,\;\varepsilon$ and positive constants $c_{i},\;i=1,2$ depending on
$M,\;\kappa$ but independent of $\varepsilon,$ such that for each $0<\rho
\leq1$ and any solution $\left(  u,v\right)  $ of
\begin{align}
\partial_{t}u-\Delta u+\nabla\left(  f_{\varepsilon}\left(  u\right)  \nabla
v\right)   &  =0\;\;\text{in\ \ }\left(  x,t\right)  \in B_{4\rho}\left(
0\right)  \times\left(  \bar{t}-c_{1}\rho^{2},\bar{t}\right) \label{S5E1}\\
-\Delta v  &  =f_{\varepsilon}\left(  u\right)  -h\left(  t\right)
\;\;\text{in\ \ }\left(  x,t\right)  \in B_{4\rho}\left(  0\right)
\times\left(  \bar{t}-c_{1}\rho^{2},\bar{t}\right)  \label{S5E2}%
\end{align}
satisfying:
\begin{align}
\partial_{t}\left(  \int_{\mathbb{R}^{2}}\varphi\left(  \frac{\left\vert
x\right\vert }{2\rho}\right)  u\left(  x,t\right)  dx\right)   &  \geq
-\frac{\kappa}{\rho^{2}}\;\;,\;\;t\in\left(  \bar{t}-c_{1}\rho^{2},\bar
{t}\right) \label{S5E3a}\\
\int_{B_{4\rho}\left(  0\right)  }u\left(  x,\bar{t}\right)  dx  &  \leq
m_{0}\label{S5E3b}\\
\sup_{t\in\left(  \bar{t}-c_{1}\rho^{2},\bar{t}\right)  }\left\Vert v\left(
\cdot,t\right)  \right\Vert _{L^{6}\left(  B_{4\rho}\left(  0\right)  \right)
}  &  \leq M \label{S5E3c}%
\end{align}%
\begin{equation}
0\leq h\left(  t\right)  \leq M \label{S5E3dd}%
\end{equation}
with $\varphi$ as in (\ref{S3E2A}).

Then, the following inequality holds:
\[
\sup_{s\in\left[  \bar{t}-c_{1}\rho^{2},\bar{t}\right]  }\int_{B_{\rho}\left(
0\right)  }u^{2}\left(  x,s\right)  dx\leq\frac{c_{2}}{\rho^{4}}\;.
\]

\end{proposition}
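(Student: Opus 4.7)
This is an $\varepsilon$-regularity statement of J\"ager--Luckhaus type: a small local mass implies parabolic smoothing. My approach has four ingredients that I would combine.

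First, I would propagate the smallness of mass backward in time. Integrating the one-sided bound (\ref{S5E3a}) in $t$ and using $\varphi \geq 1/2$ on $[0,1]$ together with $\varphi \leq 1$ gives
\[
\int_{B_{2\rho}} u(x,t)\,dx \;\leq\; 2\int \varphi\bigl(|x|/(2\rho)\bigr) u(x,\bar t)\,dx + 2\kappa c_{1} \;\leq\; 2 m_{0} + 2\kappa c_{1} \qquad \forall t \in [\bar t - c_{1}\rho^{2}, \bar t],
\]
so that choosing $c_{1}=c_{1}(\kappa,m_{0})$ small enough keeps the mass on $B_{2\rho}$ below $4 m_{0}$ throughout the time interval.

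Next I would perform the standard $L^{2}$ energy estimate. Fix a cutoff $\eta \in C_{c}^{\infty}(B_{2\rho})$ with $\eta \equiv 1$ on $B_{3\rho/2}$ and $|\nabla^{k}\eta| \leq C\rho^{-k}$. Multiplying (\ref{S5E1}) by $u\eta^{2}$ and integrating by parts, using the primitive $G_{\varepsilon}(u):=\int_{0}^{u}f_{\varepsilon}(s)\,ds \leq u^{2}/2$ (so that $f_{\varepsilon}(u)\nabla u = \nabla G_{\varepsilon}(u)$), the Poisson equation (\ref{S5E2}) to compute $\Delta v$, and $f_{\varepsilon}(u) \leq u$, produces an inequality of the form
\[
\frac{1}{2}\frac{d}{dt}\int u^{2}\eta^{2}\,dx + \int \eta^{2}|\nabla u|^{2}\,dx \;\leq\; \frac{1}{2}\int u^{3}\eta^{2}\,dx + R(t),
\]
where $R(t)$ collects lower-order terms controlled by (\ref{S5E3c})--(\ref{S5E3dd}), Calder\'on--Zygmund regularity for $v$, and $|\nabla\eta|\leq C/\rho$.

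The critical cubic term on the right is then handled by the scale-invariant 2D Gagliardo--Nirenberg inequality
$\int_{\mathbb{R}^{2}} w^{3}\,dx \leq C_{\star}\|w\|_{L^{1}}\|\nabla w\|_{L^{2}}^{2}$ (for $w\ge 0$). Applying it to $w = u\tilde\eta$ for a nested cutoff $\tilde\eta \geq \eta$ with $\tilde\eta \equiv 1$ on $\mathrm{supp}(\eta)$, and using the propagated $L^{1}$ bound from the first step, gives
\[
\int u^{3}\eta^{2}\,dx \;\leq\; \int (u\tilde\eta)^{3}\,dx \;\leq\; C_{\star}(4m_{0})\Bigl(2\int \tilde\eta^{2}|\nabla u|^{2}\,dx + \frac{C}{\rho^{2}}\int_{B_{2\rho}} u^{2}\,dx\Bigr).
\]
Choosing $m_{0}$ once and for all so that $8C_{\star}m_{0} \leq 1/2$ (a threshold depending only on the universal constant $C_{\star}$, hence independent of $M,\kappa,\varepsilon,\rho$) permits absorption of the dissipative term into the LHS; a nested Moser-type iteration on a shrinking family of balls aligns the cutoff powers between $\eta$ and $\tilde\eta$. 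The resulting differential inequality $y'(t) \leq C(M,\kappa)(y(t)+\rho^{-4})$ for $y(t) = \int u^{2}\eta^{2}\,dx$ is then closed by Gr\"onwall. Since no $L^{2}$ datum is available at $t = \bar t - c_{1}\rho^{2}$, I would multiply the test function by an extra time-weight $(t - (\bar t - c_{1}\rho^{2}))$ to kill the initial datum; the space-time integral $\iint u^{2}\eta^{2}\,dx\,dt$ this introduces is in turn controlled by the auxiliary 2D estimate $\|u\eta\|_{L^{2}}^{2} \leq C\|u\eta\|_{L^{1}}\|\nabla(u\eta)\|_{L^{2}}$ and again the small-mass bound. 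Iterating the resulting estimate backward in time on shifted subintervals of length $O(\rho^{2})$ finally covers the full closed interval $[\bar t - c_{1}\rho^{2}, \bar t]$.

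The main obstacle is the scale-invariance of the 2D cubic Gagliardo--Nirenberg inequality, which leaves no margin: the absorption works only because $m_{0}$ is fixed below a universal threshold, and the cutoff algebra must be arranged so that $m_{0}$ multiplies exactly the dissipation (and not some weaker norm). The secondary difficulty is the absence of $L^{2}$ initial information, handled by the time-weighted test function described above.
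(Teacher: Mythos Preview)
Your strategy is exactly the paper's: propagate the mass smallness backward via (\ref{S5E3a}), run a time-weighted $L^{2}$ energy estimate, and absorb the cubic term by the critical two-dimensional Gagliardo--Nirenberg inequality with $m_{0}$ fixed below a universal threshold. The execution differs in three places where the paper's choices eliminate the iteration layers you introduce. First, the paper tests with $u\eta^{6}(t-t_{0})^{\beta}$ (sixth power, not square) and proves a localized inequality (Lemma~\ref{L3}) of the form $\int u^{3}\eta^{6}\le \tfrac{9(1+\delta)}{16\pi}\bigl(\int|\nabla u|^{2}\eta^{6}\bigr)\bigl(\int_{\operatorname*{supp}\eta}u\bigr)+\text{l.o.t.}$, so the \emph{same} cutoff appears on both sides and the absorption is direct---no nested $\tilde\eta$ or Moser iteration. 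Second, the time weight is $(t-t_{0})^{\beta}$ with $\beta=2$: the extra term $\beta(t-t_{0})^{\beta-1}\int u^{2}\eta^{6}$ coming from differentiating the weight is then split by Cauchy--Schwarz between $(t-t_{0})^{\beta}\int u^{3}\eta^{6}$ and $(t-t_{0})^{\beta-2}\int u\,\eta^{6}$, and the latter is integrable in $t$ precisely when $\beta\ge 2$; your linear weight $(t-t_{0})$ is what forces the auxiliary $\|u\eta\|_{L^{2}}^{2}\le C\|u\eta\|_{L^{1}}\|\nabla(u\eta)\|_{L^{2}}$ step and the backward-in-time iteration. Third, no Calder\'on--Zygmund estimate on $v$ is actually available here (the Poisson equation holds only on $B_{4\rho}$ and there is no a~priori $L^{p}$ control on $f_{\varepsilon}(u)$ for $p>1$); the paper instead integrates by parts once more to trade $\nabla v$ for $v$ and $\Delta v=h(t)-f_{\varepsilon}(u)$, after which only the hypothesis $\|v\|_{L^{6}}\le M$ is used---and this is also why the sixth power of $\eta$ is the natural choice for the H\"older pairings.
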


An essential ingredient in the proof of Proposition \ref{I1} is the following
lemma that has been obtained before in slightly different forms, but that we
prove here by the reader's convenience.

\begin{lemma}
\label{L3}For any $\delta>0$ there exists $C>0$ independent on $\delta$ such
that for any $u\in W_{loc}^{1,2}\left(  \mathbb{R}^{2}\right)  $, any
compactly supported function $\eta\in C^{\infty}\left(  \mathbb{R}^{2}\right)
$ there holds:
\[
\int u^{3}\eta^{6}dx\leq\frac{9\left(  1+\delta\right)  }{16\pi}\left[
\int\left\vert \nabla u\right\vert ^{2}\eta^{6}dx\right]  \left[
\int_{\operatorname*{supp}\left(  \eta\right)  }udx\right]  +\frac{C}%
{\delta^{5}}\left\Vert \nabla\eta\right\Vert _{L^{\infty}}^{6}\left(
\int_{\operatorname*{supp}\left(  \eta\right)  }udx\right)  ^{3}\left(
\int_{\operatorname*{supp}\left(  \eta\right)  }dx\right)
\]

\end{lemma}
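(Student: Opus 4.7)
The plan is to reduce the inequality to the sharp two–dimensional Sobolev embedding $W^{1,1}(\mathbb{R}^{2})\hookrightarrow L^{2}(\mathbb{R}^{2})$, namely
\[
\|w\|_{L^{2}(\mathbb{R}^{2})}^{2}\leq\tfrac{1}{4\pi}\|\nabla w\|_{L^{1}(\mathbb{R}^{2})}^{2},
\]
applied to the test function $v:=u^{3/2}\eta^{3}$, so that $\int u^{3}\eta^{6}\,dx=\int v^{2}\,dx$. The constant $1/(4\pi)$ here is the standard isoperimetric one and is precisely what will produce the prefactor $9/(16\pi)$ after differentiating $v$.

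Computing $\nabla v=\tfrac{3}{2}u^{1/2}\eta^{3}\nabla u+3u^{3/2}\eta^{2}\nabla\eta$, I would set
\[
A:=\tfrac{3}{2}\!\int u^{1/2}\eta^{3}|\nabla u|\,dx,\qquad B:=3\!\int u^{3/2}\eta^{2}|\nabla\eta|\,dx,
\]
and split via $(A+B)^{2}\leq(1+\delta')A^{2}+(1+\delta'^{-1})B^{2}$ for a parameter $\delta'$ comparable to $\delta$, to be fixed at the end. Cauchy--Schwarz on $A$ (noting that its integrand is supported in $\operatorname{supp}\eta$) gives
\[
A^{2}\leq\tfrac{9}{4}\Bigl(\int_{\operatorname{supp}\eta}u\,dx\Bigr)\Bigl(\int|\nabla u|^{2}\eta^{6}\,dx\Bigr),
\]
which is exactly the dominant term with the sharp constant $9(1+\delta')/(16\pi)$. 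All the work is in estimating $B$ so that it leaves behind only the harmless remainder.

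For $B$ I would use two successive Hölder-type bounds. First $B\leq3\|\nabla\eta\|_{L^{\infty}}\int u^{3/2}\eta^{2}\,dx$; then, writing $u^{3/2}\eta^{2}=u^{1/2}\cdot u\eta^{2}$ and applying Cauchy--Schwarz,
\[
\int u^{3/2}\eta^{2}\,dx\leq\Bigl(\int_{\operatorname{supp}\eta}u\,dx\Bigr)^{\!1/2}\Bigl(\int u^{2}\eta^{4}\,dx\Bigr)^{\!1/2};
\]
finally the plain Hölder inequality with exponents $3/2$ and $3$ gives $\int u^{2}\eta^{4}\,dx\leq(\int u^{3}\eta^{6}\,dx)^{2/3}|\operatorname{supp}\eta|^{1/3}$. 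Abbreviating $M=\int_{\operatorname{supp}\eta}u\,dx$, $S=|\operatorname{supp}\eta|$ and $X=\int u^{3}\eta^{6}\,dx$, this yields
\[
B^{2}\leq 9\|\nabla\eta\|_{L^{\infty}}^{2}\,M\,S^{1/3}\,X^{2/3}.
\]

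The main obstacle, and the reason for the exponent $5$ on $\delta$ in the final constant, is that $X$ now appears on both sides and must be absorbed without destroying the sharp coefficient of the first term. I would use Young's inequality
\[
\alpha X^{2/3}\leq\varepsilon X+\tfrac{4\alpha^{3}}{27\varepsilon^{2}}
\]
with $\alpha\sim(1+\delta'^{-1})\|\nabla\eta\|_{L^{\infty}}^{2}MS^{1/3}$ and choose $\varepsilon=\delta/(2(1+\delta))$ together with $\delta'=\delta/2$, so that $(1+\delta')/(1-\varepsilon)\leq 1+\delta$. After this absorption the coefficient of $(\int u)(\int|\nabla u|^{2}\eta^{6})$ is exactly $9(1+\delta)/(16\pi)$, while the residual term scales as $(1+\delta'^{-1})^{3}/\varepsilon^{2}\sim\delta^{-5}$, multiplied by $\|\nabla\eta\|_{L^{\infty}}^{6}M^{3}S$, yielding the stated inequality.
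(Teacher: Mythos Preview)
Your proof is correct and follows essentially the same route as the paper: apply the sharp Sobolev inequality $\|w\|_{L^2}^2\le\frac{1}{4\pi}\|\nabla w\|_{L^1}^2$ to $w=u^{3/2}\eta^3$, use Cauchy--Schwarz on the $\nabla u$ part to get the main term with constant $9(1+\delta)/16\pi$, and then H\"older followed by Young's inequality to absorb the $\nabla\eta$ remainder into the left-hand side, producing the $\delta^{-5}$ scaling. The paper is terser in the H\"older step (it simply writes the resulting bound $[\int u^{3/2}\eta^{2}|\nabla\eta|]^{2}\le\delta^{2}\int u^{3}\eta^{6}+\frac{C}{\delta^{4}}\|\nabla\eta\|_{L^\infty}^{6}(\int u)^{3}|\operatorname{supp}\eta|$), but your more explicit chain is the same computation.
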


\begin{proof}
We apply the following classical Sobolev estimate in the critical case
\[
\int w^{2}dx\leq\frac{1}{4\pi}\left(  \int\left\vert \nabla w\right\vert
dx\right)  ^{2}\;\;,\;\;w\in W^{1,1}\left(  \mathbb{R}^{2}\right)
\]
to the particular function $w=u^{\frac{3}{2}}\eta^{3}.$ Then:
\[
\int u^{3}\eta^{6}dx\leq\frac{9\left(  1+\frac{\delta}{2}\right)  }{16\pi
}\left[  \int\left\vert \nabla u\right\vert ^{2}\eta^{6}dx\right]  \left[
\int_{\operatorname*{supp}\left(  \eta\right)  }udx\right]  +\frac{C}{\delta
}\left[  \int u^{\frac{3}{2}}\eta^{2}\left\vert \nabla\eta\right\vert
dx\right]  ^{2}%
\]

Applying H\"{o}lder and Young's inequalities:
\[
\left[  \int u^{\frac{3}{2}}\eta^{2}\left\vert \nabla\eta\right\vert
dx\right]  ^{2}\leq\delta^{2}\int u^{3}\eta^{6}dx+\frac{C}{\delta^{4}%
}\left\Vert \nabla\eta\right\Vert _{L^{\infty}}^{6}\left(  \int
_{\operatorname*{supp}\left(  \eta\right)  }udx\right)  ^{3}\left(
\int_{\operatorname*{supp}\left(  \eta\right)  }dx\right)
\]
and the result follows.
\end{proof}

\bigskip

\begin{proof}
[Proof of Proposition \ref{I1}]Let $\eta=\eta\left(  x\right)  $ be a cutoff
function satisfying $\eta\left(  x\right)  =1$ for $\left\vert x\right\vert
\leq\rho,\;\eta\left(  x\right)  =0$ for $\left\vert x\right\vert \geq2\rho,$
$\eta\in C^{\infty}$, decreasing on $\left\vert x\right\vert $ and satisfying
$\rho\left\vert \nabla\eta\right\vert +\rho^{2}\left\vert \nabla^{2}%
\eta\right\vert \leq C.$ Let us denote $t_{0}=\bar{t}-2c_{1}\rho^{2},$ where
$c_{1}$ will be precised later. Multiplying (\ref{S5E1}) by the test function
$u\eta^{6}\left(  t-t_{0}\right)  ^{\beta}$ with$\;\beta\geq2$ we obtain,
after integrating by parts:
\begin{align*}
\lefteqn{\partial_{t}\left(  \int\frac{u^{2}}{2}\eta^{6}\left(  t-t_{0}%
\right)  ^{\beta}dx\right)  }\\
&  =-\int\left\vert \nabla u\right\vert ^{2}\eta^{6}\left(  t-t_{0}\right)
^{\beta}dx+\frac{\beta}{2}\int u^{2}\eta^{6}\left(  t-t_{0}\right)  ^{\beta
-1}dx-6\int u\nabla u\eta^{5}\nabla\eta\left(  t-t_{0}\right)  ^{\beta}dx+\\
&  +\int f_{\varepsilon}\left(  u\right)  \nabla u\nabla v\eta^{6}\left(
t-t_{0}\right)  ^{\beta}dx+6\int uf_{\varepsilon}\left(  u\right)  \nabla
v\eta^{5}\nabla\eta\left(  t-t_{0}\right)  ^{\beta}dx
\end{align*}

We integrate by parts again to bring the eliminate the derivatives of $u$ in
the fourth term on the right. Then, if we define $F_{\varepsilon}\left(
u\right)  =\int_{0}^{u}f_{\varepsilon}\left(  s\right)  ds$ and use
(\ref{S5E2}) we obtain:
\begin{align*}
\lefteqn{\partial_{t}\left(  \int\frac{u^{2}}{2}\eta^{6}\left(  t-t_{0}%
\right)  ^{\beta}dx\right)  }\\
&  =-\int\left\vert \nabla u\right\vert ^{2}\eta^{6}\left(  t-t_{0}\right)
^{\beta}dx+\frac{\beta}{2}\int u^{2}\eta^{6}\left(  t-t_{0}\right)  ^{\beta
-1}dx-6\int u\nabla u\eta^{5}\nabla\eta\left(  t-t_{0}\right)  ^{\beta}dx+\\
&  +\int F_{\varepsilon}\left(  u\right)  \left[  f_{\varepsilon}\left(
u\right)  -h\left(  t\right)  \right]  \eta^{6}\left(  t-t_{0}\right)
^{\beta}dx+6\int\left[  uf_{\varepsilon}\left(  u\right)  -F_{\varepsilon
}\left(  u\right)  \right]  \eta^{5}\nabla v\nabla\eta\left(  t-t_{0}\right)
^{\beta}dx
\end{align*}

Eliminating the derivatives of $v$ in the last integral we arrive at:
\begin{align}
\lefteqn{\partial_{t}\left(  \int\frac{u^{2}}{2}\eta^{6}\left(  t-t_{0}%
\right)  ^{\beta}dx\right)  }\label{S5E3}\\
&  =-\int\left\vert \nabla u\right\vert ^{2}\eta^{6}\left(  t-t_{0}\right)
^{\beta}dx+\frac{\beta}{2}\int u^{2}\eta^{6}\left(  t-t_{0}\right)  ^{\beta
-1}dx-6\int u\nabla u\eta^{5}\nabla\eta\left(  t-t_{0}\right)  ^{\beta
}dx+\nonumber\\
&  +\int F_{\varepsilon}\left(  u\right)  f_{\varepsilon}\left(  u\right)
\eta^{6}\left(  t-t_{0}\right)  ^{\beta}dx-6\int\nabla\left[  uf_{\varepsilon
}\left(  u\right)  -F_{\varepsilon}\left(  u\right)  \right]  \eta^{5}%
v\nabla\eta\left(  t-t_{0}\right)  ^{\beta}dx-\nonumber\\
&  -30\int\left[  uf_{\varepsilon}\left(  u\right)  -F_{\varepsilon}\left(
u\right)  \right]  \eta^{4}v\left(  \nabla\eta\right)  ^{2}\left(
t-t_{0}\right)  ^{\beta}dx-\int F_{\varepsilon}\left(  u\right)  h\left(
t\right)  \eta^{6}\left(  t-t_{0}\right)  ^{\beta}dx\nonumber\\
&  -6\int\left[  uf_{\varepsilon}\left(  u\right)  -F_{\varepsilon}\left(
u\right)  \right]  \eta^{5}v\Delta\eta\left(  t-t_{0}\right)  ^{\beta
}dx\nonumber
\end{align}

The last two terms can be estimated easily:
\begin{align}
\left\vert \int\left[  uf_{\varepsilon}\left(  u\right)  -F_{\varepsilon
}\left(  u\right)  \right]  \eta^{4}v\left(  \nabla\eta\right)  ^{2}\left(
t-t_{0}\right)  ^{\beta}dx\right\vert  &  \leq\frac{C}{\rho^{2}}\left(  \int
u^{3}\eta^{6}\left(  t-t_{0}\right)  ^{\beta}dx\right)  ^{\frac{2}{3}}\left(
\int v^{3}\left(  t-t_{0}\right)  ^{\beta}dx\right)  ^{\frac{1}{3}}%
\leq\label{S5E4}\\
&  \leq\delta\int u^{3}\eta^{6}\left(  t-t_{0}\right)  ^{\beta}dx+\frac
{C}{\rho^{6}\delta^{2}}\int_{\operatorname*{supp}\left(  \eta\right)  }%
v^{3}\left(  t-t_{0}\right)  ^{\beta}dx\nonumber\\
\left\vert \int\left[  uf_{\varepsilon}\left(  u\right)  -F_{\varepsilon
}\left(  u\right)  \right]  \eta^{5}v\Delta\eta\left(  t-t_{0}\right)
^{\beta}dx\right\vert  &  \leq\frac{C}{\rho^{2}}\left(  \int u^{3}\eta
^{6}\left(  t-t_{0}\right)  ^{\beta}dx\right)  ^{\frac{2}{3}}\left(  \int
v^{3}\eta^{3}\left(  t-t_{0}\right)  ^{\beta}dx\right)  ^{\frac{1}{3}%
}\label{S5E5}\\
&  \leq\delta\int u^{3}\eta^{6}\left(  t-t_{0}\right)  ^{\beta}dx+\frac
{C}{\rho^{6}\delta^{2}}\int v^{3}\eta^{3}\left(  t-t_{0}\right)  ^{\beta
}dx\nonumber
\end{align}

On the other hand, using that $\left\vert f_{\varepsilon}^{\prime}\right\vert
\leq1,\;\left\vert F_{\varepsilon}^{\prime}\right\vert =\left\vert
f_{\varepsilon}\right\vert \leq\left\vert u\right\vert $ we obtain:
\begin{align}
&  \left\vert \int u\nabla u\eta^{5}\nabla\eta\left(  t-t_{0}\right)  ^{\beta
}dx\right\vert +\left\vert \int\nabla\left[  uf_{\varepsilon}\left(  u\right)
-F_{\varepsilon}\left(  u\right)  \right]  \eta^{5}v\nabla\eta\left(
t-t_{0}\right)  ^{\beta}dx\right\vert \nonumber\\
&  \leq\frac{C}{\rho}\left(  \int u^{3}\eta^{6}\left(  t-t_{0}\right)
^{\beta}dx\right)  ^{\frac{1}{3}}\left(  \int\left\vert \nabla u\right\vert
^{2}\eta^{6}\left(  t-t_{0}\right)  ^{\beta}dx\right)  ^{\frac{1}{2}}\left(
\int\left(  1+\left\vert v\right\vert \right)  ^{6}\left(  t-t_{0}\right)
^{\beta}dx\right)  ^{\frac{1}{6}}\label{S5E6}\\
&  \leq\delta\int u^{3}\eta^{6}\left(  t-t_{0}\right)  ^{\beta}dx+\delta
\int\left\vert \nabla u\right\vert ^{2}\eta^{6}\left(  t-t_{0}\right)
^{\beta}dx+\frac{C}{\rho^{6}\delta^{5}}\int\left(  1+\left\vert v\right\vert
\right)  ^{6}\left(  t-t_{0}\right)  ^{\beta}dx\nonumber
\end{align}

We also have, using $\left\vert F_{\varepsilon}\left(  u\right)  \right\vert
\leq\frac{u^{2}}{2}$ and H\"{o}lder's inequality%
\begin{align}
&  \left\vert \int u^{2}\eta^{6}\left(  t-t_{0}\right)  ^{\beta-1}%
dx\right\vert +\left\vert \int F_{\varepsilon}\left(  u\right)  h\left(
t\right)  \eta^{6}\left(  t-t_{0}\right)  ^{\beta}dx\right\vert \label{S5E7}\\
&  \leq\delta\int u^{3}\eta^{6}\left(  t-t_{0}\right)  ^{\beta}dx+\frac
{C\left(  t-t_{0}\right)  ^{\beta-2}}{\delta}\int u\eta^{6}dx\nonumber
\end{align}
where $C>0$ depends only on $M.$

The most delicate term is the fourth one on the right-hand side of
(\ref{S5E3}). This term can be estimated using Lemma \ref{L3} as:
\begin{align}
\left\vert \int F_{\varepsilon}\left(  u\right)  f_{\varepsilon}\left(
u\right)  \eta^{6}\left(  t-t_{0}\right)  ^{\beta}dx\right\vert  &  \leq
\frac{9\left(  1+\delta\right)  }{32\pi}\left[  \int\left\vert \nabla
u\right\vert ^{2}\eta^{6}\left(  t-t_{0}\right)  ^{\beta}dx\right]  \left[
\int_{\operatorname*{supp}\left(  \eta\right)  }udx\right] \label{S5E8}\\
&  +\frac{C\left(  t-t_{0}\right)  ^{\beta}}{\delta^{5}}\left\Vert \nabla
\eta\right\Vert _{L^{\infty}}^{6}\left(  \int_{\operatorname*{supp}\left(
\eta\right)  }udx\right)  ^{3}\left(  \int_{\operatorname*{supp}\left(
\eta\right)  }dx\right) \nonumber
\end{align}

Combining (\ref{S5E3})-(\ref{S5E8}) we obtain:
\begin{align}
\lefteqn{\partial_{t}\left(  \int\frac{u^{2}}{2}\eta^{6}\left(  t-t_{0}%
\right)  ^{\beta}dx\right)  }\label{S5E9}\\
&  \leq-\left(  1-\delta\right)  \int\left\vert \nabla u\right\vert ^{2}%
\eta^{6}\left(  t-t_{0}\right)  ^{\beta}dx+\frac{9\left(  1+\delta\right)
}{32\pi}\left[  \int\left\vert \nabla u\right\vert ^{2}\eta^{6}\left(
t-t_{0}\right)  ^{\beta}dx\right]  \left[  \int_{\operatorname*{supp}\left(
\eta\right)  }udx\right]  +\nonumber\\
&  +\frac{C\left(  t-t_{0}\right)  ^{\beta}}{\delta^{5}\rho^{6}}\left(
\int_{\operatorname*{supp}\left(  \eta\right)  }udx\right)  ^{3}\left(
\int_{\operatorname*{supp}\left(  \eta\right)  }dx\right)  +\frac{C}{\rho
^{6}\delta^{2}}\int_{\operatorname*{supp}\left(  \eta\right)  }v^{3}\left(
t-t_{0}\right)  ^{\beta}dx+\nonumber\\
&  +\frac{C}{\rho^{6}\delta^{5}}\int\left(  1+\left\vert v\right\vert \right)
^{6}\left(  t-t_{0}\right)  ^{\beta}dx+\frac{C\left(  t-t_{0}\right)
^{\beta-2}}{\delta}\int_{\operatorname*{supp}\left(  \eta\right)
}udx\nonumber
\end{align}
where we have estimated all the terms $\delta\int u^{3}\eta^{6}\left(
t-t_{0}\right)  ^{\beta}dx$ on the right-hand side of (\ref{S5E4}%
)-(\ref{S5E7}) using Lemma \ref{L3}. The values of $\delta$ and $C$ have been
then changed.

Let us write $m_{0}=\frac{8\pi}{27}.$ Using assumptions (\ref{S5E3a}),
(\ref{S5E3b}) as well as the definition of $\varphi$ it follows that, if
$c_{1}$ is chosen sufficiently small (although independent on $\rho$), we
have:
\[
\int_{B_{2\rho}\left(  0\right)  }u\left(  x,t\right)  dx\leq3m_{0}%
\;\;\text{for\ \ }t\in\left(  \bar{t}-2c_{1}\rho^{2},\bar{t}\right)
\]

Then, if $\delta$ is small enough, it follows from (\ref{S5E3c}), (\ref{S5E9})
that:
\begin{align}
\lefteqn{\partial_{t}\left(  \int\frac{u^{2}}{2}\eta^{6}\left(  t-t_{0}%
\right)  ^{\beta}dx\right)  \leq\frac{C\left(  t-t_{0}\right)  ^{\beta}%
}{\delta^{5}\rho^{6}}m_{0}^{3}\rho^{2}+\frac{C\left(  t-t_{0}\right)  ^{\beta
}}{\rho^{6}\delta^{2}}M^{3}\rho+}\nonumber\\
&  +\frac{C\left(  t-t_{0}\right)  ^{\beta}}{\rho^{6}\delta^{5}}\left[
\rho^{2}+M^{6}\right]  +\frac{Cm_{0}\left(  t-t_{0}\right)  ^{\beta-2}}%
{\delta}\nonumber
\end{align}
and assuming that $\rho_{0}$ is small enough and $M$ is of order one, without
loss of generality, we obtain:
\begin{equation}
\lefteqn{\partial_{t}\left(  \int\frac{u^{2}}{2}\eta^{6}\left(  t-t_{0}%
\right)  ^{\beta}dx\right)  \leq\frac{K\left(  t-t_{0}\right)  ^{\beta}}%
{\rho^{6}\delta^{5}}+\frac{Cm_{0}\left(  t-t_{0}\right)  ^{\beta-2}}{\delta}%
}\nonumber
\end{equation}
where $K$ depends on $M.$ Integrating this formula, with $\beta=2$ in the
interval $t\in\left(  t_{0},\bar{t}\right)  ,$ and using that $t_{0}=\bar
{t}-2c_{1}\rho^{2}$ it follows that:
\[
\int\left(  u\left(  x,s\right)  \right)  ^{2}\eta^{6}dx\leq K\left[
\frac{\left(  \bar{t}-t_{0}\right)  }{\rho^{6}\delta^{5}}+\frac{1}{\delta
}\frac{1}{\left(  \bar{t}-t_{0}\right)  }\right]  \leq\frac{K}{\delta^{5}%
\rho^{4}}\;,\;s\in\left[  \bar{t}-c_{1}\rho^{2},\bar{t}\right]
\]
and since $\delta$ is of order one (although small) the result follows just
changing $\frac{c_{1}}{2}$ by $c_{1}.$
\end{proof}

\bigskip

\subsection{Interior estimates: Second regularization.}

\bigskip

We now derive interior estimates for the regularization in (\ref{S2E4}),
(\ref{S2E5}).

\begin{proposition}
\label{I2}Given $M>1,$ $\kappa>0\;$there exist $m_{0}>0$ independent of
$M,\;\kappa,\;\varepsilon,$ and positive constants $c_{i}%
,\;i=1,2,\;\varepsilon_{0}>0,\;\rho_{0}>0$ depending on $M,\;\kappa$ but
independent of $\varepsilon,$ such that for each $0<\rho\leq\rho
_{0},\;0<\varepsilon\leq\varepsilon_{0}$ and any solution $\left(  u,v\right)
$ of
\begin{equation}
\partial_{t}u-\Delta\left(  u+\varepsilon u^{\frac{7}{6}}\right)
+\nabla\left(  u\nabla v\right)  =0\;\;\text{in\ \ }\left(  x,t\right)  \in
B_{4\rho}\left(  0\right)  \times\left(  \bar{t}-c_{1}\rho^{2},\bar{t}\right)
\label{S6E1}%
\end{equation}%
\begin{equation}
\Delta v=u-h\left(  t\right)  \;\;\text{in\ \ }\left(  x,t\right)  \in
B_{4\rho}\left(  0\right)  \times\left(  \bar{t}-c_{1}\rho^{2},\bar{t}\right)
\label{S6E2a}%
\end{equation}
satisfying:
\begin{align}
\partial_{t}\left(  \int_{\Omega}\varphi\left(  \frac{\left\vert x\right\vert
}{2\rho}\right)  u\left(  x,t\right)  dx\right)   &  \geq-\frac{\kappa}%
{\rho^{2}}-\frac{2\varepsilon}{\rho^{2}}\int_{B_{\rho}\left(  0\right)
}u^{\frac{7}{6}}\left(  x,t\right)  dx\;\;,\;t\in\left(  \bar{t}-c_{1}\rho
^{2},\bar{t}\right) \label{S6E3}\\
\int_{B_{4\rho}\left(  0\right)  }u\left(  x,\bar{t}\right)  dx  &  \leq
m_{0}\;\label{S6E4}\\
\sup_{t\in\left[  \bar{t}-2c_{1}\rho^{2},\bar{t}\right]  }\left[
\varepsilon^{\frac{3}{2}}\int_{B_{\rho}\left(  0\right)  }u^{\frac{7}{6}%
}\left(  x,t\right)  dx\right]   &  \leq M\;\;\label{S6E5}\\
\sup_{t\in\left(  \bar{t}-c_{1}\rho^{2},\bar{t}\right)  }\left\Vert v\left(
\cdot,t\right)  \right\Vert _{L^{6}\left(  B_{4\rho}\left(  0\right)  \right)
}  &  \leq M\label{S6E6}\\
0  &  \leq h\left(  t\right)  \leq M \label{S6E6a}%
\end{align}
with $\varphi$ as in (\ref{S3E2A}).

Then, the following inequality holds:
\begin{equation}
\sup_{s\in\left[  \bar{t}-c_{1}\rho^{2},\bar{t}\right]  }\int_{B_{\rho}\left(
0\right)  }u^{\frac{7}{6}}\left(  x,s\right)  dx\leq\frac{c_{2}}{\rho^{4}}\;
\label{S6E7}%
\end{equation}

\end{proposition}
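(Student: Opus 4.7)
The plan is to adapt the scheme of Proposition \ref{I1} by replacing the test function $u\eta^6(t-t_0)^\beta$ with $u^{1/6}\eta^6(t-t_0)^\beta$, where $\eta\in C^\infty$ satisfies $\eta\equiv 1$ on $B_\rho(0)$, $\mathrm{supp}\,\eta\subset B_{2\rho}(0)$, $\rho|\nabla\eta|+\rho^2|\nabla^2\eta|\leq C$, and $t_0=\bar t-2c_1\rho^2$. With this choice, $\int\partial_t u\cdot u^{1/6}\eta^6(t-t_0)^\beta\,dx$ contributes $\tfrac{6}{7}\partial_t\int u^{7/6}\eta^6(t-t_0)^\beta\,dx$ up to a harmless $(t-t_0)^{\beta-1}$-correction, giving direct control of the $L^{7/6}$-norm targeted in (\ref{S6E7}).

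After multiplying (\ref{S6E1}) by this test function, integrating by parts in the diffusive terms, and integrating by parts twice in the chemotactic term (writing $\tfrac{1}{6}u^{1/6}\nabla u=\tfrac{1}{7}\nabla u^{7/6}$ and finally substituting $\Delta v=u-h$ via (\ref{S6E2a})), one obtains
\begin{align*}
\tfrac{6}{7}\partial_t\!\!\int\! u^{7/6}\eta^6(t-t_0)^\beta dx&+\tfrac{24}{49}\!\!\int\!|\nabla u^{7/12}|^2\eta^6(t-t_0)^\beta dx\\
&+\tfrac{7\varepsilon}{16}\!\!\int\!|\nabla u^{2/3}|^2\eta^6(t-t_0)^\beta dx+\tfrac{1}{7}\!\!\int\! u^{13/6}\eta^6(t-t_0)^\beta dx=\mathcal{R},
\end{align*}
where the gradient coefficients arise from $u^{-5/6}|\nabla u|^2=\tfrac{144}{49}|\nabla u^{7/12}|^2$ and $u^{-2/3}|\nabla u|^2=\tfrac{9}{4}|\nabla u^{2/3}|^2$. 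The structurally crucial feature, in contrast with the proof of Proposition \ref{I1}, is that the chemotaxis here delivers an extra $L^{13/6}$ dissipation on the left rather than the bad $+\tfrac{1}{2}\int u^3$-type term that there had to be tamed by Lemma \ref{L3} and small mass. The remainder $\mathcal{R}$ gathers the $(t-t_0)^{\beta-1}$ time-weight correction, the $\tfrac{h}{7}\int u^{7/6}\eta^6$ term, the diffusive cross terms $\int u^{1/6}\nabla u\cdot\eta^5\nabla\eta\bigl(1+\tfrac{7\varepsilon}{6}u^{1/6}\bigr)dx$ coming from the cutoff IBP, and the chemotactic cross term $\tfrac{36}{7}\int u^{7/6}\eta^5\nabla v\cdot\nabla\eta\,dx$.

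Each piece of $\mathcal{R}$ is absorbed as follows. The $h$- and time-weight terms are handled by Young's inequality $u^{7/6}\leq\delta u^{13/6}+C_\delta$. The diffusive cross terms, after one more IBP via $u^{1/6}\nabla u=\tfrac{6}{7}\nabla u^{7/6}$ and $u^{1/3}\nabla u=\tfrac{3}{4}\nabla u^{4/3}$, become $O(\rho^{-2})\int(u^{7/6}+\varepsilon u^{4/3})\eta^4\,dx$ and are absorbed the same way. The chemotactic cross term is the delicate one: I would integrate by parts once more to transfer the derivative off $v$, producing integrals of $v$ against $u^{7/6}|\nabla\eta|^2$, $u^{7/6}|\Delta\eta|$, and $\nabla u^{7/6}\cdot\nabla\eta$. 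Each is bounded by H\"{o}lder against the uniform $L^6$-bound (\ref{S6E6}) on $v$, and then via Young by $\delta\int u^{13/6}\eta^6+\delta\int|\nabla u^{7/12}|^2\eta^6+O(\rho^{-6})(t-t_0)^\beta$. The analog of Lemma \ref{L3}, obtained by applying the critical Sobolev inequality $\int w^2\leq\tfrac{1}{4\pi}\bigl(\int|\nabla w|\bigr)^2$ to $w=u^{13/12}\eta^3$, yields $\int u^{13/6}\eta^6\,dx\leq C\bigl(\int u\,dx\bigr)\int|\nabla u^{7/12}|^2\eta^6\,dx+C\rho^{-6}\bigl(\int u\,dx\bigr)^3\rho^2$; combined with local mass smallness $\int u\,dx\leq 3m_0$ on $[t_0,\bar t]$ (inherited from (\ref{S6E4}), (\ref{S6E3}), and (\ref{S6E5}), possibly by a bootstrap using (\ref{S6E7}) itself), this keeps all absorption coefficients strictly below the gains $\tfrac{24}{49}$ and $\tfrac{1}{7}$.

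Setting $\beta=2$ and integrating the resulting differential inequality on $[t_0,s]$ with $s\in[\bar t-c_1\rho^2,\bar t]$, then dividing by $(s-t_0)^2\geq c_1^2\rho^4$, yields the required $\int_{B_\rho}u^{7/6}(\cdot,s)dx\leq c_2/\rho^4$. The main obstacle I anticipate is the combined issue of $\varepsilon$-uniform cross-term control and mass propagation. Because (\ref{S6E5}) only gives $\int u^{7/6}\leq M\varepsilon^{-3/2}$, any attempt to use classical $L^p$-regularity for $-\Delta v=u-h$ to bound $\nabla v$ directly would introduce $\varepsilon$-dependent constants that ruin the uniform estimate; every occurrence of $\nabla v$ in the chemotactic cross term must therefore be eliminated by IBP and controlled through the $L^6$-bound on $v$ alone. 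Likewise, the extra $-\tfrac{2\varepsilon}{\rho^2}\int u^{7/6}$ flux in (\ref{S6E3}) obstructs straightforward propagation of $\int u\,dx\leq 3m_0$ from the endpoint datum (\ref{S6E4}) back to earlier times, so the propagation must be closed through a bootstrap using the improved conclusion (\ref{S6E7}) itself, which fixes $\varepsilon_0,\rho_0$ in terms of $M,\kappa$ so that $\varepsilon\rho^{-4}$ remains suitably small.
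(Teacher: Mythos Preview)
Your central structural claim is based on a sign error. Equation (\ref{S6E2a}) as printed reads $\Delta v=u-h$, but this is a typo: throughout the paper (cf.\ (\ref{S1E2}), (\ref{S2E5}), (\ref{S5E2})) the Keller--Segel coupling is $-\Delta v=u-h$, and the proof of Proposition~\ref{I1} uses exactly this sign to produce the \emph{bad} term $+\int F_\varepsilon(u)f_\varepsilon(u)\eta^6$ on the right. Redoing your computation with $-\Delta v=u-h$ gives, on the right-hand side,
\[
-\tfrac{1}{7}\!\int u^{7/6}\Delta v\,\eta^6\,dx=\tfrac{1}{7}\!\int u^{7/6}(u-h)\,\eta^6\,dx=+\tfrac{1}{7}\!\int u^{13/6}\eta^6\,dx-\tfrac{h}{7}\!\int u^{7/6}\eta^6\,dx,
\]
so the $L^{13/6}$ term is not dissipation but the principal bad term, exactly as $\tfrac12\int u^3$ was in Proposition~\ref{I1}. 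Your whole absorption scheme---treating $\tfrac{1}{7}\int u^{13/6}\eta^6$ as a coercive sink into which you feed the $\delta\int u^{13/6}$ pieces from Young's inequality---then collapses. The approach could in principle be repaired by absorbing the bad $\tfrac{1}{7}\int u^{13/6}\eta^6$ into the gradient term $\tfrac{24}{49}\int|\nabla u^{7/12}|^2\eta^6$ via your own Lemma~\ref{L3} variant with $w=u^{13/12}\eta^3$ and small mass, but this is not what you wrote, and the remainder estimates would need to be redone since you no longer have the $L^{13/6}$ bucket available.

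For comparison, the paper does \emph{not} change the test function: it multiplies by $u\eta^6(t-t_0)^\beta$ exactly as in Proposition~\ref{I1}, obtains the analogue of (\ref{S5E9}) with two extra $\varepsilon$-terms (one with a good sign, one a cross term controlled via Lemma~\ref{L3}), and first proves an $L^2$ bound. The novelty is the mass-propagation step: a continuity argument defines $t^\ast$ as the infimum of times with $\int_{B_{2\rho}}u\le 3m_0$, uses the $L^2$ bound on $[t^\ast,\bar t]$ to interpolate $\int u^{7/6}\le K[(t-t^\ast)^{1/6}\rho^{-1}+(t-t^\ast)^{-1/6}]$, feeds this back into (\ref{S6E3}) to show $t^\ast=\bar t-2c_1\rho^2$ when $\varepsilon\le\rho^{2/3}$, and finally covers the complementary regime $\varepsilon\ge\rho^{2/3}$ directly from (\ref{S6E5}). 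The $L^{7/6}$ conclusion (\ref{S6E7}) then follows by H\"older from the $L^2$ estimate. This bootstrap, not a favorable sign, is what handles the $\varepsilon$-flux in (\ref{S6E3}) that you correctly flagged as the obstacle.
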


\begin{proof}
\bigskip Let us assume that $\eta$ is the same cutoff function used in the
proof of Proposition \ref{I1}. Arguing as there we obtain the inequality
(\ref{S5E9}) for any $t_{0}\in\left[  \bar{t}-2c_{1}\rho^{2},\bar{t}\right]
.$ We can estimate the terms in (\ref{S5E9}) containing $v$ as in the proof of
Proposition \ref{I1}. This gives:
\begin{align}
\partial_{t}\left(  \int\frac{u^{2}}{2}\eta^{6}\left(  t-t_{0}\right)
^{\beta}dx\right)   &  \leq\left[  -1+\delta+\frac{9\left(  1+\delta\right)
}{32\pi}\left[  \int_{\operatorname*{supp}\left(  \eta\right)  }udx\right]
\right]  \int\left\vert \nabla u\right\vert ^{2}\eta^{6}\left(  t-t_{0}%
\right)  ^{\beta}dx+\nonumber\\
&  +\frac{K\left(  t-t_{0}\right)  ^{\beta}}{\rho^{6}\delta^{5}}%
+\frac{C\left(  t-t_{0}\right)  ^{\beta}}{\delta^{5}\rho^{6}}\left(
\int_{\operatorname*{supp}\left(  \eta\right)  }udx\right)  ^{3}\left(
\int_{\operatorname*{supp}\left(  \eta\right)  }dx\right)  +\nonumber\\
&  +\frac{C\left(  t-t_{0}\right)  ^{\beta-2}}{\delta}\int
_{\operatorname*{supp}\left(  \eta\right)  }udx-\frac{7\varepsilon}{6}\int
\eta^{6}\left(  t-t_{0}\right)  ^{\beta}u^{\frac{1}{6}}\left(  \nabla
u\right)  ^{2}dx\label{S6E9a}\\
&  -6\varepsilon\int\eta^{5}\left(  t-t_{0}\right)  ^{\beta}u\nabla\eta
\nabla\left(  u^{\frac{7}{6}}\right)  dx\nonumber
\end{align}
with $K$ depending only on $M.$ The last two terms are due to the regularizing
term $\varepsilon\Delta\left(  u^{\frac{7}{6}}\right)  .$ The term
$-\frac{7\varepsilon}{6}\int\eta^{6}\left(  t-t_{0}\right)  ^{\beta}%
u^{\frac{1}{6}}\left(  \nabla u\right)  ^{2}dx$ is nonpositive and therefore
it can be estimated above by zero. It remains to estimate the additional term
$-6\varepsilon\int\eta^{5}\left(  t-t_{0}\right)  ^{\beta}u\nabla\eta
\nabla\left(  u^{\frac{7}{6}}\right)  dx.$ To this end we use the estimate:
\[
\left\vert \int\eta^{5}\left(  t-t_{0}\right)  ^{\beta}u\nabla\eta
\nabla\left(  u^{\frac{7}{6}}\right)  dx\right\vert \leq\frac{C\left(
t-t_{0}\right)  ^{\beta}}{\rho}\left[  \int_{\operatorname*{supp}\left(
\eta\right)  }udx\right]  ^{\frac{1}{6}}\left[  \int\eta^{6}u^{3}dx\right]
^{\frac{1}{3}}\left[  \int\eta^{6}\left\vert \nabla u\right\vert
^{2}dx\right]  ^{\frac{1}{2}}%
\]

Using now Lemma \ref{L3} we obtain:
\[
\left\vert \int\eta^{5}\left(  t-t_{0}\right)  ^{\beta}u\nabla\eta
\nabla\left(  u^{\frac{7}{6}}\right)  dx\right\vert \leq\delta\int\left\vert
\nabla u\right\vert ^{2}\eta^{6}\left(  t-t_{0}\right)  ^{\beta}%
dx+\frac{C\left(  t-t_{0}\right)  ^{\beta}}{\rho^{6}\delta^{5}}\left[
1+\left[  \int_{\operatorname*{supp}\left(  \eta\right)  }udx\right]
^{3}\right]
\]

Using this estimate in (\ref{S6E9a}) we arrive at:
\begin{align}
\partial_{t}\left(  \int\frac{u^{2}}{2}\eta^{6}\left(  t-t_{0}\right)
^{\beta}dx\right)   &  \leq\left[  -1+2\delta+\frac{9\left(  1+\delta\right)
}{16\pi}\left[  \int_{\operatorname*{supp}\left(  \eta\right)  }udx\right]
\right]  \int\left\vert \nabla u\right\vert ^{2}\eta^{6}\left(  t-t_{0}%
\right)  ^{\beta}dx+\label{S6E9}\\
&  +\frac{K\left(  t-t_{0}\right)  ^{\beta}}{\rho^{6}\delta^{5}}%
+\frac{C\left(  t-t_{0}\right)  ^{\beta}}{\delta^{5}\rho^{6}}\left(
\int_{\operatorname*{supp}\left(  \eta\right)  }udx\right)  ^{3}+\nonumber\\
&  +\frac{C\left(  t-t_{0}\right)  ^{\beta-2}}{\delta}\int
_{\operatorname*{supp}\left(  \eta\right)  }udx\nonumber
\end{align}
with $K$ depending only on $M$ and $C$ just a numerical constant.

For any $\bar{t},$ let us define $t^{\ast}$ as:
\[
t^{\ast}=\inf\left\{  t\in\left[  \bar{t}-2c_{1}\rho^{2},\bar{t}\right]
:\sup_{s\in\left[  t,\bar{t}\right]  }\int_{B_{2\rho}\left(  0\right)
}u\left(  x,s\right)  dx\leq3m_{0}\right\}
\]
with $m_{0}=\frac{4\pi}{27}.$

Our goal is to show that $t^{\ast}=\bar{t}-2c_{1}\rho^{2}$ if $c_{1}$ is
sufficiently small (with $c_{1}$ independent on $\varepsilon,\rho$). First we
notice that the assumptions of Proposition \ref{I2} imply $t^{\ast}\leq
\min\left\{  \bar{t}-\frac{m_{0}\rho^{2}\sqrt{\varepsilon}}{2\left(
\kappa\sqrt{\varepsilon}+2M\right)  },\bar{t}-2c_{1}\rho^{2}\right\}  .$
Indeed, this is an easy consequence of the fact that the definition of
$\varphi$ combined with (\ref{S6E3})-(\ref{S6E5}) imply:
\begin{equation}
\int_{B_{2\rho}\left(  0\right)  }u\left(  x,t\right)  dx\leq2m_{0}+2\left(
\kappa+\frac{2M}{\sqrt{\varepsilon}}\right)  \frac{\left(  \bar{t}-t\right)
}{\rho^{2}} \label{S6E8}%
\end{equation}
and the left hand side of (\ref{S6E8}) is smaller than $3m_{0}$ for any $t$
satisfying $\left(  \bar{t}-t\right)  \leq\frac{m_{0}\rho^{2}}{2\left(
\kappa+\frac{2M}{\sqrt{\varepsilon}}\right)  }.$

Let us suppose that $t^{\ast}>\bar{t}-c_{1}\rho^{2}.$ Then, (\ref{S6E9})
implies that, if $\delta=\frac{1}{4},$ for $t_{0}=t^{\ast}$ and $t\in\left[
t^{\ast},\bar{t}\right]  :$%
\[
\partial_{t}\left(  \int\frac{u^{2}}{2}\eta^{6}\left(  t-t^{\ast}\right)
^{\beta}dx\right)  \leq K\left[  \frac{\left(  t-t^{\ast}\right)  ^{\beta}%
}{\rho^{6}}+\left(  t-t^{\ast}\right)  ^{\beta-2}\right]
\]
where $K$ whose value can change, depends only on $M.$ Assuming that
$\beta=2,$ and integrating in $\left[  t^{\ast},\bar{t}\right]  $ we obtain:
\begin{equation}
\int_{B_{\rho}\left(  0\right)  }u^{2}\left(  x,t\right)  dx\leq K\left[
\frac{\left(  t-t^{\ast}\right)  }{\rho^{6}}+\frac{1}{\left(  t-t^{\ast
}\right)  }\right]  \;\;,\;\;t\in\left[  t^{\ast},\bar{t}\right]
\label{S6E10}%
\end{equation}

This estimate yields:
\[
\int_{B_{\rho}\left(  0\right)  }u^{\frac{7}{6}}\left(  x,t\right)
dx\leq\left[  \int_{B_{\rho}\left(  0\right)  }u^{2}\left(  x,t\right)
dx\right]  ^{\frac{1}{6}}\left[  \int_{B_{\rho}\left(  0\right)  }u\left(
x,t\right)  dx\right]  ^{\frac{5}{6}}\leq K\left[  \frac{\left(  t-t^{\ast
}\right)  ^{\frac{1}{6}}}{\rho}+\frac{1}{\left(  t-t^{\ast}\right)  ^{\frac
{1}{6}}}\right]
\]
with $K$ depending only on $M$. Using (\ref{S6E3}):
\[
\partial_{t}\left(  \int_{\Omega}\varphi\left(  \frac{\left\vert x\right\vert
}{2\rho}\right)  u\left(  x,t\right)  dx\right)  \geq-\frac{\kappa}{\rho^{2}%
}-\frac{K\varepsilon}{\rho^{2}}\left[  \frac{\left(  t-t^{\ast}\right)
^{\frac{1}{6}}}{\rho}+\frac{1}{\left(  t-t^{\ast}\right)  ^{\frac{1}{6}}%
}\right]
\]

Integrating this formula between $t$ and $\bar{t}$ and using that $t\geq
t^{\ast}$ we obtain:%

\[
\frac{1}{2}\int_{B_{2\rho}\left(  0\right)  }u\left(  x,t\right)  dx\leq
m_{0}+\frac{\kappa}{\rho^{2}}\left(  \bar{t}-t^{\ast}\right)  +\frac
{K\varepsilon}{\rho^{3}}\left(  \bar{t}-t^{\ast}\right)  ^{\frac{7}{6}}%
+\frac{K\varepsilon}{\rho^{2}}\left(  \bar{t}-t^{\ast}\right)  ^{\frac{5}{6}}%
\]

Suppose first that $\varepsilon\leq\rho^{\frac{2}{3}}.$ Then, since $\left(
\bar{t}-t^{\ast}\right)  \leq\rho^{2}$ we obtain:
\[
\frac{1}{2}\int_{B_{2\rho}\left(  0\right)  }u\left(  x,t\right)  dx\leq
m_{0}+2\kappa c_{1}+\frac{Kc_{1}^{\frac{7}{6}}\varepsilon}{\rho^{\frac{2}{3}}%
}+\frac{Kc_{1}^{\frac{5}{6}}\varepsilon}{\rho^{\frac{1}{3}}}\leq m_{0}+2\kappa
c_{1}+K\left[  c_{1}^{\frac{7}{6}}+c_{1}^{\frac{5}{6}}\right]
\]

Then:
\[
\int_{B_{2\rho}\left(  0\right)  }u\left(  x,t\right)  dx\leq2m_{0}+4\kappa
c_{1}+2K\left[  c_{1}^{\frac{7}{6}}+c_{1}^{\frac{5}{6}}\right]  \;,\;t\in
\left[  t^{\ast},\bar{t}\right]
\]

Choosing $c_{1}$ small enough we obtain $4\kappa c_{1}+2K\left[  c_{1}%
^{\frac{7}{6}}+c_{1}^{\frac{5}{6}}\right]  <m_{0}.$ This contradicts the
definition of $t^{\ast}$ unless $t^{\ast}=\bar{t}-2c_{1}\rho^{2}.$ Using
(\ref{S6E10}) with $t=\bar{t}$ it then follows that:
\begin{equation}
\sup_{s\in\left[  \bar{t}-c_{1}\rho^{2},\bar{t}\right]  }\int_{B_{\rho}\left(
0\right)  }u^{2}\left(  x,s\right)  dx\leq\frac{c_{2}}{\rho^{4}}%
\;\;\;\;\text{if\ \ }\varepsilon\leq\rho^{\frac{2}{3}} \label{S8E1}%
\end{equation}

Suppose now that $\varepsilon\geq\rho^{\frac{2}{3}}.$ Then (\ref{S6E5})
implies:
\begin{equation}
\sup_{s\in\left[  \bar{t}-c_{1}\rho^{2},\bar{t}\right]  }\int_{B_{\rho}\left(
0\right)  }u^{\frac{7}{6}}\left(  x,s\right)  dx\leq\frac{c_{2}}%
{\varepsilon^{\frac{3}{2}}}\leq\frac{c_{2}}{\rho}\;\;\text{if\ \ }%
\varepsilon\geq\rho^{\frac{2}{3}} \label{S8E2}%
\end{equation}

\end{proof}

\subsection{Boundary estimates.}

Estimates analogous to Propositions \ref{I1}, \ref{I2} can be obtained near
the boundary points. The proof is similar, with the only difference of using
test functions $\varphi$ and $\eta$\ with homogeneous boundary conditions at
$\partial\Omega.$ We formulate the results by completeness, but the details of
the proofs will be omitted.

\bigskip

In the case of the first regularization we have:

\begin{proposition}
\label{BI1}Let $\Lambda$ be as in Lemma \ref{L2}. Given $M>0,$ $\kappa>0$
there exist $m_{0}>0,$ independent of $M,\;\kappa,\;\varepsilon$ and positive
constants $c_{i},\;i=1,2$ depending on $M,\;\kappa$ but independent of
$\varepsilon,$ such that for each $0<\rho\leq1,$ any $\ x_{0}\in\bar{\Omega
}\;$with $d\left(  x_{0}\right)  \leq4\rho$ and any solution $\left(
u,v\right)  $of
\begin{align*}
\partial_{t}u-\Delta u+\nabla\left(  f_{\varepsilon}\left(  u\right)  \nabla
v\right)   &  =0\;\;\text{in\ \ }\left(  x,t\right)  \in\left[  B_{\Lambda
\rho}\left(  x_{0}\right)  \cap\Omega\right]  \times\left(  \bar{t}-c_{1}%
\rho^{2},\bar{t}\right) \\
-\Delta v  &  =f_{\varepsilon}\left(  u\right)  -h\left(  t\right)
\;\;\text{in\ \ }\left(  x,t\right)  \in\left[  B_{\Lambda\rho}\left(
x_{0}\right)  \cap\Omega\right]  \times\left(  \bar{t}-c_{1}\rho^{2},\bar
{t}\right)
\end{align*}
satisfying:
\begin{align*}
\partial_{t}\left(  \int_{\Omega}\psi_{\rho}\left(  x\right)  u\left(
x,t\right)  dx\right)   &  \geq-\frac{\kappa}{\rho^{2}}\;\;,\;\;t\in\left(
\bar{t}-c_{1}\rho^{2},\bar{t}\right) \\
\int_{\left[  B_{\Lambda\rho}\left(  x_{0}\right)  \cap\Omega\right]
}u\left(  x,\bar{t}\right)  dx  &  \leq m_{0}\\
\sup_{t\in\left(  \bar{t}-c_{1}\rho^{2},\bar{t}\right)  }\left\Vert v\left(
\cdot,t\right)  \right\Vert _{L^{6}\left(  \left[  B_{\Lambda\rho}\left(
x_{0}\right)  \cap\Omega\right]  \right)  }  &  \leq M\\
0  &  \leq h\left(  t\right)  \leq M
\end{align*}
with $\psi_{\rho}$ as in Lemma \ref{L2}.

Then, the following inequality holds:
\[
\sup_{s\in\left[  \bar{t}-c_{1}\rho^{2},\bar{t}\right]  }\int_{B_{\rho}\left(
x_{0}\right)  }u^{2}\left(  x,s\right)  dx\leq\frac{c_{2}}{\rho^{4}}\;.
\]

\end{proposition}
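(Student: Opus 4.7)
The plan is to mimic the proof of Proposition \ref{I1} with the cutoff function $\eta$ replaced by one that is compatible with the Neumann boundary condition. Concretely, I would construct $\eta\in C^{1,1}(\bar\Omega)$ satisfying $\eta=1$ on $B_\rho(x_0)\cap\Omega$, $\eta=0$ outside $B_{(\Lambda-1)\rho/2}(x_0)$ (so that $\eta$ and $\nabla\eta$ are supported strictly inside $B_{\Lambda\rho}(x_0)\cap\bar\Omega$), with $\partial_\nu\eta=0$ on $\partial\Omega\cap B_{\Lambda\rho}(x_0)$, and obeying the same scaling $\rho|\nabla\eta|+\rho^2|\nabla^2\eta|\le C$. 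Its existence follows verbatim from the perturbation-of-half-plane construction used in Lemma \ref{L2}. With this choice, when we test (\ref{S2E1}) against $u\eta^6(t-t_0)^\beta$ and integrate by parts over $\Omega$, the boundary terms vanish: both $\partial_\nu u=0$ on $\partial\Omega$ from (\ref{S2E1}) and $\partial_\nu\eta=0$ on $\partial\Omega\cap\operatorname{supp}\eta$, so no boundary contribution survives.

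Next I would reproduce the energy identity (\ref{S5E3}) with all integrals replaced by integrals over $B_{\Lambda\rho}(x_0)\cap\Omega$. Unlike the interior case, the second term in the Green's function decomposition of Lemma \ref{L1} (the reflected logarithm) is no longer smooth on the support of $\eta$, so when we substitute $\nabla v$ via that decomposition, the fourth term $\int F_\varepsilon(u)f_\varepsilon(u)\eta^6(t-t_0)^\beta dx$ now acquires an extra contribution coming from the reflected kernel $\log|\tau(y)-x|$. Exactly as in the proof of Proposition \ref{mBound}, I would symmetrize this contribution. The estimate (\ref{S4E4a}), together with the bound $|(x-\tau(x))\cdot\nabla\eta(x)|\le Cd(x)/\rho^2$ guaranteed by the Neumann condition $\partial_\nu\eta=0$ and the scaling of $\eta$ (so that $\nu(x)\cdot\nabla\eta(x)=O(d(x)/\rho^2)$), shows that the symmetrized reflected term is bounded by $C/\rho^2$ times the cubic integral $\int u^3\eta^6(t-t_0)^\beta dx$, i.e. of the same structural form as the unreflected term, and hence absorbable by the same mechanism.

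The critical cubic term $\int F_\varepsilon(u)f_\varepsilon(u)\eta^6(t-t_0)^\beta dx$ is then controlled by a boundary-adapted version of Lemma \ref{L3}. To obtain it, I would apply the classical $L^2$-Sobolev inequality on $\mathbb{R}^2$ to the even reflection of $u^{3/2}\eta^3$ across a local flattening of $\partial\Omega$. Because $\partial_\nu\eta=0$ on $\partial\Omega$, the reflected function lies in $W^{1,1}(\mathbb{R}^2)$ and its $L^1$ norm of the gradient is exactly twice the original; a small geometric correction coming from the curvature of $\partial\Omega$ is of lower order for $\rho\le\rho_0$ and can be absorbed. Thus the bound
\[
\int u^3\eta^6\,dx\le\frac{9(1+\delta)}{16\pi}\Bigl[\int|\nabla u|^2\eta^6\,dx\Bigr]\Bigl[\int_{\operatorname{supp}\eta\cap\Omega}u\,dx\Bigr]+\frac{C}{\delta^5\rho^6}\Bigl(\int_{\operatorname{supp}\eta\cap\Omega}u\,dx\Bigr)^3\cdot|\operatorname{supp}\eta\cap\Omega|
\]
holds with the same constant $9/(16\pi)$. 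Choosing $m_0=\frac{8\pi}{27}$ as in Proposition \ref{I1}, the mass smallness assumption, propagated forward in time using (\ref{B1E1}) and the analog of the $\varphi$-based estimate, lets us absorb the gradient term, producing a differential inequality of the form $\partial_t(\int\tfrac12 u^2\eta^6(t-t_0)^\beta)\le K(t-t_0)^\beta\rho^{-6}+Km_0(t-t_0)^{\beta-2}/\delta$, whose integration with $\beta=2$ over $[\bar t-2c_1\rho^2,\bar t]$ yields the desired bound $\sup_s\int_{B_\rho(x_0)}u^2\le c_2/\rho^4$.

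The main obstacle is the boundary Sobolev step together with the proper treatment of the reflected kernel; the interior argument uses strongly that $G_0$ is smooth on the support of $\eta$, something that fails near $\partial\Omega$. The resolution is to exploit the cancellation between $(x-y)/|x-y|^2$ and $(x-\tau(y))/|x-\tau(y)|^2$ via the same symmetrization trick employed in Proposition \ref{mBound}, and to use the Neumann condition on $\eta$ both to make the reflection-Sobolev inequality give the sharp constant $1/(4\pi)$ and to gain the extra factor $d(x)/\rho^2$ that controls the mixed reflected term.
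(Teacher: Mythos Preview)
Your overall strategy---replace $\varphi$ and $\eta$ by cutoffs satisfying $\partial_\nu=0$ on $\partial\Omega$ so that all boundary terms in the integrations by parts vanish, then repeat the energy argument of Proposition~\ref{I1}---is correct and is exactly what the paper intends (it omits the proof, saying only that one uses test functions ``with homogeneous boundary conditions at $\partial\Omega$'').

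However, your second paragraph rests on a misreading of how the cubic term arises. In the derivation of (\ref{S5E3}) the Green's function representation of Lemma~\ref{L1} is \emph{never} used. One integrates $\int f_\varepsilon(u)\,\nabla u\cdot\nabla v\,\eta^{6}(t-t_{0})^{\beta}\,dx$ by parts to obtain $-\int F_\varepsilon(u)\,\Delta v\,\eta^{6}(t-t_{0})^{\beta}\,dx$ plus the lower-order terms involving $\nabla\eta$, and then substitutes $-\Delta v=f_\varepsilon(u)-h(t)$ directly from the equation. No reflected kernel $\log|\tau(y)-x|$ ever enters, so there is no ``extra contribution'' to symmetrize; the machinery of Proposition~\ref{mBound} belongs to the mass-change estimate of Section~4, not to the $L^{2}$ argument here. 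Once $\eta$ obeys $\partial_\nu\eta=0$ on $\partial\Omega$ (and one uses $\partial_\nu u=\partial_\nu v=0$), the identity (\ref{S5E3}) carries over to the boundary case without modification, all integrals being over $\Omega$.

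A minor point: your reflection argument for the boundary analogue of Lemma~\ref{L3} does not reproduce the constant $\tfrac{1}{4\pi}$. Even reflection across a flat boundary doubles both $\int w^{2}$ and $\int|\nabla w|$, so the half-space inequality reads $\int_{\Omega}w^{2}\le\tfrac{1}{2\pi}\bigl(\int_{\Omega}|\nabla w|\bigr)^{2}$, and the coefficient in front of the gradient term in the analogue of (\ref{S5E8}) becomes $\tfrac{9(1+\delta)}{16\pi}$ rather than $\tfrac{9(1+\delta)}{32\pi}$. This is harmless: it only forces a smaller value of $m_{0}$ near the boundary, which the statement permits.
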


\bigskip In the case of the second regularization we have:

\begin{proposition}
\label{BI2}Let $\Lambda$ be as in Lemma \ref{L2}. Given $M>1,$ $\kappa
>0\;$there exist $m_{0}>0$ independent of $M,\;\kappa,\;\varepsilon,$ and
positive constants $c_{i},\;i=1,2,\;\varepsilon_{0}>0,\;\rho_{0}>0$ depending
on $M,\;\kappa$ but independent of $\varepsilon,$ such that for each
$0<\rho\leq\rho_{0},\;0<\varepsilon\leq\varepsilon_{0},$ any $\ x_{0}\in
\bar{\Omega}\;$with $d\left(  x_{0}\right)  \leq4\rho$ and any solution
$\left(  u,v\right)  $ of
\begin{align*}
\partial_{t}u-\Delta\left(  u+\varepsilon u^{\frac{7}{6}}\right)
+\nabla\left(  u\nabla v\right)   &  =0\;\;\text{in\ \ }\left(  x,t\right)
\in\left[  B_{\Lambda\rho}\left(  x_{0}\right)  \cap\Omega\right]
\times\left(  \bar{t}-c_{1}\rho^{2},\bar{t}\right) \\
-\Delta v  &  =u-h\left(  t\right)  \;\;\text{in\ \ }\left(  x,t\right)
\in\left[  B_{\Lambda\rho}\left(  x_{0}\right)  \cap\Omega\right]
\times\left(  \bar{t}-c_{1}\rho^{2},\bar{t}\right)
\end{align*}
satisfying:
\begin{align*}
\partial_{t}\left(  \int_{\Omega}\psi_{\rho}\left(  x\right)  u\left(
x,t\right)  dx\right)   &  \geq-\frac{\kappa}{\rho^{2}}-\frac{2\varepsilon
}{\rho^{2}}\int_{B_{\rho}\left(  x_{0}\right)  }u^{\frac{7}{6}}\left(
x,t\right)  dx\;\;,\;t\in\left(  \bar{t}-c_{1}\rho^{2},\bar{t}\right) \\
\int_{\left[  B_{\Lambda\rho}\left(  x_{0}\right)  \cap\Omega\right]
}u\left(  x,\bar{t}\right)  dx  &  \leq m_{0}\;\\
\sup_{t\in\left[  \bar{t}-2c_{1}\rho^{2},\bar{t}\right]  }\left[
\varepsilon^{\frac{3}{2}}\int_{B_{\rho}\left(  x_{0}\right)  }u^{\frac{7}{6}%
}\left(  x,t\right)  dx\right]   &  \leq M\;\;\\
\sup_{t\in\left(  \bar{t}-c_{1}\rho^{2},\bar{t}\right)  }\left\Vert v\left(
\cdot,t\right)  \right\Vert _{L^{6}\left(  \left[  B_{\Lambda\rho}\left(
x_{0}\right)  \cap\Omega\right]  \right)  }  &  \leq M\\
0  &  \leq h\left(  t\right)  \leq M
\end{align*}
with $\psi_{\rho}$ as in Lemma \ref{L2}.

Then, the following inequality holds:
\begin{equation}
\sup_{s\in\left[  \bar{t}-c_{1}\rho^{2},\bar{t}\right]  }\int_{B_{\rho}\left(
x_{0}\right)  }u^{\frac{7}{6}}\left(  x,s\right)  dx\leq\frac{c_{2}}{\rho^{4}}
\label{S6E7a}%
\end{equation}

\end{proposition}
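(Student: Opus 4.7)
The plan is to mimic the proof of Proposition \ref{I2} \emph{verbatim}, with only three modifications to account for the presence of the boundary. First, I replace the radially symmetric cutoff $\eta$ used in the interior argument by a function $\tilde\eta\in C^{1,1}(\bar\Omega)$, supported in $B_{2\rho}(x_0)\cap\bar\Omega$, equal to $1$ on $B_\rho(x_0)\cap\bar\Omega$, satisfying $\partial_\nu\tilde\eta=0$ on $B_{\Lambda\rho}(x_0)\cap\partial\Omega$ and $\rho|\nabla\tilde\eta|+\rho^2|\nabla^2\tilde\eta|\leq C$. Such a function is constructed exactly as $\psi_\rho$ in Lemma \ref{L2}: flatten the boundary by the diffeomorphism $x\mapsto(x-P_\partial(x_0))/\rho$, build the radial cutoff in the model half-plane by even reflection, and then correct by a small function to recover the exact Neumann condition on the curved boundary for $\rho\leq\rho_0$ small.

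Second, I multiply (\ref{S2E4}) by $u\tilde\eta^6(t-t_0)^\beta$ and integrate over $\Omega$. The Neumann conditions $\partial_\nu u=0$, $\partial_\nu(u^{7/6})=0$, $\partial_\nu v=0$ together with $\partial_\nu\tilde\eta=0$ on $B_{\Lambda\rho}(x_0)\cap\partial\Omega$ ensure that every boundary term produced by the integrations by parts vanishes. The resulting identity is identical in form to the one that yields (\ref{S6E9a}), with $\eta$ replaced by $\tilde\eta$. The mass growth bound needed to play the role of (\ref{S5E3a}), (\ref{S6E3}) in the interior argument is now supplied by Proposition \ref{mBound} applied to $\psi_\rho$ from Lemma \ref{L2} rather than by Proposition \ref{P1}.

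Third, and this is the main obstacle, I need a boundary version of the critical Sobolev inequality (Lemma \ref{L3}). For $w=u^{3/2}\tilde\eta^3$ supported near the boundary, even reflection across the locally flattened boundary produces a function $\tilde w\in W^{1,1}(\mathbb{R}^2)$ with $\|\tilde w\|_{L^2(\mathbb{R}^2)}^2=2\|w\|_{L^2(\Omega)}^2$ and $\|\nabla\tilde w\|_{L^1(\mathbb{R}^2)}=2\|\nabla w\|_{L^1(\Omega)}$. Applying the classical inequality $\|\tilde w\|_{L^2}^2\leq\frac{1}{4\pi}\|\nabla\tilde w\|_{L^1}^2$ yields an analogue of Lemma \ref{L3} with the constant $9/16\pi$ replaced by $9/8\pi$ (i.e.\ doubled). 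The curvature of $\partial\Omega$ introduces a distortion of the reflection map whose Jacobian is $1+O(\rho)$, so the constant in this reflected inequality differs from the sharp one by at most $O(\rho_0)$, which can be absorbed into the $\delta$ parameter for $\rho_0$ small enough.

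With these three changes the bootstrap argument on $t^*$ from the proof of Proposition \ref{I2} applies unchanged, provided we choose the smaller threshold $m_0=\frac{2\pi}{27}$ (half of the interior value $\frac{4\pi}{27}$) to compensate for the doubled Sobolev constant in the reflected inequality. Exactly as in the interior case, one obtains an $L^2$ bound of the form (\ref{S6E10}) on the region $B_\rho(x_0)\cap\Omega$ when $\varepsilon\leq\rho^{2/3}$, and interpolation with the $L^1$ bound gives (\ref{S6E7a}); when $\varepsilon\geq\rho^{2/3}$, the conclusion (\ref{S6E7a}) is immediate from hypothesis (\ref{S6E5}) as in (\ref{S8E2}). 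The rest of the computation (estimates of the five extra terms coming from $v$, the lower-order term $\frac{7\varepsilon}{6}\int\tilde\eta^6 u^{1/6}(\nabla u)^2$, and the cross term $\varepsilon\int\tilde\eta^5 u\nabla\tilde\eta\nabla(u^{7/6})$) transfers with only cosmetic changes.
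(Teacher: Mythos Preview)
Your proposal is correct and follows exactly the approach the paper indicates: the paper does not give a detailed proof of Proposition~\ref{BI2} but simply states that ``the proof is similar, with the only difference of using test functions $\varphi$ and $\eta$ with homogeneous boundary conditions at $\partial\Omega$,'' and you have filled in precisely those details. Your identification of the three needed modifications---the Neumann-compatible cutoff $\tilde\eta$ built via the Lemma~\ref{L2} construction, the vanishing of boundary terms in the integrations by parts, and the reflected Sobolev inequality with doubled constant (hence halved mass threshold $m_0$)---is accurate and complete; the only cosmetic point is that to control $\int_{\operatorname{supp}(\tilde\eta)}u=\int_{B_{2\rho}(x_0)\cap\Omega}u$ backward in time you should invoke the Lemma~\ref{L2} test function at scale $2\rho$ rather than $\rho$, mirroring the use of $\varphi(|x|/2\rho)$ in the interior case.
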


\section{The limit $\varepsilon\rightarrow0:$ Measured valued solutions.}

The regularity results above allow to obtain convergence of the solutions of
the problems (\ref{S2E1}), (\ref{S2E2}) and (\ref{S2E4}), (\ref{S2E5}) to some
measures $\mu$ whose ''singular set'' is a finite set of points for each time
$t.$

We will denote in the following as $M^{+}\left(  \Omega\times\mathbb{R}%
^{+}\right)  $ the space of positive Radon measures in $\Omega\times
\mathbb{R}^{+}.$ We will also denote as $N$ the number:
\begin{equation}
N=\left[  \frac{4\int_{\Omega}u_{0}dx}{m_{0}}\right]  \label{T1E1}%
\end{equation}
where $\left[  x\right]  $ denotes the integer part of $x\geq0$ and $m_{0}$ is
as in Proposition \ref{I1}.

\bigskip

We begin considering the limit of the solutions of (\ref{S2E1}), (\ref{S2E2}):

\begin{proposition}
\label{M1}Suppose that $u^{\varepsilon}$ is the solution of (\ref{S2E1}),
(\ref{S2E2}) with initial value $u^{\varepsilon}\left(  x,0\right)
=u_{0}\left(  x\right)  ,\;x\in\Omega$ and $\varepsilon>0.$ Then, there exist
Radon measures $\mu,\mu^{-}\in M^{+}\left(  \Omega\times\mathbb{R}^{+}\right)
$ and a subsequence $\left\{  \varepsilon_{k}\right\}  _{k=1}^{\infty}$ such
that:
\begin{align*}
u^{\varepsilon_{k}} &  \rightharpoonup\mu\;\;\text{,\ \ }f_{\varepsilon_{k}%
}\left(  u^{\varepsilon_{k}}\right)  \rightharpoonup\mu^{-}\text{\ \ as\ \ }%
k\rightarrow\infty\\
\mu^{-} &  \leq\mu
\end{align*}
in the $\ast-$weak topology. Moreover, the measures $\mu,\ \mu^{-}\ $ can be
written as the product:
\[
d\mu=d\mu_{t}dt\ \ \ ,\ \ d\mu^{-}=d\mu_{t}^{-}dt
\]
with
\begin{equation}
\mu_{t}\left(  \Omega\right)  =\int_{\Omega}u_{0}\left(  x\right)
dx\ \ \ ,\ \ \mu_{t}^{-}\leq\mu_{t}\ .\label{T1E1e}%
\end{equation}

We define the singular set of $\mu,$ and denote it as $S,$ as the set of
points $\left(  x_{0},t_{0}\right)  \in\bar{\Omega}\times\left[
0,\infty\right)  $ where:
\begin{equation}
\lim_{\delta\rightarrow0}\inf\left[  \frac{\mu\left(  B_{\rho}\left(
x_{0}\right)  \times\left[  t_{0},t_{0}+\delta\right]  \right)  }{\delta
}\right]  =\lim_{\delta\rightarrow0}\inf\left[  \frac{1}{\delta}\int_{t_{0}%
}^{t_{0}+\delta}\mu_{t}\left(  B_{\rho}\left(  x_{0}\right)  \right)
dt\right]  \geq\frac{m_{0}}{2}\;\;\text{for any }\rho>0\label{T1E1a}%
\end{equation}
with $m_{0}$ as in (\ref{S5E3b}). Then we can write:
\begin{equation}
\mu=\bar{\mu}+u\ \ \ ,\ \ \ \mu^{-}=\bar{\mu}^{-}+u\label{T1E1b}%
\end{equation}
where $\bar{\mu},\bar{\mu}^{-}\in M^{+}\left(  \Omega\times\mathbb{R}%
^{+}\right)  $ are supported in the set $S$ and $u\in C^{\infty}\left(
\Omega\times\mathbb{R}^{+}\setminus S\right)  \cap L^{1}\left(  \Omega
\times\left[  0,T\right)  \right)  $ for any $T<\infty.$ Moreover, for
$a.e.\;t_{0}\in\left[  0,\infty\right)  $ the set $S_{t_{0}}\equiv
S\cap\left\{  \left(  x,t\right)  :t=t_{0}\right\}  $ contains at most $N$
points and the measures $\mu_{t},\ \mu_{t}^{-}$ can be represented as:
\begin{align}
\mu_{t} &  =\sum_{x_{j}\left(  t\right)  \in S_{t}}\alpha_{j}\left(  t\right)
\delta_{x_{j}\left(  t\right)  }+u\left(  \cdot,t\right)
dx\;\;,\;\;a.e.\;t\in\left[  0,\infty\right)  \label{T1E1c}\\
\mu_{t}^{-} &  =\sum_{x_{j}\left(  t\right)  \in S_{t}}\beta_{j}^{-}\left(
t\right)  \delta_{x_{j}\left(  t\right)  }+u\left(  \cdot,t\right)
dx\;\;,\;\;a.e.\;t\in\left[  0,\infty\right)  \label{T1E1d}%
\end{align}
with $\alpha_{j}\left(  t\right)  >0,\ \alpha_{j}\left(  t\right)  \geq
\beta_{j}^{-}\left(  t\right)  ,\;u\left(  \cdot,t\right)  \in L^{1}\left(
\bar{\Omega}\right)  ,\;\int_{\bar{\Omega}}u\left(  x,t\right)  dx\leq
\int_{\Omega}u_{0}\left(  x\right)  dx.$
\end{proposition}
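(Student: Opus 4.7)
The plan is to extract a weakly-$\ast$ convergent subsequence using the uniform $L^{1}$ bound coming from (\ref{S2E6}), to disintegrate the limit in time using the local mass-change estimates of Propositions \ref{P1} and \ref{mBound}, to bound the number of concentration points by combining that disintegration with the definition of $S$, and finally to upgrade convergence to $C^{\infty}$ off $S$ via the interior/boundary $L^{2}$ estimates of Propositions \ref{I1} and \ref{BI1}.

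First, (\ref{S2E6}) gives $\|u^{\varepsilon}(\cdot,t)\|_{L^{1}(\Omega)}=\|u_{0}\|_{L^{1}}$, so $u^{\varepsilon}\,dx\,dt$ and $f_{\varepsilon}(u^{\varepsilon})\,dx\,dt$ are uniformly of bounded variation on $\bar{\Omega}\times[0,T]$ for each $T<\infty$. Banach--Alaoglu and a diagonal argument furnish $\varepsilon_{k}\to 0$ with $u^{\varepsilon_{k}}\rightharpoonup\mu$ and $f_{\varepsilon_{k}}(u^{\varepsilon_{k}})\rightharpoonup\mu^{-}$; the pointwise bound $0\le f_{\varepsilon}(u)\le u$ gives $\mu^{-}\le\mu$. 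For the disintegration $d\mu=d\mu_{t}\,dt$, I would combine Propositions \ref{P1} and \ref{mBound} through a partition of unity subordinate to a covering of $\bar{\Omega}$ by balls of fixed size $\rho$, obtaining that for every $\varphi\in C^{1,1}(\bar{\Omega})$ with $\partial_{\nu}\varphi=0$ on $\partial\Omega$ the map $t\mapsto\int_{\Omega}\varphi\,u^{\varepsilon}(\cdot,t)\,dx$ is Lipschitz uniformly in $\varepsilon$. Passing to the limit and using density yields a weakly continuous representative $t\mapsto\mu_{t}$ with $\mu_{t}(\Omega)=\int_{\Omega}u_{0}\,dx$ at every $t$, and analogously for $\mu_{t}^{-}$.

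Next, I would bound the cardinality of $S_{t_{0}}$. If $(x_{1},t_{0}),\dots,(x_{k},t_{0})\in S$ are distinct, pick $\rho>0$ small enough that the sets $\overline{B_{2\rho}(x_{j})}\cap\bar{\Omega}$ are pairwise disjoint. The weak continuity established above converts the averaged condition (\ref{T1E1a}) into the pointwise bound $\mu_{t_{0}}(B_{\rho}(x_{j}))\ge m_{0}/2$ at every continuity point $t_{0}$. Summing over $j$ and using $\mu_{t_{0}}(\Omega)=\int u_{0}\,dx$ yields $k\,m_{0}/2\le\int u_{0}\,dx$, i.e.\ $k\le N$ in the sense of (\ref{T1E1}). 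Now fix $(x_{0},t_{0})\notin S$. By the definition of $S$ and the time continuity, there exist $\rho>0$ and $\delta>0$ such that $\int_{B_{4\rho}(x_{0})\cap\Omega}u^{\varepsilon_{k}}(\cdot,t)\,dx\le m_{0}$ for $t\in[t_{0}-\delta,t_{0}+\delta]$ and all large $k$. The remaining hypotheses of Proposition \ref{I1} (or \ref{BI1} near the boundary) are met: the local mass-change bound is Proposition \ref{P1} (or \ref{mBound}); the uniform $L^{6}$ bound on $v^{\varepsilon}$ follows from Neumann regularity for (\ref{S2E2}) together with $\|f_{\varepsilon}(u^{\varepsilon})\|_{L^{1}}\le\|u_{0}\|_{L^{1}}$, which gives $v^{\varepsilon}$ uniformly in $W^{1,q}$ for every $q<2$ and hence, by Sobolev embedding, in every $L^{p}$. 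Thus $u^{\varepsilon_{k}}$ is uniformly bounded in $L^{2}$ on a smaller parabolic cylinder, and a standard parabolic bootstrap, treating $\nabla\cdot(f_{\varepsilon}(u)\nabla v)$ as a lower-order term since $\nabla v\in C^{\alpha}$ once $u\in L^{2}$ locally, upgrades this to uniform $C^{\infty}$ bounds away from $S$. The limit $u$ therefore lies in $C^{\infty}(\bar{\Omega}\times\mathbb{R}^{+}\setminus S)$, and writing $\mu_{t}$ as the sum of its atomic part on the finite set $S_{t}$ and its diffuse part produces (\ref{T1E1c}) and (\ref{T1E1d}), with $\alpha_{j}(t)\ge\beta_{j}^{-}(t)$ inherited from $\mu^{-}\le\mu$ and $\int u(\cdot,t)\,dx\le\int u_{0}\,dx$ from Fatou.

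The main obstacle is reconciling the \emph{averaged} definition of $S$ in (\ref{T1E1a}) with the \emph{pointwise} cardinality bound on $S_{t_{0}}$ and with the localization needed to apply Propositions \ref{I1} and \ref{BI1}. This is precisely what forces the uniform Lipschitz control on test-function integrals via the local mass-change estimates, which in turn provides weak continuity of $t\mapsto\mu_{t}$ outside a null set and reconciles the two notions of concentration. A secondary technical point is the $\varepsilon$-uniformity of the $L^{6}$ bound on $v^{\varepsilon}$, which ultimately rests on $\|f_{\varepsilon}(u^{\varepsilon})\|_{L^{1}}\le\|u_{0}\|_{L^{1}}$ built into the regularization (\ref{S2E3}).
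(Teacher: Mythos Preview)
Your proposal is correct, but it takes a genuinely different route from the paper in two places.

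\textbf{Disintegration.} You obtain $d\mu=d\mu_t\,dt$ by first proving that $t\mapsto\int_\Omega\varphi\,u^{\varepsilon}(\cdot,t)\,dx$ is uniformly Lipschitz for all $C^{1,1}$ Neumann test functions (this is indeed a mild extension of the proofs of Propositions~\ref{P1} and~\ref{mBound}, which go through for any such $\varphi$ with constants depending on $\|\varphi\|_{C^{1,1}}$). The paper instead uses only the uniform bound $\int_\Omega u^\varepsilon(\cdot,t)\,dx=\int u_0$, shows that for each test function the induced signed measure on $[0,\infty)$ is absolutely continuous with respect to $dt$, and then invokes Radon--Nikodym and Riesz--Markov on a countable dense family. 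Your approach buys a genuinely weakly-continuous (indeed Lipschitz) representative $t\mapsto\mu_t$ defined for \emph{every} $t$; the paper's gives $\mu_t$ only for a.e.\ $t$ but needs no time-regularity input.

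\textbf{Handling the singular set.} You convert the averaged condition (\ref{T1E1a}) into a pointwise bound $\mu_{t_0}(B_\rho(x_j))\ge m_0/2$ via your weak continuity, and similarly produce a whole time interval on which $\int_{B_{4\rho}}u^{\varepsilon_k}\le m_0$ before invoking Proposition~\ref{I1}. The paper works directly with the averaged quantities throughout: for the cardinality bound it simply sums the inequalities $\frac{1}{\delta}\mu(B_\rho(x_i)\times[t_0,t_0+\delta])\ge m_0/4$ over disjoint balls and compares with $\int u_0$; for regularity it extracts, from the averaged smallness and weak convergence, a sequence of good times $t_{n,k}\downarrow t_0$ at which $\int_{B_{4\rho}}u^{\varepsilon_k}(\cdot,t_{n,k})\le m_0$, applies Proposition~\ref{I1} with $\bar t=t_{n,k}$, and lets $t_{n,k}\to t_0$. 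Your route is cleaner once the Lipschitz continuity is in hand; the paper's is more economical in that it never needs to upgrade the averaged condition. One small caution: your passage from the averaged condition on $\mu_t(B_\rho)$ to a pointwise bound requires sandwiching the characteristic function between continuous test functions (weak continuity of $\mu_t$ does not directly give continuity of $t\mapsto\mu_t(B_\rho)$); you acknowledge this in your final paragraph, but it should be made explicit in the argument.
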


\begin{proof}
We will just make the arguments for points at the interior of $\Omega,$ since
in the case of boundary points the arguments are similar. We define the family
of measures $\mu^{\varepsilon}\in M^{+}\left(  \bar{\Omega}\times
\mathbb{R}^{+}\right)  $ by means of:
\[
\mu^{\varepsilon}\left(  B\right)  =\int_{B}u^{\varepsilon}\left(  x,t\right)
dxdt
\]
for any Borel set $B\subset\bar{\Omega}\times\left[  0,\infty\right)  .$
Taking a subsequence we have $\mu^{\varepsilon_{k}}\rightharpoonup\mu$ as
$k\rightarrow\infty.$ Using the mass conservation property for (\ref{S2E4}),
(\ref{S2E5}) we then have:%
\begin{equation}
\left\vert \int_{\Omega\times A}\varphi d\mu\right\vert \leq\left\Vert
u_{0}\right\Vert _{L^{1}\left(  \Omega\right)  }\int_{A}\left\Vert
\varphi\left(  \cdot,t\right)  \right\Vert _{L^{\infty}\left(  \Omega\right)
}dt\ \ .\ \label{RN2}%
\end{equation}

For any $\varphi\in C_{0}\left(  \bar{\Omega}\times\mathbb{R}^{+}\right)  $ we
define a signed measure $\omega_{\varphi}\in M\left(  \mathbb{R}^{+}\right)  $
by means of:
\begin{equation}
\omega_{\varphi}\left(  A\right)  =\int_{A}\int_{\Omega}\varphi\left(
x,t\right)  d\mu\ \ . \label{RN2a}%
\end{equation}

Notice that (\ref{RN2}) implies:
\begin{equation}
\left\vert \omega_{\varphi}\left(  A\right)  \right\vert \leq\left\Vert
u_{0}\right\Vert _{L^{1}\left(  \Omega\right)  }\left\Vert \varphi\right\Vert
_{L^{\infty}\left(  \Omega\times\mathbb{R}^{+}\right)  }\int_{A}dt\ \ .
\label{RN3}%
\end{equation}

Similar estimates hold for $\omega_{\varphi}^{+},\;\omega_{\varphi}^{-}.$
Therefore the measure $\left\vert \omega_{\varphi}\right\vert =\omega
_{\varphi}^{+}+\omega_{\varphi}^{-}$ is absolutely continuous with respect to
the Lebesgue measure in $\left[  0,\infty\right)  $. It then follows from the
Radon-Nikodym theorem that:
\begin{equation}
\omega_{\varphi}\left(  A\right)  =\int_{A}g_{\varphi}dt \label{RN3a}%
\end{equation}
for some $g_{\varphi}\in L^{1}\left(  \mathbb{R}^{+}\right)  .$ Moreover, due
to (\ref{RN3}) we have
\begin{equation}
\left\Vert g_{\varphi}\right\Vert _{L^{\infty}\left(  \mathbb{R}^{+}\right)
}\leq\left\Vert u_{0}\right\Vert _{L^{1}\left(  \Omega\right)  }\left\Vert
\varphi\right\Vert _{L^{\infty}\left(  \Omega\times\mathbb{R}^{+}\right)  }
\label{RN4}%
\end{equation}

Notice that:
\begin{equation}
g_{\alpha_{1}\varphi_{1}+\alpha_{2}\varphi_{2}}=\alpha_{1}g_{\varphi_{1}%
}+\alpha_{2}g_{\varphi_{2}} \label{RN5}%
\end{equation}
for any $\alpha_{1},\alpha_{2}\in\mathbb{R}$ and $\varphi_{1},\varphi_{2}\in
C_{0}\left(  \bar{\Omega}\times\mathbb{R}^{+}\right)  .$

Let us consider a countable dense linear space $\mathcal{S}\subset C\left(
\bar{\Omega}\right)  .$ For any $\psi\in\mathcal{S},$ $T>0$ we define
$\varphi_{T}=\psi\zeta\left(  t-T\right)  ,$ $T>0,\;$with $\zeta\in C^{\infty
}\left(  \mathbb{R}\right)  ,$ $\zeta\left(  t\right)  =1,\;t\in\left(
-\infty,0\right]  ,\;\zeta\left(  t\right)  =0$ for $t\in\left[
1,\infty\right)  .$ We define also:
\[
L_{t,T}\left[  \psi\right]  =g_{\varphi_{T}}\left(  t\right)  \;\;,\;\;t\in
\mathcal{U}%
\]
for a set $\mathcal{U}\subset\left[  0,\infty\right)  $ whose complement has
zero measure.

Due to (\ref{RN4}), (\ref{RN5}) it follows that for each $t\in\mathcal{U},$
$L_{t,T}$ is a continuous linear functional on $\mathcal{S}$ that can be
extended by density to a continuous linear functional in $C\left(  \bar
{\Omega}\right)  .$ On the other hand, we have:
\[
L_{t,T_{1}}\left[  \psi\right]  =L_{t,T_{2}}\left[  \psi\right]
\;\;,\;\;t\in\left[  0,T_{1}\right]  \cap\mathcal{U},\text{\ }T_{1}\leq T_{2}%
\]
due to (\ref{RN2a}), (\ref{RN3a}). We will denote $L_{t}\left[  \psi\right]
=\lim_{T\rightarrow\infty}L_{t,T}\left[  \psi\right]  .$ This defines a family
of continuous linear functionals $L_{t}$ for $a.e.\;t\in\left[  0,\infty
\right)  .$ Therefore, Riesz-Markov Theorem implies that there exists a family
of signed measures $\mu_{t}\in M\left(  \bar{\Omega}\right)  $ defined
$a.e.\;t\in\left[  0,\infty\right)  $ such that:
\begin{align*}
L_{t}\left[  \psi\right]   &  =\int_{\Omega}\psi d\mu_{t}\;\;,\;\;\psi\in
C\left(  \bar{\Omega}\right)  \;,\;a.e.\;t\in\left[  0,\infty\right) \\
\left\vert \mu_{t}\right\vert  &  \leq\left\Vert u_{0}\right\Vert
_{L^{1}\left(  \Omega\right)  }\;\;,\;a.e.\;t\in\left[  0,\infty\right)
\end{align*}

We now remark that for any $\varepsilon>0,\;t_{0}\in\left[  0,\infty\right)  $
and any smooth cutoff function $\eta$ such that $\eta\left(  t\right)
=1,\;t\in\left[  t_{0}-\varepsilon,t_{0}+\varepsilon\right]  ,\;\eta\left(
t\right)  =0,\;\left\vert t-t_{0}\right\vert \geq2\varepsilon$ and any
$\psi\in C\left(  \bar{\Omega}\right)  $ we have:
\[
g_{\eta\psi}\left(  t\right)  =g_{\varphi_{T}}\left(  t\right)  =L_{t}\left[
\psi\right]  =\int_{\Omega}\psi d\mu_{t}\;\;,\;\;t\in\left[  t_{0}%
-\varepsilon,t_{0}+\varepsilon\right]  \cap\mathcal{U}\;
\]
for $T$ sufficiently large. This is just a consequence of (\ref{RN2a}),
(\ref{RN3a}). A density argument then yields:
\[
\int_{\left[  0,\infty\right)  }\int_{\bar{\Omega}}\varphi\left(  x,t\right)
d\mu=\int_{\left[  0,\infty\right)  }\int_{\bar{\Omega}}\varphi\left(
x,t\right)  d\mu_{t}dt\;\;,\;\;\varphi\in C_{0}\left(  \bar{\Omega}%
\times\mathbb{R}^{+}\right)
\]
or shortly $d\mu=d\mu_{t}dt.$

Moreover, assuming that $\varphi=\varphi\left(  t\right)  $ and using
\begin{align*}
\int_{\left[  0,\infty\right)  }\varphi\left(  t\right)  \int_{\bar{\Omega}%
}u_{0}\left(  x\right)  dxdt  &  =\int_{\bar{\Omega}\times\left[
0,\infty\right)  }\varphi\left(  t\right)  u^{\varepsilon_{k}}\left(
x,t\right)  dxdt\\
&  =\int_{\bar{\Omega}\times\left[  0,\infty\right)  }\varphi\left(  t\right)
d\mu^{\varepsilon_{k}}\rightarrow\int_{\left[  0,\infty\right)  }\mu
_{t}\left(  \bar{\Omega}\right)  \varphi\left(  t\right)  dt
\end{align*}
it follows that $\mu_{t}\left(  \Omega\right)  =\int_{\Omega}u_{0}\left(
x\right)  dx,$ $a.e.\;t\in\left[  0,\infty\right)  .$

We now define the singular set by means of (\ref{T1E1a}) and decompose $\mu$
as in (\ref{T1E1b}) with $\nu=\mu\chi_{S},$ with $\chi_{S}$ denoting the
characteristic function of $S.$ We now show that $u=\mu-\nu$ is a smooth
function. To this end, notice that by definition of $S$ we have, for any
$\left(  x_{0},t_{0}\right)  \in\left[  \bar{\Omega}\setminus S\right]
\times\left[  0,\infty\right)  $ there exists $\rho=\rho\left(  x_{0}%
,t_{0}\right)  $ such that:
\[
\lim\inf_{\delta\rightarrow0}\frac{1}{\delta}\int_{t_{0}}^{t_{0}+\delta}%
\mu_{t}\left(  B_{4\rho}\left(  x_{0}\right)  \right)  dt<\frac{m_{0}}{2}%
\]

Then, there exists a sequence $\delta_{n}\rightarrow0$ such that:%

\[
\frac{1}{\delta_{n}}\int_{t_{0}}^{t_{0}+\delta_{n}}\mu_{t}\left(  B_{4\rho
}\left(  x_{0}\right)  \right)  dt\leq\frac{3m_{0}}{4}%
\]
and the weak convergence $u^{\varepsilon_{k}}\rightharpoonup\mu$ yields:
\[
\frac{1}{\delta_{n}}\int_{t_{0}}^{t_{0}+\delta_{n}}\int_{B_{4\rho}\left(
x_{0}\right)  }u^{\varepsilon_{k}}\left(  x,t\right)  dxdt\leq m_{0}%
\]
for $k$ sufficiently large. Therefore, there exists a sequence $t_{n,k}%
\in\left[  t_{0},t_{0}+\delta_{n}\right]  $ such that:
\begin{equation}
\int_{B_{4\rho}\left(  x_{0}\right)  }u^{\varepsilon_{k}}\left(
x,t_{n,k}\right)  dx\leq m_{0} \label{T1E2}%
\end{equation}
for $k$ sufficiently large. We can now apply Proposition \ref{I1} to the
functions $u^{\varepsilon_{k}}$ for large $k.$ Indeed, notice that
(\ref{S5E3a}) holds due to (\ref{S3E1}) in Proposition \ref{P1} and, on the
other hand, $\left\|  v^{\varepsilon}\right\|  _{L^{6}\left(  \Omega\right)
}\leq C\int u_{0}$ due to classical regularity theory for the Poisson
equation, whence (\ref{S5E3c}) also holds. Finally (\ref{T1E2}) implies
(\ref{S5E3b}). Proposition \ref{I1} then yields:
\[
\sup_{s\in\left[  t_{n,k}-c_{1}\rho^{2},t_{n,k}\right]  }\int_{B_{\rho}\left(
x_{0}\right)  }\left(  u^{\varepsilon_{k}}\right)  ^{2}\left(  x,t\right)
dx\leq\frac{c_{2}}{\rho^{4}}%
\]
and since $\delta_{n}$ can be assumed to be arbitrarily close to $0$ we have:
\[
\sup_{s\in\left[  t_{0}-\frac{c_{1}\rho^{2}}{2},t_{0}\right]  }\int_{B_{\rho
}\left(  x_{0}\right)  }\left(  u^{\varepsilon_{k}}\right)  ^{2}\left(
x,t\right)  dx\leq\frac{c_{2}}{\rho^{4}}%
\]

Notice that the order of the limits is, first we fix $\rho,$ then we choose
$n$ to have $\delta_{n}\leq\frac{c_{1}\rho^{2}}{2},$ and then $k$ large.

Classical regularity results for parabolic equations (cf. \cite{DB})\texttt{
}then imply that\newline $u^{\varepsilon_{k}}\in C^{\infty}\left(
B_{\frac{\rho}{2}}\left(  x_{0}\right)  \times\left[  t_{0}-\frac{c_{1}%
\rho^{2}}{4},t_{0}\right]  \right)  $ as well as uniform estimates for the
derivatives of $u^{\varepsilon_{k}}$ in the same set. Then, $u\in C^{\infty
}\left(  \left[  \bar{\Omega}\setminus S\right]  \times\mathbb{R}^{+}\right)
.$ Moreover, using the estimate $\int_{\Omega\times\left[  0,T\right]  }u\leq
T\int_{\Omega}u_{0}$ for any $T>0,$ and the positivity of $u,$ we obtain $u\in
L^{1}\left(  \left(  \Omega\times\mathbb{R}^{+}\right)  \right)  $ for any
$T<\infty.$

We now prove that for $a.e.\;t_{0}\in\left[  0,\infty\right)  $ the set
$S_{t_{0}}$ contains at most $N$ points. Suppose that $S_{t_{0}}$ contains at
least $\left(  N+1\right)  $ points $x_{1},...,x_{N+1}.$ Let us choose
$\rho<\frac{1}{2}\min_{i\neq j}\left\{  \left|  x_{i}-x_{j}\right|  \right\}
.$ Due to the definition of the singular set there exists $\delta$ (depending
on $\rho$) such that:
\begin{equation}
\frac{1}{\delta}\mu\left(  B_{\rho}\left(  x_{i}\right)  \times\left[
t_{0},t_{0}+\delta\right]  \right)  =\frac{1}{\delta}\int_{t_{0}}%
^{t_{0}+\delta}\mu_{t}\left(  B_{\rho}\left(  x_{i}\right)  \right)
dt\geq\frac{m_{0}}{4} \label{I1E1}%
\end{equation}
for $i=1,...,\left(  N+1\right)  .$

Since the balls $\left\{  B_{\rho}\left(  x_{i}\right)  \right\}  _{i=1}%
^{N+1}$ are disjoint, we have:
\begin{align*}
\frac{1}{\delta}\sum_{i=1}^{N+1}\mu\left(  B_{\rho}\left(  x_{i}\right)
\times\left[  t_{0},t_{0}+\delta\right]  \right)  =\frac{1}{\delta}\mu\left(
\left[  \bigcup_{i=1}^{N+1}B_{\rho}\left(  x_{i}\right)  \right]
\times\left[  t_{0},t_{0}+\delta\right]  \right)  \leq\\
\leq\frac{1}{\delta}\mu\left(  \bar{\Omega}\times\left[  t_{0},t_{0}%
+\delta\right]  \right)  =\frac{1}{\delta}\int_{t_{0}}^{t_{0}+\delta}\mu
_{t}\left(  \bar{\Omega}\right)  dt=\int_{\bar{\Omega}}u_{0}dx
\end{align*}
due to (\ref{T1E1e}).

Using (\ref{T1E1}), (\ref{I1E1}) it then follows that:
\[
\frac{m_{0}\left(  N+1\right)  }{4}\leq\int_{\bar{\Omega}}u_{0}dx<\frac
{m_{0}\left(  N+1\right)  }{4}%
\]
that yields a contradiction. Then $S_{t_{0}}$ contains at most $N$ points for
$a.e.\;t_{0}\in\left[  0,\infty\right)  .$

Since a measure concentrated in a finite set of points is a sum of Dirac
masses\texttt{ }it then follows that:
\[
\mu_{t}\chi_{S_{t_{0}}}=\sum_{x_{j}\left(  t\right)  \in S_{t}}\alpha
_{j}\left(  t\right)  \delta_{x_{i}\left(  t\right)  }\;\;,\;\;a.e.\;t\in
\left[  0,\infty\right)
\]
for at most $N$ points $\left\{  x_{j}\left(  t\right)  \right\}  $ and
positive numbers $\left\{  \alpha_{j}\left(  t\right)  \right\}  .$ On the
other hand, if\ $\left(  x_{0},t_{0}\right)  $ is not in the singular set, we
can represent $\mu$ in $B_{\frac{\rho}{2}}\left(  x_{0}\right)  \times\left[
t_{0}-\frac{c_{1}\rho^{2}}{4},t_{0}\right]  $ as a smooth function. Then:
\[
\mu_{t}\left(  1-\chi_{S_{t_{0}}}\right)  =u\left(  \cdot,t\right)  dx
\]
and:
\[
\mu_{t}=\sum_{x_{j}\left(  t\right)  \in S_{t}}\alpha_{j}\left(  t\right)
\delta_{x_{i}\left(  t\right)  }+u\left(  \cdot,t\right)
dx\;\;,\;\;a.e.\;t\in\left[  0,\infty\right)
\]

This gives (\ref{T1E1c}) and concludes the proof of Proposition \ref{M1}.

The convergence of a subsequence $\left\{  f_{\varepsilon_{k}}\left(
u^{\varepsilon_{k}}\right)  \right\}  $ to $\mu^{-}$ as well as the properties
of this measure can be obtained exactly as for the measure $\mu.$ The property
$\mu^{-}\leq\mu$ is just a consequence of the inequalities $f_{\varepsilon
}\left(  u^{\varepsilon}\right)  \leq u^{\varepsilon}.$ The fact that $\mu
^{-}$ can be represented by means of $u$ at the regular points is a
consequence of the fact that $u^{\varepsilon}$ is bounded at the regular set,
and therefore $f_{\varepsilon}\left(  u^{\varepsilon}\right)  \rightarrow u,$ there.
\end{proof}

\bigskip

We now prove a result similar to Proposition \ref{M1} for the regularization
(\ref{S2E4}), (\ref{S2E5}).

\begin{proposition}
\label{M2}Suppose that $u^{\varepsilon}$ is the solution of (\ref{S2E4}),
(\ref{S2E5}) with initial value $u^{\varepsilon}\left(  x,0\right)
=u_{0}\left(  x\right)  ,\;x\in\Omega$ and $\varepsilon>0.$ Then, there exist
a Radon measure $\mu\in M^{+}\left(  \Omega\times\mathbb{R}^{+}\right)  $ and
a subsequence $\left\{  \varepsilon_{k}\right\}  _{k=1}^{\infty}$ such that:
\[
u^{\varepsilon_{k}}\rightharpoonup\mu\;\ ,\ \ u^{\varepsilon_{k}}%
+\varepsilon_{k}\left(  u^{\varepsilon_{k}}\right)  ^{\frac{7}{6}%
}\rightharpoonup\mu^{+}\ \ \text{as\ \ }k\rightarrow\infty\ \ ,\ \ \mu\leq
\mu^{+}%
\]
in the $\ast-$weak topology. Moreover, the measures $\mu,\ \mu^{+}$ can be
written as the product:
\[
d\mu=d\mu_{t}dt\ \ \ ,\ \ \ d\mu^{+}=d\mu_{t}^{+}dt\ \ ,\ \ \mu_{t}\leq\mu
_{t}^{+}%
\]
where the family of Radon measures $\left\{  \mu_{t}\right\}  _{t\geq0}$
satisfy $a.e.\;t\in\left[  0,\infty\right)  :$%
\[
\mu_{t}\left(  \Omega\right)  =\int_{\Omega}u_{0}\left(  x\right)  dx\
\]
We define the singular set of $\mu,$ and denote it as $S,$ as the set of
points $\left(  x_{0},t_{0}\right)  \in\bar{\Omega}\times\left[
0,\infty\right)  $ where:
\[
\lim_{\delta\rightarrow0}\inf\left[  \frac{\mu\left(  B_{\rho}\left(
x_{0}\right)  \times\left[  t_{0},t_{0}+\delta\right]  \right)  }{\delta
}\right]  =\lim_{\delta\rightarrow0}\inf\left[  \frac{1}{\delta}\int_{t_{0}%
}^{t_{0}+\delta}\mu_{t}\left(  B_{\rho}\left(  x_{0}\right)  \right)
dt\right]  \geq\frac{m_{0}}{2}\;\;\text{for any }\rho>0
\]
with $m_{0}$ as in (\ref{S5E3b}). Then, we can write:
\begin{equation}
\mu=\bar{\mu}+u\ \ \ \ ,\ \ \ \ \mu^{+}=\bar{\mu}^{+}+u\label{TN1}%
\end{equation}
where $\bar{\mu},\bar{\mu}^{+}\in M^{+}\left(  \Omega\times\mathbb{R}%
^{+}\right)  $ are supported in the set $S$ and $u\in C^{\infty}\left(
\Omega\times\mathbb{R}^{+}\setminus S\right)  \cap L^{1}\left(  \Omega
\times\left[  0,T\right)  \right)  $ for any $T<\infty.$ Moreover, for
$a.e.\;t_{0}\in\left[  0,\infty\right)  $ the set $S_{t_{0}}\equiv
S\cap\left\{  \left(  x,t\right)  :t=t_{0}\right\}  $ contains at most $N$
points and the measure $\mu_{t}$ can be represented as:
\begin{equation}
\mu_{t}=\sum_{x_{j}\left(  t\right)  \in S_{t}}\alpha_{j}\left(  t\right)
\delta_{x_{j}\left(  t\right)  }+u\left(  \cdot,t\right)
dx\;\;,\;\;a.e.\;t\in\left[  0,\infty\right)  \label{T2E1}%
\end{equation}%
\begin{equation}
\mu_{t_{0}}^{+}=\sum_{x_{j}\left(  t\right)  \in S_{t}}\beta_{j}^{+}\left(
t\right)  \delta_{x_{j}\left(  t\right)  }+u\left(  \cdot,t\right)
dx\;\;,\;\;a.e.\;t\in\left[  0,\infty\right)  \label{T2E1a}%
\end{equation}
with $\beta_{j}^{+}\left(  t\right)  \geq\alpha_{j}\left(  t\right)
>0,\;u\left(  \cdot,t\right)  \in L^{1}\left(  \bar{\Omega}\right)
,\;\int_{\bar{\Omega}}u\left(  x,t\right)  dx\leq\int_{\Omega}u_{0}\left(
x\right)  dx.$
\end{proposition}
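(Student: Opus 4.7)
The plan is to follow the structure of the proof of Proposition~\ref{M1} line by line, replacing Proposition~\ref{I1} by Proposition~\ref{I2} (and Proposition~\ref{BI1} by Proposition~\ref{BI2} near $\partial\Omega$), adding two new ingredients: the entropy estimate~(\ref{S7E1}) to verify hypothesis~(\ref{S6E5}) of the interior regularity proposition, and an analysis of the second limit measure $\mu^+$. First I would introduce $\mu^\varepsilon = u^\varepsilon\,dx\,dt$ and $(\mu^+)^\varepsilon = (u^\varepsilon + \varepsilon (u^\varepsilon)^{7/6})\,dx\,dt$, extract a common weakly convergent subsequence $\{\varepsilon_k\}$ (using mass conservation to bound $u^\varepsilon$ in $L^\infty_t L^1_x$ and the entropy estimate to bound the second term), and run the Radon--Nikodym/disintegration argument from Proposition~\ref{M1} verbatim to obtain $d\mu = d\mu_t\,dt$ and $d\mu^+ = d\mu_t^+\,dt$ with $\mu_t(\Omega) = \int u_0$ for a.e.~$t$. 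The inequality $\mu_t \leq \mu_t^+$ is immediate from the pointwise inequality $u^\varepsilon \leq u^\varepsilon + \varepsilon (u^\varepsilon)^{7/6}$.

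The central step is to show that off the singular set $S$ the limit is a smooth function. Fix $(x_0, t_0) \notin S$ and, as in the proof of Proposition~\ref{M1}, select $\rho$ with $\liminf_{\delta\to 0}\delta^{-1}\int_{t_0}^{t_0+\delta}\mu_t(B_{4\rho}(x_0))\,dt < m_0/2$, producing times $t_{n,k}$ at which $\int_{B_{4\rho}(x_0)}u^{\varepsilon_k}(\cdot,t_{n,k})\,dx \leq m_0$. To apply Proposition~\ref{I2} I must check its four hypotheses: (\ref{S6E3}) follows directly from the local mass-change estimate~(\ref{S3E2}) of Proposition~\ref{P1} (or~(\ref{B1E2}) of Proposition~\ref{mBound}); (\ref{S6E4}) is the choice of $t_{n,k}$; (\ref{S6E6}) follows from classical Poisson regularity, since in two dimensions the Neumann Green's function lies in $L^p$ for every $p<\infty$, so $\|v\|_{L^6(\Omega)} \leq C\|u_0\|_{L^1}$ uniformly in $\varepsilon$. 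The genuinely new hypothesis is (\ref{S6E5}); this is precisely where I would invoke the entropy bound~(\ref{S7E1}) with $\alpha = 1/2$, which yields $\varepsilon^{3/2}\int_\Omega u^{7/6}\,dx \leq C$ uniformly in $t$ and therefore implies~(\ref{S6E5}) a fortiori. Proposition~\ref{I2} then delivers $\sup_s \int_{B_\rho(x_0)}(u^{\varepsilon_k})^{7/6} \leq c_2/\rho^4$ on a parabolic cylinder, and classical regularity for quasilinear parabolic equations (cf.~\cite{DB}) upgrades this to uniform $C^\infty$ bounds on a smaller cylinder.

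Third, since $u^{\varepsilon_k}$ is locally uniformly bounded off $S$, the regularizing term $\varepsilon_k(u^{\varepsilon_k})^{7/6}$ tends to zero uniformly on compact subsets of $(\bar\Omega\times\mathbb{R}^+)\setminus S$. Hence $\mu$ and $\mu^+$ share the same smooth density $u$ on the regular set, giving the decomposition~(\ref{TN1}) with $\bar\mu,\bar\mu^+$ supported in $S$. The counting argument $|S_{t_0}| \leq N$ is identical to that of Proposition~\ref{M1}: assuming $N+1$ distinct points in $S_{t_0}$ and using disjoint small balls, the inequality $\mu_t(B_\rho(x_i)) \geq m_0/4$ integrated over a short time interval is incompatible with the global mass bound $\mu_t(\Omega) = \int u_0$ combined with the definition~(\ref{T1E1}) of $N$. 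A measure concentrated on finitely many points is a finite sum of atoms, yielding (\ref{T2E1}) and (\ref{T2E1a}); the relation $\beta_j^+(t) \geq \alpha_j(t) > 0$ follows from $\mu_t \leq \mu_t^+$ together with the fact that both singular parts are supported on exactly the same discrete set $S_t$.

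The chief obstacle I expect is ensuring that the margin in the entropy estimate is compatible with Proposition~\ref{I2}: (\ref{S7E1}) gives $\varepsilon^{1+\alpha}\int u^{7/6} \leq C$ for any $\alpha > 0$, while Proposition~\ref{I2} demands the specific exponent $3/2$, which is why the choice $\alpha = 1/2$ is forced. One must verify that this exact exponent was the one used in the design of Proposition~\ref{I2} (inspection of the case split $\varepsilon \leq \rho^{2/3}$ versus $\varepsilon \geq \rho^{2/3}$ in its proof confirms this). Beyond this bookkeeping, the remaining arguments are parallel to those of Proposition~\ref{M1}, since the extra degenerate diffusion $-\varepsilon\Delta u^{7/6}$ only contributes nonnegative corrections to the mass-change inequalities, which have already been accommodated throughout Sections~4--6.
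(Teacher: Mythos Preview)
Your outline is essentially correct for the construction and structure of $\mu$ --- the verification of the hypotheses of Proposition~\ref{I2} via (\ref{S3E2}), Poisson regularity, and the entropy estimate (\ref{S7E1}) with $\alpha=\tfrac12$ is exactly what the paper does, and the counting argument for $|S_{t_0}|\le N$ is indeed copied verbatim from Proposition~\ref{M1}.

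There is, however, a genuine gap in your treatment of $\mu^{+}$. You propose to extract the weak limit of $(\mu^{+})^{\varepsilon}=(u^{\varepsilon}+\varepsilon(u^{\varepsilon})^{7/6})\,dx\,dt$ at the outset, ``using the entropy estimate to bound the second term''. But (\ref{S7E1}) only yields $\varepsilon^{1+\alpha}\int_{\Omega}(u^{\varepsilon})^{7/6}dx\le C$ for $\alpha>0$, hence $\varepsilon\int_{\Omega}(u^{\varepsilon})^{7/6}dx\le C\varepsilon^{-\alpha}$, which is \emph{not} uniformly bounded as $\varepsilon\to 0$. The entropy estimate alone therefore does not give weak compactness of $(\mu^{+})^{\varepsilon}$ in $M^{+}(\Omega\times[0,T])$, nor does it give the absolute continuity in $t$ needed for the disintegration $d\mu^{+}=d\mu_{t}^{+}\,dt$.

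The paper supplies a separate argument for this bound \emph{after} the structure of $\mu$ has been established. Once one knows that for each $(x_{0},t_{0})$ either $u^{\varepsilon}$ is uniformly bounded on a parabolic neighbourhood (regular case) or $x_{0}$ is isolated in $S_{t_{0}}$ (singular case), one covers $\bar\Omega\times[0,T]$ by finitely many such cylinders. On regular cylinders $\varepsilon(u^{\varepsilon})^{7/6}=O(\varepsilon)$ trivially. On singular cylinders one tests the equation against a function $\psi_{\rho}$ with $\Delta\psi_{\rho}=-\rho^{-2}$ in $B_{\rho}(x_{0})$ and $\operatorname{supp}\nabla\psi_{\rho}$ contained in an annulus lying in the regular region; this converts the degenerate diffusion into the signed term $-\rho^{-2}\int_{B_{\rho}(x_{0})}[u^{\varepsilon}+\varepsilon(u^{\varepsilon})^{7/6}]dx$ on the right-hand side, while the annular and nonlinear contributions are bounded by symmetrization and the local smoothness of $u^{\varepsilon}$ there. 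Integrating in time yields $\int_{t_{0}-\delta_{0}}^{t_{0}}\int_{B_{\rho}(x_{0})}[u^{\varepsilon}+\varepsilon(u^{\varepsilon})^{7/6}]\,dx\,dt\le C_{\rho}$ uniformly in $\varepsilon$, and the finite cover gives the global $L^{1}$ bound. This step is not a matter of bookkeeping; it is the mechanism by which the potentially unbounded quantity $\varepsilon(u^{\varepsilon})^{7/6}$ is shown to concentrate only at the (already identified) singular set with finite total mass. You should insert this argument between your second and third paragraphs, and postpone the extraction of $\mu^{+}$ until after it.
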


\begin{proof}
Arguing as in the proof of Proposition \ref{M1} it follows that $d\mu=d\mu
_{t}dt$ as well as $\mu_{t}\left(  \Omega\right)  =\int_{\Omega}u_{0}\left(
x\right)  dx.$ We have also that for $\left(  x_{0},t_{0}\right)  \in\left[
\bar{\Omega}\times\left[  0,\infty\right)  \right]  \setminus S$ there exists
$\rho=\rho\left(  x_{0},t_{0}\right)  >0$ such that:
\begin{equation}
\lim_{\delta\rightarrow0}\inf\left[  \frac{1}{\delta}\int_{t_{0}}%
^{t_{0}+\delta}\mu_{t}\left(  B_{4\rho}\left(  x_{0}\right)  \right)
dt\right]  \leq\frac{m_{0}}{2} \label{T1E4}%
\end{equation}

Arguing as in the proof of Proposition \ref{M1} we can apply Proposition
\ref{I1} to the functions $u^{\varepsilon_{k}}$ for large $k.$ Indeed, notice
that (\ref{S6E3}) holds due to (\ref{S3E2}) in Proposition \ref{P1} and, on
the other hand, $\left\Vert v^{\varepsilon}\right\Vert _{L^{6}\left(
\Omega\right)  }\leq C\int u_{0}$ due to classical regularity theory for the
Poisson equation, whence (\ref{S6E6}) also holds. The entropy estimate
(\ref{S7E1}) implies (\ref{S6E5}).\texttt{ }Finally (\ref{T1E4}) implies the
existence of a subsequence $t_{n,k}\downarrow t_{0}$ such that $\int
_{B_{4\rho}\left(  x_{0}\right)  }u^{\varepsilon_{k}}\left(  x,t_{n,k}\right)
dx\leq m_{0}$ \ (cf. (\ref{S6E4})). Proposition \ref{I2} as well as the fact
that $t_{n,k}\downarrow t_{0}$ then yields:
\[
\sup_{s\in\left[  t_{0}-c_{1}\rho^{2},t_{0}\right]  }\int_{B_{\rho}\left(
x\right)  }\left(  u^{\varepsilon}\right)  ^{2}\left(  y,s\right)  dy\leq
\frac{c_{2}}{\rho^{4}}\ \ .
\]

Classical regularity results for parabolic equations\texttt{ }then imply that
$u^{\varepsilon}\in C^{\infty}\left(  B_{\frac{\rho}{2}}\left(  x_{0}\right)
\times\left[  t_{0}-\frac{c_{1}\rho^{2}}{2},t_{0}\right]  \right)  $ as well
as uniform estimates for the derivatives of $u^{\varepsilon}$ in
$B_{\frac{\rho}{2}}\left(  x\right)  \times\left[  t_{0}-\frac{c_{1}\rho^{2}%
}{2},t_{0}\right]  .$ Then, $u\in C^{\infty}\left(  \left[  \bar{\Omega}%
\times\mathbb{R}^{+}\right]  \setminus S\right)  $ (cf. \cite{KSu}). The
uniform estimate $\int_{\Omega}u\leq\int_{\Omega}u_{0}$ and the positivity of
$u$ imply that $u\in L^{1}\left(  \bar{\Omega}\times\mathbb{R}^{+}\right)  .$
We can now prove as in Proposition \ref{M1} that for $a.e.\;t_{0}\in\left[
0,\infty\right)  $ the set $S_{t_{0}}$ contains at most $N$ points. The
measures $\mu_{t}$ have then the structure (\ref{T2E1}). This concludes the
proof of Proposition \ref{M2}.

Finally we prove the convergence properties of $u^{\varepsilon}+\varepsilon
\left(  u^{\varepsilon}\right)  ^{\frac{7}{6}}.$ To this end we need to obtain
an estimate for $\int\int\varepsilon\left(  u^{\varepsilon}\right)  ^{\frac
{7}{6}}dxdt.$ This can be obtained as follows. Suppose that $\left(
x_{0},t_{0}\right)  \in\bar{\Omega}\times\left[  0,T\right]  ,$ with
$0<T<\infty.$ Either $\left(  x_{0},t_{0}\right)  \in S$ or $\left(
x_{0},t_{0}\right)  \in\bar{\Omega}\setminus S.$ In the second case, there
exists $\rho=\rho\left(  x_{0},t_{0}\right)  $ such that $u^{\varepsilon}$ is
bounded by some $C=C\left(  x_{0},t_{0},\rho\right)  $ but independent on
$\varepsilon$ for $\varepsilon\leq\varepsilon_{0}\left(  x_{0},t_{0}%
,\rho\right)  $ in $B_{2\rho\left(  x_{0},t_{0}\right)  }\left(  x_{0}\right)
\times\left[  t_{0}-2c\rho^{2},t_{0}\right]  $ (cf. Proposition \ref{I2})$.$
Then, $u$ is smooth in $B_{\rho\left(  x_{0},t_{0}\right)  }\left(
x_{0}\right)  \times\left[  t_{0}-c\rho^{2},t_{0}\right]  $ and we have:
\begin{equation}
\int_{t_{0}-c\rho^{2}}^{t_{0}}\int_{B_{\rho\left(  x_{0},t_{0}\right)
}\left(  x_{0}\right)  }\left[  u^{\varepsilon}\left(  x,t\right)
+\varepsilon\left(  u^{\varepsilon}\left(  x,t\right)  \right)  ^{\frac{7}{6}%
}\right]  dxdt\leq C\left(  x_{0},t_{0},\rho\right)  \label{H1E2}%
\end{equation}
with $C$ independent on $\varepsilon.$

Suppose that, on the contrary, $\left(  x_{0},t_{0}\right)  \in S.$ Since the
number of points in $S_{t_{0}}$ is finite, we can choose $\rho>0,$ such that
$\mathcal{D}=\left(  \left[  \overline{B_{2\rho}\left(  x_{0}\right)
}\setminus B_{\rho}\left(  x_{0}\right)  \right]  \times\left\{
t_{0}\right\}  \right)  \cap S=\emptyset$. It then follows that for any
$\left(  \bar{x},t_{0}\right)  \in\mathcal{D}$ there exists $\delta
=\delta\left(  \bar{x},t_{0}\right)  $ such that $u^{\varepsilon}$\ is
uniformly bounded in $B_{\delta\left(  \bar{x},t_{0}\right)  }\left(  \bar
{x}\right)  \times\left[  t_{0}-2c\delta^{2},t_{0}\right]  .$ Since
$\mathcal{D}$ is compact we can cover it with a finite number of sets of the
form $B_{\delta\left(  \bar{x},t_{0}\right)  }\left(  \bar{x},t_{0}\right)
\times\left[  t_{0}-2c\delta^{2},t_{0}\right]  .$ Therefore, there exists
$\delta_{0}=\delta_{0}\left(  x_{0},t_{0},\rho\right)  >0$ such that
$u^{\varepsilon}$ is uniformly bounded in $\mathcal{D}\times\left[
t_{0}-\delta_{0},t_{0}\right]  $ for $\varepsilon\leq\varepsilon_{0}\left(
x_{0},t_{0},\rho\right)  .$ We now consider a test function $\psi_{\rho}$ such
that $\Delta\psi_{\rho}=-\frac{1}{\rho^{2}}$ for $\left\vert x-x_{0}%
\right\vert \leq\rho$ and $\left\vert \Delta\psi_{\rho}\right\vert \leq
C_{\rho}$ for $\rho\leq\left\vert x-x_{0}\right\vert \leq2\rho,$ and
$\psi_{\rho}=0$ for $\left\vert x-x_{0}\right\vert \geq2\rho.$

Using $\psi_{\rho}$ as test function we obtain the following estimate:
\begin{equation}
\left\vert \partial_{t}\left(  \int_{B_{2\rho}\left(  x_{0}\right)
}u^{\varepsilon}\left(  x,t\right)  \psi_{\rho}\left(  x\right)  dx\right)
+\frac{1}{\rho^{2}}\int_{B_{\rho}\left(  x_{0}\right)  }\left[  u^{\varepsilon
}\left(  x,t\right)  +\varepsilon\left(  u^{\varepsilon}\left(  x,t\right)
\right)  ^{\frac{7}{6}}\right]  dx\right\vert \leq C_{\rho}\;,\;t\in\left[
t_{0}-\delta_{0},t_{0}\right]  \label{H1E1}%
\end{equation}
where the error term on the right is due to the contribution of $\int
_{\mathcal{D}}\Delta\psi_{\rho}\left(  x\right)  \left[  u^{\varepsilon
}\left(  x,t\right)  \right]  dx,$ as well as the nonlinear terms that can be
estimated using the symmetrization argument as in the Proof of Proposition
\ref{I2}. Notice that we use the smoothness of $u^{\varepsilon}$ in
$\mathcal{D}\times\left[  t_{0}-\delta_{0},t_{0}\right]  .$ Therefore,
integrating (\ref{H1E1}) in $\left[  t_{0}-\delta_{0},t_{0}\right]  $:
\begin{equation}
\int_{t_{0}-\delta_{0}}^{t_{0}}\int_{B_{\rho}\left(  x_{0}\right)  }\left[
u^{\varepsilon}\left(  x,t\right)  +\varepsilon\left(  u^{\varepsilon}\left(
x,t\right)  \right)  ^{\frac{7}{6}}\right]  dxdt\leq C_{\rho} \label{H1E3}%
\end{equation}

Therefore, (\ref{H1E2}), (\ref{H1E3}) imply the existence for any $\left(
x_{0},t_{0}\right)  \in\Omega\times\left[  0,T\right]  $ of a cylinder
$B_{\rho}\left(  x_{0}\right)  \times\left[  t_{0}-\delta_{0},t_{0}\right]  $
for which (\ref{H1E3}) holds. Since $\Omega$ is compact, we can find a finite
covering of it by means of some of these cylinders. Then:
\[
\int_{0}^{T}\int_{\Omega}\left[  u^{\varepsilon}\left(  x,t\right)
+\varepsilon\left(  u^{\varepsilon}\left(  x,t\right)  \right)  ^{\frac{7}{6}%
}\right]  dxdt\leq C
\]
with $C>0$ independent on $\varepsilon,$ assuming that $\varepsilon
\leq\varepsilon_{0}.$

We then have:
\[
u^{\varepsilon_{k}}+\varepsilon_{k}\left(  u^{\varepsilon_{k}}\right)
^{\frac{7}{6}}\rightharpoonup\mu^{+}\text{\ \ as\ \ }k\rightarrow\infty\ \ .
\]

The fact that the singular set of $\mu^{+}$ and $\mu$ are the same is a
consequence of the fact that for every regular point of $\mu$ we have
$\varepsilon_{k}\left(  u^{\varepsilon_{k}}\right)  ^{\frac{7}{6}}%
\rightarrow0$ in a neighbourhood of the regular point, as it can be seen from
the estimate (\ref{S6E7}).
\end{proof}

\begin{remark}
We have denoted the limit of the sequence $u^{\varepsilon}$ as $\mu$ for both
regularizations. Notice, however, that there is not any reason to expect both
limits to be the same measure.
\end{remark}

\bigskip

\subsection{A continuity result for the singular set.}

\begin{lemma}
\label{contS} Suppose that the measure $\mu_{t}$ and the set $S_{t}$ are as in
Proposition \ref{M1} or Proposition \ref{M2}. Let $T>0.$ For any
$\varepsilon>0$ there exists $\delta>0$ such that for any $t_{0}\in\left[
0,T\right]  ,$ $S_{t}\subset S_{t_{0}}+B_{\varepsilon}\left(  0\right)  $ if
$t\in\left[  t_{0}-\delta,t_{0}\right]  .$ Moreover, suppose that for $\left(
x_{0},t_{0}\right)  \in S$ we have $\int_{B_{R}\left(  x_{0}\right)
\setminus\left\{  x_{0}\right\}  }d\mu_{t_{0}}\leq\frac{m_{0}}{2},$ with
$m_{0}$ as in Propositions \ref{I1}, \ref{I2} and $R>0$ fixed. Then, there
exists $c>0,\ L>0$ depending only on $\int_{\Omega}d\mu_{0}\left(  x\right)  $
and $\Omega$ such that $S_{t}\cap B_{R}\left(  x_{0}\right)  \subset
B_{L\sqrt{\left\vert t-t_{0}\right\vert }}\left(  x_{0}\right)  $ for
$t\in\left[  t_{0}-cR^{2},t_{0}\right]  .$
\end{lemma}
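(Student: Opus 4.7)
The two assertions are upper semicontinuity (from below in time) of the singular set $S_t$ and a $\sqrt{\,\cdot\,}$-rate at which a singular trajectory can merge into $x_0$. Both will be derived from the same two ingredients already present in the paper: the backward-cylinder regularity constructed inside the proofs of Propositions \ref{M1}--\ref{M2}, and the mass change estimates of Propositions \ref{P1} and \ref{mBound}, passed to the weak-$\ast$ limit.

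For the first statement I argue by contradiction. Suppose not: then there exist $\varepsilon_{0}>0$, a sequence $t_{n}\uparrow t_{0}\in[0,T]$ and points $x_{n}\in S_{t_{n}}$ with $\operatorname{dist}(x_{n},S_{t_{0}})\geq\varepsilon_{0}$. Compactness of $\bar{\Omega}$ gives a subsequential limit $x^{\ast}$, and $\operatorname{dist}(x^{\ast},S_{t_{0}})\geq\varepsilon_{0}$, so $(x^{\ast},t_{0})\notin S$. The argument used in the proof of Proposition \ref{M1} (and verbatim for the second regularization through Proposition \ref{M2}) then produces $\rho=\rho(x^{\ast},t_{0})>0$ and a cylinder $B_{\rho/2}(x^{\ast})\times[t_{0}-c_{1}\rho^{2}/4,t_{0}]$ on which $u^{\varepsilon_{k}}$ is uniformly $C^{\infty}$. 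The limiting $\mu$ is therefore a smooth function on that cylinder, so $S_{s}\cap B_{\rho/2}(x^{\ast})=\varnothing$ for every $s\in[t_{0}-c_{1}\rho^{2}/4,t_{0}]$, contradicting $x_{n}\to x^{\ast}$, $t_{n}\to t_{0}$, $x_{n}\in S_{t_{n}}$. This gives a single uniform $\delta$ on $[0,T]$ after a standard compactness argument on $t_{0}$.

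The second statement I prove through a ``Dirac concentration lemma'' and the mass change estimate. First I show that at every singular point $(x_{1},t)\in S$ the atom has mass $\alpha_{j}(t)=\mu_{t}(\{x_{1}\})\geq m_{0}/2$. To see this, pick a $C^{1,1}$ cutoff $\tilde{\psi}_{\rho}$ equal to $1$ on $B_{\rho}(x_{1})$, supported in $B_{2\rho}(x_{1})$, with the scaling $|\nabla\tilde{\psi}_{\rho}|\lesssim\rho^{-1}$ and $|\nabla^{2}\tilde{\psi}_{\rho}|\lesssim\rho^{-2}$; the proof of Propositions \ref{P1}--\ref{mBound} (both the symmetrization and the $G_{0}$ piece) applies unchanged and yields $|\partial_{t}\int\tilde{\psi}_{\rho}u^{\varepsilon}|\leq\kappa/\rho^{2}$ (with the usual entropy correction, harmless after integration in $t$, for the second regularization). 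Passing to the weak-$\ast$ limit gives $\mu_{s}(B_{\rho}(x_{1}))\leq\mu_{t}(B_{2\rho}(x_{1}))+\kappa(s-t)/\rho^{2}$, averaging on $s\in[t,t+\delta]$, invoking \eqref{T1E1a} and letting first $\delta\to0$ then $\rho\to0$ produces $\alpha_{j}(t)\geq m_{0}/2$. Now take $x_{1}\in S_{t}\cap B_{R}(x_{0})$, set $r=|x_{1}-x_{0}|$ and $\rho=\frac{1}{3}\min(r,R-r)$, so that $B_{2\rho}(x_{1})\subset B_{R}(x_{0})\setminus\{x_{0}\}$. Applying the same Lipschitz-in-time estimate between $t$ and $t_{0}$ with the cutoff $\tilde{\psi}_{\rho}$ centered at $x_{1}$ gives
\[
\mu_{t_{0}}(B_{2\rho}(x_{1}))\geq\int\tilde{\psi}_{\rho}\,d\mu_{t_{0}}\geq\int\tilde{\psi}_{\rho}\,d\mu_{t}-\frac{\kappa(t_{0}-t)}{\rho^{2}}\geq\frac{m_{0}}{2}-\frac{\kappa(t_{0}-t)}{\rho^{2}}.
\]
Combining with the hypothesis $\mu_{t_{0}}(B_{R}(x_{0})\setminus\{x_{0}\})\leq m_{0}/2$ yields $\rho^{2}\leq\kappa(t_{0}-t)/\eta$, where $\eta>0$ is a strict deficit that I still have to extract; from this the desired bound $r\leq L\sqrt{t_{0}-t}$ follows by the choice of $\rho$, after restricting $t\geq t_{0}-cR^{2}$ so that the splitting between the regimes $r\leq R/2$ and $r>R/2$ keeps $\rho$ comparable to $r$.

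The main obstacle is exactly the extraction of that deficit $\eta$: the naive bound gives $m_{0}/2-m_{0}/2\leq\kappa(t_{0}-t)/\rho^{2}$, which is vacuous. To close the gap I use the first statement of the lemma as an input: by the already-proven upper semicontinuity, $x_{1}\to x^{\ast}\in S_{t_{0}}\cap\bar{B}_{R}(x_{0})$ along any sequence realizing the would-be counterexample, and the hypothesis $\mu_{t_{0}}(B_{R}(x_{0})\setminus\{x_{0}\})\leq m_{0}/2$ together with $\alpha_{j}(t_{0})\geq m_{0}/2$ forces $x^{\ast}=x_{0}$ (otherwise the budget $m_{0}/2$ would already be exhausted by the Dirac at $x^{\ast}$, leaving no room for a strictly positive mass around the \emph{moving} atom at $x_{1}$ for $t<t_{0}$; quantitatively, I track the smooth-part contribution $\int_{B_{2\rho}(x_{1})}u(\cdot,t_{0})\,dx$ which tends to $0$ as $\rho\to0$, and this is precisely the deficit $\eta$). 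The second regularization requires in addition that the singular mass produced by $\varepsilon(u^{\varepsilon})^{7/6}$, controlled by the entropy estimate \eqref{S7E1} and the $L^{1}(dx\,dt)$ bound obtained in the proof of Proposition \ref{M2}, contributes only a time-integrated error on the right-hand side of the mass change estimate, which is absorbed in the same way.
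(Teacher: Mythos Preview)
Your argument for the first statement is correct and matches the mechanism behind the paper's one-line proof: a point outside $S_{t_0}$ has, by the construction in the proofs of Propositions~\ref{M1}--\ref{M2}, a uniform backward regularity cylinder, and this immediately gives upper semicontinuity from below.

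The second statement, however, does not close as you set it up. Your route matches the atom lower bound $\alpha_j(t)\geq m_0/2$ against the hypothesis $\mu_{t_0}(B_R(x_0)\setminus\{x_0\})\leq m_0/2$, and you correctly identify that this yields only the vacuous $m_0/2-m_0/2\leq\kappa(t_0-t)/\rho^2$. The fix you propose---extracting a deficit from the smooth part $\int_{B_{2\rho}(x_1)}u(\cdot,t_0)\,dx\to 0$ as $\rho\to 0$---does not produce the universal constant $L$ claimed in the lemma: the rate at which this integral vanishes depends on the local integrability of $u(\cdot,t_0)$ near $x_0$, not just on $\int_\Omega d\mu_0$ and $\Omega$. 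Concretely, for a \emph{fixed} $L$ the displayed chain only gives $\mu_{t_0}(B_{2\rho}(x_1))\geq m_0/2-9\kappa/L^2$, which is compatible with $\mu_{t_0}(B_{2\rho}(x_1))\leq m_0/2$ and never forces $\rho^2\lesssim t_0-t$. Moreover, in the borderline situation where $B_R(x_0)\setminus\{x_0\}$ carries a second atom of mass exactly $m_0/2$ (allowed by the hypothesis), your contradiction sequence may converge to that atom rather than to $x_0$, and no smooth-part deficit is available at all.

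The paper's intended argument avoids this entirely by using the regularity criterion itself rather than the atom lower bound. The key is the factor-of-two slack between the hypothesis and the threshold in Propositions~\ref{I1}, \ref{I2}, \ref{BI1}, \ref{BI2}: those propositions require only $\int_{B_{4\rho}}u^\varepsilon\leq m_0$, while the hypothesis gives $\mu_{t_0}(B_{4\rho}(x_1))\leq m_0/2$ for any $B_{4\rho}(x_1)\subset B_R(x_0)\setminus\{x_0\}$. Taking $\rho=r/8$ with $r=|x_1-x_0|$, one transfers this bound to $u^{\varepsilon_k}(\cdot,t_0)$ via the uniform Lipschitz-in-time estimate of Proposition~\ref{P1} (which upgrades the weak-$\ast$ convergence to convergence of $t\mapsto\int\psi\,u^{\varepsilon_k}(\cdot,t)$ at $t_0$), and then Proposition~\ref{I1} yields regularity on $B_\rho(x_1)\times[t_0-c_1\rho^2,t_0]$. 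Hence $(x_1,t)\in S$ forces $t_0-t>c_1 r^2/64$, i.e. $r\leq L\sqrt{t_0-t}$ with $L=8/\sqrt{c_1}$ depending only on the data. The restriction $t\in[t_0-cR^2,t_0]$ with $c$ small then handles the range $r\in(R/2,R)$ by the same mechanism with $\rho$ comparable to $R$.
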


\begin{proof}
It is just a consequence of Propositions \ref{I1}, \ref{I2}, \ref{BI1},
\ref{BI2}.\bigskip
\end{proof}

We include now some auxiliary results that will be required later.

\subsection{Mass continuity. Characterization of the limit of some quadratic
terms.}

A basic characteristic of the regularizations of the Keller-Segel system in
(\ref{S2E1})-(\ref{S2E2}), (\ref{S2E4})-(\ref{S2E5}) is the fact that the mass
in each neighbourhood changes in a continuous way. More precisely, we have the
following result:

\begin{lemma}
\label{MassChange}Suppose that $0<T<\infty.$ Let $u^{\varepsilon}$ be the
solution of (\ref{S2E1})-(\ref{S2E2}) or (\ref{S2E4})-(\ref{S2E5}). Given
$\left(  x_{0},t_{0}\right)  \in\bar{\Omega}\times\left[  0,T\right]  $ there
exists a cutoff function $\psi\left(  x;x_{0},t_{0}\right)  $ and $\rho
=\rho\left(  x_{0},t_{0}\right)  ,\ \tau=\tau\left(  x_{0},t_{0}\right)  $
independent on $\varepsilon$ satisfying:%
\begin{equation}
\left\vert \partial_{t}\left(  \int_{B_{2\rho}\left(  x_{0}\right)
}u^{\varepsilon}\left(  y,t\right)  \psi\left(  y;x_{0},t_{0}\right)
dy\right)  \right\vert \leq C\left(  x_{0},t_{0}\right)  \label{M5E1}%
\end{equation}
for $t\in\left[  t_{0}-\tau,t_{0}\right]  $ with $C\left(  x_{0},t_{0}\right)
$ independent on $\varepsilon.$
\end{lemma}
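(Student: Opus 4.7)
The strategy is to place the support of $\nabla\psi$ and $\Delta\psi$ in a regular annular shell around $x_0$, where $u^{\varepsilon}$ admits a uniform-in-$\varepsilon$ pointwise bound; this then controls every term appearing in $\partial_t\int\psi u^{\varepsilon}$, including the $\varepsilon(u^{\varepsilon})^{7/6}$ diffusion contribution from the second regularization, which cannot be handled by the one-sided bound of Proposition \ref{mBound} alone. For the first regularization the assertion is in fact an immediate consequence of Proposition \ref{P1} applied with $\rho$ depending on $(x_0,t_0)$, so the real work lies in the second regularization.

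I first choose $\rho=\rho(x_0,t_0)>0$ small enough that $S_{t_0}\cap\overline{B_{4\rho}(x_0)}\subset\{x_0\}$, which is possible because $S_{t_0}$ is finite by Proposition \ref{M2}, and small enough that $\rho\le\rho_0$ with $\rho_0$ as in Lemma \ref{L2} when $x_0$ lies near $\partial\Omega$. By the continuity statement Lemma \ref{contS} there exists $\tau_1>0$ with $S_t\subset S_{t_0}+B_{\rho/8}(0)$ for $t\in[t_0-\tau_1,t_0]$, so the annulus $A=\overline{B_{2\rho}(x_0)}\setminus B_{5\rho/4}(x_0)$ is disjoint from $S_t$ throughout this interval. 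Applying Propositions \ref{I2} and \ref{BI2} at points of $A$, together with classical parabolic regularity, then yields some $\tau\in(0,\tau_1]$ and a constant $C(x_0,t_0)$, independent of $\varepsilon$, such that
\[
\|u^{\varepsilon}(\cdot,t)\|_{L^{\infty}(A)}\le C(x_0,t_0)\qquad\text{for } t\in[t_0-\tau,t_0].
\]

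Now I take $\psi(\cdot;x_0,t_0)\in C^{\infty}(\bar\Omega)$ with $\psi\equiv 1$ on $\overline{B_{5\rho/4}(x_0)}\cap\bar\Omega$, $\psi\equiv 0$ outside $B_{2\rho}(x_0)$, $0\le\psi\le 1$, and $\partial_\nu\psi=0$ on $\partial\Omega$; near the boundary the Neumann condition is enforced via the reflection construction of Lemma \ref{L2}. Then $\operatorname{supp}(\nabla\psi)\cup\operatorname{supp}(\Delta\psi)\subset A$. Multiplying the equation for $u^{\varepsilon}$ by $\psi$ and integrating by parts yields, in the case of the second regularization,
\[
\partial_t\int_{\Omega}\psi u^{\varepsilon}\,dy=\int_{A}(\Delta\psi)u^{\varepsilon}\,dy+\varepsilon\int_{A}(\Delta\psi)(u^{\varepsilon})^{7/6}\,dy+\int_{\Omega}u^{\varepsilon}\nabla v^{\varepsilon}\cdot\nabla\psi\,dy.
\]
The first two integrals are bounded by $C(x_0,t_0)$ using the $L^{\infty}$ estimate on $A$, the second one being of order $\varepsilon$. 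The convective integral is treated by the symmetrization already used in Propositions \ref{P1} and \ref{mBound}: inserting the Green's-function representation of Lemma \ref{L1} for $\nabla v^{\varepsilon}$, the leading singular kernel $(y-x)/|y-x|^2$ is symmetrized against $\nabla\psi(y)-\nabla\psi(x)$ and so bounded by $\|\nabla^2\psi\|_{L^{\infty}}\|u_0\|_{L^1}^2$; the boundary-reflection kernel $(x-\tau(y))/|x-\tau(y)|^2$ is handled analogously by means of (\ref{S4E4a}); and the smooth remainder $\nabla_x K$ contributes the trivial bound $\|\nabla_x K\|_{L^{\infty}}\|\nabla\psi\|_{L^1}\|u_0\|_{L^1}$.

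The main obstacle is precisely the $\varepsilon(u^{\varepsilon})^{7/6}$ diffusion: the entropy bound (\ref{S7E1}) only gives $\varepsilon^{1+\alpha}\int u^{7/6}\le C$ with $\alpha>0$, which is too weak to bound $\varepsilon\int_{B_{2\rho}}u^{7/6}$ uniformly, so a direct application of Proposition \ref{mBound} to a standard cutoff fails. The present argument sidesteps this by arranging that $\Delta\psi$ is supported in the regular annulus $A$, where the pointwise $L^{\infty}$ bound on $u^{\varepsilon}$ makes the offending term trivially $O(\varepsilon)$. Summing the four contributions above then yields (\ref{M5E1}).
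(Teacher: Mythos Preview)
Your proof is correct and follows essentially the same route as the paper: choose $\rho$ so that the annulus where $\nabla\psi$ and $\Delta\psi$ live is disjoint from the singular set on a short backward time interval (via Lemma \ref{contS}), use the uniform $L^{\infty}$ bound for $u^{\varepsilon}$ on that annulus coming from Propositions \ref{I2}, \ref{BI2} and parabolic regularity to control the diffusion terms (in particular the troublesome $\varepsilon(u^{\varepsilon})^{7/6}$ contribution), and treat the convective term by the standard symmetrization of Propositions \ref{P1}, \ref{mBound}. Your write-up is in fact more explicit than the paper's about \emph{why} this localization is necessary for the second regularization---namely that the entropy estimate (\ref{S7E1}) is too weak to bound $\varepsilon\int u^{7/6}$ directly, so Proposition \ref{mBound} alone only yields a one-sided inequality---which is a helpful clarification.
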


\begin{proof}
We just consider interior points, since boundary estimates can be obtained
similarly using Lemma \ref{L2}. If $\left(  x_{0},t_{0}\right)  \in\left(
\Omega\times\left[  0,T\right]  \right)  \setminus S$ there exists $\rho
=\rho\left(  x_{0},t_{0}\right)  >0,\ \tau=\tau\left(  x_{0},t_{0}\right)  $
such that $B_{2\rho}\left(  x_{0}\right)  \times\left[  t_{0}-\tau
,t_{0}\right]  \in\left(  \Omega\times\left[  0,T\right]  \right)  \setminus
S$ (cf. Propositions \ref{I1}, \ref{I2}). We take a cutoff function
$\psi\left(  y\right)  $ such that $\psi\left(  y\right)  =1$ for $\left\vert
y-x_{0}\right\vert \leq\rho$ and $\psi\left(  y\right)  =0$ for $\left\vert
y-x_{0}\right\vert \geq2\rho.$ Then, the estimates for the linear terms
$\Delta\left(  u^{\varepsilon}\right)  ,\ \Delta\left(  u^{\varepsilon
}+\varepsilon u^{\varepsilon}\right)  $ are immediate and the nonlinear terms
can be estimated using the symmetrization argument in the proof of
Propositions \ref{I1}, \ref{I2}. If $\left(  x_{0},t_{0}\right)  \in S$ we
have $\left[  B_{2\rho}\left(  x_{0}\right)  \setminus B_{\rho}\left(
x_{0}\right)  \right]  \times\left[  t_{0}-\tau,t_{0}\right]  \cap
S=\emptyset$ if $\rho>0$ and $\tau>0$ are sufficiently small (see Lemma
\ref{contS}). The result then follows choosing $\psi\left(  y\right)  $ with
$\nabla\psi\neq0$ only in $\left[  B_{2\rho}\left(  x_{0}\right)  \setminus
B_{\rho}\left(  x_{0}\right)  \right]  .$
\end{proof}

\bigskip

Lemma \ref{MassChange} allows to obtain the weak limit of some quadratic terms.

\begin{lemma}
Suppose that $u^{\varepsilon}$ is as in Lemma \ref{MassChange} and that, for
suitable subsequences $u^{\varepsilon_{k}}\rightharpoonup\mu,$ with $\mu$ as
in Proposition \ref{M1} or Proposition \ref{M2}. Let $\varphi\in C\left(
\bar{\Omega}\times\bar{\Omega}\times\mathbb{R}^{+}\right)  .$ Then:%
\begin{equation}
\int\int\int u^{\varepsilon_{k}}\left(  x,t\right)  u^{\varepsilon_{k}}\left(
y,t\right)  \varphi\left(  x,y,t\right)  dxdydt\rightarrow\int\int\int
\varphi\left(  x,y,t\right)  d\mu_{t}\left(  x\right)  d\mu_{t}\left(
y\right)  dt \label{DouLim}%
\end{equation}
as $k\rightarrow\infty.$
\end{lemma}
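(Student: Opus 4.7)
The plan is to upgrade the already-known weak--$*$ convergence $u^{\varepsilon_k}\rightharpoonup\mu$ on $\bar\Omega\times\mathbb{R}^+$ to pointwise-in-time weak convergence of the slices, $u^{\varepsilon_k}(\cdot,t)\,dx\rightharpoonup\mu_t$ for a.e.\ $t$, and then to combine this with Stone--Weierstrass on $\bar\Omega\times\bar\Omega$ and dominated convergence in $t$ to recover \eqref{DouLim}.

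\textbf{Reduction to separable test functions.} Assume (as is implicit) that $\varphi$ is compactly supported in $t$. The linear span of products $\chi(t)\psi_1(x)\psi_2(y)$ with $\chi\in C_c(\mathbb{R}^+)$ and $\psi_i\in C(\bar\Omega)$ is uniformly dense in $C_c(\bar\Omega\times\bar\Omega\times\mathbb{R}^+)$. Together with the uniform bound $\int\int u^{\varepsilon_k}(x,t)u^{\varepsilon_k}(y,t)\,dxdy\leq\|u_0\|_{L^1}^2$, this density reduces \eqref{DouLim} to showing
\[
\int\chi(t)F^{\varepsilon_k}_{\psi_1}(t)F^{\varepsilon_k}_{\psi_2}(t)\,dt\longrightarrow\int\chi(t)\Bigl(\int\psi_1\,d\mu_t\Bigr)\Bigl(\int\psi_2\,d\mu_t\Bigr)\,dt,
\]
where $F^\varepsilon_\psi(t):=\int_\Omega\psi(x)u^\varepsilon(x,t)\,dx$.

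\textbf{Equicontinuity in time.} The heart of the proof is to show that for each $\psi\in C(\bar\Omega)$ and each $T>0$ the family $\{F^{\varepsilon_k}_\psi\}_k$ is uniformly equicontinuous on $[0,T]$. Fix $\delta>0$ and $t_0\in[0,T]$. Lemma \ref{MassChange} attaches to every $x\in\bar\Omega$ radii $\rho(x,t_0),\tau(x,t_0)>0$ and a cutoff function $\psi(\cdot;x,t_0)$ such that the integral of $u^{\varepsilon_k}$ against it is Lipschitz on $[t_0-\tau,t_0]$, uniformly in $k$. Compactness of $\bar\Omega$ provides a finite subcover by balls $\{B_{\rho_i}(x_i)\}_{i=1}^N$ with a common $\tau>0$; choose the $\rho_i$ so small that the oscillation of $\psi$ on $B_{2\rho_i}(x_i)$ is below $\delta/(N\|u_0\|_{L^1})$. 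Writing $\psi=\sum_i\psi(x_i)\,\psi(\cdot;x_i,t_0)+r$ with $r\in C(\bar\Omega)$ of sup-norm controlled by the oscillations plus the overlap of the cutoffs, each term $\psi(x_i)F^{\varepsilon_k}_{\psi(\cdot;x_i,t_0)}$ is uniformly Lipschitz on $[t_0-\tau,t_0]$ by Lemma \ref{MassChange}, while $|F^{\varepsilon_k}_r|\leq\|u_0\|_{L^1}\|r\|_\infty\lesssim\delta$ uniformly. Covering $[0,T]$ by finitely many such time windows and letting $\delta\to 0$ yields the equicontinuity claim.

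\textbf{Uniform convergence and identification of the limit.} Pick a countable dense family $\{\psi_j\}\subset C(\bar\Omega)$. By Arzel\`a--Ascoli and a diagonal argument, pass to a subsequence (not relabelled) along which $F^{\varepsilon_k}_{\psi_j}\to\tilde F_j$ uniformly on $[0,T]$ for every $j$. Testing the weak--$*$ convergence $u^{\varepsilon_k}\rightharpoonup\mu$ against $\chi(t)\psi_j(x)$ identifies $\tilde F_j(t)=\int\psi_j\,d\mu_t$ for a.e.\ $t$. Density of $\{\psi_j\}$ in $C(\bar\Omega)$ and the uniform $L^1$-mass bound promote this to $u^{\varepsilon_k}(\cdot,t)\,dx\rightharpoonup\mu_t$ weakly--$*$ on $\bar\Omega$ for a.e.\ $t\in[0,T]$. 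For such $t$, the standard tensor-product argument (Stone--Weierstrass on $\bar\Omega\times\bar\Omega$ applied to the bounded measures $u^{\varepsilon_k}(\cdot,t)\otimes u^{\varepsilon_k}(\cdot,t)$) gives
\[
F^{\varepsilon_k}_{\psi_1}(t)F^{\varepsilon_k}_{\psi_2}(t)\longrightarrow \Bigl(\int\psi_1\,d\mu_t\Bigr)\Bigl(\int\psi_2\,d\mu_t\Bigr).
\]
Since the integrands are bounded by $\|\psi_1\|_\infty\|\psi_2\|_\infty\|u_0\|_{L^1}^2$ on the compact support of $\chi$, dominated convergence yields the separable case, and uniqueness of the limit independent of the extracted subsequence transfers the conclusion to the full sequence.

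\textbf{Main obstacle.} The step that requires real work is the time-equicontinuity, because Lemma \ref{MassChange} is stated only for specific cutoffs adapted to the structure of the singular set. In the case of the second regularization \eqref{S2E4}--\eqref{S2E5}, the extra term $\varepsilon u^{7/6}$ can be controlled only where $u^{\varepsilon_k}$ is known to be bounded uniformly in $\varepsilon$, and it is precisely this requirement that forces the localized, singular-set-aware choice of cutoffs in Lemma \ref{MassChange}. The partition-type decomposition above is the mechanism for lifting those localized Lipschitz bounds to an arbitrary $\psi\in C(\bar\Omega)$.
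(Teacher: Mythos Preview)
Your approach is correct and coincides with the paper's in its essential content: both hinge on Lemma \ref{MassChange} to obtain uniform-in-$\varepsilon$ time-Lipschitz control of $\int u^\varepsilon\psi_{\ell}\,dx$ for a finite family of localized cutoffs, and then lift this to arbitrary continuous test functions via a partition-type decomposition. The paper carries this out by discretizing $[0,T]$ into steps of length $\delta_0$, building at each step a partition of unity $\{\psi_{\ell,m}\}$ satisfying the Lipschitz bound, and comparing $\int u^\varepsilon(\cdot,t)\psi_{\ell,m}$ with its value at a well-chosen reference time $t_m$ where weak convergence of the slice is known; you instead package the same Lipschitz estimate as equicontinuity of $t\mapsto F^\varepsilon_\psi(t)$ and invoke Arzel\`a--Ascoli followed by dominated convergence. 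The two arguments are equivalent; your Stone--Weierstrass reduction and Arzel\`a--Ascoli step make the structure slightly more transparent, while the paper's version is more hands-on. One small point: your decomposition $\psi=\sum_i\psi(x_i)\psi(\cdot;x_i,t_0)+r$ tacitly assumes the Lemma \ref{MassChange} cutoffs can be normalized into a genuine partition of unity without losing the Lipschitz bound---the paper makes the same assertion without detail, and it is justified because the proof of Lemma \ref{MassChange} shows the bound holds for any cutoff whose gradient is supported in the regular set.
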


\begin{proof}
The compactness of $\bar{\Omega}\times\left[  0,T\right]  $ implies the
existence, for each $\delta_{0}>0$ of functions $\left\{  \psi_{\ell,m}\left(
y\right)  \right\}  $ satisfying $\sum_{\ell}\psi_{\ell,m}\left(  y\right)
=1$ for $y\in\bar{\Omega},$ and such that:
\begin{equation}
\left\vert \partial_{t}\left(  \int u^{\varepsilon}\left(  y,t\right)
\psi_{\ell,m}\left(  y\right)  dy\right)  \right\vert \leq C\;\;\;,\;\;t\in
\left[  m\delta_{0},\left(  m+1\right)  \delta_{0}\right]  \ \ ,\ \ m\delta
_{0}<T \label{H3E1}%
\end{equation}
whence:%
\begin{equation}
\left\vert \int u^{\varepsilon}\left(  y,t\right)  \psi_{\ell,m}\left(
y\right)  dy-\int u^{\varepsilon}\left(  y,t_{m}\right)  \psi_{\ell,m}\left(
y\right)  dy\right\vert \leq C\left\vert t-t_{m}\right\vert \;\;,\;\;t\in
\left[  m\delta_{0},\left(  m+1\right)  \delta_{0}\right]  \label{H3E2}%
\end{equation}

Combining Propositions \ref{M1} and \ref{M2} and (\ref{H3E1}) we obtain the
existence of $t_{m}\in\left[  m\delta_{0},\left(  m+1\right)  \delta
_{0}\right]  $ for any $m=0,1,...$ such that:%

\begin{equation}
\int u^{\varepsilon}\left(  y,t_{m}\right)  \psi_{\ell,m}\left(  y\right)
dy\rightarrow\int\psi_{\ell,m}\left(  y\right)  d\mu_{t_{m}} \label{H3E6}%
\end{equation}

We rewrite the left-hand side of (\ref{DouLim}) as:
\[
\int\int\int u^{\varepsilon}\left(  x,t\right)  u^{\varepsilon}\left(
y,t\right)  \varphi\left(  x,y,t\right)  dxdydt=\sum_{m=0}^{\left[  \frac
{T}{\delta_{0}}\right]  }\int_{m\delta_{0}}^{\left(  \left(  m+1\right)
\delta_{0}\right)  \wedge T}dt\sum_{\ell}\int dxu^{\varepsilon}\left(
x,t\right)  \int u^{\varepsilon}\left(  y,t\right)  \varphi\left(
x,y,t\right)  dy
\]

Using the continuity of $\varphi$ and (\ref{H3E2}) we then obtain for any
$\sigma_{0}>0:$%
\[
\left\vert \int\int\int u^{\varepsilon}\left(  x,t\right)  u^{\varepsilon
}\left(  y,t\right)  \varphi\left(  x,y,t\right)  dxdydt-\int_{0}^{T}dt\int
d\mu_{t}\left(  x\right)  \int d\mu_{t}\left(  y\right)  \varphi\left(
x,y,t\right)  dy\right\vert \leq C\left(  \sigma_{0}+\delta_{0}\right)
\]
with $C$ independent on $\sigma_{0},\delta_{0}$ whence the result
follows.\bigskip
\end{proof}

\section{Formulation of the limit problem.}

We now pass to the limit in the problems (\ref{S2E1})-(\ref{S2E2}),
(\ref{S2E4})-(\ref{S2E5}) to derive the problems satisfied by the pairs of
measures $\left(  \mu,\mu^{-}\right)  ,\ \left(  \mu,\mu^{+}\right)  $
respectively. As a matter of fact, in order to write the weak equations
satisfied by the measures $\mu$ we will need to introduce some auxiliary
measures $\hat{\mu}^{-},\hat{\mu}$ defined in a space larger than $\bar
{\Omega}\times\left[  0,\infty\right)  $ at the singular points. We begin with
ther first regularization (\ref{S2E1})-(\ref{S2E2}).

\subsection{First regularization.}

We begin rewriting the weak formulation of the regularized problem
(\ref{S2E1})-(\ref{S2E2}) in a more convenient form:

\begin{lemma}
\label{Lterms}Suppose that $u^{\varepsilon}$ solves (\ref{S2E1})-(\ref{S2E2}).
Then, for any test function $\psi\in C^{\infty}\left(  \bar{\Omega}%
\times\mathbb{R}^{+}\right)  $ satisfying%
\begin{equation}
\frac{\partial\psi}{\partial\nu_{x}}\left(  x,t\right)  =0\text{ \ ,\ \ }%
x\in\partial\Omega\label{G2E4}%
\end{equation}
we have:%
\begin{equation}
L_{1}+Q_{1}+Q_{2}+Q_{3}+Q_{4}+Q_{5}=0\ \ ,\ \label{G2E3}%
\end{equation}
where:
\begin{align}
L_{1}  &  =-\int\psi\left(  x,0\right)  u_{0}\left(  x\right)  dx-\int\int
\psi_{t}u^{\varepsilon}dxdt-\int\int u^{\varepsilon}\Delta\psi
dxdt\label{G2E3a}\\
Q_{1}  &  =\frac{1}{4\pi}\int\int\int\frac{\left[  \left(  x-y\right)
\cdot\left(  \nabla\psi\left(  x,t\right)  -\nabla\psi\left(  y,t\right)
\right)  \right]  }{\left\vert x-y\right\vert ^{2}}d\omega_{\varepsilon}%
^{-}\left(  x,y,t\right) \label{G2E3b}\\
Q_{2}  &  =\frac{1}{4\pi}\int\int\int\left[  \nabla\psi\left(  x,t\right)
Z\left(  y\right)  -\nabla\psi\left(  y,t\right)  Z\left(  x\right)  \right]
\left(  \frac{P_{\partial}\left(  x\right)  -P_{\partial}\left(  y\right)
}{D}\right)  d\omega_{\varepsilon}^{-}\left(  x,y,t\right) \label{G2E3c}\\
Q_{3}  &  =-\frac{1}{4\pi}\int\int\int\left[  \nabla\psi\left(  x,t\right)
Z\left(  y\right)  +\nabla\psi\left(  y,t\right)  Z\left(  x\right)  \right]
\frac{\left[  d\left(  x\right)  \nu\left(  x\right)  +d\left(  y\right)
\nu\left(  y\right)  \right]  }{D}d\omega_{\varepsilon}^{-}\left(
x,y,t\right) \label{G2E3d}\\
Q_{4}  &  =\int\int\int\nabla\psi\left(  x,t\right)  \frac{Z\left(  y\right)
h\left(  y\right)  }{2\pi}\cdot\label{G2E3e}\\
&  \cdot\left[  \mathcal{G}_{t}\left(  Y\left(  x,y\right)  ,\lambda
_{1}\left(  x,y\right)  ,\lambda_{2}\left(  x,y\right)  \right)
+\mathit{g}_{n}\left(  Y\left(  x,y\right)  ,\lambda_{1}\left(  x,y\right)
,\lambda_{2}\left(  x,y\right)  \right)  \nu\left(  y\right)  \right]
d\omega_{\varepsilon}^{-}\left(  x,y,t\right) \nonumber\\
Q_{5}  &  =-\int\int\int\nabla\psi\left(  x,t\right)  W\left(  x,y\right)
d\omega_{\varepsilon}^{-}\left(  x,y,t\right)  \label{G2E3f}%
\end{align}
and:%
\[
d\omega_{\varepsilon}^{-}\left(  x,y,t\right)  =f_{\varepsilon}\left(
u\left(  x,t\right)  \right)  f_{\varepsilon}\left(  u\left(  y,t\right)
\right)  dxdydt
\]

\end{lemma}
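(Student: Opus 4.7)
The plan is to derive (\ref{G2E3}) by starting from the standard distributional form of (\ref{S2E1})--(\ref{S2E2}) and substituting the Green's function expansion of Lemma \ref{LRep} for $\nabla v$.

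First I would multiply (\ref{S2E1}) by the test function $\psi$, integrate over $\Omega\times[0,\infty)$, and integrate by parts twice in space and once in time. The three candidate boundary contributions on $\partial\Omega\times\mathbb{R}^+$ all vanish: $\int\psi\,\partial_\nu u=0$ by (\ref{S2E1}), $\int u\,\partial_\nu\psi=0$ by the hypothesis (\ref{G2E4}), and $\int\psi f_\varepsilon(u)\partial_\nu v=0$ by (\ref{S2E2}). What remains is exactly $L_1+\int\!\!\int\nabla\psi\cdot f_\varepsilon(u^\varepsilon)\nabla v^\varepsilon\,dx\,dt=0$, so the task reduces to showing that this nonlinear integral equals $Q_1+Q_2+Q_3+Q_4+Q_5$.

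Next I would solve (\ref{S2E2}) using the Green's function of Lemma \ref{Lsymm}, writing $v^\varepsilon(y,t)=\int_\Omega G(y,x)f_\varepsilon(u^\varepsilon(x,t))\,dx$, so that the nonlinear integral becomes $\int\!\!\int\!\!\int\nabla\psi(y,t)\cdot\nabla_y G(y,x)\,d\omega_\varepsilon^-(x,y,t)$. By the symmetry $G(y,x)=G(x,y)$ of Lemma \ref{Lsymm}, $\nabla_y G(y,x)$ is obtained from (\ref{M2E3}) by interchanging the roles of $x$ and $y$, producing four terms: the log-singular piece $-(2\pi)^{-1}(y-x)/|y-x|^2$, the $Z(x)$-weighted image piece with numerator $P_\partial(y)-P_\partial(x)-[d(y)\nu(y)+d(x)\nu(x)]$ over $D$, the $Z(x)h(x)$-weighted curvature piece $\mathcal{G}_t+\mathit{g}_n\nu(x)$, and the continuous remainder $W(y,x)$.

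The central step is then symmetrization. Since $d\omega_\varepsilon^-$ is symmetric in $(x,y)$, I can average each of the four sub-integrals with its image under the dummy swap $x\leftrightarrow y$. For the log-singular piece this is the Senba--Suzuki chemotactic identity already used in Proposition \ref{P1}; it produces $Q_1$. For the image piece, splitting the numerator into the $P_\partial$-difference (odd under the swap, as $D$ is even) and the $d\nu$-sum (even) yields the antisymmetric combination $\nabla\psi(x,t)Z(y)-\nabla\psi(y,t)Z(x)$ in $Q_2$ and the symmetric combination $\nabla\psi(x,t)Z(y)+\nabla\psi(y,t)Z(x)$ in $Q_3$. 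The curvature and $W$ pieces are continuous (or at least integrable) on $\bar\Omega\times\bar\Omega$ by Lemma \ref{LRep}, so they do not need to be symmetrized; a bare rename $x\leftrightarrow y$ identifies them with $Q_4$ and $Q_5$. The main hurdle is purely bookkeeping: tracking the factors of $\tfrac{1}{4\pi}$ (from averaging) against the $\tfrac{1}{2\pi}$ in $G$, and verifying that each parity produces the correct signed combination of $\nabla\psi(x,t)Z(y)$ and $\nabla\psi(y,t)Z(x)$ appearing in (\ref{G2E3c})--(\ref{G2E3d}). No new analytic input beyond Lemma \ref{LRep} is required.
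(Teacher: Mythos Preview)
Your approach is essentially the paper's own proof: multiply by $\psi$, integrate by parts to isolate $L_1$ and the nonlinear term, insert the expansion (\ref{M2E3}) of Lemma \ref{LRep} for $\nabla G$, and symmetrize in $(x,y)$ using the invariance of $d\omega_\varepsilon^-$ to produce $Q_1$ from the log kernel and $Q_2+Q_3$ from the image piece, leaving $Q_4,Q_5$ untouched. The only slip is a sign: the integration by parts actually gives $L_1-\int\!\!\int\nabla\psi\cdot f_\varepsilon(u^\varepsilon)\nabla v^\varepsilon\,dx\,dt=0$, not $L_1+\ldots$, which is what makes the $-\frac{1}{2\pi}$ in (\ref{M2E3}) come out as the $+\frac{1}{4\pi}$ in $Q_1$ after symmetrization.
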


\begin{proof}
Multiplying the regularized equations by a test function $\psi\left(
x,t\right)  $ compactly supported in $\bar{\Omega}\times\left[  0,\infty
\right)  $, solving the Poisson equation using the corresponding Green's
function and integrating in $\bar{\Omega}\times\left[  0,\infty\right)  $ we
obtain:
\begin{align*}
0  &  =-\int\psi\left(  x,0\right)  u_{0}\left(  x\right)  dx-\int\int\psi
_{t}u^{\varepsilon}dxdt-\int\int u^{\varepsilon}\Delta\psi dxdt-\\
&  -\int\int\int f_{\varepsilon}\left(  u\left(  x,t\right)  \right)
f_{\varepsilon}\left(  u\left(  y,t\right)  \right)  \nabla\psi\left(
x\right)  \nabla_{x}G\left(  y,x\right)  dxdydt
\end{align*}
where $G$ is the Green's function for the Poisson equation described in Lemma
\ref{Lsymm}.

Using Lemma \ref{L1} it then follows that:
\begin{align}
&  L_{1}+\frac{1}{2\pi}\int\int\int\nabla\psi\left(  x,t\right)  \frac{\left(
x-y\right)  }{\left\vert x-y\right\vert ^{2}}d\omega_{\varepsilon}^{-}\left(
x,y,t\right)  +\label{M2E4a}\\
&  +\frac{1}{2\pi}\int\int\int\nabla\psi\left(  x,t\right)  Z\left(  y\right)
\frac{P_{\partial}\left(  x\right)  -P_{\partial}\left(  y\right)  -\left[
d\left(  x\right)  \nu\left(  x\right)  +d\left(  y\right)  \nu\left(
y\right)  \right]  }{D}d\omega_{\varepsilon}^{-}\left(  x,y,t\right)
+Q_{4}+Q_{5}=0\nonumber
\end{align}

Symmetrizing the second term in (\ref{M2E4a}) we can rewrite it as $Q_{1}.$ On
the other hand, symmetrizing the third term in (\ref{M2E4a}) it becomes
$Q_{2}+Q_{3}.$
\end{proof}

\bigskip

We now proceed to identify the limit of the quadratic terms $Q_{k}%
,\ k=1,...,5$ in (\ref{G2E3b})-(\ref{G2E3f}). The sequence $\left\{
\omega_{\varepsilon}^{-}\left(  x,y,t\right)  \right\}  $ has good properties
to pass to the limit in $M^{+}\left(  \bar{\Omega}\times\bar{\Omega}%
\times\mathbb{R}^{+}\right)  ,$ however these measures are multiplied by
functions like $\frac{\left[  \left(  x-y\right)  \cdot\left(  \nabla
\psi\left(  x,t\right)  -\nabla\psi\left(  y,t\right)  \right)  \right]
}{\left\vert x-y\right\vert ^{2}}$ that are bounded but not continuous near
the diagonal $\left\{  x=y\right\}  $. To deal with such a terms will require
to introduce measures defined in larger sets than $\bar{\Omega}\times
\bar{\Omega}\times\left[  0,\infty\right)  .$

\bigskip

\subsubsection{Limit of the nonlinear terms: The term $Q_{1}.$}

\bigskip

\begin{lemma}
\label{LQ1}There exist measures $\omega^{-}\in M^{+}\left(  \bar{\Omega}%
\times\bar{\Omega}\times\mathbb{R}^{+}\right)  ,\ \hat{\mu}^{-}\in
M^{+}\left(  \bar{\Omega}\times S^{1}\times\mathbb{R}^{+}\right)  $ satisfying
$\omega^{-}\left(  \left\{  x=y\right\}  \times\left[  0,\infty\right)
\right)  =0,$ $\operatorname*{supp}\left(  \hat{\mu}^{-}\right)  \subset
S\times S^{1}$ such that for any test function as in Lemma \ref{Lterms} we
have:
\begin{equation}
Q_{1}\rightarrow\int_{\left[  \bar{\Omega}\times S^{1}\right]  \times\left[
0,\infty\right)  }\frac{\left(  \nu\cdot\nabla^{2}\psi\left(  x,t\right)
\cdot\nu\right)  }{4\pi}d\hat{\mu}^{-}+\frac{1}{4\pi}\int_{\left[  \bar
{\Omega}\times\bar{\Omega}\times\left[  0,\infty\right)  \right]  \cap\left\{
x\neq y\right\}  }\frac{\left[  \left(  x-y\right)  \cdot\left(  \nabla
\psi\left(  x,t\right)  -\nabla\psi\left(  y,t\right)  \right)  \right]
}{\left\vert x-y\right\vert ^{2}}d\omega_{t}^{-}dt \label{G2E6}%
\end{equation}
for some subsequence $\varepsilon_{k}\rightarrow0,$ $k\rightarrow\infty.$

Moreover, we have:%
\begin{equation}
d\hat{\mu}^{-}=d\hat{\mu}_{t}^{-}dt\ \ \ ,\ \ \ d\omega^{-}=d\omega_{t}%
^{-}dt\label{G2E6Fa}%
\end{equation}
and:%
\begin{equation}
\int_{S^{1}}d\hat{\mu}_{t}^{-}\left(  \cdot,\nu\right)  =\sum_{x_{i}\left(
t\right)  \in S_{t}}\gamma_{i}^{-}\left(  t\right)  \delta_{x_{i}\left(
t\right)  }\ \ ,\ \ \gamma_{i}^{-}\left(  t\right)  \geq\left(  \beta_{i}%
^{-}\left(  t\right)  \right)  ^{2}\ \ ,\ \ a.e.\ t\in\left[  0,\infty\right)
\label{G2E6a}%
\end{equation}%
\begin{align}
d\omega_{t}^{-}\left(  x,y\right)   &  =\sum_{\left\{  x_{i}\left(  t\right)
\neq x_{j}\left(  t\right)  \right\}  }\lambda_{i,j}\left(  t\right)
\delta_{x_{i}\left(  t\right)  }\left(  x\right)  \delta_{x_{j}\left(
t\right)  }\left(  y\right)  +\label{G2E6b}\\
&  +\sum_{x_{i}\left(  t\right)  \in S_{t}}\beta_{i}\left(  t\right)  \left[
\delta_{x_{i}\left(  t\right)  }\left(  x\right)  u\left(  y,t\right)
dy+\delta_{x_{i}\left(  t\right)  }\left(  y\right)  u\left(  x,t\right)
dx\right]  +u\left(  x,t\right)  u\left(  y,t\right)  dxdy\nonumber
\end{align}
for some $\lambda_{i,j}\left(  t\right)  \geq0$ defined for $i\neq j.$
\end{lemma}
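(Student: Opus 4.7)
The plan is to extract weak-$*$ limits not of $\omega_{\varepsilon}^{-}$ alone but of an enriched version on $\bar{\Omega}\times\bar{\Omega}\times S^{1}\times\mathbb{R}^{+}$ that records the direction $\nu=(y-x)/|y-x|$; the diagonal part of this enriched limit then defines $\hat{\mu}^{-}$, while the off-diagonal part defines $\omega^{-}$. The motivating identity is, for $x\neq y$,
\[
\frac{(x-y)\cdot(\nabla\psi(x,t)-\nabla\psi(y,t))}{|x-y|^{2}}=\int_{0}^{1}\nu\cdot\nabla^{2}\psi(y+s(x-y),t)\cdot\nu\,ds,\qquad\nu=\tfrac{y-x}{|y-x|},
\]
so the integrand of $Q_{1}$ is bounded by $\|\nabla^{2}\psi\|_{\infty}$ but its limit as $y\to x$ depends on the direction of approach --- precisely the information that $\hat{\mu}^{-}$ must carry.

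First I record that $f_{\varepsilon}(u^{\varepsilon})\leq u^{\varepsilon}$ and (\ref{S2E6}) give the uniform mass bound $\omega_{\varepsilon}^{-}(\bar{\Omega}^{2}\times[0,T])\leq T\|u_{0}\|_{L^{1}}^{2}$. I then define on $\bar{\Omega}\times\bar{\Omega}\times S^{1}\times[0,T]$ the enriched measure
\[
\int\Phi\,d\tilde{\omega}_{\varepsilon}^{-}:=\int\Phi\!\left(x,y,\tfrac{y-x}{|y-x|},t\right)d\omega_{\varepsilon}^{-}(x,y,t),
\]
well-defined since $\omega_{\varepsilon}^{-}$ is absolutely continuous. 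These measures are supported on the closed set $\mathcal{K}=\{(x,y,\nu):y=x+|y-x|\nu\}$ and have uniformly bounded total mass, so a subsequence converges weak-$*$ to some $\tilde{\omega}^{-}\in M^{+}(\bar{\Omega}^{2}\times S^{1}\times\mathbb{R}^{+})$ still supported on $\mathcal{K}$ by the portmanteau theorem. I set $\omega^{-}$ to be the projection of $\tilde{\omega}^{-}\chi_{\{x\neq y\}}$ to $\bar{\Omega}^{2}\times\mathbb{R}^{+}$ (so $\omega^{-}(\{x=y\})=0$ automatically) and $\hat{\mu}^{-}$ to be the push-forward of $\tilde{\omega}^{-}\chi_{\{x=y\}}$ under $(x,y,\nu,t)\mapsto(x,\nu,t)$.

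Passage to the limit in $Q_{1}$ hinges on the continuity on $\mathcal{K}$ of the function
\[
\Phi(x,y,\nu,t)=\begin{cases}\dfrac{(x-y)\cdot(\nabla\psi(x,t)-\nabla\psi(y,t))}{|x-y|^{2}}&\text{if }x\neq y,\\[4pt]\nu\cdot\nabla^{2}\psi(x,t)\cdot\nu&\text{if }x=y.\end{cases}
\]
For any sequence in $\mathcal{K}$ with $(x_{n},y_{n},\nu_{n})\to(x_{0},x_{0},\nu_{0})$ the constraint forces $\nu_{n}=(y_{n}-x_{n})/|y_{n}-x_{n}|$ whenever $x_{n}\neq y_{n}$, and the Taylor identity above yields $\Phi(x_{n},y_{n},\nu_{n})\to\nu_{0}\cdot\nabla^{2}\psi(x_{0},t)\cdot\nu_{0}$. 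Extending $\Phi$ continuously to the ambient product (Tietze), the weak-$*$ convergence of $\tilde{\omega}_{\varepsilon}^{-}$ gives $Q_{1}\to(4\pi)^{-1}\int\Phi\,d\tilde{\omega}^{-}$; partitioning $\mathcal{K}$ into $\{x\neq y\}$ and $\{x=y\}$ recovers the two terms of (\ref{G2E6}). The disintegration (\ref{G2E6Fa}) is obtained by the Radon-Nikodym/Riesz-Markov argument used in the proof of Proposition~\ref{M1}, the uniform mass bound making the $t$-marginal absolutely continuous with $L^{\infty}$ density. Finally, $\operatorname{supp}(\hat{\mu}^{-})\subset S\times S^{1}$: near any regular point Propositions~\ref{I1} and \ref{BI1} yield a uniform $L^{\infty}$ bound on $u^{\varepsilon}$ in a small parabolic cylinder, so $\omega_{\varepsilon}^{-}$ has uniformly bounded $L^{\infty}$ density there and cannot concentrate on the diagonal in the limit.

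The main obstacle is the structural inequality $\gamma_{i}^{-}(t)\geq(\beta_{i}^{-}(t))^{2}$ in (\ref{G2E6a}). I would argue as follows. For fixed $\rho>0$ the identity
\[
\tilde{\omega}_{\varepsilon}^{-}\bigl(\overline{B_{\rho}(x_{i}(t_{0}))}^{\,2}\!\times S^{1}\!\times[t_{0},t_{0}+\delta]\bigr)=\int_{t_{0}}^{t_{0}+\delta}\!\left(\int_{B_{\rho}(x_{i}(t_{0}))}f_{\varepsilon}(u^{\varepsilon})\,dx\right)^{\!2}dt
\]
combined with the weak-$*$ convergence $\int_{B_{\rho}}f_{\varepsilon}(u^{\varepsilon})(\cdot)\,dx\rightharpoonup\mu_{t}^{-}(B_{\rho})$ in $L^{\infty}(t_{0},t_{0}+\delta)$ and the weak lower semicontinuity of $w\mapsto\int w^{2}$ yields
\[
\int_{t_{0}}^{t_{0}+\delta}\!\bigl(\mu_{t}^{-}(B_{\rho})\bigr)^{2}dt\;\leq\;\liminf_{\varepsilon}\int_{t_{0}}^{t_{0}+\delta}\!\left(\int_{B_{\rho}}f_{\varepsilon}(u^{\varepsilon})\right)^{\!2}dt\;\leq\;\tilde{\omega}^{-}\bigl(\overline{B_{\rho}}^{\,2}\!\times S^{1}\!\times[t_{0},t_{0}+\delta]\bigr),
\]
the last step by portmanteau on the compact set. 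Dividing by $\delta$, letting $\delta\to 0$ (Lebesgue differentiation in $t$), and then $\rho\to 0$ gives $\gamma_{i}^{-}(t_{0})\geq(\beta_{i}^{-}(t_{0}))^{2}$ for a.e.\ $t_{0}$. The decomposition (\ref{G2E6b}) is then read off $\omega^{-}$ on the complement of the diagonal: pairs of distinct singular points produce the off-diagonal terms $\lambda_{i,j}\delta_{x_{i}}\otimes\delta_{x_{j}}$, a singular and a regular point produce the mixed terms $\beta_{i}\delta_{x_{i}}\otimes u(\cdot,t)\,dy$ (using the $L^{\infty}$ control of the regular factor from Proposition~\ref{I1} and the pointwise convergence $f_{\varepsilon_{k}}(u^{\varepsilon_{k}})\to u$ there), and two regular points produce $u(x,t)u(y,t)\,dx\,dy$ by local uniform convergence of the same kind.
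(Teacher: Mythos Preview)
Your argument is correct and takes a genuinely different route from the paper's.

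The paper proceeds by a \emph{double limit with cutoffs}: it splits $Q_{1}=I_{1}+I_{2}$ using a factor $\eta(|x-y|/\delta)$, passes to the limit $\varepsilon\to 0$ with $\delta$ fixed to obtain intermediate measures $\hat{\mu}^{-}_{\delta}$ and $\omega^{-}_{\delta}$, and only then sends $\delta\to 0$. The diagonal information is recovered because on $\{|x-y|\le\delta\}$ the Taylor expansion replaces the difference quotient by $\nu\cdot\nabla^{2}\psi\cdot\nu$ up to an $O(\delta)$ error. For the inequality $\gamma_{i}^{-}\ge(\beta_{i}^{-})^{2}$ the paper expands the product $f_{\varepsilon}(u^{\varepsilon}(x))f_{\varepsilon}(u^{\varepsilon}(y))$ around the weak limit $\mu^{-}$ and observes that the diagonal remainder is a nonnegative square.

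Your approach instead lifts $\omega_{\varepsilon}^{-}$ once and for all to the \emph{blown-up diagonal} $\mathcal{K}\subset\bar{\Omega}^{2}\times S^{1}$ and extracts a single weak-$*$ limit $\tilde{\omega}^{-}$; the point is that $\mathcal{K}$ is exactly the compactification on which the integrand of $Q_{1}$ extends continuously, so no cutoff or iterated limit is needed. This is cleaner and identifies $\hat{\mu}^{-}$ and $\omega^{-}$ as the diagonal and off-diagonal pieces of one object, making the disintegration $d\tilde{\omega}^{-}=d\tilde{\omega}^{-}_{t}dt$ serve both at once. Your lower-semicontinuity argument for $\gamma_{i}^{-}\ge(\beta_{i}^{-})^{2}$ is also tidier than the paper's expansion; just be careful that the Lebesgue-differentiation step produces an exceptional $t$-set that depends on the fixed ball $B_{\rho}(X)$, so you should run it over a countable family of centers $X$ and radii $\rho$ and then approximate each $x_{i}(t_{0})$ from that family before letting $\rho\to 0$. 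With that bookkeeping in place the two proofs yield the same measures (your $\hat{\mu}^{-}$ coincides with the paper's double limit, as one checks by testing $\tilde{\omega}^{-}$ against $\varphi(x,\nu,t)\eta(|x-y|/\delta)$ and sending $\delta\to 0$), and the structural decomposition (\ref{G2E6b}) follows in both cases from the strong convergence of $f_{\varepsilon}(u^{\varepsilon})$ on the regular set.
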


\begin{proof}
Let $\eta\in C^{\infty}\left(  \mathbb{R}^{+}\right)  $ be a cutoff function
satisfying:
\[
\eta\left(  s\right)  =1\;\;,\;\;\left\vert s\right\vert \leq\frac{1}%
{2}\;\;,\;\;\eta\left(  s\right)  =0\;\;,\;\;\left\vert s\right\vert
\geq1\;\;\;,\;\;0\leq\eta\leq1
\]
We then write:
\begin{align*}
Q_{1}  &  =\int\int\int H_{1}\left(  x,y,t\right)  \eta\left(  \frac
{\left\vert x-y\right\vert }{\delta}\right)  d\omega_{\varepsilon}^{-}\left(
x,y,t\right)  +\int\int\int H_{1}\left(  x,y,t\right)  \left[  1-\eta\left(
\frac{\left\vert x-y\right\vert }{\delta}\right)  \right]  d\omega
_{\varepsilon}^{-}\left(  x,y,t\right) \\
&  \equiv I_{1}+I_{2}%
\end{align*}
where:%
\begin{equation}
H_{1}\left(  x,y,t\right)  =\frac{1}{4\pi}\frac{\left[  \left(  x-y\right)
\cdot\left(  \nabla\psi\left(  x,t\right)  -\nabla\psi\left(  y,t\right)
\right)  \right]  }{\left\vert x-y\right\vert ^{2}} \label{H1A1}%
\end{equation}

Given $\varphi\in C^{\infty}\left(  \Omega\times S^{1}\times\mathbb{R}%
^{+}\right)  $ we define:
\[
\hat{\mu}_{\delta,\varepsilon}^{-}\left(  \varphi\right)  \equiv\int\int
\int\varphi\left(  x,\frac{x-y}{\left\vert x-y\right\vert },t\right)
\eta\left(  \frac{\left\vert x-y\right\vert }{\delta}\right)  d\omega
_{\varepsilon}^{-}\left(  x,y,t\right)
\]

The family of nonnegative measures $\hat{\mu}_{\delta,\varepsilon}^{-}\left(
\chi_{\left[  0,T\right]  }\varphi\right)  $ is compact in $M^{+}\left(
\Omega\times S^{1}\times\left[  0,T\right]  \right)  $ for each $T<\infty$
with the weak topology, since $\hat{\mu}_{\delta,\varepsilon}^{-}\left(
\chi_{\left[  0,T\right]  }\cdot1\right)  \leq T\left(  \int u_{0}dx\right)
^{2}.$\texttt{ }Taking a subsequence if needed we can define $\hat{\mu
}_{\delta,T}^{-}\left(  \varphi\right)  =\lim_{k\rightarrow\infty}\hat{\mu
}_{\delta,\varepsilon_{k}}^{-}\left(  \chi_{\left[  0,T\right]  }%
\varphi\right)  $ where the limit is taken in the weak topology. We can now
take the limit $\delta\rightarrow0$ for suitable subsequences. Then:
\begin{equation}
\hat{\mu}_{\delta_{k},T}^{-}\rightharpoonup\hat{\mu}_{T}^{-} \label{G2E7}%
\end{equation}

Moreover, we can write $d\hat{\mu}_{T}^{-}=d\hat{\mu}_{t}^{-}dt$ arguing as in
the Proof of Proposition \ref{M1}.

We can now compute the limit of the term $I_{1}$ using the measures $\hat{\mu
}_{T}^{-}.$ To this end, we approximate the test function $\frac{\left[
\left(  x-y\right)  \cdot\left(  \nabla\psi\left(  x,t\right)  -\nabla
\psi\left(  y,t\right)  \right)  \right]  }{\left\vert x-y\right\vert ^{2}}$
by a test function having the form $\varphi\left(  x,\frac{x-y}{\left\vert
x-y\right\vert },t\right)  .$ Indeed, Taylor's Theorem yields:
\[
\frac{\left[  \left(  x-y\right)  \cdot\left(  \nabla\psi\left(  x,t\right)
-\nabla\psi\left(  y,t\right)  \right)  \right]  }{\left\vert x-y\right\vert
^{2}}=\frac{\left(  x-y\right)  \cdot\nabla^{2}\psi\left(  x,t\right)
\cdot\left(  x-y\right)  }{\left\vert x-y\right\vert ^{2}}+O\left(
\delta\right)
\]
whence:
\[
I_{1}=\hat{\mu}_{\delta,\varepsilon}^{-}\left(  \chi_{\left[  0,T\right]
}\varphi\right)  +O\left(  \delta\right)  \ \ \ ,\ \ \ \varphi\left(
x,\nu,t\right)  =\frac{\nu\cdot\nabla^{2}\psi\left(  x,t\right)  \cdot\nu
}{4\pi}\;,\;x\in\Omega\;,\;\nu\in S^{1}%
\]

Therefore, the limit $\varepsilon\rightarrow0,$ $\delta\rightarrow0$, for
suitable subsequences, yields:
\begin{equation}
I_{1}\rightarrow\int\int_{\bar{\Omega}\times S^{1}}\left(  \frac{\nu
\cdot\nabla^{2}\psi\left(  x,t\right)  \cdot\nu}{4\pi}\right)  d\hat{\mu}%
_{t}^{-}\left(  x,\nu\right)  dt \label{G2E7a}%
\end{equation}

It only remains to compute the limit of $I_{2}$ as $\varepsilon\rightarrow0,$
$\delta\rightarrow0.$ Notice that the family $\left\{  \omega_{\varepsilon
}^{-}\left(  x,y,t\right)  \right\}  $ is weakly compact in $M^{+}\left(
\left[  \left(  \bar{\Omega}\times\bar{\Omega}\right)  \cap\left\{  \left\vert
x-y\right\vert \geq\delta\right\}  \right]  \times\left[  0,T\right]  \right)
.$ Therefore there exists $\omega_{\delta,T}^{-}$ such that $\omega
_{\varepsilon}^{-}\left[  1-\eta\left(  \frac{\left\vert x-y\right\vert
}{\delta}\right)  \right]  \rightharpoonup\omega_{\delta,T}^{-}.$ Taking then
the limit $\delta\rightarrow0,$ we finally obtain:%
\begin{equation}
I_{2}\rightarrow\frac{1}{4\pi}\int_{\left[  \bar{\Omega}\times\bar{\Omega
}\times\left[  0,\infty\right)  \right]  \cap\left\{  x\neq y\right\}  }%
\frac{\left[  \left(  x-y\right)  \cdot\left(  \nabla\psi\left(  x,t\right)
-\nabla\psi\left(  y,t\right)  \right)  \right]  }{\left\vert x-y\right\vert
^{2}}d\omega_{t}^{-}dt \label{G2E7b}%
\end{equation}

Combining (\ref{G2E7a}), (\ref{G2E7b}) we obtain (\ref{G2E6}). The
representation of $\omega^{-}$ given in (\ref{G2E6Fa}) follows as in
Proposition \ref{M1}.

To derive (\ref{G2E6a}) we compute the measure $\hat{\mu}_{t}^{-}$ acting over
test functions independent on $\nu.$ We then need to consider the limit as
$\varepsilon\rightarrow0,\ \delta\rightarrow0,$ (for suitable subsequences) of
integrals with the form:%
\begin{equation}
\int\int\int f_{\varepsilon}\left(  u^{\varepsilon}\left(  x,t\right)
\right)  f_{\varepsilon}\left(  u^{\varepsilon}\left(  y,t\right)  \right)
\varphi\left(  x,t\right)  \eta\left(  \frac{\left\vert x-y\right\vert
}{\delta}\right)  dxdydt \label{G2A1}%
\end{equation}

Writing:%
\[
f_{\varepsilon}\left(  u^{\varepsilon}\left(  x,t\right)  \right)  =\mu
^{-}\left(  x,t\right)  +\left[  f_{\varepsilon}\left(  u^{\varepsilon}\left(
x,t\right)  \right)  -\mu^{-}\left(  x,t\right)  \right]
\]
we obtain:%
\begin{align}
&  f_{\varepsilon}\left(  u^{\varepsilon}\left(  x,t\right)  \right)
f_{\varepsilon}\left(  u^{\varepsilon}\left(  y,t\right)  \right) \nonumber\\
&  =\mu^{-}\left(  x,t\right)  \mu^{-}\left(  y,t\right)  +\mu^{-}\left(
x,t\right)  \left[  f_{\varepsilon}\left(  u^{\varepsilon}\left(  y,t\right)
\right)  -\mu^{-}\left(  y,t\right)  \right]  +\nonumber\\
&  +\left[  f_{\varepsilon}\left(  u^{\varepsilon}\left(  x,t\right)  \right)
-\mu^{-}\left(  x,t\right)  \right]  \mu^{-}\left(  y,t\right)  +\nonumber\\
&  +\left[  f_{\varepsilon}\left(  u^{\varepsilon}\left(  x,t\right)  \right)
-\mu^{-}\left(  x,t\right)  \right]  \left[  f_{\varepsilon}\left(
u^{\varepsilon}\left(  y,t\right)  \right)  -\mu^{-}\left(  y,t\right)
\right]  \label{G2A2}%
\end{align}
{}

Plugging (\ref{G2A2}) into (\ref{G2A1}) it follows that the limit of the
second and third term approach zero as $\varepsilon\rightarrow0.$ On the other
hand, in order to estimate the contribution of the last term in (\ref{G2A2})
we remark that, estimating $f_{\varepsilon}\left(  u^{\varepsilon}\right)  $
by $u^{\varepsilon}$, and using that outside a ball of radius $\rho$ of the
singular set $u^{\varepsilon}$ converges uniformly to $u,$ we can estimate the
contribution outside the singular set by a $L^{1}$ function and the resulting
integral contribution approaches zero as $\delta\rightarrow0,$ since the
measure of the considered set approaches zero. Therefore, the integration in
(\ref{G2A1}) is restricted to $S+B_{\rho}\left(  0\right)  \times\left\{
0\right\}  $ with $\rho$ very small. In such a set we can assume that
$\eta\left(  \frac{\left\vert x-y\right\vert }{\delta}\right)  $ is constant,
and $\varphi\left(  x,t\right)  $ can be approximated by the values at the
singular set, therefore, by functions depending only on time. It then follows
that the last term in (\ref{G2A2}) gives a contribution with the form:%
\[
\int\sum_{x_{j}\left(  t\right)  \in S_{t}}\varphi\left(  x_{j}\left(
t\right)  ,t\right)  \left(  \int\left[  f_{\varepsilon}\left(  u^{\varepsilon
}\left(  x,t\right)  \right)  -\mu^{-}\left(  x,t\right)  \right]  dx\right)
^{2}dt\geq0
\]
except for a small error. $\varphi\left(  x,t\right)  $ in (\ref{G2A1}) can be
approximated as $\varepsilon\rightarrow0$ by the sum of the values at the
singular set.

It then follows that:%
\[
\int_{S^{1}}d\hat{\mu}_{t}^{-}\left(  \cdot,\nu\right)  \geq\left(
\mu_{\operatorname{sing}}^{-}\left(  \cdot\right)  \right)  ^{2}%
\]
whence using the fact that $f_{\varepsilon}\left(  u\right)  \leq u$ as well
as Corollary \ref{MassBound}, (\ref{G2E6a}) follows. The representation
formula (\ref{G2E6b}) is a consequence of the fact that the measures $\left\{
\omega_{\varepsilon}^{-}\left(  x,y,t\right)  \left[  1-\eta\left(
\frac{\left\vert x-y\right\vert }{\delta}\right)  \right]  \right\}  $ are
supported in the region $\left\{  \left\vert x-y\right\vert \geq
\delta\right\}  ,$ as well as (\ref{T1E1d}). If we consider points $\left(
x_{0},y_{0}\right)  $ at the singular set we can obtain smoothness of the
solutions in an neighbourhood, and obtain strong convergence of $f\left(
u^{\varepsilon}\left(  x,t\right)  \right)  f\left(  u^{\varepsilon}\left(
y,t\right)  \right)  .$ This gives the term $u\left(  x,t\right)  u\left(
y,t\right)  $ in (\ref{G2E6b}). If, say $x_{0}\in S_{t}$ and $y_{0}$ is a
regular point, we obtain strong convergence of the function $f\left(
u^{\varepsilon}\left(  y,t\right)  \right)  $ in a neighbourhood and, taking
the product of weak convergence with strong convergence to obtain the terms
$\sum_{x_{i}\left(  t\right)  \in S_{t}}\beta_{i}^{-}\left(  t\right)  \left[
\delta_{x_{i}\left(  t\right)  }\left(  x\right)  u\left(  y,t\right)
dy+\delta_{x_{i}\left(  t\right)  }\left(  y\right)  u\left(  x,t\right)
dx\right]  $ in (\ref{G2E6b}). Finally, in a neighbourhood of the points
$\left(  x,y\right)  =\left(  x_{i}\left(  t\right)  ,x_{j}\left(  t\right)
\right)  \in S_{t}\times S_{t}$ with $i\neq j$ we can only prove the existence
of a singular set with weights $\lambda_{i,j}\left(  t\right)  \geq0.$
Unfortunately it is not possible to ensure that $\lambda_{i,j}\left(
t\right)  =\beta_{i}^{-}\left(  t\right)  \beta_{j}^{-}\left(  t\right)  $
without a careful study of the possible fast oscillations in time of the
functions $f_{\varepsilon}\left(  u^{\varepsilon}\right)  $. A Young measure
formalism that allows to describe the possible effect of oscillations in short
time scales is given in Section \ref{Young}.
\end{proof}

\bigskip

A consequence of Lemmas \ref{MassChange} and \ref{LQ1} is the following:

\begin{corollary}
\label{MassBound}Suppose that $\mu,\ \omega^{-}$ are as in Proposition
\ref{M1}. Then:%
\begin{equation}
d\omega_{t}^{-}\left(  x,y\right)  \leq d\mu_{t}\left(  x\right)  d\mu
_{t}\left(  y\right)  \label{Wmass}%
\end{equation}
\bigskip
\end{corollary}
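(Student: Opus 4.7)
The plan is to exploit the pointwise inequality $f_{\varepsilon}(u^{\varepsilon})\leq u^{\varepsilon}$ at the level of the approximating product measures and then pass to the weak limit, using the double-limit Lemma (equation (\ref{DouLim})) to identify the limit of $u^{\varepsilon_{k}}(x,t)u^{\varepsilon_{k}}(y,t)\,dxdydt$.

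Fix a nonnegative test function $\varphi\in C(\bar{\Omega}\times\bar{\Omega}\times[0,T])$ and the cutoff $\eta$ from the proof of Lemma \ref{LQ1}. Since $f_{\varepsilon}(s)\leq s$, we have the pointwise bound
\[
\bigl[1-\eta(|x-y|/\delta)\bigr]\,\varphi(x,y,t)\,f_{\varepsilon}(u^{\varepsilon}(x,t))f_{\varepsilon}(u^{\varepsilon}(y,t))\leq\varphi(x,y,t)\,u^{\varepsilon}(x,t)u^{\varepsilon}(y,t),
\]
since all factors are nonnegative and $1-\eta\leq 1$. Integrating in $(x,y,t)$ and taking the subsequential limit $\varepsilon_{k}\to 0$, the left-hand side converges to $\int\varphi\,d\omega^{-}_{\delta,T}$ by the weak convergence established inside the proof of Lemma \ref{LQ1}, while the right-hand side converges, by the double-limit formula (\ref{DouLim}), to $\int_{0}^{T}\int\int\varphi(x,y,t)\,d\mu_{t}(x)d\mu_{t}(y)dt$. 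Hence
\[
\int\varphi\,d\omega^{-}_{\delta,T}\leq\int_{0}^{T}\!\int\!\int\varphi(x,y,t)\,d\mu_{t}(x)d\mu_{t}(y)dt.
\]

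Now send $\delta\to 0$ along the subsequence for which $\omega^{-}_{\delta,T}\rightharpoonup\omega^{-}$. Since $\varphi\geq 0$ is continuous and the right-hand side is independent of $\delta$, the inequality is preserved in the limit and yields
\[
\int\varphi\,d\omega^{-}\leq\int_{0}^{T}\!\int\!\int\varphi\,d\mu_{t}(x)d\mu_{t}(y)dt
\]
for every nonnegative $\varphi\in C(\bar{\Omega}\times\bar{\Omega}\times[0,T])$. A standard density/approximation argument extends the inequality to Borel sets and gives (\ref{Wmass}) after writing $d\omega^{-}=d\omega^{-}_{t}dt$ and $d\mu=d\mu_{t}dt$ (both representations are available from Proposition \ref{M1} and (\ref{G2E6Fa})), together with Fubini on the right-hand side.

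The only delicate point is that the weak limit defining $\omega^{-}$ vanishes on the diagonal $\{x=y\}$, so we do not recover the full equality; but the inequality is unaffected by this, since removing mass on $\{x=y\}$ can only decrease the left-hand side. The main obstacle would have been identifying the weak limit of the product $u^{\varepsilon_{k}}(x,t)u^{\varepsilon_{k}}(y,t)\,dxdydt$ as the product measure $d\mu_{t}(x)d\mu_{t}(y)dt$ with $t$-marginal being Lebesgue (rather than some Young-measure-type object); this, however, has already been done in the continuity lemma (Lemma \ref{MassChange}) and the quadratic-limit lemma preceding the corollary.
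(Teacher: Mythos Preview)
Your proof is correct and is precisely the argument the paper intends: the corollary is stated in the paper as an immediate consequence of Lemma~\ref{MassChange} (via the quadratic-limit formula~(\ref{DouLim})) and Lemma~\ref{LQ1}, and you have supplied exactly that derivation, combining the pointwise bound $f_{\varepsilon}(u^{\varepsilon})\leq u^{\varepsilon}$ with the identification of the limit of $u^{\varepsilon_k}(x,t)u^{\varepsilon_k}(y,t)\,dxdydt$ as $d\mu_t(x)d\mu_t(y)dt$.
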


\subsubsection{Limit of the nonlinear terms: The term $Q_{2}.$}

\bigskip

In order to characterize the limit of the term $Q_{2}$ we need to define a
manifold that will play the role of $\Omega\times S^{1}$ for the points at the
boundary. We define also some auxiliary sets.

\begin{definition}
For any $y\in\partial\Omega$ we define the following manifold with boundary:
\[
\mathcal{M}_{2}\left[  y\right]  =\left\{  \left(  Y,\lambda_{1},\lambda
_{2}\right)  :Y\in\mathbb{TM}_{y}\left(  \partial\Omega\right)  ,\;\lambda
_{1}\geq0,\;\lambda_{2}\geq0,\;\left\vert Y\right\vert ^{2}+\left(
\lambda_{1}+\lambda_{2}\right)  ^{2}=1\right\}
\]

\end{definition}

Notice that $\mathcal{M}_{2}\left[  y\right]  $ is isomorphic to the
intersection of a two-dimensional cylinder with the quadrant $\left\{
\lambda_{1}\geq0,\;\lambda_{2}\geq0\right\}  .$

\begin{definition}
We will denote as $\mathcal{M}_{2}$ the set $\left\{  \left(  y,\sigma\right)
:y\in\partial\Omega,\ \sigma\in\mathcal{M}_{2}\left[  y\right]  \right\}  $
endowed naturally with a structure of three-dimensional manifold with boundary.
\end{definition}

\begin{definition}
We will denote as $\mathcal{M}_{2}^{\left(  \varepsilon_{0}\right)  }$ the set
$\left\{  \left(  y,\sigma\right)  :y\in\Omega,\ \operatorname*{dist}\left(
y,\partial\Omega\right)  \leq\varepsilon_{0},\ \sigma\in\mathcal{M}_{2}\left[
y_{0}\right]  \right\}  ,\ $where $y_{0}$ is the closest point to $y$ in
$\partial\Omega$ and $\varepsilon_{0}>0$ is fixed sufficiently small.
\end{definition}

\begin{lemma}
\label{LQ2}Let $Q_{2}$ as in (\ref{G2E3c}). There exists a measure $\hat{\mu
}_{b}^{-}\in M^{+}\left(  \mathcal{M}_{2}\times\mathbb{R}^{+}\right)
,\ d\hat{\mu}_{b}^{-}=d\hat{\mu}_{b,t}^{-}dt$ such that, for suitable
subsequences $\varepsilon_{k}\rightarrow0,\ \delta_{k}\rightarrow0,$
$k\rightarrow\infty:$%
\begin{align}
Q_{2}  &  \rightarrow\hat{\mu}_{b}^{-}\left(  \varphi_{1}\right)
+\label{G4E3}\\
&  +\frac{1}{4\pi}\int\int\int_{\left[  \bar{\Omega}\times\bar{\Omega}%
\times\left[  0,\infty\right)  \right]  \cap\left\{  x\neq y\right\}  }\left[
\nabla\psi\left(  x,t\right)  Z\left(  y\right)  -\nabla\psi\left(
y,t\right)  Z\left(  x\right)  \right]  \left(  \frac{P_{\partial}\left(
x\right)  -P_{\partial}\left(  y\right)  }{D}\right)  d\omega_{t}^{-}\left(
x,y\right)  dt\nonumber
\end{align}
with $\omega_{t}^{-}\left(  x,y\right)  $ as in Lemma \ref{LQ1} and:with:
\begin{equation}
\varphi_{1}\left(  y,Y,\lambda_{1},\lambda_{2},t\right)  =\left[  Y+\left(
\lambda_{2}-\lambda_{1}\right)  \nu\left(  y\right)  \right]  \cdot\nabla
^{2}\psi\left(  y,t\right)  \cdot Y\;\;\;,\;\;y\in\partial\Omega
\;\;,\;\;\left(  Y,\lambda_{1},\lambda_{2}\right)  \in\mathcal{M}_{2}\left[
y\right]  \;\;,\;\;t\in\left[  0,\infty\right)  \label{G4E2}%
\end{equation}

Moreover:%
\begin{equation}
\int_{\mathcal{M}_{2}\left[  \cdot\right]  }d\hat{\mu}_{b,t}^{-}\left(
\cdot,\sigma\right)  =\sum_{x_{i}\left(  t\right)  \in S_{t}}\gamma_{b,i}%
^{-}\left(  t\right)  \delta_{x_{i\left(  t\right)  }}\left(  \cdot\right)
\ \ ,\ \ \gamma_{b,i}^{-}\left(  t\right)  \geq\left(  \beta_{i}^{-}\left(
t\right)  \right)  ^{2}\ \ ,\ \ a.e.\ t\in\left[  0,\infty\right)
\label{G3A1}%
\end{equation}

\end{lemma}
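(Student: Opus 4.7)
The strategy parallels the proof of Lemma \ref{LQ1}, with the blow-up space $S^{1}$ replaced by the manifold $\mathcal{M}_{2}$ tailored to the boundary geometry. The key observation is that the factors $Z(y)$ and $Z(x)$ localize the integrand to a neighborhood of $\partial\Omega$, and that the singularity of $(P_{\partial}(x)-P_{\partial}(y))/D$ occurs precisely as $(x,y)$ approach a common point of $\partial\Omega$ along the three natural directions encoded by $(Y,\lambda_{1},\lambda_{2})\in\mathcal{M}_{2}[y_{0}]$. Writing $r=\sqrt{D}$ and using the definitions of $Y,\lambda_{1},\lambda_{2}$ in Lemma \ref{LRep}, one has $|Y|^{2}+(\lambda_{1}+\lambda_{2})^{2}=1$, $(P_{\partial}(x)-P_{\partial}(y))/D=Y/r$, and $d(x)+d(y)=(\lambda_{1}+\lambda_{2})r$, which identifies the correct rescaling.

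First I would split $Q_{2}=J_{1}+J_{2}$ using the cutoff $\eta(\sqrt{D}/\delta)$ of Lemma \ref{LQ1}. For $J_{2}$, the region $\sqrt{D}\geq\delta/2$, the integrand is bounded and continuous, so the weak convergence of the restricted measures $(1-\eta(\sqrt{D}/\delta))\,d\omega_{\varepsilon}^{-}\rightharpoonup (1-\eta(\sqrt{D}/\delta))\,d\omega^{-}$ along the subsequence from Lemma \ref{LQ1}, followed by $\delta\to 0$, yields the second term on the right-hand side of (\ref{G4E3}); here one uses $\omega^{-}(\{x=y\}\times[0,\infty))=0$ together with the fact that the potentially singular set $\{D=0\}$ is already absorbed into $\{x=y\}\cap\partial\Omega\times\partial\Omega$, which has $\omega^{-}$-measure zero.

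For $J_{1}$ I would form the pushforward measures
\[
\hat{\mu}_{b,\delta,\varepsilon}^{-}(\varphi)=\int\!\!\int\!\!\int \varphi\bigl(P_{\partial}(y),Y(x,y),\lambda_{1}(x,y),\lambda_{2}(x,y),t\bigr)\,\eta\!\left(\tfrac{\sqrt{D}}{\delta}\right)d\omega_{\varepsilon}^{-}(x,y,t)
\]
for $\varphi\in C(\mathcal{M}_{2}^{(\varepsilon_{0})}\times[0,T])$; the $Z$ factors already restrict attention to the boundary strip, so the map $(x,y)\mapsto(P_{\partial}(y),Y,\lambda_{1},\lambda_{2})$ is well defined on the support of the integrand. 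Since the total mass is dominated by $T\bigl(\int u_{0}\bigr)^{2}$, weak compactness gives a limit $\hat{\mu}_{b}^{-}$ along a diagonal subsequence $\varepsilon_{k}\to 0$, $\delta_{k}\to 0$; the factorization $d\hat{\mu}_{b}^{-}=d\hat{\mu}_{b,t}^{-}\,dt$ is obtained exactly as in Proposition \ref{M1}. The support lies in $\mathcal{M}_{2}$ (that is, in $\{d(y)=0\}$) because on $\operatorname{supp}\eta(\sqrt{D}/\delta)$ one has $d(x)+d(y)\leq\delta$, which forces any interior accumulation to vanish in the limit. To identify $J_{1}\to\hat{\mu}_{b}^{-}(\varphi_{1})$ I would write $\nabla\psi(x,t)Z(y)-\nabla\psi(y,t)Z(x)=Z(y)[\nabla\psi(x,t)-\nabla\psi(y,t)]+\nabla\psi(y,t)[Z(y)-Z(x)]$; the second piece vanishes because for $(x,y)$ in the $\delta$-neighborhood of $\partial\Omega$ one has $Z(x)=Z(y)=1$. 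For the first piece, combining Taylor's theorem with the expansion $x-y=r[Y+(\lambda_{2}-\lambda_{1})\nu(y_{0})]+O(r^{2})$ and $(P_{\partial}(x)-P_{\partial}(y))/D=Y/r$ yields, to leading order, the integrand $\varphi_{1}$ from (\ref{G4E2}).

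Finally, the marginal lower bound (\ref{G3A1}) is established by testing $\hat{\mu}_{b}^{-}$ against functions independent of $(Y,\lambda_{1},\lambda_{2})$ and repeating the decomposition (\ref{G2A2}) of $f_{\varepsilon_{k}}(u^{\varepsilon_{k}}(x,t))f_{\varepsilon_{k}}(u^{\varepsilon_{k}}(y,t))$; after localizing $\varphi$ in a small ball around a singular point $x_{i}(t)\in S_{t}\cap\partial\Omega$, the quadratic deviation term is nonnegative and bounded below by $(\beta_{i}^{-}(t))^{2}$ by the lower semicontinuity argument used for (\ref{G2E6a}). The main technical obstacle will be the scaling step: verifying that the change of variables to $(P_{\partial}(y),Y,\lambda_{1},\lambda_{2})$ combined with the $\delta$-localization faithfully captures the singular structure of $Q_{2}$, and that curvature-driven remainders, the mismatch between $\nu(x)$ and $\nu(y)$, and the normal-direction correction in $P_{\partial}(x)-P_{\partial}(y)$ all contribute $o(1)$ errors as $\delta\to 0$; this is the analog, for a three-dimensional blow-up manifold with boundary, of the planar analysis carried out in Lemmas \ref{L1} and \ref{LRep}.
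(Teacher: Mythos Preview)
Your proposal is correct and follows essentially the same approach as the paper: split $Q_{2}$ by a cutoff localizing to the boundary diagonal, form a pushforward measure on $\mathcal{M}_{2}$, identify the near-diagonal piece via Taylor expansion, and pass to the limit in the far piece using the measure $\omega^{-}$ from Lemma \ref{LQ1}. The only cosmetic differences are your choice of cutoff variable $\sqrt{D}$ (the paper uses $|x-\tau(y)|$, which by (\ref{M2E1a}) is equivalent up to $O(\delta^{2})$) and your base point $P_{\partial}(y)$ for the pushforward (the paper uses $(x+y)/2$, which agrees with $P_{\partial}(y)$ up to $O(\delta)$ on the support of the cutoff); neither affects the argument.
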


\begin{proof}
We use the same argument as in the proof of Lemma \ref{LQ1}. Using the same
cutoff function $\eta$ we write:
\begin{align*}
&  \int\int\int H_{2}\left(  x,y,t\right)  d\omega_{\varepsilon}^{-}\left(
x,y,t\right) \\
&  =\int\int\int H_{2}\left(  x,y,t\right)  \eta\left(  \frac{\left\vert
x-\tau\left(  y\right)  \right\vert }{\delta}\right)  d\omega_{\varepsilon
}^{-}\left(  x,y,t\right)  +\\
&  +\int\int\int H_{2}\left(  x,y,t\right)  \left[  1-\eta\left(
\frac{\left\vert x-\tau\left(  y\right)  \right\vert }{\delta}\right)
\right]  d\omega_{\varepsilon}^{-}\left(  x,y,t\right) \\
&  \equiv I_{1}+I_{2}%
\end{align*}
with:%
\begin{equation}
H_{2}\left(  x,y,t\right)  =\frac{1}{4\pi}\left[  \nabla\psi\left(
x,t\right)  Z\left(  y\right)  -\nabla\psi\left(  y,t\right)  Z\left(
x\right)  \right]  \left(  \frac{P_{\partial}\left(  x\right)  -P_{\partial
}\left(  y\right)  }{D}\right)  \label{H1A2}%
\end{equation}

In order to define the measure $\hat{\mu}_{b}^{-}\in M^{+}\left(
\mathcal{M}_{2}\times\mathbb{R}^{+}\right)  $ we argue as follows. Given a
test function $\varphi\in C^{\infty}\left(  \mathcal{M}_{2}\times
\mathbb{R}^{+}\right)  $ we extend it to $\mathcal{M}_{2}^{\left(
\varepsilon_{0}\right)  }$ for small $\varepsilon_{0}$ as $\varphi\left(
y,\sigma\right)  =\varphi\left(  y_{0},\sigma\right)  $ with $\sigma
\in\mathcal{M}_{2}\left[  y_{0}\right]  $ and we define the auxiliary linear
functional for $\delta>0$ sufficiently small:%
\begin{equation}
\hat{\mu}_{b,\delta,\varepsilon}^{-}\left(  \varphi\right)  =\int\int
\int\varphi\left(  \frac{x+y}{2},\left[  \frac{\left(  P_{\partial}\left(
x\right)  -P_{\partial}\left(  y\right)  \right)  \cdot t\left(  y\right)
}{\kappa\sqrt{D}}\right]  t\left(  y\right)  ,\frac{d\left(  x\right)  }%
{\sqrt{D}},\frac{d\left(  y\right)  }{\sqrt{D}},t\right)  \eta\left(
\frac{\left\vert x-\tau\left(  y\right)  \right\vert }{\delta}\right)
d\omega_{\varepsilon}^{-}\left(  x,y,t\right)  \label{G4E7}%
\end{equation}
where $\kappa=\frac{\left\vert \left(  P_{\partial}\left(  x\right)
-P_{\partial}\left(  y\right)  \right)  \cdot t\left(  y\right)  \right\vert
}{\left\vert P_{\partial}\left(  x\right)  -P_{\partial}\left(  y\right)
\right\vert }.$ Notice that for any $x,y\ $satisfying $\left\vert
x-\tau\left(  y\right)  \right\vert \leq\delta,$ we have also
$\operatorname*{dist}\left(  x,\partial\Omega\right)  \leq\delta,$
$\operatorname*{dist}\left(  y,\partial\Omega\right)  \leq2\delta$ for
$\delta$ sufficiently small. In particular $Z\left(  y\right)  =1.$ The
sequence of measures $\hat{\mu}_{b,\delta,\varepsilon}^{-}$ is weakly compact
in\footnote{See if it is possible to put $T=\infty$ directly.} $\left[
C\left(  \mathcal{M}_{2}\times\left[  0,T\right)  \right)  \right]  ^{\ast}$
for any $0<T<\infty$. Then, taking suitable subsequences:
\[
\hat{\mu}_{b,\delta,\varepsilon}^{-}\rightharpoonup\hat{\mu}_{b}^{-}%
\]
where $\hat{\mu}_{b}^{-}\in\mathbb{M}^{+}\left(  \mathcal{M}_{2}%
\times\mathbb{R}^{+}\right)  $.

We can then pass to the limit in $I_{1}$ as follows. Using Taylor's we can
write:
\begin{align*}
I_{1}  &  =\int\int\int\eta\left(  \frac{\left\vert x-\tau\left(  y\right)
\right\vert }{\delta}\right)  \cdot\\
&  \cdot\left[  \left[  \left(  P_{\partial}\left(  x\right)  -P_{\partial
}\left(  y\right)  \right)  \cdot t\left(  y\right)  \right]  t\left(
y\right)  +\left(  d\left(  y\right)  -d\left(  x\right)  \right)  \nu\left(
y\right)  \right]  \frac{\nabla^{2}\psi\left(  y,t\right)  }{4\pi}\left(
\frac{P_{\partial}\left(  x\right)  -P_{\partial}\left(  y\right)  }%
{D}\right)  d\omega_{\varepsilon}^{-}\left(  x,y,t\right) \\
&  +O\left(  \delta\right)
\end{align*}

Then:
\[
I_{1}=\hat{\mu}_{b,\delta,\varepsilon}^{-}\left(  \varphi_{1}\right)
+O\left(  \delta\right)
\]

Then, taking suitable subsequences:
\[
I_{1}\rightarrow\hat{\mu}_{b}^{-}\left(  \varphi_{1}\right)
\;\;,\;\;\varepsilon_{k}\rightarrow0\;\;,\;\;\delta_{k}\rightarrow
0\;\;\text{as}\;k\rightarrow\infty
\]

On the other hand, taking the limit $\varepsilon\rightarrow0$ and
$\delta\rightarrow0$, also for suitable subsequences:
\[
I_{2}\rightarrow\frac{1}{4\pi}\int\int\int_{\left[  \bar{\Omega}\times
\bar{\Omega}\times\left[  0,\infty\right)  \right]  \cap\left\{  x\neq
y\right\}  }\left[  \nabla\psi\left(  x,t\right)  Z\left(  y\right)
-\nabla\psi\left(  y,t\right)  Z\left(  x\right)  \right]  \left(
\frac{P_{\partial}\left(  x\right)  -P_{\partial}\left(  y\right)  }%
{D}\right)  d\omega_{t}^{-}\left(  x,y\right)  dt
\]
where the measure $\omega_{t}^{-}$ is as in (\ref{G2E7b}).

We can compute the action of the measure $\hat{\mu}_{b}^{-}$ over test
functions $\varphi\in C\left(  \mathcal{M}_{2}\times\mathbb{R}^{+}\right)  $
depending only on $\left(  x,t\right)  \in\partial\Omega\times\left[
0,\infty\right)  .$ To this end we need to consider the limit of:%
\[
\int\int\int\varphi\left(  \frac{x+y}{2},t\right)  \eta\left(  \frac
{\left\vert x-\tau\left(  y\right)  \right\vert }{\delta}\right)
d\omega_{\varepsilon}^{-}\left(  x,y,t\right)
\]
that converge in the limit $\varepsilon_{k}\rightarrow0\;,\delta
_{k}\rightarrow0\;$as$\;k\rightarrow\infty$ to:%
\[
\int_{0}^{\infty}\int_{\partial\Omega}\int_{\mathcal{M}_{2}\left[  x\right]
}\varphi\left(  x,t\right)  d\hat{\mu}_{b,t}^{-}\left(  x,\sigma\right)  dt
\]

Arguing as in the Proof of Lemma \ref{LQ1} we obtain (\ref{G3A1}).
\end{proof}

\bigskip

\subsubsection{Limit of the nonlinear terms: The term $Q_{3}.$}

We now compute the limit of $Q_{3}$.

\begin{lemma}
\label{LQ3}Suppose that $\psi\in C^{\infty}\left(  \bar{\Omega}\times
\mathbb{R}^{+}\right)  $ satisfies (\ref{G2E4}). Then, taking suitable
subsequences $\varepsilon_{k}\rightarrow0,\ \delta_{k}\rightarrow0$ as
$k\rightarrow\infty$ we have:%
\begin{align}
Q_{3}  &  \rightarrow\hat{\mu}_{b}^{-}\left(  \varphi_{2}+\varphi_{3}\right)
-\frac{1}{4\pi}\int\int\int_{\left[  \bar{\Omega}\times\bar{\Omega}%
\times\left[  0,\infty\right)  \right]  \cap\left\{  x\neq y\right\}  }%
\frac{\left[  d\left(  x\right)  \nu\left(  x\right)  +d\left(  y\right)
\nu\left(  y\right)  \right]  }{D}\cdot\nonumber\\
&  \cdot\left[  Z\left(  y\right)  \left[  \nabla\psi\left(  x,t\right)
-Z\left(  x\right)  \nabla\psi\left(  P_{\partial}\left(  x\right)  ,t\right)
\right]  +Z\left(  x\right)  \left[  \nabla\psi\left(  y,t\right)  -Z\left(
y\right)  \nabla\psi\left(  P_{\partial}\left(  y\right)  ,t\right)  \right]
\right]  d\omega_{t}^{-}\left(  x,y\right)  dt-\nonumber\\
&  -\frac{1}{4\pi}\int\int\int_{\left[  \bar{\Omega}\times\bar{\Omega}%
\times\left[  0,\infty\right)  \right]  \cap\left\{  x\neq y\right\}  }%
\frac{Z\left(  x\right)  Z\left(  y\right)  \left[  \nabla\psi\left(
P_{\partial}\left(  x\right)  ,t\right)  -\nabla\psi\left(  P_{\partial
}\left(  y\right)  ,t\right)  \right]  }{D}\cdot\label{G4E8}\\
&  \cdot\left[  d\left(  y\right)  \nu\left(  y\right)  -d\left(  x\right)
\nu\left(  x\right)  \right]  d\omega_{t}^{-}\left(  x,y\right)  dt\nonumber
\end{align}
where $Q_{3}$ is as in (\ref{G2E3d}), the measure $\hat{\mu}_{b}^{-}$ as in
Lemma \ref{LQ2}, $\omega_{t}^{-}\left(  x,y\right)  $ is as in Lemma \ref{LQ1}
and the test functions $\varphi_{2},\ \varphi_{3}$ are given by:%
\begin{align}
\varphi_{2}\left(  y,Y,\lambda_{1},\lambda_{2}\right)   &  =\frac{1}{4\pi
}\left[  \nu\left(  y\right)  \cdot\nabla^{2}\psi\left(  y,t\right)  \cdot
\nu\left(  y\right)  \right]  \left(  \lambda_{1}+\lambda_{2}\right)
^{2}\label{M3E1}\\
\varphi_{3}\left(  y,Y,\lambda_{1},\lambda_{2}\right)   &  =\frac{1}{4\pi
}\left(  \lambda_{1}-\lambda_{2}\right)  \left[  Y\cdot\nabla^{2}\psi\left(
y,t\right)  \cdot\nu\left(  y\right)  \right]  \label{M3E2}%
\end{align}
with $\left(  Y,\lambda_{1},\lambda_{2}\right)  \in\mathcal{M}_{2}\left[
y\right]  .$
\end{lemma}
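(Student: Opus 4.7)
\textbf{Proof proposal for Lemma \ref{LQ3}.} The plan is to mimic the strategy already used in the proofs of Lemmas \ref{LQ1} and \ref{LQ2}: introduce the cutoff $\eta$ and split
\[
Q_{3}=I_{1}+I_{2}, \qquad I_{1}=\int\!\!\int\!\!\int H_{3}\,\eta\!\left(\tfrac{|x-\tau(y)|}{\delta}\right)d\omega_{\varepsilon}^{-},\quad I_{2}=\int\!\!\int\!\!\int H_{3}\left[1-\eta\!\left(\tfrac{|x-\tau(y)|}{\delta}\right)\right]d\omega_{\varepsilon}^{-},
\]
where $H_{3}(x,y,t)$ is the integrand in (\ref{G2E3d}). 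The near-diagonal piece $I_{1}$ will be captured by the boundary measure $\hat{\mu}_{b}^{-}$ already constructed in Lemma \ref{LQ2}, and the far piece $I_{2}$ will be integrated against $\omega_{t}^{-}$ after an appropriate subtraction that makes the kernel $[d(x)\nu(x)+d(y)\nu(y)]/D$ harmless.

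For $I_{1}$, on the support of $\eta(|x-\tau(y)|/\delta)$ we have $Z(x)=Z(y)=1$ for $\delta$ small, so the boundary condition (\ref{G2E4}) combined with Taylor's expansion yields
\[
\nabla\psi(x,t)=\nabla\psi(P_{\partial}(x),t)-d(x)\,\nu(x)\cdot\nabla^{2}\psi(P_{\partial}(x),t)+O(d(x)^{2}),
\]
together with the identity $\nu(P_{\partial}(x))\cdot\nabla\psi(P_{\partial}(x),t)=0$ and its analogue at $P_{\partial}(y)$. Expanding
\[
[\nabla\psi(x,t)+\nabla\psi(y,t)]\cdot[d(x)\nu(x)+d(y)\nu(y)]
\]
to leading order and passing to the rescaled variables $(Y,\lambda_{1},\lambda_{2})$ exactly as in (\ref{G4E7}) (the diagonal $\nu\cdot\nabla^{2}\psi\cdot\nu$ contribution produces $\varphi_{2}$, while the off-diagonal $Y\cdot\nabla^{2}\psi\cdot\nu$ contribution weighted by the difference $\lambda_{1}-\lambda_{2}$ produces $\varphi_{3}$), the same sequential limits $\varepsilon_{k}\to 0,\ \delta_{k}\to 0$ used in Lemma \ref{LQ2} give $I_{1}\to\hat{\mu}_{b}^{-}(\varphi_{2}+\varphi_{3})$.

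For $I_{2}$, the kernel $[d(x)\nu(x)+d(y)\nu(y)]/D$ is uniformly bounded only away from the set $\{x=\tau(y)\}$, so a naive limit is not integrable against $\omega_{t}^{-}$. The remedy is the algebraic decomposition
\[
\nabla\psi(x,t)Z(y)=Z(y)\bigl[\nabla\psi(x,t)-Z(x)\nabla\psi(P_{\partial}(x),t)\bigr]+Z(x)Z(y)\nabla\psi(P_{\partial}(x),t),
\]
and similarly with $x,y$ interchanged. Because $\partial_{\nu}\psi=0$ on $\partial\Omega$, the first bracket vanishes to order $d(x)$ along the normal direction, so its pairing with $d(x)\nu(x)+d(y)\nu(y)$ divided by $D$ stays bounded (and continuous off $\{x=y\}$) — this is precisely the second line of (\ref{G4E8}). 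The remaining piece involving $Z(x)Z(y)\nabla\psi(P_{\partial}(\cdot),t)$ is purely tangential; symmetrizing in $x\leftrightarrow y$ converts $\nabla\psi(P_{\partial}(x),t)\cdot[d(x)\nu(x)+d(y)\nu(y)]+\nabla\psi(P_{\partial}(y),t)\cdot[d(x)\nu(x)+d(y)\nu(y)]$ into $[\nabla\psi(P_{\partial}(x),t)-\nabla\psi(P_{\partial}(y),t)]\cdot[d(y)\nu(y)-d(x)\nu(x)]$, yielding the last line of (\ref{G4E8}). With these two bounded, continuous integrands in hand, the weak convergence $\omega_{\varepsilon}^{-}[1-\eta(|x-\tau(y)|/\delta)]\rightharpoonup\omega_{\delta,T}^{-}$ followed by $\delta\to 0$ (exactly as in Lemma \ref{LQ2}) completes the identification of the limit of $I_{2}$.

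The main obstacle is verifying that the subtraction in $I_{2}$ is the correct one: one must check that the "subtracted" term (tangential component of $\nabla\psi$ at the projections) when added back with the appropriate sign cancels the singular behaviour of the raw integrand and matches the Taylor coefficients of $I_{1}$, so that the sum of the two limits reproduces (\ref{G4E8}) without double-counting. The cancellation is algebraic and relies only on the identity $\nu\cdot\nabla\psi|_{\partial\Omega}=0$ and the symmetrization in $(x,y)$; the remaining bookkeeping (measurability and continuity of the resulting kernels, the fact that $\omega_{t}^{-}$ assigns no mass to the diagonal, and the use of the same subsequence as in Lemmas \ref{LQ1} and \ref{LQ2}) follows routinely.
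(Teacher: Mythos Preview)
Your proposal is correct and follows essentially the same route as the paper. The only cosmetic difference is the order of operations: the paper first performs the algebraic decomposition (using $\nabla\psi(x,t)=[\nabla\psi(x,t)-Z(x)\nabla\psi(P_{\partial}(x),t)]+Z(x)\nabla\psi(P_{\partial}(x),t)$ together with the Neumann condition) to write $Q_{3}=Q_{3,1}+Q_{3,2}$ with bounded integrands $H_{3,1},H_{3,2}$, and \emph{then} applies the $\eta$--cutoff to each piece, obtaining $\varphi_{2}$ from the near part of $Q_{3,1}$ and $\varphi_{3}$ from the near part of $Q_{3,2}$ separately; you instead split with $\eta$ first and Taylor-expand the raw integrand on the near piece, reserving the decomposition for the far piece.
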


\begin{proof}
In order to rewrite $Q_{3}$ we use the identity:%

\begin{align*}
&  \left[  \nabla\psi\left(  x,t\right)  Z\left(  y\right)  +\nabla\psi\left(
y,t\right)  Z\left(  x\right)  \right]  \left[  d\left(  x\right)  \nu\left(
x\right)  +d\left(  y\right)  \nu\left(  y\right)  \right] \\
&  =\left[  \left[  \nabla\psi\left(  x,t\right)  -Z\left(  x\right)
\nabla\psi\left(  P_{\partial}\left(  x\right)  ,t\right)  \right]  Z\left(
y\right)  +\nabla\psi\left(  P_{\partial}\left(  x\right)  ,t\right)  Z\left(
x\right)  Z\left(  y\right)  \right]  \left[  d\left(  x\right)  \nu\left(
x\right)  +d\left(  y\right)  \nu\left(  y\right)  \right] \\
&  +\left[  \left[  \nabla\psi\left(  y,t\right)  -Z\left(  y\right)
\nabla\psi\left(  P_{\partial}\left(  y\right)  ,t\right)  \right]  Z\left(
x\right)  +\nabla\psi\left(  P_{\partial}\left(  y\right)  ,t\right)  Z\left(
x\right)  Z\left(  y\right)  \right]  \left[  d\left(  x\right)  \nu\left(
x\right)  +d\left(  y\right)  \nu\left(  y\right)  \right]
\end{align*}

Using $\nu\left(  P_{\partial}\left(  x\right)  \right)  =\nu\left(  x\right)
$ as well as (\ref{G2E4}), we obtain, after symmetrizing:
\begin{align*}
&  \left[  \nabla\psi\left(  x,t\right)  Z\left(  y\right)  +\nabla\psi\left(
y,t\right)  Z\left(  x\right)  \right]  \left[  d\left(  x\right)  \nu\left(
x\right)  +d\left(  y\right)  \nu\left(  y\right)  \right] \\
&  =\left[  Z\left(  y\right)  \left[  \nabla\psi\left(  x,t\right)  -Z\left(
x\right)  \nabla\psi\left(  P_{\partial}\left(  x\right)  ,t\right)  \right]
+Z\left(  x\right)  \left[  \nabla\psi\left(  y,t\right)  -Z\left(  y\right)
\nabla\psi\left(  P_{\partial}\left(  y\right)  ,t\right)  \right]  \right]
\cdot\left[  d\left(  x\right)  \nu\left(  x\right)  +d\left(  y\right)
\nu\left(  y\right)  \right]  +\\
&  +Z\left(  x\right)  Z\left(  y\right)  \left[  \nabla\psi\left(
P_{\partial}\left(  x\right)  ,t\right)  -\nabla\psi\left(  P_{\partial
}\left(  y\right)  ,t\right)  \right]  \cdot\left[  d\left(  y\right)
\nu\left(  y\right)  -d\left(  x\right)  \nu\left(  x\right)  \right]
\end{align*}

Therefore, we can write $Q_{3}$ as:%
\[
Q_{3}=Q_{3,1}+Q_{3,2}%
\]
with:%
\[
Q_{3,1}=-\int\int\int H_{3,1}\left(  x,y,t\right)  d\omega_{\varepsilon}%
^{-}\left(  x,y,t\right)  \ \ \ ,\ \ \ Q_{3,2}=-\int\int\int H_{3,2}\left(
x,y,t\right)  d\omega_{\varepsilon}^{-}\left(  x,y,t\right)
\]%
\begin{align}
H_{3,1}\left(  x,y,t\right)   &  =\frac{\left[  d\left(  x\right)  \nu\left(
x\right)  +d\left(  y\right)  \nu\left(  y\right)  \right]  }{4\pi D}%
\cdot\left[  Z\left(  y\right)  \left[  \nabla\psi\left(  x,t\right)
-Z\left(  x\right)  \nabla\psi\left(  P_{\partial}\left(  x\right)  ,t\right)
\right]  \right.  +\nonumber\\
&  \left.  +Z\left(  x\right)  \left[  \nabla\psi\left(  y,t\right)  -Z\left(
y\right)  \nabla\psi\left(  P_{\partial}\left(  y\right)  ,t\right)  \right]
\right] \label{H1A3}\\
H_{3,2}\left(  x,y,t\right)   &  =\frac{Z\left(  x\right)  Z\left(  y\right)
\left[  \nabla\psi\left(  P_{\partial}\left(  x\right)  ,t\right)  -\nabla
\psi\left(  P_{\partial}\left(  y\right)  ,t\right)  \right]  \cdot\left[
d\left(  y\right)  \nu\left(  y\right)  -d\left(  x\right)  \nu\left(
x\right)  \right]  }{4\pi D} \label{H1A4}%
\end{align}

In order to compute the contribution near the diagonal $\left\{  x=y\right\}
$ and away from it we split $Q_{3,1},\ Q_{3,2}$ as in the Proof of Lemmas
\ref{LQ1}, \ref{LQ2}. More precisely:%
\begin{align*}
Q_{3,1}  &  =-\int\int\int H_{3,1}\left(  x,y,t\right)  \eta\left(
\frac{\left\vert x-\tau\left(  y\right)  \right\vert }{\delta}\right)
d\omega_{\varepsilon}^{-}\left(  x,y,t\right) \\
&  -\int\int\int H_{3,1}\left(  x,y,t\right)  \left[  1-\eta\left(
\frac{\left\vert x-\tau\left(  y\right)  \right\vert }{\delta}\right)
\right]  d\omega_{\varepsilon}^{-}\left(  x,y,t\right) \\
&  \equiv I_{1,1}+I_{1,2}%
\end{align*}%
\begin{align*}
Q_{3,2}  &  =-\int\int\int H_{3,2}\left(  x,y,t\right)  \eta\left(
\frac{\left\vert x-\tau\left(  y\right)  \right\vert }{\delta}\right)
d\omega_{\varepsilon}^{-}\left(  x,y,t\right) \\
&  -\int\int\int H_{3,2}\left(  x,y,t\right)  \left[  1-\eta\left(
\frac{\left\vert x-\tau\left(  y\right)  \right\vert }{\delta}\right)
\right]  d\omega_{\varepsilon}^{-}\left(  x,y,t\right) \\
&  \equiv I_{2,1}+I_{2,2}%
\end{align*}

Using Taylor's expansion as well as the definition of $P_{\partial}\left(
x\right)  ,\ P_{\partial}\left(  y\right)  :$%
\[
I_{1,1}=\frac{1}{4\pi}\int\int\int\frac{\left[  d\left(  x\right)  +d\left(
y\right)  \right]  ^{2}}{D}\left[  \nu\left(  y\right)  \cdot\nabla^{2}%
\psi\left(  y,t\right)  \cdot\nu\left(  y\right)  \right]  \eta\left(
\frac{\left\vert x-\tau\left(  y\right)  \right\vert }{\delta}\right)
d\omega_{\varepsilon}^{-}\left(  x,y,t\right)  +O\left(  \delta\right)
\]%
\[
I_{2,1}=\frac{1}{4\pi}\int\int\int\left[  \left(  P_{\partial}\left(
x\right)  -P_{\partial}\left(  y\right)  \right)  \cdot\nabla^{2}\psi\left(
y,t\right)  \cdot\nu\left(  y\right)  \right]  \frac{\left[  d\left(
x\right)  -d\left(  y\right)  \right]  }{D}\eta\left(  \frac{\left\vert
x-\tau\left(  y\right)  \right\vert }{\delta}\right)  d\omega_{\varepsilon
}^{-}\left(  x,y,t\right)
\]
Taking the limit $\varepsilon_{k}\rightarrow0,\;\delta_{k}\rightarrow0$ for
suitable subsequences we then obtain:
\[
I_{1,1}+I_{2,1}\rightarrow\hat{\mu}_{b}^{-}\left(  \varphi_{2}+\varphi
_{3}\right)
\]

On the other hand, using the boundedness of the integrands we can pass to the
limit as $\varepsilon_{k}\rightarrow0,\;\delta_{k}\rightarrow0$ in
$I_{1,2},\;I_{2,2}$ to obtain:
\begin{align*}
I_{1,2}  &  \rightarrow-\frac{1}{4\pi}\int\int\int_{\left[  \bar{\Omega}%
\times\bar{\Omega}\times\left[  0,\infty\right)  \right]  \cap\left\{  x\neq
y\right\}  }\frac{\left[  d\left(  x\right)  \nu\left(  x\right)  +d\left(
y\right)  \nu\left(  y\right)  \right]  }{D}\cdot\\
&  \cdot\left[  Z\left(  y\right)  \left[  \nabla\psi\left(  x,t\right)
-Z\left(  x\right)  \nabla\psi\left(  P_{\partial}\left(  x\right)  ,t\right)
\right]  +Z\left(  x\right)  \left[  \nabla\psi\left(  y,t\right)  -Z\left(
y\right)  \nabla\psi\left(  P_{\partial}\left(  y\right)  ,t\right)  \right]
\right]  d\omega_{t}^{-}\left(  x,y\right)  dt
\end{align*}%
\begin{align*}
I_{2,2}  &  \rightarrow-\frac{1}{4\pi}\int\int\int_{\left[  \bar{\Omega}%
\times\bar{\Omega}\times\left[  0,\infty\right)  \right]  \cap\left\{  x\neq
y\right\}  }\frac{Z\left(  x\right)  Z\left(  y\right)  \left[  \nabla
\psi\left(  P_{\partial}\left(  x\right)  ,t\right)  -\nabla\psi\left(
P_{\partial}\left(  y\right)  ,t\right)  \right]  }{D}\cdot\\
&  \cdot\left[  d\left(  y\right)  \nu\left(  y\right)  -d\left(  x\right)
\nu\left(  x\right)  \right]  d\omega_{t}^{-}\left(  x,y\right)  dt
\end{align*}

This concludes the proof of the lemma.
\end{proof}

\subsubsection{Limit of the nonlinear terms: The terms $Q_{4}$ and $Q_{5}.$}

We finally precise the limit of the terms $Q_{4}$ and $Q_{5}.$

\begin{lemma}
\label{LQ4}Taking suitable subsequences $\varepsilon_{k}\rightarrow
0,\ \delta_{k}\rightarrow0$ as $k\rightarrow\infty$ we have:%
\begin{align}
Q_{4}  &  \rightarrow\hat{\mu}_{b}^{-}\left(  \varphi_{4}\right)  +\int
\int\int_{\left[  \bar{\Omega}\times\bar{\Omega}\times\left[  0,\infty\right)
\right]  \cap\left\{  x\neq y\right\}  }\nabla\psi\left(  x,t\right)
\frac{Z\left(  y\right)  h\left(  y\right)  }{2\pi}\cdot\label{G5E3}\\
&  \cdot\left[  \mathcal{G}_{t}\left(  Y\left(  x,y\right)  ,\lambda
_{1}\left(  x,y\right)  ,\lambda_{2}\left(  x,y\right)  \right)
+\mathit{g}_{n}\left(  Y\left(  x,y\right)  ,\lambda_{1}\left(  x,y\right)
,\lambda_{2}\left(  x,y\right)  \right)  \nu\left(  y\right)  \right]
d\omega_{t}^{-}\left(  x,y\right)  dt\nonumber
\end{align}%
\begin{equation}
Q_{5}\rightarrow-\int\int\int_{\left[  \bar{\Omega}\times\bar{\Omega}%
\times\left[  0,\infty\right)  \right]  }\nabla\psi\left(  x,t\right)
W\left(  x,y\right)  d\omega_{t}^{-}\left(  x,y\right)  dt \label{G5E4}%
\end{equation}
where $Q_{4},\ Q_{5}$ are as in (\ref{G2E3e}), (\ref{G2E3f}) respectively,
$\varphi_{4}$ is:%
\begin{equation}
\varphi_{4}\left(  y,Y,\lambda_{1},\lambda_{2}\right)  =\frac{h\left(
y\right)  }{2\pi}\nabla\psi\left(  y,t\right)  \cdot\left[  \mathcal{G}%
_{t}\left(  Y,\lambda_{1},\lambda_{2}\right)  +\mathit{g}_{n}\left(
Y,\lambda_{1},\lambda_{2}\right)  \nu\left(  y\right)  \right]  \label{M3E3}%
\end{equation}
and $\mathcal{G}_{t},\ \mathit{g}_{n}$ are as in (\ref{M2E3a}), (\ref{M2E3b}).
The measures $\omega_{t}^{-}\left(  x,y\right)  ,\ \hat{\mu}_{b}^{-},\ $are
respectively as in Lemmas \ref{LQ1}, \ref{LQ2}.
\end{lemma}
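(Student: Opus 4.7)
The strategy will parallel the proofs of Lemmas \ref{LQ1}, \ref{LQ2}, \ref{LQ3}: split each integral into a near-diagonal piece (using the cutoff $\eta(|x-\tau(y)|/\delta)$ employed in Lemma \ref{LQ2}) and a far-from-diagonal piece, identifying the near-diagonal part as the action of $\hat\mu_b^-$ on an explicit test function and the far-from-diagonal part as an integral against $\omega_t^-$.

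First I will treat $Q_{5}$, which is the easier of the two. The key observation is that by Lemma \ref{L1} the remainder $K(y,x)$ is continuous on $\bar\Omega\times\bar\Omega$, and together with the smooth tangential correction from the proof of Lemma \ref{LRep} this makes $W(x,y)\in C(\bar\Omega\times\bar\Omega)$. Consequently $\nabla\psi(x,t)W(x,y)$ is a bounded continuous function on $\bar\Omega\times\bar\Omega\times[0,T]$, and no splitting is required. Direct $\ast$-weak convergence $\omega^-_{\varepsilon_k}\rightharpoonup \omega^-$ (as established in the proof of Lemma \ref{LQ1} when combined with Corollary \ref{MassBound}; any diagonal mass is absorbed into the representation of $\omega_t^-$ via the $u\,u$ and $\delta_{x_i}u$ terms of \eqref{G2E6b}) then yields \eqref{G5E4}.

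For $Q_{4}$, I decompose
\[
Q_{4}= \int\!\!\int\!\!\int H_4(x,y,t)\,\eta\!\left(\tfrac{|x-\tau(y)|}{\delta}\right)d\omega^-_{\varepsilon}+\int\!\!\int\!\!\int H_4(x,y,t)\Bigl[1-\eta\!\left(\tfrac{|x-\tau(y)|}{\delta}\right)\Bigr]d\omega^-_{\varepsilon}\equiv J_1+J_2,
\]
where $H_4(x,y,t)=\nabla\psi(x,t)\frac{Z(y)h(y)}{2\pi}\cdot[\mathcal{G}_t(Y,\lambda_1,\lambda_2)+g_n(Y,\lambda_1,\lambda_2)\nu(y)]$. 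On the support of $1-\eta(|x-\tau(y)|/\delta)$, the Remark after Lemma \ref{LRep} ensures $H_4$ is bounded; hence the weak convergence of $\omega^-_{\varepsilon_k}[1-\eta(|x-\tau(y)|/\delta)]\rightharpoonup \omega^-_{\delta,T}$ followed by $\delta\to 0$ gives the $\omega_t^-$ integral appearing in \eqref{G5E3}. For $J_1$, on the support of $\eta(|x-\tau(y)|/\delta)$ we have for $\delta\le\delta_0$ that $Z(y)=Z(x)=1$ and $d(x),d(y)=O(\delta)$, so $|x-P_\partial(y)|=O(\delta)$ and Taylor's theorem gives
\[
\nabla\psi(x,t)= \nabla\psi\bigl(P_\partial(y),t\bigr)+O(\delta).
\]
Since $\mathcal{G}_t$ and $g_n$ are homogeneous of order zero, they depend only on the normalized variables $(Y(x,y),\lambda_1(x,y),\lambda_2(x,y))\in\mathcal{M}_2[P_\partial(y)]$ (up to the curvature correction in the definition of $\tau$, which contributes $O(\delta)$), and $h(y)=h(P_\partial(y))+O(\delta)$. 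Using the coordinate map from the proof of Lemma \ref{LQ2} (see \eqref{G4E7}), this rewrites $J_1$ as $\hat\mu^-_{b,\delta,\varepsilon}(\varphi_4)+O(\delta)$. Passing to the same subsequence that yields $\hat\mu^-_{b,\delta_k,\varepsilon_k}\rightharpoonup\hat\mu^-_b$ then gives $J_1\to\hat\mu^-_b(\varphi_4)$ with $\varphi_4$ as in \eqref{M3E3}.

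The main technical obstacle will be verifying that the normal and tangential coordinate identifications used implicitly in the definition of $\hat\mu_b^-$ (equation \eqref{G4E7}) are exactly compatible with the arguments of $\mathcal{G}_t,g_n$: one must check that the vectors $Y(x,y)$, $\lambda_1(x,y)$, $\lambda_2(x,y)$ from Lemma \ref{LRep} coincide, up to errors of order $\delta$, with the coordinates $(\frac{(P_\partial(x)-P_\partial(y))\cdot t(y)}{\kappa\sqrt{D}})\,t(y)$, $d(x)/\sqrt D$, $d(y)/\sqrt D$ used in defining $\hat\mu_b^-$. This is a geometric computation using Lemma \ref{Lgeom} analogous to \eqref{M2E1aa}--\eqref{M2E1c}. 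Everything else is a routine re-run of the splitting strategy of Lemmas \ref{LQ2}, \ref{LQ3}.
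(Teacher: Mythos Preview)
Your proposal is correct and follows essentially the same approach as the paper: split $Q_4$ with the cutoff $\eta(|x-\tau(y)|/\delta)$, identify the near-diagonal piece as $\hat\mu_{b,\delta,\varepsilon}^-(\varphi_4)+O(\delta)$ via the coordinate map \eqref{G4E7}, pass the far piece to the $\omega_t^-$ integral, and handle $Q_5$ directly by the continuity of $W$. Your write-up is in fact slightly more explicit than the paper's (which simply states $Q_{4,1}=\hat\mu_{b,\delta,\varepsilon}^-(\varphi_4)+O(\delta)$ without spelling out the Taylor step or the coordinate compatibility), so the ``technical obstacle'' you flag is already the only content of the argument and is handled exactly as you describe.
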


\begin{proof}
The proof is essentially similar to the one in Lemmas \ref{LQ2}, \ref{LQ3}. We
split $Q_{4}$ using the cutoff function $\eta.$ Then:%
\[
Q_{4}=\int\int\int\left[  \cdot\cdot\cdot\right]  \eta\left(  \frac{\left\vert
x-\tau\left(  y\right)  \right\vert }{\delta}\right)  d\omega_{\varepsilon
}^{-}\left(  x,y,t\right)  +
\]%
\[
+\int\int\int\left[  \cdot\cdot\cdot\right]  \left[  1-\eta\left(
\frac{\left\vert x-\tau\left(  y\right)  \right\vert }{\delta}\right)
\right]  d\omega_{\varepsilon}^{-}\left(  x,y,t\right)  \equiv Q_{4,1}+Q_{4,2}%
\]

Using (\ref{G4E7}) we have:
\[
Q_{4,1}=\hat{\mu}_{b,\delta,\varepsilon}^{-}\left(  \varphi_{4}\right)
+O\left(  \delta\right)
\]
with $\varphi_{4}$ as in (\ref{M3E3}). Taking the limit $\varepsilon
_{k}\rightarrow0,\;\delta_{k}\rightarrow0$ we then obtain (\ref{G5E3}). On the
other hand, we can take directly the limit in $Q_{5}$ due to the continuity of
$W\left(  x,y\right)  ,$ whence (\ref{G5E4}) follows.
\end{proof}

\begin{remark}
It is important to notice that the domain of integration in (\ref{G5E4})
includes the diagonal $\left\{  x=y\right\}  $ differently from the other
formulas where the measures $\left\{  \omega_{t}^{-}\right\}  $ appear (cf.
(\ref{G2E6}), (\ref{G4E3}), (\ref{G4E8}), (\ref{G5E3})).
\end{remark}

\subsubsection{Weak formulation limit equation for the first regularization.}

\bigskip

We can now collect the previous results as follows.

\bigskip

\begin{theorem}
\label{WeakC1}Let $u^{\varepsilon},v^{\varepsilon}$ be a solution of
(\ref{S2E1}), (\ref{S2E2}). Suppose that the measures $\mu,\ \hat{\mu}%
^{-},\ \omega^{-},\ \hat{\mu}_{b}^{-}$ are defined as in Proposition \ref{M1}
and Lemmas \ref{LQ1}, \ref{LQ2} respectively. Then, for any test function
$\psi\in C^{\infty}$ satisfying (\ref{G2E4}) we have:%
\begin{align}
&  -\int\psi\left(  x,0\right)  u_{0}\left(  x\right)  dx-\int\int\psi_{t}%
d\mu_{t}dt-\int\int\Delta\psi d\mu_{t}dt+\nonumber\\
&  +\frac{1}{4\pi}\int\int_{\bar{\Omega}\times S^{1}}\left(  \nu\cdot
\nabla^{2}\psi\left(  x,t\right)  \cdot\nu\right)  d\hat{\mu}_{t}^{-}\left(
x,\nu\right)  dt+\hat{\mu}_{b}^{-}\left(  \varphi\right)  -\nonumber\\
&  -\int\int\int_{\left[  \bar{\Omega}\times\bar{\Omega}\times\left[
0,\infty\right)  \right]  \cap\left\{  x=y\right\}  }\nabla\psi\left(
x,t\right)  W\left(  x,y\right)  d\omega_{t}^{-}\left(  x,y\right)
dt-\nonumber\\
&  -\int\int\int_{\left[  \bar{\Omega}\times\bar{\Omega}\times\left[
0,\infty\right)  \right]  \cap\left\{  x\neq y\right\}  }\left[  \nabla
_{x}G\left(  x,y\right)  \cdot\nabla\psi\left(  x,t\right)  \right]
d\omega_{t}^{-}\left(  x,y\right)  dt\nonumber\\
&  =0 \label{W6E2}%
\end{align}
with the test function $\varphi$ given by:%
\begin{align}
\varphi\left(  y,Y,\lambda_{1},\lambda_{2},t\right)   &  =\frac{Y\cdot
\nabla^{2}\psi\left(  y,t\right)  \cdot Y}{4\pi}+\left[  \frac{\nu\left(
y\right)  \cdot\nabla^{2}\psi\left(  y,t\right)  \cdot\nu\left(  y\right)
}{4\pi}\right]  \left(  \lambda_{1}+\lambda_{2}\right)  ^{2}+\nonumber\\
&  +\frac{h\left(  y\right)  }{2\pi}\nabla\psi\left(  y,t\right)  \cdot\left[
\mathcal{G}_{t}\left(  Y,\lambda_{1},\lambda_{2}\right)  +\mathit{g}%
_{n}\left(  Y,\lambda_{1},\lambda_{2}\right)  \nu\left(  y\right)  \right]
\label{M4E1}%
\end{align}
Moreover, the measures $\hat{\mu}_{t}^{-},\ \hat{\mu}_{b,t}^{-}$ satisfy
(\ref{G2E6a}), (\ref{G3A1}).
\end{theorem}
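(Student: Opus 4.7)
The plan is to take the limit $\varepsilon_k\to 0$ (along a common subsequence for which all the convergences in Proposition \ref{M1} and Lemmas \ref{LQ1}, \ref{LQ2}, \ref{LQ3}, \ref{LQ4} hold simultaneously) in the identity
\[
L_1+Q_1+Q_2+Q_3+Q_4+Q_5=0
\]
obtained in Lemma \ref{Lterms}. For the linear term $L_1$ this is immediate: the three integrals in (\ref{G2E3a}) converge to the corresponding integrals against $d\mu=d\mu_t\,dt$ by the $\ast$-weak convergence $u^{\varepsilon_k}\rightharpoonup\mu$ stated in Proposition \ref{M1}, so $L_1$ contributes exactly the first line of (\ref{W6E2}).

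First I would assemble the \emph{singular} contributions. By Lemma \ref{LQ1}, $Q_1$ contributes the interior second-order term $\tfrac{1}{4\pi}\int\int_{\bar\Omega\times S^1}(\nu\cdot\nabla^2\psi\cdot\nu)\,d\hat\mu_t^-\,dt$. By Lemmas \ref{LQ2}, \ref{LQ3} and \ref{LQ4}, the near-boundary concentrated pieces of $Q_2,Q_3,Q_4$ are $\hat\mu_b^-(\varphi_1)$, $\hat\mu_b^-(\varphi_2+\varphi_3)$ and $\hat\mu_b^-(\varphi_4)$, with $\varphi_i$ as in (\ref{G4E2}), (\ref{M3E1}), (\ref{M3E2}), (\ref{M3E3}). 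A direct algebraic check, using only the identity $(Y+(\lambda_2-\lambda_1)\nu)\cdot\nabla^2\psi\cdot Y+(\lambda_1-\lambda_2)[Y\cdot\nabla^2\psi\cdot\nu]=Y\cdot\nabla^2\psi\cdot Y$ (which collapses the cross terms in $\varphi_1+\varphi_3$) and $\varphi_2$ providing the $(\lambda_1+\lambda_2)^2\,\nu\cdot\nabla^2\psi\cdot\nu$ piece, gives $\varphi_1+\varphi_2+\varphi_3+\varphi_4=\varphi$ with $\varphi$ precisely as in (\ref{M4E1}). Hence these four pieces sum to $\hat\mu_b^-(\varphi)$.

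Next I would assemble the \emph{regular} (off-diagonal) contributions, which are integrals of the form $\int\!\!\int\!\!\int_{\{x\ne y\}}H_i(x,y,t)\,d\omega_t^-(x,y)\,dt$ with the kernels $H_1,H_2,H_{3,1},H_{3,2}$ and the $Q_4$-kernel read off from (\ref{H1A1})-(\ref{H1A4}) and (\ref{G5E3}). On $\{x\ne y\}$ the boundary terms of $Q_3$ involving $Z(x)Z(y)[\nabla\psi(P_\partial(x),t)-\nabla\psi(P_\partial(y),t)]$ and the ``correction'' pieces $\nabla\psi-Z\nabla\psi(P_\partial,\cdot)$ desymmetrize back to $[\nabla\psi(x,t)Z(y)+\nabla\psi(y,t)Z(x)][d(x)\nu(x)+d(y)\nu(y)]/D$ as in the derivation of Lemma \ref{LQ3}; similarly the desymmetrization of $Q_1$ and $Q_2$ undoes the splittings performed there. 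Putting the resulting kernels side by side one recognises exactly the representation formula (\ref{M2E3}) of Lemma \ref{LRep} for $-\nabla_xG(y,x)$, except for the $W(x,y)$ piece coming from $Q_5$, which is continuous on all of $\bar\Omega\times\bar\Omega$ and hence contributes the single off-diagonal integral $\int\!\!\int\!\!\int_{\{x\ne y\}}\nabla\psi\cdot\nabla_xG\,d\omega_t^-\,dt$ when combined. The remaining diagonal piece of $Q_5$ (from Lemma \ref{LQ4} and the remark following it) gives the integral over $\{x=y\}$ in (\ref{W6E2}).

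The main obstacle is the bookkeeping of paragraph~three: one must carefully reverse the symmetrizations performed in Lemmas \ref{LQ2}-\ref{LQ3} on the region $\{x\ne y\}$, then verify that the resulting combination reproduces exactly $-\nabla_x G$ as decomposed in Lemma \ref{LRep}. Nothing new is needed beyond (\ref{M2E3}) and the definitions of $\mathcal{G}_t,\mathit{g}_n,Y,\lambda_1,\lambda_2$; once this identification is made, the representation (\ref{M4E1}) of the singular test function $\varphi$ also emerges naturally because the same four terms of $\nabla_x G$ that are being integrated against $d\omega_t^-$ on $\{x\ne y\}$ are, on the diagonal blowup, producing $\varphi_1,\varphi_2+\varphi_3,\varphi_4$ respectively (plus the interior $Y\cdot\nabla^2\psi\cdot Y/(4\pi)$ piece from $Q_1$, which survives only in the bulk $\hat\mu^-$). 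Collecting everything yields (\ref{W6E2}); the measures $\hat\mu_t^-$ and $\hat\mu_{b,t}^-$ inherit the structure (\ref{G2E6a}) and (\ref{G3A1}) directly from Lemmas \ref{LQ1} and \ref{LQ2}.
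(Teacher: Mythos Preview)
Your proposal is correct and follows essentially the same route as the paper: pass to the limit in $L_1+Q_1+\dots+Q_5=0$ via Lemmas \ref{LQ1}--\ref{LQ4}, identify $\varphi=\varphi_1+\varphi_2+\varphi_3+\varphi_4$, and then on $\{x\neq y\}$ undo the symmetrizations and invoke the decomposition (\ref{M2E3}) of Lemma \ref{LRep} to reassemble the off-diagonal kernels into $\nabla_x G$, leaving the diagonal $W$-piece from $Q_5$ separately. The paper records the intermediate identity (\ref{W6E1}) with the kernels $H_1,H_2,H_{3,1},H_{3,2}$ displayed before performing this recombination, whereas you go straight to (\ref{W6E2}); but the argument is the same.
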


\begin{proof}
The result just follows taking the limit in (\ref{G2E3}). The limit of the
term $L_{1}$ is inmediate. The limit of the terms $Q_{1},...,Q_{5}$ can be
obtained using Lemmas \ref{LQ1}-\ref{LQ4}. We have $\varphi=\varphi
_{1}+\varphi_{2}+\varphi_{3}+\varphi_{4}$ with the functions $\varphi
_{j},\ j=1,...,4$ as in (\ref{G4E2}), (\ref{M3E1}), (\ref{M3E2}),
(\ref{M3E3}). We then obtain:%
\begin{align}
&  -\int\psi\left(  x,0\right)  u_{0}\left(  x\right)  dx-\int\int\psi_{t}%
d\mu_{t}dt-\int\int\Delta\psi d\mu_{t}dt+\nonumber\\
&  +\int\int_{\bar{\Omega}\times S^{1}}\left(  \frac{\nu\cdot\nabla^{2}%
\psi\left(  x,t\right)  \cdot\nu}{4\pi}\right)  d\hat{\mu}_{t}^{-}\left(
x,\nu\right)  dt+\hat{\mu}_{b}^{-}\left(  \varphi\right)  +\nonumber\\
&  +\int\int_{\left[  \bar{\Omega}\times\bar{\Omega}\times\left[
0,\infty\right)  \right]  \cap\left\{  x\neq y\right\}  }\int H_{1}\left(
x,y,t\right)  d\omega_{t}^{-}\left(  x,y\right)  dt+\nonumber\\
&  +\int\int\int_{\left[  \bar{\Omega}\times\bar{\Omega}\times\left[
0,\infty\right)  \right]  \cap\left\{  x\neq y\right\}  }H_{2}\left(
x,y,t\right)  d\omega_{t}^{-}\left(  x,y\right)  dt-\nonumber\\
&  -\int\int\int_{\left[  \bar{\Omega}\times\bar{\Omega}\times\left[
0,\infty\right)  \right]  }\nabla\psi\left(  x,t\right)  W\left(  x,y\right)
d\omega_{t}^{-}\left(  x,y\right)  dt\nonumber\\
&  -\int\int\int_{\left[  \bar{\Omega}\times\bar{\Omega}\times\left[
0,\infty\right)  \right]  \cap\left\{  x\neq y\right\}  }H_{3,1}\left(
x,y,t\right)  d\omega_{t}^{-}\left(  x,y\right)  dt-\nonumber\\
&  -\int\int\int_{\left[  \bar{\Omega}\times\bar{\Omega}\times\left[
0,\infty\right)  \right]  \cap\left\{  x\neq y\right\}  }H_{3,2}\left(
x,y,t\right)  d\omega_{t}^{-}\left(  x,y\right)  dt+\nonumber\\
&  +\int\int\int_{\left[  \bar{\Omega}\times\bar{\Omega}\times\left[
0,\infty\right)  \right]  \cap\left\{  x\neq y\right\}  }\nabla\psi\left(
x,t\right)  \frac{Z\left(  y\right)  h\left(  y\right)  }{2\pi}\cdot
\nonumber\\
&  \cdot\left[  \mathcal{G}_{t}\left(  Y\left(  x,y\right)  ,\lambda
_{1}\left(  x,y\right)  ,\lambda_{2}\left(  x,y\right)  \right)
+\mathit{g}_{n}\left(  Y\left(  x,y\right)  ,\lambda_{1}\left(  x,y\right)
,\lambda_{2}\left(  x,y\right)  \right)  \nu\left(  y\right)  \right]
d\omega_{t}^{-}\left(  x,y\right)  dt\nonumber\\
&  =0 \label{W6E1}%
\end{align}
where the functions $H_{1},H_{2},H_{3,1},H_{3,2}\ $ are as in (\ref{H1A1}),
(\ref{H1A2}), (\ref{H1A3}), (\ref{H1A4}) respectively.

Formula (\ref{W6E1}) is particularly convenient in order to check the
convergence of the different terms arising in the integrals, because the
measures $\omega_{t}^{-}\left(  x,y\right)  $ are integrated against
continuous functions in the region of integration. However, it can be written
in a more convenient form reversing the computations in Lemma \ref{LQ3} in
order to rewrite $H_{3,1},H_{3,2}$, using (\ref{G2E4}) and Lemma \ref{LRep}.
\end{proof}

\begin{remark}
It is important to take into account the presence in (\ref{W6E2}) of the
integral term containing $W$ and integrated in $\left\{  x=y\right\}  .$ This
term gives a nonzero contribution in the singular points of the measure
$\omega_{t}^{-}\left(  x,y\right)  .$
\end{remark}

\bigskip

\subsection{Second regularization}

Arguing in a completely similar manner with the second regularization
(\ref{S2E4}), (\ref{S2E5}) we can obtain the following result:

\begin{theorem}
\label{WeakC2} Let $u^{\varepsilon},v^{\varepsilon}$ be a solution of
(\ref{S2E4}), (\ref{S2E5}). Suppose that $\mu,\ \mu^{+}$ are as in Proposition
\ref{M2}. There exists measures $\hat{\mu}\in M^{+}\left(  \Omega\times
S^{1}\times\mathbb{R}^{+}\right)  ,\ \omega\in M^{+}\left(  \Omega\times
\Omega\times\mathbb{R}^{+}\right)  ,\ \hat{\mu}_{b}\in M^{+}\left(
\mathcal{M}\times\mathbb{R}^{+}\right)  $ defined as in Lemmas \ref{LQ1},
\ref{LQ2} with the functions $f_{\varepsilon}\left(  u^{\varepsilon}\right)  $
replaced by $u^{\varepsilon}.$ For any test function $\psi\in C^{\infty}$
satisfying (\ref{G2E4}) this family of measures satisfy:%
\begin{align}
&  -\int\psi\left(  x,0\right)  u_{0}\left(  x\right)  dx-\int\int\psi_{t}%
d\mu_{t}dt-\int\int\Delta\psi d\mu_{t}^{+}dt+\nonumber\\
&  +\frac{1}{4\pi}\int\int_{\bar{\Omega}\times S^{1}}\left(  \nu\cdot
\nabla^{2}\psi\left(  x,t\right)  \cdot\nu\right)  d\hat{\mu}_{t}\left(
x,\nu\right)  dt+\hat{\mu}_{b}\left(  \varphi\right)  -\nonumber\\
&  -\int\int\int_{\left[  \bar{\Omega}\times\bar{\Omega}\times\left[
0,\infty\right)  \right]  \cap\left\{  x=y\right\}  }\nabla\psi\left(
x,t\right)  W\left(  x,y\right)  d\omega_{t}\left(  x,y\right)  dt-\nonumber\\
&  -\int\int\int_{\left[  \bar{\Omega}\times\bar{\Omega}\times\left[
0,\infty\right)  \right]  \cap\left\{  x\neq y\right\}  }\left[  \nabla
_{x}G\left(  x,y\right)  \cdot\nabla\psi\left(  x,t\right)  \right]
d\omega_{t}\left(  x,y\right)  dt\nonumber\\
&  =0 \label{W6E1a}%
\end{align}
with the test function $\varphi$ as in (\ref{M4E1}). Moreover, we have:%
\begin{equation}
\int_{S^{1}}d\hat{\mu}_{t}\left(  \cdot,\nu\right)  =\sum_{x_{i}\left(
t\right)  \in S_{t}}\gamma_{i}\left(  t\right)  \delta_{x_{i}\left(  t\right)
}\ \ ,\ \ \gamma_{i}\left(  t\right)  =\left(  \alpha_{i}\left(  t\right)
\right)  ^{2}\ \ ,\ \ a.e.\ t\in\left[  0,\infty\right)  \label{W6E1b}%
\end{equation}%
\begin{align*}
d\omega_{t}\left(  x,y\right)   &  =\sum_{\left\{  x_{i}\left(  t\right)  \neq
x_{j}\left(  t\right)  \right\}  }\alpha_{i}\left(  t\right)  \alpha
_{j}\left(  t\right)  \delta_{x_{i}\left(  t\right)  }\left(  x\right)
\delta_{x_{j}\left(  t\right)  }\left(  y\right)  +\\
&  +\sum_{x_{i}\left(  t\right)  \in S_{t}}\alpha_{i}\left(  t\right)  \left[
\delta_{x_{i}\left(  t\right)  }\left(  x\right)  u\left(  y,t\right)
dy+\delta_{x_{i}\left(  t\right)  }\left(  y\right)  u\left(  x,t\right)
dx\right]  +u\left(  x,t\right)  u\left(  y,t\right)  dxdy
\end{align*}%
\[
\int_{\mathcal{M}_{2}\left[  \cdot\right]  }d\hat{\mu}_{b,t}\left(
\cdot,\sigma\right)  =\sum_{x_{i}\left(  t\right)  \in S_{t}}\gamma
_{b,i}\left(  t\right)  \delta_{x_{i}\left(  t\right)  }\left(  \cdot\right)
\ \ ,\ \ \gamma_{b,i}\left(  t\right)  =\left(  \alpha_{i}\left(  t\right)
\right)  ^{2}\ \ ,\ \ a.e.\ t\in\left[  0,\infty\right)
\]

\end{theorem}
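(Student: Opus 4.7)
The plan is to mirror the derivation of Theorem \ref{WeakC1}, replacing at every step the measures built from $f_\varepsilon(u^\varepsilon)$ by the analogous measures built from $u^\varepsilon$ itself, and then to exploit the double-limit identity (\ref{DouLim}) in order to upgrade the inequalities in (\ref{G2E6a}), (\ref{G3A1}) and (\ref{G2E6b}) to the equalities claimed here.

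First I would multiply (\ref{S2E4}) by a test function $\psi \in C^\infty(\bar{\Omega}\times\mathbb{R}^+)$ satisfying (\ref{G2E4}), integrate by parts twice in space and once in time, and use Lemmas \ref{Lsymm}, \ref{L1}, \ref{LRep} to rewrite $\nabla v^\varepsilon$ via the Green's function. This yields the exact analog of the decomposition (\ref{G2E3}) in which the linear part becomes
\[
L_1' = -\int \psi(x,0)\,u_0(x)\,dx - \int\!\!\int \psi_t\, u^\varepsilon\,dx\,dt - \int\!\!\int \bigl(u^\varepsilon + \varepsilon (u^\varepsilon)^{7/6}\bigr)\,\Delta\psi\,dx\,dt,
\]
and the five quadratic terms $Q_1',\ldots,Q_5'$ have exactly the form (\ref{G2E3b})--(\ref{G2E3f}) except that $d\omega_\varepsilon(x,y,t) = u^\varepsilon(x,t)\,u^\varepsilon(y,t)\,dx\,dy\,dt$ replaces $d\omega_\varepsilon^-$. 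The first two summands in $L_1'$ pass to the limit against $\mu$ by Proposition \ref{M2}; the third summand converges to $-\int\!\!\int \Delta\psi\,d\mu_t^+\,dt$ by the weak-$\ast$ convergence $u^{\varepsilon_k} + \varepsilon_k (u^{\varepsilon_k})^{7/6} \rightharpoonup \mu^+$ established there. This is the sole source of the discrepancy between (\ref{W6E1a}) and (\ref{W6E2}). The passage to the limit in $Q_1',\ldots,Q_5'$ proceeds exactly as in Lemmas \ref{LQ1}--\ref{LQ4}: for each $Q_j'$ one uses the cutoff $\eta(|x-y|/\delta)$ (for $Q_1'$) or $\eta(|x-\tau(y)|/\delta)$ (for $Q_2', Q_3', Q_4'$) to split off the near-diagonal contribution, which defines $\hat{\mu}$ (respectively $\hat{\mu}_b$) as a weak limit analogous to (\ref{G4E7}), from an off-diagonal contribution, which defines $\omega$; continuity of $W$ handles $Q_5'$ directly. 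This produces (\ref{W6E1a}) verbatim with $\mu^+$ in the diffusion slot.

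The main obstacle, and the only substantive new point compared to Theorem \ref{WeakC1}, is the \emph{identification} of the singular parts of $\omega_t$, $\hat{\mu}_t$ and $\hat{\mu}_{b,t}$ as products of atoms of $\mu_t$: the equalities $\gamma_i(t) = \gamma_{b,i}(t) = (\alpha_i(t))^2$ and $\lambda_{i,j}(t) = \alpha_i(t)\alpha_j(t)$ rather than the mere inequalities valid for the first regularization. The key input is (\ref{DouLim}): since $\omega_\varepsilon$ is now built from $u^{\varepsilon_k}$ itself, and since Lemma \ref{MassChange} applies to $u^{\varepsilon_k}$, the double-limit identity gives $d\omega_t(x,y) = d\mu_t(x)\,d\mu_t(y)$ as a product measure on $\bar{\Omega}\times\bar{\Omega}$ for a.e.\ $t$. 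Evaluating this product structure at $(x_i(t), x_j(t))$ with $i\neq j$ forces $\lambda_{i,j}(t) = \alpha_i(t)\alpha_j(t)$, and evaluating on the diagonal atom $(x_i(t), x_i(t))$ forces its weight to be $(\alpha_i(t))^2$. Since the measures $\hat{\mu}_t$ and $\hat{\mu}_{b,t}$ are constructed precisely as the weak limits of the near-diagonal cutoffs of $\omega_\varepsilon$ (cf.\ the construction leading to (\ref{G4E7}) and its interior analog), their total masses over $S^1$ and $\mathcal{M}_2[\cdot]$ inherit exactly these diagonal weights, yielding (\ref{W6E1b}) and the stated identity for $\hat{\mu}_{b,t}$. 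The delicate point -- namely, why this product representation is unavailable in the first regularization -- is that no analog of Lemma \ref{MassChange} holds for $f_{\varepsilon_k}(u^{\varepsilon_k})$, so (\ref{DouLim}) fails for the pair $f_{\varepsilon_k}(u^{\varepsilon_k})f_{\varepsilon_k}(u^{\varepsilon_k})$ and only the Cauchy--Schwarz lower bound $\gamma_i^-(t) \geq (\beta_i^-(t))^2$ is available.
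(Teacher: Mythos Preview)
Your proposal is correct and follows the paper's intended route. The paper itself gives no explicit proof of Theorem \ref{WeakC2}, stating only that one argues ``in a completely similar manner'' to Theorem \ref{WeakC1}; your outline fills this in accurately, and your identification of the double-limit identity (\ref{DouLim}) as the mechanism that upgrades the inequalities $\gamma_i^-(t)\geq(\beta_i^-(t))^2$ of the first regularization to the equalities $\gamma_i(t)=(\alpha_i(t))^2$ here is exactly the point---indeed the unnamed lemma containing (\ref{DouLim}) is placed in the paper precisely for this purpose, and its hypothesis (Lemma \ref{MassChange}, giving Lipschitz-in-time control of $\int\psi\,u^\varepsilon$) is what distinguishes $u^\varepsilon$ from $f_\varepsilon(u^\varepsilon)$.
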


\bigskip

\section{On the connection between the measures $\mu,\ \mu^{-}$ and $\mu^{+}.$
Separation Lemma. Proof of the different evolutions for the two
regularizations.}

The main result in this Section is the following:

\begin{theorem}
\label{RepMeas}The measure $\bar{\mu}$ in Proposition \ref{M1} (cf.
(\ref{T1E1b})) is related with the measures $\hat{\mu}^{-},\ \hat{\mu}_{b}%
^{-}$ defined in Lemma \ref{LQ1} by means of:%
\[
\frac{1}{8\pi}\int_{S^{1}}d\hat{\mu}_{t}^{-}\left(  \cdot,\nu\right)
+\frac{1}{8\pi}\int_{\mathcal{M}_{2}}d\hat{\mu}_{b}^{-}\left(  \cdot
,\nu\right)  =d\bar{\mu}_{t}\left(  \cdot\right)  \ \ \ a.e\ \ t\in\left[
0,\infty\right)
\]

The measure $\bar{\mu}^{+}$ in Proposition \ref{M2} (cf. (\ref{TN1})) is
related with the measures $\hat{\mu},\ \hat{\mu}_{b}\ $ in Theorem
\ref{WeakC2} by means of:%
\[
\frac{1}{8\pi}\int_{S^{1}}d\hat{\mu}_{t}\left(  \cdot,\nu\right)  +\frac
{1}{8\pi}\int_{\mathcal{M}_{2}}d\hat{\mu}_{b}\left(  \cdot,\nu\right)
=d\bar{\mu}_{t}^{+}\left(  \cdot\right)  \ \ \ a.e\ \ t\in\left[
0,\infty\right)
\]

\end{theorem}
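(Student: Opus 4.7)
The strategy is to read off both identities by inserting into the weak formulations of Theorems~\ref{WeakC1} and \ref{WeakC2} a family of test functions $\psi(x,t) = \xi(t)\phi_\rho(x)$ that vanishes quadratically at a prescribed singular point $x_0 = x_i(t_0) \in S_{t_0}$, and balancing the resulting singular contributions. At the algebraic level, the factor $8\pi$ arises because $\Delta\psi(x_0) = 2$ pairs with the atom of $\mu_t$ (resp.\ $\mu_t^+$), while $\nu\cdot\nabla^2\psi(x_0)\cdot\nu = 1$ pairs with the fiberwise total of $\hat\mu_t^-$ (resp.\ $\hat\mu_t$); the coefficient $1/(4\pi)$ of the Hessian term in (\ref{W6E2}), (\ref{W6E1a}) then gives the quoted ratio.

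Fix $t_0$ in a full-measure subset of $(0,\infty)$ where the representations of Propositions~\ref{M1}, \ref{M2} and Lemmas~\ref{LQ1}, \ref{LQ2} hold, and which is a Lebesgue point of the $t$-slices of $\mu,\mu^\pm,\hat\mu^\pm,\hat\mu_b^\pm,\omega^\pm$. Fix $x_0 = x_i(t_0)$ and choose $\rho$ smaller than half the distance from $x_0$ to the remaining points of $S_{t_0}$. In the interior case, set $\phi_\rho(x) = \eta_\rho(x)\,|x-x_0|^2/2$ with $\eta_\rho$ a smooth cutoff equal to $1$ on $B_{\rho/2}(x_0)$ and vanishing outside $B_\rho(x_0)$. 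In the boundary case $x_0 \in \partial\Omega$, use local tangent--normal coordinates $(\ell,d)$ of Section~3 and take $\phi_\rho(x) = \eta_\rho(x)(\ell^2+d^2)/2$; the Neumann condition $\partial_\nu\phi_\rho|_{\partial\Omega}=0$ holds up to $O(d^2)$ curvature corrections, which can be removed by a small perturbation as in the reflection/correction argument of Lemma~\ref{L2}. Crucially, $\phi_\rho = O(\rho^2)$ and $\nabla\phi_\rho = O(\rho)$ uniformly, while $\Delta\phi_\rho = 2$ and $\nabla^2\phi_\rho = I$ on $B_{\rho/2}(x_0)$.

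Analyze each term of (\ref{W6E2}) as $\xi$ first concentrates at $t_0$ (Lebesgue differentiation at fixed $\rho$) and then $\rho \to 0$. The time-derivative contribution vanishes because $\phi_\rho(x_0)=0$ and $\nabla\phi_\rho(x_0)=0$ force $\tfrac{d}{dt}\int\phi_\rho\,d\mu_t$ to tend to zero at $t_0$ as $\rho \to 0$ (modulo a regular-part contribution that is $O(\rho^2)$). The Laplacian term produces $-2\alpha_i(t_0)$ via the representation (\ref{T1E1c}) since $\Delta\phi_\rho(x_0)=2$, the smooth part being $o(1)$ by $L^1$-absolute continuity of $u$ on the shrinking set $B_\rho(x_0)$. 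The Hessian term produces $\gamma_i^-(t_0)/(4\pi)$ by (\ref{G2E6a}) since $\nabla^2\phi_\rho \to I$ at $x_0$. In the boundary case, the constraint $|Y|^2 + (\lambda_1+\lambda_2)^2 = 1$ on $\mathcal{M}_2[y]$ combined with $\nabla^2\phi_\rho = I$ collapses the first two pieces of the test function $\varphi$ of (\ref{M4E1}) to the constant $1/(4\pi)$, yielding $\gamma_{b,i}^-(t_0)/(4\pi)$, while the curvature-dependent third piece is $O(\rho)$ because $\nabla\phi_\rho = O(\rho)$. The remainder terms involving $W$ and $\nabla_x G$ are $O(\rho)$: the estimate $\|\nabla\phi_\rho\|_\infty = O(\rho)$ combines with the finiteness of $\omega_t^-$, the $L^p_{\mathrm{loc}}$-integrability ($p<2$) of $\nabla_x G$, and the key observation $\nabla\phi_\rho(x_0)=0$, which together with (\ref{G2E6b}) suppresses the atom--atom contributions of $\omega_t^-$. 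Collecting, for a.e.\ $t_0$,
\[
-2\alpha_i(t_0) + \tfrac{1}{4\pi}\bigl(\gamma_i^-(t_0) + \gamma_{b,i}^-(t_0)\bigr) = 0,
\]
and summing over $S_{t_0}$ yields the first identity. The second identity follows from the same computation applied to (\ref{W6E1a}); the only difference is that the Laplacian is paired with $\mu_t^+$ rather than $\mu_t$, so $\beta_i^+(t_0)$ replaces $\alpha_i(t_0)$ and one reads off $\gamma_i(t_0) + \gamma_{b,i}(t_0) = 8\pi\beta_i^+(t_0)$.

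The principal obstacle is the rigorous vanishing of the time-derivative term. At a.e.\ $t_0$ one must justify $\lim_{\rho \to 0}\tfrac{d}{dt}[\alpha_i(t)\phi_\rho(x_i(t))] = 0$ at $t=t_0$ in a setting where the atom trajectory $t \mapsto x_i(t)$ is only a measurable function of $t$ a priori. This relies on the spatial confinement $|x_i(t)-x_0| \leq L\sqrt{|t-t_0|}$ from Lemma~\ref{contS} and the local Lipschitz mass-bound from Lemma~\ref{MassChange}, combined with a careful selection of Lebesgue/differentiability points for the trajectory. A secondary technical step is the construction of $\phi_\rho$ in the boundary case that satisfies the Neumann condition exactly while preserving $\phi_\rho(x_0) = \nabla\phi_\rho(x_0) = 0$ and $\nabla^2\phi_\rho(x_0) = I$; this adapts the harmonic-correction device in the proof of Lemma~\ref{L2}.
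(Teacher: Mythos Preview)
Your approach is natural but has a genuine gap, precisely at the point you yourself flag as the ``principal obstacle''. The time-derivative contribution does \emph{not} vanish under the available regularity, and the tools you invoke (Lemma~\ref{contS} and Lemma~\ref{MassChange}) are not enough to force it to zero. Concretely: after localizing in time around $t_0$ as in the derivation of (\ref{W1E6}), the term $-\int\int\psi_t\,d\mu_t\,dt$ becomes the boundary contribution $\int\phi_\rho\,d\mu_{T_2}-\int\phi_\rho\,d\mu_{T_1}$. With $T_1=t_0$ this equals $\alpha_i(T_2)\phi_\rho(x_i(T_2))$ plus a regular part that is $O(\rho^2)$. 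Now $\phi_\rho(x_i(T_2))\approx\tfrac{1}{2}|x_i(T_2)-x_0|^2$, and Lemma~\ref{contS} only gives $|x_i(T_2)-x_0|\le L\sqrt{|T_2-t_0|}$. Hence the boundary contribution divided by $|T_2-T_1|$ is bounded by $\tfrac{L^2}{2}\alpha_i(T_2)$, which is $O(1)$ and of \emph{the same order} as the Laplacian and Hessian terms you are trying to balance. The H\"older-$\tfrac{1}{2}$ continuity of the atom trajectory is exactly critical for a quadratically vanishing test function; no choice of Lebesgue point improves this, because nothing in the paper establishes better-than-H\"older-$\tfrac{1}{2}$ regularity of $t\mapsto x_i(t)$.

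The paper's proof takes a completely different route that does not require the boundary term to vanish at a single time. It first derives the quantitative two-sided bound (\ref{Z1E1}) (Lemma~\ref{LI1}) using a test function $\varphi_\rho$ with $\Delta\varphi_\rho=-2/\rho^2$ on $B_\rho(x_0)$, so that the Laplacian and Hessian contributions scale like $(T_2-T_1)/\rho^2$ while the boundary terms remain $O(1)$. Then, arguing by contradiction, it assumes the identity fails by a fixed amount $\delta_2$ on a set $A$ of positive measure, chooses $\rho\sim\sqrt{\delta_2(T_2-T_1)}$ so that the two scales match, and \emph{iterates} the inequality (\ref{Z1E1}) over a chain of roughly $n\sim 1/\delta_2$ consecutive subintervals $[t_i^\ell,t_{i+1}^\ell]$ with endpoints in $A$. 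The accumulated gain forces $\alpha(Y_n^\ell;t_0+\varepsilon_\ell)-\alpha(Y_1^\ell;t_0)\ge\theta>0$ with $\theta$ independent of $\varepsilon_\ell$, and repeating this at a sequence of density points of $A$ drives the Dirac mass to $+\infty$, contradicting mass conservation. The H\"older-$\tfrac{1}{2}$ motion of the atom is absorbed into the choice of scales (\ref{Z1E6a}) rather than wished away. Your algebraic identification of the $8\pi$ is correct, but the analytic mechanism that pins down the identity pointwise in $t$ is this iteration/contradiction argument, not a direct limit at a single Lebesgue point.
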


A relevant consequence of Theorem \ref{RepMeas} is the following.

\bigskip

\begin{corollary}
\label{CorM}Let $\mu,\ \mu^{-},\ \hat{\mu}^{-}$ be solutions of (\ref{W6E1})
(cf. Theorem \ref{WeakC1}) as in (\ref{T1E1c}), (\ref{T1E1d}), (\ref{G2E6a}).
Suppose that there exists a set of positive measure $\mathcal{A}\subset\left[
0,\infty\right)  $ such that for $t_{0}\in\mathcal{A},$ $\beta_{i}^{-}\left(
t_{0}\right)  >8\pi.$ Then $\alpha_{i}\left(  t_{0}\right)  >\beta_{i}%
^{-}\left(  t_{0}\right)  $ $a.e.\ t_{0}\in\mathcal{A}.$

Let $\mu,\ \mu^{+},\ \hat{\mu}$ be solutions of (\ref{W6E1a}) (cf. Theorem
\ref{WeakC2}) as in (\ref{T2E1}), (\ref{T2E1a}), (\ref{W6E1b}). Suppose that
ther exists a set of positive measure $\mathcal{A}\subset\left[
0,\infty\right)  $ such that for $t_{0}\in\mathcal{A},$ $\alpha_{i}\left(
t_{0}^{+}\right)  >8\pi.$ Then $\beta_{i}^{+}\left(  t_{0}\right)  >\alpha
_{i}\left(  t_{0}\right)  .$
\end{corollary}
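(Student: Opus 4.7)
My plan is to combine Theorem \ref{RepMeas} with the quadratic lower bounds from Lemmas \ref{LQ1}--\ref{LQ2} (or the corresponding equalities in Theorem \ref{WeakC2}) together with the elementary observation that $x^{2}/(8\pi) > x$ whenever $x > 8\pi$. No new analysis is needed; the entire corollary is a pointwise consequence of identities already established.

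The first step will be to localize the disintegration identity of Theorem \ref{RepMeas} at a prescribed singular atom $x_{i}(t)\in S_{t}$. Testing the identity against a continuous cutoff $\varphi_{\rho}$ that approximates $\mathbf{1}_{\{x_{i}(t)\}}$ (isolation of the finitely many atoms of $S_{t}$ from one another being guaranteed by Lemma \ref{contS}) and reading off the atomic weights from the explicit representations (\ref{T1E1c}), (\ref{G2E6a}), (\ref{G3A1}), I expect to obtain
\begin{equation*}
\alpha_{i}(t)\;=\;\frac{\gamma_{i}^{-}(t)+\gamma_{b,i}^{-}(t)}{8\pi}\qquad \text{for a.e. }t\in[0,\infty),
\end{equation*}
with the understanding that $\gamma_{b,i}^{-}(t)=0$ when $x_{i}(t)$ is an interior point, because $\hat\mu_{b}^{-}$ is supported on $\mathcal{M}_{2}$, which is a fiber bundle over $\partial\Omega$. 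Inserting the bound $\gamma_{i}^{-}(t)\geq(\beta_{i}^{-}(t))^{2}$ from Lemma \ref{LQ1} (and discarding the nonnegative contribution of $\gamma_{b,i}^{-}$) yields $\alpha_{i}(t)\geq(\beta_{i}^{-}(t))^{2}/(8\pi)$ for a.e.\ $t\in\mathcal{A}$. On $\mathcal{A}$ one has $\beta_{i}^{-}(t)>8\pi$, so the elementary inequality forces $\alpha_{i}(t)>\beta_{i}^{-}(t)$, which is the first assertion.

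The second assertion follows by the same scheme with two adjustments. First, in Theorem \ref{WeakC2} the quadratic lower bounds are promoted to exact equalities $\gamma_{i}(t)=\gamma_{b,i}(t)=(\alpha_{i}(t))^{2}$; this is because the nonlinearity in (\ref{S2E4})--(\ref{S2E5}) is $u^{\varepsilon}(x,t)\,u^{\varepsilon}(y,t)$, a tensor product of the same sequence at $x$ and $y$ with no $f_{\varepsilon}$--type regularization that could open a defect gap in the limit (the cross--term analogue of (\ref{G2A2}) vanishes identically). Second, Theorem \ref{RepMeas} now reads $\beta_{i}^{+}(t)=(\gamma_{i}(t)+\gamma_{b,i}(t))/(8\pi)$, giving the exact relation $\beta_{i}^{+}(t_{0})=(\alpha_{i}(t_{0}))^{2}/(8\pi)$ at each atom. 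Under the hypothesis $\alpha_{i}(t_{0}^{+})>8\pi$, evaluated at a Lebesgue density point of $\mathcal{A}$ where the right limit is attained, this produces $\beta_{i}^{+}(t_{0})>\alpha_{i}(t_{0})$.

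The main technical obstacle I anticipate is the rigorous localization in the first step: one must verify that the three measures $\hat\mu_{t}^{-}$, $\hat\mu_{b,t}^{-}$ and $\bar\mu_{t}$ admit compatible Radon--Nikodym time--slicings, so that the identity of Theorem \ref{RepMeas} can be legitimately tested against a product cutoff $\varphi_{\rho}(x)\,\eta_{\sigma}(t)$ with $\eta_{\sigma}$ approximating $\delta_{t_{0}}$ and the limit $\sigma\to 0$ taken at a Lebesgue point of $t\mapsto\alpha_{i}(t),\gamma_{i}^{-}(t),\gamma_{b,i}^{-}(t)$. This compatibility is provided by the common disintegration $d\mu=d\mu_{t}\,dt$ (analogously for $d\hat\mu^{-}$, $d\hat\mu_{b}^{-}$) already established in Propositions \ref{M1}--\ref{M2} and Lemmas \ref{LQ1}--\ref{LQ2}; the positive measure of $\mathcal{A}$ guarantees the existence of such density points.
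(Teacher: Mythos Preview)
Your proposal is correct and follows essentially the same approach as the paper: combine the atomic identity from Theorem \ref{RepMeas} with the quadratic lower bound $\gamma_i^{-}(t)\geq(\beta_i^{-}(t))^2$ (resp.\ the equality $\gamma_i(t)=(\alpha_i(t))^2$) and the elementary fact that $x^2/(8\pi)>x$ when $x>8\pi$. The paper's proof is even shorter than yours---it simply writes $(\beta_i^{-}(t_0))^2\leq\gamma_i^{-}(t_0)=8\pi\alpha_i(t_0)$ and reads off the conclusion---so your discussion of localization via cutoffs $\varphi_\rho\eta_\sigma$ and Lebesgue density points, while not wrong, is more cautious than necessary: once Theorem \ref{RepMeas} gives an identity of measures on $\bar\Omega$ for a.e.\ $t$, the atomic weights can be matched directly since both sides are finite sums of Dirac masses at the same points.
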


\begin{proof}
In the case of the first regularization (cf. (\ref{W6E1})), Theorem
\ref{RepMeas} combined with (\ref{T1E1c}), (\ref{G2E6a}) implies:%
\[
\left(  \beta_{i}^{-}\left(  t_{0}\right)  \right)  ^{2}\leq\gamma_{i}%
^{-}\left(  t_{0}\right)  =8\pi\alpha_{i}\left(  t_{0}\right)
\]
whence $\alpha_{i}\left(  t_{0}\right)  \geq\beta_{i}^{-}\left(  t_{0}\right)
\left(  \frac{\beta_{i}^{-}\left(  t_{0}\right)  }{8\pi}\right)  >\beta
_{i}^{-}\left(  t_{0}\right)  $ and the result follows. In the case of the
second regularization we use (\ref{T2E1a}), (\ref{W6E1b}) and Theorem
\ref{RepMeas} to obtain:%
\[
\left(  \alpha_{i}\left(  t\right)  \right)  ^{2}=\gamma_{i}\left(  t\right)
=8\pi\beta_{i}^{+}\left(  t_{0}\right)
\]
whence the conclusion follows in a similar way.
\end{proof}

\begin{remark}
Notice that a consequence of Corollary \ref{CorM} is that the weak
formulations (\ref{W6E2}), (\ref{W6E1b}) cannot define the same evolution as
soon as one of the Dirac masses at any singular point becomes larger than
$8\pi.$ Notice that intuitively, Corollary \ref{CorM} states that the effect
of the cutoff $f_{\varepsilon}\left(  u^{\varepsilon}\right)  $ in the case of
the first regularization, or the term $\varepsilon u^{\frac{7}{6}}$ in the
case of the second regularization becomes visible as soon as the masses become
larger than $8\pi.$
\end{remark}

\bigskip

We will give the details of the proof of Theorem \ref{RepMeas} for the
measures $\hat{\mu}^{-},\ \bar{\mu}$ for the points placed at the interior of
$\Omega,$ since the points at the boundary $\partial\Omega$ or the case of the
measures $\hat{\mu},\ \bar{\mu}^{+}$ can be studied with similar arguments.

The starting point in the Proof of Theorem \ref{RepMeas} will be the following
inequality that measures the rate of change of the mass in the neighbourhood
of a singular point in terms of the values of the measures $\hat{\mu}%
^{-},\ \bar{\mu}$ near such a point.

\bigskip

\begin{lemma}
\label{LI1}Suppose that the measures $\hat{\mu}^{-},\ \bar{\mu}$ solve
(\ref{W6E2}) for any $\psi\in C^{\infty}\left(  \bar{\Omega}\times
\mathbb{R}^{+}\right)  $ satisfying (\ref{G2E4}). Let $x_{0}\in\Omega,$
$2\rho<R,$ $B_{2R}\left(  x_{0}\right)  \subset\Omega,$ $0\leq T_{1}\leq
T_{2}<\infty.$ Suppose that $\eta_{R}\left(  x\right)  =\eta\left(
\frac{\left\vert x-x_{0}\right\vert }{R}\right)  $ with $\eta\in C^{\infty
}\left(  \mathbb{R}\right)  ,$ $\eta\left(  r\right)  =1$ if$\;0\leq r\leq1,$
$\eta^{\prime}\left(  r\right)  \leq0,\;\eta\left(  r\right)  =0$ if $r\geq2.$
We define also the test function:%
\begin{equation}
\varphi_{\rho}\left(  x\right)  =\left\{
\begin{array}
[c]{c}%
1-\frac{\left\vert x-x_{0}\right\vert ^{2}}{2\rho^{2}}\;\;,\;\;\left\vert
x-x_{0}\right\vert <\rho\\
\frac{\left(  \left\vert x-x_{0}\right\vert -2\rho\right)  _{-}^{2}}{2\rho
^{2}}\;\;,\;\;\left\vert x-x_{0}\right\vert >\rho
\end{array}
\right.  \label{W1E7}%
\end{equation}
where $\left(  s\right)  _{-}=s$ if $s\leq0$ and $\left(  s\right)  _{-}=0$ if
$s>0.$ We define also:%
\begin{equation}
U_{\rho}\left(  t;x_{0}\right)  =\int_{B_{\rho}\left(  x_{0}\right)  \times
S^{1}}d\hat{\mu}_{t}^{-}\left(  x,\nu\right)  \label{W4E1}%
\end{equation}

Then, the following inequalities hold:%
\begin{align}
&  \left\vert \int\varphi_{\rho}\left(  x\right)  d\mu_{T_{2}}\left(
x\right)  -\int\varphi_{\rho}\left(  x\right)  d\mu_{T_{1}}\left(  x\right)
-\frac{1}{\rho^{2}}\int_{T_{1}}^{T_{2}}\left[  \frac{U_{\rho}\left(
t;x_{0}\right)  }{4\pi}-2\int_{B_{\rho}\left(  x_{0}\right)  }d\mu_{t}\left(
x\right)  \right]  dt\right\vert \nonumber\\
&  \leq\frac{C}{\rho^{2}}\int_{T_{1}}^{T_{2}}\int_{\Omega\setminus B_{\rho
}\left(  x_{0}\right)  }\eta_{R}\left(  x\right)  d\mu_{t}\left(  x\right)
dt+\frac{C}{\rho}\int_{T_{1}}^{T_{2}}\int\int\left[  \frac{\eta_{R}\left(
x\right)  }{R}+1\right]  d\mu_{t}\left(  x\right)  d\mu_{t}\left(  y\right)
dt+\nonumber\\
&  +\frac{1}{4\pi\rho^{2}}\int_{T_{1}}^{T_{2}}\int_{B_{\rho}\left(
x_{0}\right)  \times B_{\rho}\left(  x_{0}\right)  \cap\left\{  x\neq
y\right\}  }\eta_{R}\left(  x\right)  \eta_{R}\left(  y\right)  d\mu
_{t}\left(  x\right)  d\mu_{t}\left(  y\right)  dt \label{Z1E1}%
\end{align}
for some constant $C$ depending on $\int d\mu_{0}\left(  x\right)  ,$ but
independent on $\,x_{0},\ R,\ \rho,\ T_{1},\ T_{2}.$
\end{lemma}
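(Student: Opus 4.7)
The plan is to test the weak equation (\ref{W6E2}) against $\psi(x,t)=\varphi_\rho(x)\chi(t)$, where $\chi$ is a smooth approximation of $\chi_{[T_1,T_2]}$. Since $\mathrm{supp}(\varphi_\rho)\subset B_{2\rho}(x_0)\subset\Omega$, the Neumann condition (\ref{G2E4}) holds trivially, the boundary term $\hat{\mu}_b^-(\varphi)$ vanishes, and the $W$-integral over $\{x=y\}$ is zero because $\omega_t^-$ carries no mass on the diagonal (by its construction in Lemma \ref{LQ1} as the $\delta\to 0$ limit of the off-diagonal part of $f_\varepsilon f_\varepsilon\,dxdy$). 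Rearranging and passing to the time limit gives
\[
\int\varphi_\rho\,d\mu_{T_2}-\int\varphi_\rho\,d\mu_{T_1}=\int_{T_1}^{T_2}\!\!\int\Delta\varphi_\rho\,d\mu_t\,dt-\frac{1}{4\pi}\int_{T_1}^{T_2}\!\!\int\nu\cdot\nabla^2\varphi_\rho\cdot\nu\,d\hat\mu_t^-\,dt+\mathcal{T}_G,
\]
with $\mathcal{T}_G=\int_{T_1}^{T_2}\!\int\!\int_{x\neq y}\nabla_x G(x,y)\cdot\nabla\varphi_\rho(x)\,d\omega_t^-\,dt$. On $B_\rho(x_0)$ one has $\Delta\varphi_\rho=-2/\rho^2$ and $\nu\cdot\nabla^2\varphi_\rho\cdot\nu=-1/\rho^2$ for every unit $\nu$, and these two contributions combine to produce exactly $\rho^{-2}\int_{T_1}^{T_2}[U_\rho/(4\pi)-2\int_{B_\rho}d\mu_t]\,dt$, the main term subtracted on the left of (\ref{Z1E1}). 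Thus the left-hand side of (\ref{Z1E1}) equals the sum of: (i) the annular parts of the two $B_\rho$-calculations, and (ii) $\mathcal{T}_G$.

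The annular part of $\int\Delta\varphi_\rho\,d\mu_t$ is dominated by the pointwise bound $|\Delta\varphi_\rho|\le C/\rho^2$ on $B_{2\rho}\setminus B_\rho$; combined with $\eta_R\equiv 1$ on $B_{2\rho}\subset B_R$, this yields a bound by the first right-hand side term of (\ref{Z1E1}). The annular $\hat\mu_t^-$-piece is transferred to a $\mu_t$-estimate through the inequality $\hat\mu_t^-(A\times S^1)\le\mu_t(\bar A)^2$, valid for any Borel $A$: this follows from the construction of $\hat\mu_t^-$ in Lemma \ref{LQ1} by bounding the near-diagonal double integral $\int\int f_\varepsilon(u)f_\varepsilon(u)\mathbf{1}_A(x)\eta(|x-y|/\delta)\,dx\,dy\le(\int_{A_\delta}u^\varepsilon\,dx)^2\to\mu_t(\bar A)^2$. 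Applied with $A=B_{2\rho}\setminus B_\rho$ and the total mass bound $\mu_t(\Omega)\le\int u_0$, this also fits into the first right-hand side term.

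The core step is $\mathcal{T}_G$. Using Lemma \ref{LRep}, write $\nabla_x G(x,y)=-\frac{1}{2\pi}\frac{x-y}{|x-y|^2}+R(x,y)$ with $R$ continuous and uniformly bounded for interior pairs; the $R$-contribution is at most $\frac{C}{\rho}\int\!\int d\omega_t^-\le\frac{C}{\rho}\int\!\int d\mu_t\otimes d\mu_t$ by Corollary \ref{MassBound}, absorbed by the ``$1$''-part of the second right-hand side term. For the singular kernel, use the symmetry of $\omega_t^-$ in $(x,y)$ to rewrite its contribution as
\[
\frac{1}{4\pi}\int_{T_1}^{T_2}\!\!\int\!\int_{x\neq y}\frac{(x-y)\cdot(\nabla\varphi_\rho(x)-\nabla\varphi_\rho(y))}{|x-y|^2}\,d\omega_t^-\,dt,
\]
whose integrand is uniformly bounded by $\|\nabla^2\varphi_\rho\|_\infty\le C/\rho^2$. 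Then split the domain into four pieces: (a) $B_\rho\times B_\rho$, where the integrand equals $-1/\rho^2$ and Corollary \ref{MassBound} together with $\eta_R\equiv 1$ on $B_\rho$ produces the third right-hand side term; (b)--(c) pairs with at least one point in $B_{2\rho}\setminus B_\rho$ and the other in $B_R$, handled by the Lipschitz bound $C/\rho^2$ together with the uniform estimate $\mu_t(B_{2\rho})\le C$, both landing in the first right-hand side term; and (d) pairs with one point in $B_{2\rho}$ and the other outside $B_R$, where $|x-y|\ge R-2\rho\ge R/2$ gives the direct bound $|(x-y)\cdot\nabla\varphi_\rho(x)|/|x-y|^2\le C/(\rho R)$, producing the $\eta_R(x)/R$ part of the second right-hand side term.

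The main obstacle is the bookkeeping of this case split for $\mathcal{T}_G$ --- in particular ensuring that the ``near-miss'' pairs (one point just inside and one just outside $B_{2\rho}$, where the symmetrized integrand is controlled only by the uniform Lipschitz bound $C/\rho^2$ rather than by kernel decay) are paid for out of $\mu_t(B_{2\rho})$ times an annular $\mu_t$-mass at the correct $1/\rho^2$ prefactor --- together with extracting $\hat\mu_t^-(A\times S^1)\le\mu_t(\bar A)^2$ from the construction of $\hat\mu_t^-$. The boundary analogue proceeds along the same lines, with $\varphi_\rho$ replaced by a test function of the Lemma \ref{L2} type and $\nabla_x G$ expanded using the full representation of Lemma \ref{LRep}.
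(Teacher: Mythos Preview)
Your approach is essentially the same as the paper's: test (\ref{W6E2}) with $\varphi_\rho$ localized in time, split $\nabla_x G$ into the singular kernel plus a bounded remainder (the paper invokes Lemma~\ref{L1} directly rather than Lemma~\ref{LRep}), symmetrize to obtain $H_1$, use the cutoff $\eta_R$ to separate near and far interactions, and close via $d\omega_t^-\le d\mu_t\otimes d\mu_t$ (Corollary~\ref{MassBound}). Your treatment of the diagonal $W$-term (zero because $\omega_t^-$ carries no mass on $\{x=y\}$, as stated in Lemma~\ref{LQ1}) and of the annular $\hat\mu_t^-$ via $\hat\mu_t^-(A\times S^1)\le \mu_t(\bar A)^2\le C\mu_t(\bar A)$ are both fine and match what the paper uses (somewhat implicitly).

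One small gap in your case split: your (b)--(c) ``at least one point in $B_{2\rho}\setminus B_\rho$, the other in $B_R$'' does \emph{not} cover the pair $x\in B_\rho$, $y\in B_R\setminus B_{2\rho}$ (or its symmetric), where neither point lies in the annulus. The paper handles this by grouping everything with at least one variable outside $B_\rho$ together (see the derivation of (\ref{W2E2})). In your framework the fix is immediate: for such pairs $\nabla\varphi_\rho(y)=0$, $|x-y|\ge\rho$, and $|\nabla\varphi_\rho(x)|\le 1/\rho$, so $|H_1|\le C/\rho^2$; then $\omega_t^-\le\mu_t\otimes\mu_t$ and $\eta_R\equiv 1$ on $B_R$ give a bound by $\frac{C}{\rho^2}\mu_t(B_\rho)\,\mu_t(B_R\setminus B_{2\rho})\le \frac{C}{\rho^2}\int_{\Omega\setminus B_\rho}\eta_R\,d\mu_t$, which lands in the first right-hand term as desired.
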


\begin{proof}
We will give the details of the proof for the points placed at the interior of
$\Omega,$ since the boundary points can be treated similarly. Suppose that
$\psi$ is any test function supported in a ball $B_{2\rho}\left(
x_{0}\right)  $ with $2\rho<R.$ Using Lemma \ref{L1} and symmetrizing we can
rewrite the last term on the right-hand side of (\ref{W6E2}) as:%
\begin{align}
&  \left\vert \int\int\int_{\left[  \bar{\Omega}\times\bar{\Omega}%
\times\left[  0,\infty\right)  \right]  \cap\left\{  x\neq y\right\}  }\left[
\nabla_{x}G\left(  x,y\right)  \cdot\nabla\psi\left(  x,t\right)  \right]
d\omega_{t}^{-}\left(  x,y\right)  dt+\int\int_{\left\vert x-y\right\vert
>0}\int H_{1}\left(  x,y,t\right)  d\omega_{t}^{-}\left(  x,y\right)
dt\right\vert \nonumber\\
&  \leq C\left\Vert \nabla_{x}\psi\right\Vert _{L^{\infty}\left(  \Omega
\times\mathbb{R}^{+}\right)  }\int\int\int d\omega_{t}^{-}\left(  x,y\right)
\label{W1E0}%
\end{align}
with $H_{1}\left(  x,y,t\right)  $ as in (\ref{H1A1}). Using the test function
$\eta_{R}\left(  x\right)  $ we can write:%
\begin{align}
&  \int\int_{\left\vert x-y\right\vert >0}\int H_{1}\left(  x,y,t\right)
d\omega_{t}^{-}\left(  x,y\right)  dt\label{W1E2}\\
&  =\int\int_{\left\vert x-y\right\vert >0}\int H_{1}\left(  x,y,t\right)
\eta_{R}\left(  x\right)  \eta_{R}\left(  y\right)  d\omega_{t}^{-}\left(
x,y\right)  dt+\nonumber\\
&  +\int\int_{\left\vert x-y\right\vert >0}\int H_{1}\left(  x,y,t\right)
\left(  1-\eta_{R}\left(  x\right)  \right)  \eta_{R}\left(  y\right)
d\omega_{t}^{-}\left(  x,y\right)  dt+\nonumber\\
&  +\int\int_{\left\vert x-y\right\vert >0}\int H_{1}\left(  x,y,t\right)
\eta_{R}\left(  x\right)  \left(  1-\eta_{R}\left(  y\right)  \right)
d\omega_{t}^{-}\left(  x,y\right)  dt\nonumber
\end{align}
where we use that a term containing the product $\left(  1-\eta_{R}\left(
x\right)  \right)  \left(  1-\eta_{R}\left(  y\right)  \right)  $ vanishes due
to the choice of the supports of $\eta_{R},\ \psi.$

The last two terms on the right-hand side of (\ref{W1E2}) can be bounded by:
\[
\frac{C}{R-2\rho}\left\Vert \nabla_{x}\psi\right\Vert _{L^{\infty}\left(
\Omega\times\mathbb{R}^{+}\right)  }\int\int\eta_{R}\left(  x\right)
d\omega_{t}^{-}\left(  x,y\right)
\]

Then (\ref{W1E2}) becomes:
\begin{align}
&  \int\int_{\left\vert x-y\right\vert >0}\int H_{1}\left(  x,y,t\right)
d\omega_{t}^{-}\left(  x,y\right)  dt\nonumber\\
&  =\int\int_{\left\vert x-y\right\vert >0}\int H_{1}\left(  x,y,t\right)
\eta_{R}\left(  x\right)  \eta_{R}\left(  y\right)  d\omega_{t}^{-}\left(
x,y\right)  dt+\nonumber\\
&  +O\left(  \frac{\left\Vert \nabla\psi\right\Vert _{L^{\infty}\left(
\Omega\times R^{+}\right)  }}{R-2\rho}\int\int\eta_{R}\left(  x\right)
d\omega_{t}^{-}\left(  x,y\right)  \right)  \label{W1E3}%
\end{align}

Combining (\ref{W1E0}), (\ref{W1E3}), and using the fact that $\psi=0$ at
$\partial\Omega$ we can rewrite (\ref{W6E2}) as:
\begin{align}
&  -\left(  \int\psi\left(  x,0\right)  u_{0}\left(  x\right)  dx\right)
-\int\int\psi_{t}\mu_{t}\left(  x\right)  dt-\int\int\Delta\psi d\mu
_{t}\left(  x\right)  dt\nonumber\\
&  +\int\int_{\Omega\times S^{1}}\left(  \frac{\nu\cdot\nabla^{2}\psi\left(
x,t\right)  \cdot\nu}{4\pi}\right)  d\hat{\mu}_{t}^{-}\left(  x,\nu\right)
dt+\nonumber\\
&  +\frac{1}{4\pi}\int\int_{\left\vert x-y\right\vert >0}\int\frac{\left[
\left(  x-y\right)  \cdot\left(  \nabla\psi\left(  x,t\right)  -\nabla
\psi\left(  y,t\right)  \right)  \right]  }{\left\vert x-y\right\vert ^{2}%
}\eta_{R}\left(  x\right)  \eta_{R}\left(  y\right)  d\omega_{t}^{-}\left(
x,y\right)  dt-\nonumber\\
&  +O\left(  \int\int\int\left[  \frac{\eta_{R}\left(  x\right)  }{R-2\rho
}\left\Vert \nabla_{x}\psi\right\Vert _{L^{\infty}\left(  \Omega
\times\mathbb{R}^{+}\right)  }+\left\Vert \nabla_{x}\psi\right\Vert
_{L^{\infty}\left(  \Omega\times\mathbb{R}^{+}\right)  }\right]  d\omega
_{t}^{-}\left(  x,y\right)  dt\right) \nonumber\\
&  =0 \label{W1E5}%
\end{align}

Let us consider a function $H_{\varepsilon}\left(  t;T_{1,}T_{2}\right)
,\;H_{\varepsilon}\in C^{\infty},\;0\leq H_{\varepsilon}\leq1$ such that
$H_{\varepsilon}\rightarrow\chi_{\left[  T_{1},T_{2}\right]  }$ as
$\varepsilon\rightarrow0$ in $L^{1}\left(  \mathbb{R}^{+}\right)  ,$
$H_{\varepsilon}\rightharpoonup\delta_{T_{1}}-\delta_{T_{2}}$ in $\left(
C\left(  \mathbb{R}^{+}\right)  \right)  ^{\ast},$ where $\chi_{\left[
T_{1},T_{2}\right]  }$ is the characteristic function of the interval $\left[
T_{1},T_{2}\right]  $ and $\delta_{T}$ is a Dirac mass at $t=T.$ Replacing
$\psi\left(  x,t\right)  $ by $\psi\left(  x,t\right)  H_{\varepsilon}\left(
t;T_{1},T_{2}\right)  $ and taking the limit $\varepsilon\rightarrow0$ we
obtain for $a.e.\;T_{1},T_{2}>0,$ and then for all $T_{1},T_{2}>0$ due to the
absolute continuity of the integration on $t\ $%
\begin{align}
&  -\left(  \int\psi\left(  x,T_{1}\right)  d\mu_{T_{1}}\left(  x\right)
\right)  +\int\psi\left(  x,T_{2}\right)  d\mu_{T_{2}}\left(  x\right)
-\int_{T_{1}}^{T_{2}}\int\frac{\partial\psi}{\partial t}\mu_{t}\left(
x\right)  dt-\nonumber\\
&  -\int_{T_{1}}^{T_{2}}\int\Delta\psi d\mu_{t}\left(  x\right)
dt+\int_{T_{1}}^{T_{2}}\int_{\Omega\times S^{1}}\left(  \frac{\nu\cdot
\nabla^{2}\psi\left(  x,t\right)  \cdot\nu}{4\pi}\right)  d\hat{\mu}_{t}%
^{-}\left(  x,\nu\right)  dt+\nonumber\\
&  +\int_{T_{1}}^{T_{2}}\int_{\left\vert x-y\right\vert >0}\int H_{1}\left(
x,y,t\right)  \eta_{R}\left(  x\right)  \eta_{R}\left(  y\right)  d\omega
_{t}^{-}\left(  x,y\right)  dt+\nonumber\\
&  +O\left(  \int_{T_{1}}^{T_{2}}\left\Vert \nabla_{x}\psi\right\Vert
_{L^{\infty}\left(  \Omega\times R^{+}\right)  }\int\int\left[  \frac{\eta
_{R}\left(  x\right)  }{R-2\rho}+1\right]  d\omega_{t}^{-}\left(  x,y\right)
dt\right) \nonumber\\
&  =0 \label{W1E6}%
\end{align}

We now use in (\ref{W1E6}) the test function $\psi\left(  x,t\right)
=\varphi_{\rho}\left(  x\right)  $ (cf. (\ref{W1E7})). Notice that
$\varphi_{\rho}\notin C^{\infty},$ but since $\varphi_{\rho}\in C^{1,1}$ it
can be used by means of a density argument. Then:
\begin{align}
&  -\left(  \int\varphi_{\rho}\left(  x\right)  d\mu_{T_{1}}\left(  x\right)
\right)  +\int\varphi_{\rho}\left(  x\right)  d\mu_{T_{2}}\left(  x\right)
-\int_{T_{1}}^{T_{2}}\int\Delta\varphi_{\rho}d\mu_{t}\left(  x\right)
dt+\nonumber\\
&  +\int_{T_{1}}^{T_{2}}\int_{\Omega\times S^{1}}\left(  \frac{\nu\cdot
\nabla^{2}\varphi_{\rho}\left(  x\right)  \cdot\nu}{4\pi}\right)  d\hat{\mu
}_{t}^{-}\left(  x,\nu\right)  dt+\nonumber\\
&  +\frac{1}{4\pi}\int_{T_{1}}^{T_{2}}\int_{\left\vert x-y\right\vert >0}%
\int\frac{\left[  \left(  x-y\right)  \cdot\left(  \nabla\varphi_{\rho}\left(
x\right)  -\nabla\varphi_{\rho}\left(  y\right)  \right)  \right]
}{\left\vert x-y\right\vert ^{2}}\eta_{R}\left(  x\right)  \eta_{R}\left(
y\right)  d\omega_{t}^{-}\left(  x,y\right)  dt+\nonumber\\
&  +\int_{T_{1}}^{T_{2}}O\left(  \frac{1}{\rho}\int\int\left[  \frac{\eta
_{R}\left(  x\right)  }{R-2\rho}+1\right]  d\omega_{t}^{-}\left(  x,y\right)
\right)  dt\nonumber\\
&  =0 \label{A1E2}%
\end{align}

We now estimate the term containing the measure $\hat{\mu}^{-}.$ Notice that:
\begin{align*}
&  \int_{T_{1}}^{T_{2}}\int_{\Omega\times S^{1}}\left(  \frac{\nu\cdot
\nabla^{2}\varphi_{\rho}\left(  x\right)  \cdot\nu}{4\pi}\right)  d\hat{\mu
}_{t}^{-}\left(  x,\nu\right)  dt\\
&  =-\frac{1}{4\pi\rho^{2}}\int_{T_{1}}^{T_{2}}\int_{B_{\rho}\left(
x_{0}\right)  \times S^{1}}d\hat{\mu}_{t}^{-}\left(  x,\nu\right)  dt+\\
&  +\int_{T_{1}}^{T_{2}}\int_{\left[  B_{2\rho}\left(  x_{0}\right)  \setminus
B_{\rho}\left(  x_{0}\right)  \right]  \times S^{1}}\left(  \frac{\nu
\cdot\nabla^{2}\varphi_{\rho}\left(  x\right)  \cdot\nu}{4\pi}\right)
d\hat{\mu}_{t}^{-}\left(  x,\nu\right)  dt
\end{align*}
whence:
\begin{align}
&  \left\vert \int_{T_{1}}^{T_{2}}\int_{\Omega\times S^{1}}\left(  \frac
{\nu\cdot\nabla^{2}\varphi_{\rho}\left(  x\right)  \cdot\nu}{4\pi}\right)
d\hat{\mu}_{t}^{-}\left(  x,\nu\right)  dt+\frac{1}{4\pi\rho^{2}}\int_{T_{1}%
}^{T_{2}}\int_{B_{\rho}\left(  x_{0}\right)  \times S^{1}}d\hat{\mu}_{t}%
^{-}\left(  x,\nu\right)  dt\right\vert \nonumber\\
&  \leq\frac{C}{\rho^{2}}\int_{T_{1}}^{T_{2}}\int_{B_{2\rho}\left(
x_{0}\right)  \setminus B_{\rho}\left(  x_{0}\right)  }d\mu_{t}\left(
x\right)  dt \label{A1E3}%
\end{align}

Using the fact that $\left\vert \nabla\varphi_{\rho}\left(  x\right)
-\nabla\varphi_{\rho}\left(  y\right)  \right\vert \leq\frac{C}{\rho^{2}%
}\left\vert x-y\right\vert $ for $\left\vert x-x_{0}\right\vert \leq\rho,\;$
$\left\vert y-x_{0}\right\vert \geq\rho$ as well as (\ref{A1E2}), (\ref{A1E3})
it then follows that:
\begin{align}
&  \int\varphi_{\rho}\left(  x\right)  d\mu_{T_{2}}\left(  x\right)
-\int\varphi_{\rho}\left(  x\right)  d\mu_{T_{1}}\left(  x\right) \nonumber\\
&  =\frac{2}{\rho^{2}}\int_{T_{1}}^{T_{2}}\int_{B_{2\rho}\left(  x_{0}\right)
\setminus B_{\rho}\left(  x_{0}\right)  }\frac{\left\vert x-x_{0}\right\vert
-\rho}{\left\vert x-x_{0}\right\vert }d\mu_{t}\left(  x\right)  dt+\nonumber\\
&  +\frac{1}{\rho^{2}}\int_{T_{1}}^{T_{2}}\left[  \frac{U_{\rho}\left(
t;x_{0}\right)  }{4\pi}-2\int_{B_{\rho}\left(  x_{0}\right)  }d\mu_{t}\left(
x\right)  \right]  dt+\nonumber\\
&  +\int_{T_{1}}^{T_{2}}O\left(  \frac{1}{\rho^{2}}\int_{\Omega\setminus
B_{\rho}\left(  x_{0}\right)  }\int_{\Omega}\eta_{R}\left(  x\right)  \eta
_{R}\left(  y\right)  d\omega_{t}^{-}\left(  x,y\right)  +\frac{1}{\rho^{2}%
}\int_{B_{2\rho}\left(  x_{0}\right)  \setminus B_{\rho}\left(  x_{0}\right)
}d\mu_{t}\left(  x\right)  \right)  dt+\nonumber\\
&  +\int_{T_{1}}^{T_{2}}O\left(  \frac{1}{\rho}\int\int\left[  \frac{\eta
_{R}\left(  x\right)  }{R-2\rho}+1\right]  d\omega_{t}^{-}\left(  x,y\right)
\right)  dt+\nonumber\\
&  +\frac{1}{4\pi\rho^{2}}\int_{T_{1}}^{T_{2}}\int_{B_{\rho}\left(
x_{0}\right)  \times B_{\rho}\left(  x_{0}\right)  \cap\left\{  x\neq
y\right\}  }\eta_{R}\left(  x\right)  \eta_{R}\left(  y\right)  d\omega
_{t}^{-}\left(  x,y\right)  dt \label{W2E2}%
\end{align}

Using (\ref{Wmass}) as well as the inequality $\frac{\left\vert x-x_{0}%
\right\vert -\rho}{\left\vert x-x_{0}\right\vert }\leq\eta_{R}\left(
x\right)  $ for $\left\vert x-x_{0}\right\vert \leq2\rho$ we obtain:
\begin{align*}
&  \int\varphi_{\rho}\left(  x\right)  d\mu_{T_{2}}\left(  x\right)
-\int\varphi_{\rho}\left(  x\right)  d\mu_{T_{1}}\left(  x\right) \\
&  \leq\frac{1}{\rho^{2}}\int_{T_{1}}^{T_{2}}\left[  \frac{U_{\rho}\left(
t;x_{0}\right)  }{4\pi}-2\int_{B_{\rho}\left(  x_{0}\right)  }d\mu_{t}\left(
x\right)  \right]  dt+\frac{C}{\rho^{2}}\int_{T_{1}}^{T_{2}}\int
_{\Omega\setminus B_{\rho}\left(  x_{0}\right)  }\eta_{R}\left(  x\right)
d\mu_{t}\left(  x\right)  dt+\\
&  +\frac{C}{\rho}\int_{T_{1}}^{T_{2}}\int\int\left[  \frac{\eta_{R}\left(
x\right)  }{R}+1\right]  d\mu_{t}\left(  x\right)  d\mu_{t}\left(  y\right)
dt+\\
&  +\frac{1}{4\pi\rho^{2}}\int_{T_{1}}^{T_{2}}\int_{B_{\rho}\left(
x_{0}\right)  \times B_{\rho}\left(  x_{0}\right)  \cap\left\{  x\neq
y\right\}  }\eta_{R}\left(  x\right)  \eta_{R}\left(  y\right)  d\mu
_{t}\left(  x\right)  d\mu_{t}\left(  y\right)  dt
\end{align*}

\end{proof}

\bigskip

We define some auxiliary sets that will be used in the following.

\begin{definition}
For any $\sigma,$ $\delta_{1},\;\delta_{2}>0$ we define:%
\begin{align*}
I_{\delta_{1},\delta_{2}}  &  =\left\{  t\in\left[  0,\infty\right)
:\forall\text{ }x\in S_{t},\;B_{2\delta_{1}}\left(  x\right)  \cap
S_{t}=\left\{  x\right\}  ,\;\forall\text{ }Y\in\Omega,\text{\ }%
\int_{B_{2\delta_{1}}\left(  Y\right)  \setminus S_{t}}d\mu_{t}\left(
x\right)  <\delta_{2}^{2}\right\} \\
I_{\delta_{1},\delta_{2}}^{+}\left(  X\right)   &  =\left\{  t\in\left[
0,\infty\right)  :\frac{U_{\delta_{1}}\left(  t;X\right)  }{4\pi}%
>2\int_{B_{\delta_{1}}\left(  X\right)  }d\mu_{t}\left(  x\right)  +\delta
_{2}\right\} \\
I_{\delta_{1},\delta_{2}}^{-}\left(  X\right)   &  =\left\{  t\in\left[
0,\infty\right)  :\frac{U_{\delta_{1}}\left(  t;X\right)  }{4\pi}%
<2\int_{B_{\delta_{1}}\left(  X\right)  }d\mu_{t}\left(  x\right)  -\delta
_{2}\right\}
\end{align*}
where $U_{\delta_{1}}\left(  t;X\right)  $ is defined as in (\ref{W4E1}) and
$X\in\Omega.$
\end{definition}

In the proof of the following result, it will be convenient to make more
explicit the dependence on the singular point of the Dirac masses $\alpha
_{j},\ \gamma_{j}^{-}$ in (\ref{T1E1c}), (\ref{G2E6a}). We write:%
\begin{align}
\mu_{t}  &  =\sum_{x_{j}\left(  t\right)  \in S_{t}}\alpha\left(
t;x_{j}\right)  \delta_{x_{j}\left(  t\right)  }+u\left(  \cdot,t\right)
dx\;\;,\;\;u\left(  \cdot,t\right)  \in L^{1}\left(  \Omega\right)
\label{Z2E3}\\
\int_{S^{1}}d\hat{\mu}_{t}^{-}\left(  \cdot,\nu\right)   &  =\sum
_{x_{j}\left(  t\right)  \in S_{t}}\gamma^{-}\left(  t;x_{j}\right)
\delta_{x_{j}\left(  t\right)  } \label{Z2E4}%
\end{align}

\begin{lemma}
There exists $\delta_{0}>0$ small depending only on $\int_{\Omega}d\mu
_{0}\left(  x\right)  $\ such that for any $\delta_{1},\;\delta_{2}\in\left(
0,\delta_{0}\right)  $ we have
\begin{equation}
\left\vert I_{\delta_{1},\delta_{2}}\cap I_{\delta_{1},\delta_{2}}^{+}\left(
X\right)  \right\vert =0 \label{W4E2}%
\end{equation}%
\begin{equation}
\left\vert I_{\delta_{1},\delta_{2}}\cap I_{\delta_{1},\delta_{2}}^{-}\left(
X\right)  \right\vert =0 \label{W4E2bis}%
\end{equation}
for any $X\in\Omega${\Huge .}
\end{lemma}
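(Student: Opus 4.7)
The plan is to argue by contradiction, supposing that $A := I_{\delta_1,\delta_2} \cap I^{+}_{\delta_1,\delta_2}(X)$ has positive Lebesgue measure for some $X \in \Omega$, and then to exploit the quantitative mass-balance identity of Lemma \ref{LI1} to produce a contradiction. The case of $I^{-}_{\delta_1,\delta_2}(X)$ will follow by the completely symmetric argument after reversing the sign of the integrand $J(t) := U_{\delta_1}(t;X)/(4\pi) - 2\mu_t(B_{\delta_1}(X))$.

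The first step is to identify the source of the positivity of $J(t)$ on $A$. If $B_{\delta_1}(X) \cap S_t = \emptyset$, then $U_{\delta_1}(t;X) = 0$ because $\int_{S^1} d\hat{\mu}_t^{-}(\cdot,\nu)$ is supported on $S_t$ by Lemma \ref{LQ1}, while $\mu_t(B_{\delta_1}(X))$ reduces to its absolutely continuous part and is therefore bounded by $\delta_2^2$ (using the definition of $I_{\delta_1,\delta_2}$). This would force $J(t) \leq O(\delta_2^2) < \delta_2$, contradicting $t \in I^{+}_{\delta_1,\delta_2}(X)$ for $\delta_2 < 1$. Hence for each $t \in A$ there must be a unique $x_{i(t)}(t) \in S_t \cap B_{\delta_1}(X)$, uniqueness following from the $2\delta_1$-separation encoded in $I_{\delta_1,\delta_2}$.

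Next, apply Lemma \ref{LI1} with $x_0 = X$ and $\rho = \delta_1$ over a window $[T_1,T_2]$ to be chosen below. The left-hand side is uniformly bounded by $2\int d\mu_0$, while the main term on the right admits the lower bound
\[
\frac{1}{\rho^2}\int_{T_1}^{T_2} J(t)\,dt \;\geq\; \frac{1}{\rho^2}\Bigl[\delta_2 \bigl|A\cap[T_1,T_2]\bigr| - 2\!\!\int\! d\mu_0\cdot\bigl|[T_1,T_2]\setminus A\bigr|\Bigr],
\]
using $J(t) > \delta_2$ on $A$ and the trivial bound $J(t) \geq -2\int d\mu_0$. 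The crucial step is to control the three error terms on the right of Lemma \ref{LI1}: (i) the annulus mass $\mu_t(B_{2R}(X)\setminus B_\rho(X))$ is bounded by its regular part, of order $\delta_2^2(R/\delta_1)^2$, provided $R$ is chosen so that no extra singular trajectory enters $B_{2R}(X)$ during $[T_1,T_2]$; (ii) the integral over $B_\rho\times B_\rho\cap\{x\neq y\}$ is estimated using the representation of $\omega_t^{-}$ in (\ref{G2E6b}) which, with at most one singular point in $B_\rho(X)$, only contributes singular--regular cross terms of size $\alpha_{i(t)}(t)\delta_2^2$ plus purely regular terms of size $\delta_2^4$; (iii) the remaining double integral of $[\eta_R/R+1]$ against $d\mu_t\otimes d\mu_t$ is bounded by $C(\int d\mu_0)^2$ per unit time.

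To close the argument one picks $R$ slightly larger than $\delta_1$ so that, by the continuity of singular trajectories established in Lemma \ref{contS}, the distinguished point $x_{i(t)}(t)$ stays in $B_{\delta_1}(X)$ and all other trajectories stay outside $B_{2R}(X)$ on a full neighbourhood of a Lebesgue density point $t_0$ of $A$. With $T_1 = t_0 - \tau,\ T_2 = t_0 + \tau$ and $\tau$ small, the density deficit $|[T_1,T_2]\setminus A|/(2\tau)$ can be made arbitrarily small, while the error-rate contributes a term of order $(\delta_2^2 + \delta_1)/\delta_1^2$ per unit time; assembling everything gives
\[
\frac{\delta_2}{2\rho^2}\cdot 2\tau \;\leq\; 2\!\!\int\! d\mu_0 \;+\; C\,\tau\cdot\frac{\delta_2^2+\delta_1}{\delta_1^2}.
\]
Choosing $\delta_0$ small enough that $\delta_2^2 + \delta_1 < \delta_2/4$ for $\delta_1,\delta_2 < \delta_0$, the error term is absorbed into half of the main term and one is left with $\delta_2\tau/\rho^2 \leq 4\int d\mu_0$; but this cannot hold for the range of $\tau$ compatible with the density and continuity constraints, yielding the desired contradiction.

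The main obstacle is the simultaneous satisfaction of three tightly coupled requirements: (a) the density of $A$ in $[T_1,T_2]$ must be close to $1$, forcing $\tau$ to be small; (b) the continuity of singular trajectories must keep the main trajectory inside $B_\rho(X)$ and the others outside $B_{2R}(X)$, which also needs $\tau$ small; and (c) the main term $\delta_2\tau/\rho^2$ must overwhelm the fixed bound $2\int d\mu_0$ on the left-hand side, which pushes $\tau$ to be large relative to $\rho^2/\delta_2$. The quantitative balancing of these three scales, via the explicit error estimates of Lemma \ref{LI1} and the continuity bound of Lemma \ref{contS}, is the technical heart of the proof.
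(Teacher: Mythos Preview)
Your overall contradiction strategy and the identification of a unique singular point in $B_{\delta_1}(X)$ for $t\in A$ match the paper. However, the quantitative closure fails because of a scaling error in your treatment of the error term (iii). In Lemma~\ref{LI1} that term is
\[
\frac{C}{\rho}\int_{T_1}^{T_2}\!\!\int\!\!\int\Bigl[\frac{\eta_R(x)}{R}+1\Bigr]d\mu_t(x)\,d\mu_t(y)\,dt,
\]
which with $\rho=\delta_1$ and $R$ comparable to $\delta_1$ contributes a rate of order $C(\int d\mu_0)^2/\delta_1^2$ per unit time, not $\delta_1/\delta_1^2$ as you write. Since the constant here depends on the total mass and is not small, this error \emph{dominates} the main term $\delta_2/\delta_1^2$ (because $\delta_2$ is small), and no choice of $\delta_0$ can absorb it. Your own final paragraph essentially concedes that the three scale constraints (a)--(c) do not close; indeed with $\rho$ fixed they cannot.

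The paper resolves this by an essentially different scaling: it does \emph{not} apply Lemma~\ref{LI1} once with $x_0=X$ and $\rho=\delta_1$, but instead picks a density point $t_0$, subdivides $[t_0,t_0+\varepsilon_\ell]$ into $n\sim 1/\delta_2$ subintervals with endpoints $t_i^\ell\in A$, and on each subinterval applies Lemma~\ref{LI1} with a \emph{moving} center $x_0=\tfrac12(Y_i^\ell+Y_{i+1}^\ell)$ tracking the singular trajectory and a \emph{shrinking} radius $\rho=4L\sqrt{\delta_2\varepsilon_\ell}$, together with $R=D\rho$ where $D\sim 1/\delta_2$. With these choices the main term becomes $\delta_2\varepsilon_\ell/\rho^2=O(1)$, independent of $\varepsilon_\ell$, while all error rates are made small by choosing $D$ large and $\delta_2$ small. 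Telescoping over the $n$ subintervals yields a fixed increment $\alpha(Y_n^\ell;t_0+\varepsilon_\ell)-\alpha(Y_1^\ell;t_0)\ge\theta>0$ with $\theta$ depending only on the total mass. Since $t_0+\varepsilon_\ell$ can again be taken as a density point of $A$, one iterates to obtain $\alpha\to\infty$, contradicting mass conservation. The moving centers (needed because the singular point drifts out of a ball of radius $\rho\to 0$), the $\rho\sim\sqrt{\varepsilon_\ell}$ scaling, and the final iteration step are the missing ingredients in your proposal.
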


\begin{proof}
We will prove (\ref{W4E2}) since the proof of (\ref{W4E2bis}) is similar. We
argue by contradiction. Suppose that for some $X\in\Omega,$ $\left\vert
I_{\delta_{1},\delta_{2}}\cap I_{\delta_{1},\delta_{2}}^{+}\left(  X\right)
\right\vert >0.$

A well known result \cite{Stein} states that for any measurable set
$A\subset\mathbb{R}:$%
\begin{equation}
\lim_{\varepsilon\rightarrow0}\frac{1}{\varepsilon}\left\vert \left\{
t-t_{0}<\varepsilon\right\}  \cap A\right\vert =1\text{ \ \ ,\ \ \ \ }%
a.e.\;\;\;t_{0}\in A \label{W4E7a}%
\end{equation}

Let $A=I_{\delta_{1},\delta_{2}}\cap I_{\delta_{1},\delta_{2}}^{+}\left(
X\right)  $ and fix $n$ integer such that $n\delta_{2}\in\left(  1,2\right)
.$ For every $t_{0}\in A$ such that (\ref{W4E7a}) holds, there exists a
sequence $\left\{  \varepsilon_{\ell}>0\right\}  ,\;\varepsilon_{\ell
}\rightarrow0$ such that
\begin{equation}
\left\vert \left\{  t-t_{0}<\varepsilon_{\ell}\right\}  \setminus A\right\vert
\leq\frac{\delta_{2}}{K}\varepsilon_{\ell} \label{W4E7b}%
\end{equation}
where $K>0$ is a fixed numerical constant independent on $\delta_{1}%
,\delta_{2}$ that will be precised later.

Suppose that $K>8.$ Then, since $\frac{\delta_{2}\varepsilon_{\ell}}{K}%
\leq\frac{2\varepsilon_{\ell}}{Kn}\leq\frac{\varepsilon_{\ell}}{2\left(
n+1\right)  },$ we can obtain, for each $\ell$, $n$ times $t_{i}^{\ell}\in
A,\;i=1,...,n$ such that:
\begin{align}
t_{0}  &  =t_{1}^{\ell}<...<t_{n}^{\ell}=t_{0}+\varepsilon_{\ell}\nonumber\\
\frac{\varepsilon_{\ell}}{2n}  &  \leq\left(  t_{i+1}^{\ell}-t_{i}^{\ell
}\right)  \leq\frac{\varepsilon_{\ell}}{n}\;\;,\;\;i=1,...,\left(  n-1\right)
\label{W4E7c}%
\end{align}

We now prove that for $t\in I_{\delta_{1},\delta_{2}}\cap I_{\delta_{1}%
,\delta_{2}}^{+}$ there exists a singular point $Y\in S_{t}\cap B_{\delta_{1}%
}\left(  X\right)  .$ Indeed, notice that for $t\in A=I_{\delta_{1},\delta
_{2}}\cap I_{\delta_{1},\delta_{2}}^{+}$ we have:
\[
U_{\delta_{1}}\left(  t;X\right)  >\delta_{2}%
\]
On the other hand by (\ref{G2E6a}):
\[
U_{\delta_{1}}\left(  t;X\right)  =\int_{B_{\delta_{1}}\left(  X\right)
\times S^{1}}d\hat{\mu}_{t}^{-}\left(  x,\nu\right)  \leq\left(
\int_{B_{\delta_{1}}\left(  X\right)  }d\mu_{t}\left(  x\right)  \right)  ^{2}%
\]

Suppose that $S_{t}\cap B_{\delta_{1}}\left(  X\right)  =\emptyset.$ Then,
since $t\in A\subset I_{\delta_{1},\delta_{2}}$ it follows from the definition
of $I_{\delta_{1},\delta_{2}}$ that:
\[
\int_{B_{\delta_{1}}\left(  X\right)  }d\mu_{t}\left(  x\right)  \leq
\delta_{2}^{2}%
\]

Therefore:
\[
\delta_{2}<U_{\delta_{1}}\left(  t;X\right)  \leq\delta_{2}^{4}%
\]
and this gives a contradiction for $\delta_{2}$ sufficiently small. Then for
$t\in A$ we have $S_{t}\cap B_{\delta_{1}}\left(  X\right)  \neq\emptyset.$
Moreover $S_{t}\cap B_{\delta_{1}}\left(  X\right)  =\left\{  Y\right\}  $ for
some $Y\in\Omega$ and $\left[  B_{2\delta_{1}}\left(  Y\right)  \setminus
\left\{  Y\right\}  \right]  \cap S_{t}=\emptyset.$

Given the sequence of times $\left\{  t_{i}^{\ell}:i=1,...,n\right\}  ,$ let
us denote as $Y_{i}^{\ell}$ the corresponding singular points $Y_{i}^{\ell}\in
S_{t}\cap B_{\delta_{1}}\left(  X\right)  .$

It now follows using Lemma \ref{contS} that
\[
\left\vert Y_{i}^{\ell}-Y_{i+1}^{\ell}\right\vert \leq L\sqrt{\left\vert
t_{i}^{\ell}-t_{i+1}^{\ell}\right\vert }\leq L\sqrt{\frac{\varepsilon_{\ell}%
}{n}}\;\;,\;\;1=1,...,\left(  n-1\right)
\]
where $L>0$ is a constant depending only on $\int_{\Omega}d\mu_{0}\left(
x\right)  $ and $\Omega.$

We then apply (\ref{Z1E1}) with $T_{1}=t_{i}^{\ell},\;T_{2}=t_{i+1}^{\ell
},\;x_{0}=\frac{Y_{i}^{\ell}+Y_{i+1}^{\ell}}{2}.$ We will assume also that:
\begin{equation}
\rho=4L\sqrt{\delta_{2}\varepsilon_{\ell}}\;\;,\;\;R=D\rho\label{Z1E6a}%
\end{equation}
where the constant $D$ will be chosen depending only on $\int_{\Omega}d\mu
_{0}.$ Notice that $t\in A$ implies $t\in I_{\delta_{1},\delta_{2}}\cap
I_{\delta_{1},\delta_{2}}^{+}\left(  X\right)  $ for the above mentioned value
of $\delta_{1}.$ We only assume for the moment that:
\begin{equation}
D\geq8 \label{Z1E6}%
\end{equation}

Then:%
\begin{align*}
&  \int\varphi_{\rho}\left(  x\right)  d\mu_{t_{i+1}^{\ell}}\left(  x\right)
-\int\varphi_{\rho}\left(  x\right)  d\mu_{t_{i}^{\ell}}\left(  x\right) \\
&  \geq\frac{1}{\rho^{2}}\int_{\left[  t_{i}^{\ell},t_{i+1}^{\ell}\right]
\cap A}\left[  \frac{U_{\rho}\left(  t;Y_{i}^{\ell}\right)  }{4\pi}%
-2\int_{B_{\rho}\left(  x_{0}\right)  }d\mu_{t}\left(  x\right)  \right]
dt+\frac{1}{\rho^{2}}\int_{\left[  t_{i}^{\ell},t_{i+1}^{\ell}\right]
\setminus A}\left[  \frac{U_{\rho}\left(  t;Y_{i}^{\ell}\right)  }{4\pi}%
-2\int_{B_{\rho}\left(  x_{0}\right)  }d\mu_{t}\left(  x\right)  \right]
dt-\\
&  -\frac{C}{\rho^{2}}\int_{t_{i}^{\ell}}^{t_{i+1}^{\ell}}\int_{\Omega
\setminus B_{\rho}\left(  x_{0}\right)  }\eta_{R}\left(  x\right)  d\mu
_{t}\left(  x\right)  dt-\\
&  -\frac{C}{\rho}\int_{t_{i}^{\ell}}^{t_{i+1}^{\ell}}\int\left[  \frac
{\eta_{R}\left(  x\right)  }{R}+1\right]  d\mu_{t}\left(  x\right)  \int
d\mu_{t}\left(  y\right)  dt-\frac{1}{4\pi\rho^{2}}\int_{t_{i}^{\ell}%
}^{t_{i+1}^{\ell}}\int_{B_{\rho}\left(  x_{0}\right)  \times B_{\rho}\left(
x_{0}\right)  \cap\left\{  x\neq y\right\}  }d\mu_{t}\left(  x\right)
d\mu_{t}\left(  y\right)  dt
\end{align*}

Notice that, due to our choice of $x_{0}$ and the definition of $\varphi
_{\rho}$ we have that
\begin{align*}
\left\vert Y_{i+1}^{\ell}-x_{0}\right\vert  &  \leq\rho\;\;,\;\;\left\vert
Y_{i}^{\ell}-x_{0}\right\vert \leq\rho\\
\varphi_{\rho}\left(  Y_{i}^{\ell}\right)   &  =\varphi_{\rho}\left(
Y_{i+1}^{\ell}\right)  \geq\frac{1}{2}%
\end{align*}

(It is important to take into account that $x_{0}$ depends on $i$). Then:
\begin{align*}
&  \alpha\left(  Y_{i+1}^{\ell};t_{i+1}^{\ell}\right)  -\alpha\left(
Y_{i}^{\ell};t_{i}^{\ell}\right)  +\int_{\Omega\setminus\left\{  Y_{i+1}%
^{\ell}\right\}  }\varphi_{\rho}\left(  x;Y_{i}^{\ell}\right)  d\mu
_{t_{i+1}^{\ell}}\left(  x\right)  -\int_{\Omega\setminus\left\{  Y_{i}^{\ell
}\right\}  }\varphi_{\rho}\left(  x;Y_{i}^{\ell}\right)  d\mu_{t_{i}^{\ell}%
}\left(  x\right) \\
&  \geq\frac{1}{\rho^{2}}\int_{\left[  t_{i}^{\ell},t_{i+1}^{\ell}\right]
\cap A}\left[  \frac{U_{\rho}\left(  t;Y_{i}^{\ell}\right)  }{4\pi}%
-2\int_{B_{\rho}\left(  x_{0}\right)  }d\mu_{t}\left(  x\right)  \right]
dt+\frac{1}{\rho^{2}}\int_{\left[  t_{i}^{\ell},t_{i+1}^{\ell}\right]
\setminus A}\left[  \frac{U_{\rho}\left(  t;Y_{i}^{\ell}\right)  }{4\pi}%
-2\int_{B_{\rho}\left(  x_{0}\right)  }d\mu_{t}\left(  x\right)  \right]
dt-\\
&  -\frac{C}{\rho^{2}}\int_{t_{i}^{\ell}}^{t_{i+1}^{\ell}}\int_{\Omega
\setminus B_{\rho}\left(  x_{0}\right)  }\eta_{R}\left(  x\right)  d\mu
_{t}\left(  x\right)  dt-\\
&  -\frac{C}{\rho}\int_{t_{i}^{\ell}}^{t_{i+1}^{\ell}}\int\left[  \frac
{\eta_{R}\left(  x\right)  }{R}+1\right]  d\mu_{t}\left(  x\right)  \int
d\mu_{t}\left(  y\right)  dt-\frac{1}{4\pi\rho^{2}}\int_{t_{i}^{\ell}%
}^{t_{i+1}^{\ell}}\int_{B_{\rho}\left(  x_{0}\right)  \times B_{\rho}\left(
x_{0}\right)  \cap\left\{  x\neq y\right\}  }d\mu_{t}\left(  x\right)
d\mu_{t}\left(  y\right)  dt
\end{align*}
where we use the fact that $t_{i}^{\ell},t_{i+1}^{\ell}\in I_{\delta
_{1},\delta_{2}}$ and we write explicitly the dependence on the center $x_{0}$
for $\varphi_{\rho}=\varphi_{\rho}\left(  x;x_{0}\right)  .$ Notice that we
use also the fact that $U_{\rho}\left(  t;Y_{i}^{\ell}\right)  =U_{\rho
}\left(  t;x_{0}\right)  $ for $t\in\left[  t_{i}^{\ell},t_{i+1}^{\ell
}\right]  \cap A.$

Using the global boundedness of $\int d\mu_{t}\left(  x\right)  $ as well as
(\ref{W4E7c}) as the fact that $R\leq1$ and the definition of $\eta_{R}$ we
obtain:
\begin{align*}
&  \alpha\left(  Y_{i+1}^{\ell};t_{i+1}^{\ell}\right)  -\alpha\left(
Y_{i}^{\ell};t_{i}^{\ell}\right)  +\int_{\Omega\setminus\left\{  Y_{i+1}%
^{\ell}\right\}  }\varphi_{\rho}\left(  x;Y_{i}^{\ell}\right)  d\mu
_{t_{i+1}^{\ell}}\left(  x\right)  -\int_{\Omega\setminus\left\{  Y_{i}^{\ell
}\right\}  }\varphi_{\rho}\left(  x;Y_{i}^{\ell}\right)  d\mu_{t_{i}^{\ell}%
}\left(  x\right) \\
&  \geq\frac{1}{\rho^{2}}\int_{\left[  t_{i}^{\ell},t_{i+1}^{\ell}\right]
\cap A}\left[  \frac{U_{\rho}\left(  t;x_{0}\right)  }{4\pi}-2\int_{B_{\rho
}\left(  x_{0}\right)  }d\mu_{t}\left(  x\right)  \right]  dt-\frac{C}%
{\rho^{2}}\int_{\left[  t_{i}^{\ell},t_{i+1}^{\ell}\right]  \setminus A}dt\\
&  -\frac{C}{\rho^{2}}\int_{\left[  t_{i}^{\ell},t_{i+1}^{\ell}\right]  \cap
A}\int_{B_{2R}\left(  x_{0}\right)  \setminus B_{\rho}\left(  x_{0}\right)
}d\mu_{t}\left(  x\right)  dt-\frac{C}{\rho^{2}}\int_{\left[  t_{i}^{\ell
},t_{i+1}^{\ell}\right]  \setminus A}\int_{B_{2R}\left(  x_{0}\right)
\setminus B_{\rho}\left(  x_{0}\right)  }d\mu_{t}\left(  x\right)  dt-\\
&  -\frac{C}{\rho R}\int_{t_{i}^{\ell}}^{t_{i+1}^{\ell}}dt-\frac{1}{4\pi
\rho^{2}}\int_{\left[  t_{i}^{\ell},t_{i+1}^{\ell}\right]  \cap A}%
\int_{B_{\rho}\left(  x_{0}\right)  \times B_{\rho}\left(  x_{0}\right)
\cap\left\{  x\neq y\right\}  }d\mu_{t}\left(  x\right)  d\mu_{t}\left(
y\right)  dt-\\
&  -\frac{1}{4\pi\rho^{2}}\int_{\left[  t_{i}^{\ell},t_{i+1}^{\ell}\right]
\setminus A}\int_{B_{\rho}\left(  x_{0}\right)  \times\left[  B_{\rho}\left(
x_{0}\right)  \right]  \cap\left\{  x\neq y\right\}  }d\mu_{t}\left(
x\right)  d\mu_{t}\left(  y\right)  dt
\end{align*}
and using again (\ref{W4E7b}) as well as the boundedness of $\int d\mu
_{t}\left(  x\right)  =\int d\mu_{0}\left(  x\right)  $ we obtain:
\begin{align}
&  \alpha\left(  Y_{i+1}^{\ell};t_{i+1}^{\ell}\right)  -\alpha\left(
Y_{i}^{\ell};t_{i}^{\ell}\right)  +\int_{\Omega\setminus\left\{  Y_{i+1}%
^{\ell}\right\}  }\varphi_{\rho}\left(  x;Y_{i}^{\ell}\right)  d\mu
_{t_{i+1}^{\ell}}\left(  x\right)  -\int_{\Omega\setminus\left\{  Y_{i}^{\ell
}\right\}  }\varphi_{\rho}\left(  x;Y_{i}^{\ell}\right)  d\mu_{t_{i}^{\ell}%
}\left(  x\right) \nonumber\\
&  \geq\frac{1}{\rho^{2}}\int_{\left[  t_{i}^{\ell},t_{i+1}^{\ell}\right]
\cap A}\left[  \frac{U_{\rho}\left(  t;x_{0}\right)  }{4\pi}-2\int_{B_{\rho
}\left(  x_{0}\right)  }d\mu_{t}\left(  x\right)  \right]  dt-\frac{C}%
{\rho^{2}}\int_{\left[  t_{i}^{\ell},t_{i+1}^{\ell}\right]  \setminus
A}dt-\nonumber\\
&  -\frac{C}{\rho^{2}}\int_{\left[  t_{i}^{\ell},t_{i+1}^{\ell}\right]  \cap
A}\int_{B_{2R}\left(  x_{0}\right)  \setminus B_{\rho}\left(  x_{0}\right)
}d\mu_{t}\left(  x\right)  dt-\nonumber\\
&  -\frac{C}{\rho R}\int_{t_{i}^{\ell}}^{t_{i+1}^{\ell}}dt-\frac{1}{4\pi
\rho^{2}}\int_{\left[  t_{i}^{\ell},t_{i+1}^{\ell}\right]  \cap A}%
\int_{B_{\rho}\left(  x_{0}\right)  \times B_{\rho}\left(  x_{0}\right)
\cap\left\{  x\neq y\right\}  }d\mu_{t}\left(  x\right)  d\mu_{t}\left(
y\right)  dt \label{Z1E1a}%
\end{align}

Then since $B_{2R}\left(  x_{0}\right)  \setminus B_{\rho}\left(
x_{0}\right)  \subset B_{2\delta_{1}}\left(  Y\right)  \setminus\left\{
Y\right\}  $ due to (\ref{Z1E6}) we have:
\[
\int_{B_{2R}\left(  x_{0}\right)  \setminus B_{\rho}\left(  x_{0}\right)
}d\mu_{t}\left(  x\right)  \leq\int_{B_{\sigma}\left(  Y\right)  \setminus
S_{t}}d\mu_{t}\left(  x\right)  <\delta_{2}^{2}%
\]
due to the definition of $I_{\delta_{1},\delta_{2}}.$ Then:
\begin{equation}
\frac{C}{\rho^{2}}\int_{\left[  t_{i}^{\ell},t_{i+1}^{\ell}\right]  \cap
A}\int_{B_{2R}\left(  x_{0}\right)  \setminus B_{\rho}\left(  x_{0}\right)
}d\mu_{t}\left(  x\right)  dt\leq\frac{C\delta_{2}^{2}}{\rho^{2}}%
\frac{\varepsilon_{\ell}}{n} \label{Z1E2}%
\end{equation}

Moreover, since $t\in A\subset I_{\delta_{1},\delta_{2}}^{+}$ and $X=x_{0}$ we
have
\begin{equation}
\frac{U_{\rho}\left(  t;x_{0}\right)  }{4\pi}-2\int_{B_{\rho}\left(
x_{0}\right)  }d\mu_{t}\left(  x\right)  >\delta_{2} \label{Z1E3}%
\end{equation}

We finally estimate the last term in (\ref{Z1E1a}). Since $t\in A\subset
I_{\delta_{1},\delta_{2}}$ there is only one singular point $Y\in B_{\rho
}\left(  x_{0}\right)  .$ Then:
\begin{align*}
&  \int_{B_{\rho}\left(  x_{0}\right)  \times B_{\rho}\left(  x_{0}\right)
\cap\left\{  x\neq y\right\}  }d\mu_{t}\left(  x\right)  d\mu_{t}\left(
y\right) \\
&  \leq2\left[  \int_{B_{\rho}\left(  x_{0}\right)  }d\mu_{t}\left(  x\right)
\right]  \cdot\left[  \int_{B_{\rho}\left(  x_{0}\right)  \setminus\left\{
Y\right\}  }d\mu_{t}\left(  x\right)  \right]  \leq C\int_{B_{\sigma}\left(
Y\right)  \setminus\left\{  Y\right\}  }d\mu_{t}\left(  x\right)
\end{align*}
and using the definition of $I_{\delta_{1},\delta_{2}}$ we obtain:
\begin{equation}
\int_{B_{\rho}\left(  x_{0}\right)  \times B_{\rho}\left(  x_{0}\right)
\cap\left\{  x\neq y\right\}  }d\mu_{t}\left(  x\right)  d\mu_{t}\left(
y\right)  \leq C\delta_{2}^{2} \label{Z1E4}%
\end{equation}

Plugging (\ref{Z1E2}), (\ref{Z1E3}), (\ref{Z1E4}) into (\ref{Z1E1a}) we
obtain:
\begin{align*}
&  \alpha\left(  Y_{i+1}^{\ell};t_{i+1}^{\ell}\right)  -\alpha\left(
Y_{i}^{\ell};t_{i}^{\ell}\right)  +\left[  \int_{\Omega\setminus\left\{
Y_{i+1}^{\ell}\right\}  }\varphi_{\rho}\left(  x;Y_{i}^{\ell}\right)
d\mu_{t_{i+1}^{\ell}}\left(  x\right)  -\int_{\Omega\setminus\left\{
Y_{i}^{\ell}\right\}  }\varphi_{\rho}\left(  x;Y_{i}^{\ell}\right)
d\mu_{t_{i}^{\ell}}\left(  x\right)  \right] \\
&  \geq\frac{\delta_{2}}{\rho^{2}}\int_{\left[  t_{i}^{\ell},t_{i+1}^{\ell
}\right]  \cap A}dt-\frac{C}{\rho^{2}}\int_{\left[  t_{i}^{\ell},t_{i+1}%
^{\ell}\right]  \setminus A}dt-\frac{C}{\rho R}\int_{t_{i}^{\ell}}%
^{t_{i+1}^{\ell}}dt-\frac{C\delta_{2}^{2}}{\rho^{2}}\frac{\varepsilon_{\ell}%
}{n}%
\end{align*}

Adding for all $i=1,...,\left(  n-1\right)  :$%
\begin{align*}
&  \alpha\left(  Y_{n}^{\ell};t_{0}+\varepsilon_{\ell}\right)  -\alpha\left(
Y_{1}^{\ell};t_{0}\right)  +\sum_{i=1}^{n-1}\left[  \int_{\Omega
\setminus\left\{  Y_{i+1}^{\ell}\right\}  }\varphi_{\rho}\left(  x;Y_{i}%
^{\ell}\right)  d\mu_{t_{i+1}^{\ell}}\left(  x\right)  -\int_{\Omega
\setminus\left\{  Y_{i}^{\ell}\right\}  }\varphi_{\rho}\left(  x;Y_{i}^{\ell
}\right)  d\mu_{t_{i}^{\ell}}\left(  x\right)  \right] \\
&  \geq\frac{\delta_{2}}{\rho^{2}}\int_{\left[  t_{0},t_{0}+\varepsilon_{\ell
}\right]  \cap A}dt-\frac{C}{\rho^{2}}\int_{\left[  t_{0},t_{0}+\varepsilon
_{\ell}\right]  \setminus A}dt-\frac{C}{\rho R}\int_{t_{0}}^{t_{0}%
+\varepsilon_{\ell}}dt-\frac{C\delta_{2}^{2}}{\rho^{2}}\varepsilon_{\ell}\\
&  =\frac{\delta_{2}}{\rho^{2}}\int_{\left[  t_{0},t_{0}+\varepsilon_{\ell
}\right]  }dt-\frac{\left(  C+\delta_{2}\right)  }{\rho^{2}}\int_{\left[
t_{0},t_{0}+\varepsilon_{\ell}\right]  \setminus A}dt-\frac{C}{\rho R}%
\int_{t_{0}}^{t_{0}+\varepsilon_{\ell}}dt-\frac{C\delta_{2}^{2}}{\rho^{2}%
}\varepsilon_{\ell}%
\end{align*}

\bigskip We now (\ref{W4E7b}) to obtain:
\begin{align*}
&  \alpha\left(  Y_{n}^{\ell};t_{0}+\varepsilon_{\ell}\right)  -\alpha\left(
Y_{1}^{\ell};t_{0}\right)  +\sum_{i=1}^{n-1}\left[  \int_{\Omega
\setminus\left\{  Y_{i+1}^{\ell}\right\}  }\varphi_{\rho}\left(  x;Y_{i}%
^{\ell}\right)  d\mu_{t_{i+1}^{\ell}}\left(  x\right)  -\int_{\Omega
\setminus\left\{  Y_{i}^{\ell}\right\}  }\varphi_{\rho}\left(  x;Y_{i}^{\ell
}\right)  d\mu_{t_{i}^{\ell}}\left(  x\right)  \right]  \\
&  \geq\frac{\delta_{2}}{\rho^{2}}\varepsilon_{\ell}-\frac{C}{\rho^{2}}%
\frac{\delta_{2}}{K}\varepsilon_{\ell}-\frac{C}{\rho R}\varepsilon_{\ell
}-\frac{C\delta_{2}^{2}}{\rho^{2}}\varepsilon_{\ell}\\
&  =\frac{\delta_{2}\varepsilon_{\ell}}{\rho^{2}}\left[  1-\frac{2C}{K}%
-\frac{C\rho}{R}-C\delta_{2}\right]  \\
&  =\frac{\delta_{2}\varepsilon_{\ell}}{\rho^{2}}\left[  1-\frac{2C}{K}%
-\frac{C}{D\delta_{2}}-C\delta_{2}\right]
\end{align*}
where we have used (\ref{Z1E6a}). The constant $C$ depend only on
$\int_{\Omega}d\mu_{0}\left(  x\right)  .$ We now choose $\delta_{2}$ such
that $C\delta_{2}\leq\frac{1}{10},\;D$ such that $\frac{C}{D\delta_{2}}%
=\frac{1}{10},\;$and $K$ such that $\frac{2C}{K}\leq\frac{1}{10}.$ Notice that
$R=D\rho=4LD\sqrt{\delta_{2}\varepsilon_{\ell}}=\frac{40LC}{\sqrt{\delta_{2}}%
}\sqrt{\varepsilon_{\ell}}\rightarrow0$ as $\ell\rightarrow\infty.$ Finally we
choose $D$ satisfying (\ref{Z1E6}) and using the choice of $\rho$ in
(\ref{Z1E6a}) we obtain:
\[
\alpha\left(  Y_{n}^{\ell};t_{0}+\varepsilon_{\ell}\right)  -\alpha\left(
Y_{1}^{\ell};t_{0}\right)  \geq\frac{1}{2\left(  4L\right)  ^{2}}-\sum
_{i=1}^{n-1}\left[  \int_{\Omega\setminus\left\{  Y_{i+1}^{\ell}\right\}
}\varphi_{\rho}\left(  x;Y_{i}^{\ell}\right)  d\mu_{t_{i+1}^{\ell}}\left(
x\right)  -\int_{\Omega\setminus\left\{  Y_{i}^{\ell}\right\}  }\varphi_{\rho
}\left(  x;Y_{i}^{\ell}\right)  d\mu_{t_{i}^{\ell}}\left(  x\right)  \right]
\]

We estimate the sum as:
\begin{align*}
&  \sum_{i=1}^{n-1}\left[  \int_{\Omega\setminus\left\{  Y_{i+1}^{\ell
}\right\}  }\varphi_{\rho}\left(  x;Y_{i}^{\ell}\right)  d\mu_{t_{i+1}^{\ell}%
}\left(  x\right)  -\int_{\Omega\setminus\left\{  Y_{i}^{\ell}\right\}
}\varphi_{\rho}\left(  x;Y_{i}^{\ell}\right)  d\mu_{t_{i}^{\ell}}\left(
x\right)  \right] \\
&  \leq\sum_{i=1}^{n-1}\delta_{2}^{2}\leq\delta_{2}^{2}n\leq2\delta_{2}%
\end{align*}

Then:
\begin{equation}
\alpha\left(  Y_{n}^{\ell};t_{0}+\varepsilon_{\ell}\right)  -\alpha\left(
Y_{1}^{\ell};t_{0}\right)  \geq\frac{1}{32L^{2}}-2\delta_{2}\geq\frac
{1}{64L^{2}}\equiv\theta>0 \label{Z6E1}%
\end{equation}
where $\theta$ depends only on $\int_{\Omega}d\mu_{0}\left(  x\right)  $ and
$\Omega.$\texttt{\ }

We can now derive a contradiction as follows. Let us denote as $\mathcal{A}$
the set of density points of $A.$ More precisely:
\[
\mathcal{A}=\left\{  t\in A:\lim_{\varepsilon\rightarrow0}\frac{\left\vert
A\cap\left[  t,t+\varepsilon\right]  \right\vert }{\varepsilon}=1\right\}
\]
We now use that (cf. \cite{Stein}) $\left\vert A\setminus\mathcal{A}%
\right\vert =0.$ Therefore $\lim_{\varepsilon\rightarrow0}\frac{\left\vert
A\cap\left[  t,t+\varepsilon\right]  \right\vert }{\varepsilon}=\lim
_{\varepsilon\rightarrow0}\frac{\left\vert \mathcal{A}\cap\left[
t,t+\varepsilon\right]  \right\vert }{\varepsilon}$ and all the points of
$\mathcal{A}$\ are density points. By assumption $\left\vert A\right\vert
=\left\vert \mathcal{A}\right\vert >0.$ We have proved in (\ref{Z6E1}) the following.

For any $t_{0}\in\mathcal{A}$ there exists $\varepsilon\left(  t_{0}\right)  $
such that, for any $\varepsilon\leq\varepsilon\left(  t_{0}\right)  $ we
have:
\begin{equation}
\alpha\left(  Y\left(  t_{0}+\varepsilon\right)  ;t_{0}+\varepsilon\right)
-\alpha\left(  Y\left(  t_{0}\right)  ;t_{0}\right)  \geq\theta\label{Z6E2}%
\end{equation}
where $Y\left(  t\right)  $ is the unique point in $S_{t}\cap B_{\delta_{1}%
}\left(  X\right)  $ that exists for any $t\in A=I_{\delta_{1},\delta_{2}}\cap
I_{\delta_{1},\delta_{2}}^{+}\left(  X\right)  .$

Moreover:
\begin{equation}
\left\vert \mathcal{A}\cap\left[  t_{0},t_{0}+\varepsilon\right]  \right\vert
\geq\left(  1-\frac{\delta_{2}}{K}\right)  \varepsilon\label{Z6E3}%
\end{equation}
for any $\varepsilon\leq\varepsilon\left(  t_{0}\right)  .$

We now argue iteratively. Due to (\ref{Z6E3}) we can find $t_{1}\in
\mathcal{A}\cap\left(  t_{0},t_{0}+\varepsilon\left(  t_{0}\right)  \right)
,$ and there exists $\varepsilon\left(  t_{1}\right)  \leq\left(
t_{0}+\varepsilon\left(  t_{0}\right)  -t_{1}\right)  $ such that for any
$\varepsilon\leq\varepsilon\left(  t_{1}\right)  $ we have:
\begin{equation}
\alpha\left(  Y\left(  t_{1}+\varepsilon\right)  ;t_{1}+\varepsilon\right)
-\alpha\left(  Y\left(  t_{1}\right)  ;t_{1}\right)  \geq\theta\label{Z6E4}%
\end{equation}%
\begin{equation}
\left\vert \mathcal{A}\cap\left[  t_{1},t_{1}+\varepsilon\right]  \right\vert
\geq\left(  1-\frac{\delta_{2}}{K}\right)  \varepsilon\label{Z6E5}%
\end{equation}

Taking $\varepsilon=t_{1}-t_{0}$ in (\ref{Z6E2}) and using also (\ref{Z6E4})
we obtain:
\[
\alpha\left(  Y\left(  t_{1}+\varepsilon\right)  ;t_{1}+\varepsilon\right)
-\alpha\left(  Y\left(  t_{0}\right)  ;t_{0}\right)  \geq2\theta
\]
for any $\varepsilon\leq\varepsilon\left(  t_{1}\right)  .$

Iterating the argument, something that it is possible due to (\ref{Z6E5}) we
obtain the existence of sequences $\left\{  t_{n}\right\}  \subset
\mathcal{A,\;}\left\{  \varepsilon\left(  t_{n}\right)  \right\}
\subset\left[  0,\infty\right)  $ such that:
\[
\alpha\left(  Y\left(  t_{n}+\varepsilon\right)  ;t_{n}+\varepsilon\right)
-\alpha\left(  Y\left(  t_{0}\right)  ;t_{0}\right)  \geq\left(  n+1\right)
\theta
\]
for any $\varepsilon\leq\varepsilon\left(  t_{n}\right)  .$ Since
$\alpha\left(  Y;t\right)  $ is bounded for the total mass $\int_{\Omega}%
d\mu_{0}\left(  x\right)  $ this gives a contradiction.
\end{proof}

Using (\ref{W4E2}), (\ref{W4E2bis}) we can now conclude the Proof of Theorem
\ref{RepMeas}.

\begin{proof}
[Proof of Theorem \ref{RepMeas}]Notice that for any $\delta_{2}>0$ fixed the
sets $I_{\delta_{1},\delta_{2}}$ are an decreasing sequence of sets in the
sense that:
\[
0<\bar{\delta}_{1}<\delta_{1}\;\;\text{implies\ \ }I_{\delta_{1},\delta_{2}%
}\subset I_{\bar{\delta}_{1},\delta_{2}}%
\]

Moreover, for any $\delta_{2}>0$ fixed we have:%
\begin{equation}
\left\vert \left[  0,\infty\right)  \setminus\bigcup_{\delta_{1}>0}^{\infty
}I_{\delta_{1},\delta_{2}}\right\vert =0 \label{W4E2a}%
\end{equation}

Let us write:
\[
\mathcal{Z}=\left[  0,\infty\right)  \setminus\bigcup_{\delta_{1}>0}^{\infty
}I_{\delta_{1},\delta_{2}}%
\]

Then:
\begin{align}
\left[  0,\infty\right)   &  =\mathcal{Z}\cup\bigcup_{\delta_{1}>0}^{\infty
}I_{\delta_{1},\delta_{2}}=\mathcal{Z}\cup\bigcup_{n=1}^{\infty}I_{\frac{1}%
{n},\delta_{2}}\label{Z7E1}\\
\left\vert \mathcal{Z}\right\vert  &  =0\nonumber
\end{align}

Let us consider a countable set $\mathcal{F}$ dense in $\Omega.$ We have:
\begin{equation}
\Omega=\bigcup_{X\in\mathcal{F}}B_{\delta_{1}}\left(  X\right)  \label{Z7E3}%
\end{equation}

We define
\begin{equation}
\mathcal{U}_{\delta_{2}}=\left\{  t\in\left[  0,\infty\right)  :\exists\;Y\in
S_{t},\;\gamma^{-}\left(  Y;t\right)  -8\pi\alpha\left(  Y;t\right)  \geq
8\pi\delta_{2}\right\}  \label{Z7E2}%
\end{equation}

Using (\ref{Z7E1}) we obtain:
\begin{equation}
\mathcal{U}_{\delta_{2}}=\left[  \mathcal{Z}\cap\mathcal{U}_{\delta_{2}%
}\right]  \cup\bigcup_{n=1}^{\infty}\left[  I_{\frac{1}{n},\delta_{2}}%
\cap\mathcal{U}_{\delta_{2}}\right]  \label{Z7E4}%
\end{equation}

Suppose that $t\in\left[  I_{\frac{1}{n},\delta_{2}}\cap\mathcal{U}%
_{\delta_{2}}\right]  .$ Then, there exists $Y\in S_{t}$ such that $\gamma
^{-}\left(  Y,t\right)  -8\pi\alpha\left(  Y,t\right)  \geq8\pi\delta_{2}$
and, due to (\ref{Z7E3}) there exists $\tilde{X}\in\mathcal{F}$ such that
$Y\in B_{\frac{1}{n}}\left(  \tilde{X}\right)  .$ Then:
\[
\int_{B_{\frac{1}{n}}\left(  \tilde{X}\right)  \times S^{1}}d\hat{\mu}%
_{t}\left(  x,\nu\right)  -8\pi\int_{B_{\frac{1}{n}}\left(  \tilde{X}\right)
}d\mu_{t}\geq8\pi\delta_{2}-8\pi\delta_{2}^{2}\geq4\pi\delta_{2}%
\]

Therefore $t\in I_{\frac{1}{n},\delta_{2}}^{+}\left(  \tilde{X}\right)
\subset\bigcup_{X\in\mathcal{F}}I_{\frac{1}{n},\delta_{2}}^{+}\left(
X\right)  .$ Then:
\[
\left[  I_{\frac{1}{n},\delta_{2}}\cap\mathcal{U}_{\delta_{2}}\right]
\subset\bigcup_{X\in\mathcal{F}}\left[  I_{\frac{1}{n},\delta_{2}}\cap
I_{\frac{1}{n},\delta_{2}}^{+}\left(  X\right)  \right]
\]

It then follows from (\ref{Z7E4}) that:
\[
\mathcal{U}_{\delta_{2}}\subset\left[  \mathcal{Z}\cap\mathcal{U}_{\delta_{2}%
}\right]  \cup\bigcup_{n=1}^{\infty}\bigcup_{X\in\mathcal{F}}\left[
I_{\frac{1}{n},\delta_{2}}\cap I_{\frac{1}{n},\delta_{2}}^{+}\left(  X\right)
\right]
\]

Then:
\[
\left\vert \mathcal{U}_{\delta_{2}}\right\vert \leq\left\vert \mathcal{Z}%
\cap\mathcal{U}_{\delta_{2}}\right\vert +\sum_{n=1}^{\infty}\sum
_{X\in\mathcal{F}}\left\vert I_{\frac{1}{n},\delta_{2}}\cap I_{\frac{1}%
{n},\delta_{2}}^{+}\left(  X\right)  \right\vert =0+\sum_{n=1}^{\infty}%
\sum_{X\in\mathcal{F}}0=0
\]
whence:
\[
\left\vert \mathcal{U}_{\delta_{2}}\right\vert =0
\]
for any $\delta_{2}>0$ sufficiently small$.$ Due to the definition of
$\mathcal{U}_{\delta_{2}}$ in (\ref{Z7E2}) it follows that:
\[
\forall\;Y\in S_{t},\;\frac{\gamma^{-}\left(  Y,t\right)  }{8\pi}%
-\alpha\left(  Y,t\right)  \leq\delta_{2}\;\;\;a.e.\;t\in\left[
0,\infty\right)
\]

Then, since $\delta_{2}$ can be made arbitrarily small it follows that:
\[
\forall\;Y\in S_{t},\;\;\frac{\gamma^{-}\left(  Y,t\right)  }{8\pi}%
-\alpha\left(  Y,t\right)  \leq0\;\;\;a.e.\;t\in\left[  0,\infty\right)
\]

A similar argument taking as starting point (\ref{W4E2bis}) yields:
\[
\forall\;Y\in S_{t},\;\;\frac{\gamma^{-}\left(  Y,t\right)  }{8\pi}%
-\alpha\left(  Y,t\right)  \geq0\;\;\;a.e.\;t\in\left[  0,\infty\right)
\]
whence:
\[
\forall\;Y\in S_{t},\;\;\frac{\gamma^{-}\left(  Y,t\right)  }{8\pi}%
=\alpha\left(  Y,t\right)  \;\;\;a.e.\;t\in\left[  0,\infty\right)
\]
or, equivalently:
\[
\frac{1}{8\pi}\int_{S^{1}}d\hat{\mu}_{t}^{-}\left(  \cdot,\nu\right)
=d\mu_{t}^{\operatorname{sing}}\left(  \cdot\right)
\]
\bigskip

The previous argument yields the contribution of the interior singular points.
A similar argument could be use to compute the contribution of the boundary
terms. The main idea needed is now sketched. Taking a test function that is
quadratic near a singular boundary point (with Neumann boundary conditions),
and using the test function $\varphi$ in (\ref{M4E1}) we obtain formally that
the main contribution is due to the terms containing $\nabla^{2}\psi$ that
give, assuming that curvature effects in the test function are higher order
terms (as well as the curvature term in $\varphi$ that seems to give also a
negligible contribution):%
\begin{align*}
\psi &  \approx-\frac{\left\vert x-x_{0}\right\vert ^{2}}{2\rho^{2}%
}\ \ ,\ \ \Delta\psi\approx-\frac{2}{\rho^{2}}\ \ \\
&  \frac{Y\cdot\nabla^{2}\psi\left(  y,t\right)  \cdot Y}{4\pi}+\left[
\frac{\nu\left(  y\right)  \cdot\nabla^{2}\psi\left(  y,t\right)  \cdot
\nu\left(  y\right)  }{4\pi}\right]  \left(  \lambda_{1}+\lambda_{2}\right)
^{2}\\
&  \approx-\frac{1}{4\pi\rho^{2}}\left[  Y^{2}+\left(  \lambda_{1}+\lambda
_{2}\right)  ^{2}\right]  =-\frac{1}{4\pi\rho^{2}}%
\end{align*}

This gives the boundary contributions.
\end{proof}

\bigskip

\section{Characterizing oscillations in the microscopic scale: Measure valued
Young measures.\label{Young}}

\bigskip

\subsection{Generalities.}

A possible feature of the solutions obtained in this paper that we do not rule
out in this paper is the possibility of having oscillations at a microscopic
time scale of order $\varepsilon^{2}$ for functions like $f_{\varepsilon
}\left(  u^{\varepsilon}\right)  $ in the case of the first regularization or
$u^{\varepsilon}+\varepsilon\left(  u^{\varepsilon}\right)  ^{\frac{7}{6}}.$
This is the reason because we obtained in the weak limits of $f_{\varepsilon
}\left(  u^{\varepsilon}\left(  x,t\right)  \right)  f_{\varepsilon}\left(
u^{\varepsilon}\left(  y,t\right)  \right)  dxdy$ measures defined in
$\bar{\Omega}\times\bar{\Omega}\times\left[  0,\infty\right)  $ having the
form $d\omega_{t}^{-}\left(  x,y\right)  .$ It is not clear if this limit
measure can be decomposed as $d\mu_{t}^{-}\left(  x\right)  d\mu_{t}%
^{-}\left(  y\right)  .$ In this section we introduce some general formalism
that allows to characterize the limits of this nonlinear expressions even if
such oscillations take place. We will also prove that the limit objects, that
will be denoted as Measure valued Young measures, can be characterized by
means of a standard set of Young measures that depend only on the oscillations
of the masses near the singular points.

\bigskip

Let us assume that $\left\{  \mu_{1,}\;\mu_{2},...,\;\mu_{L}\right\}  $ is a
set of measures in $M_{+}\left(  \bar{\Omega}\times\mathbb{R}^{+}\right)  $
satisfying:
\begin{align}
d\mu_{k}\left(  x,t\right)   &  =d\mu_{k,t}\left(  x\right)
dt\;\;,\;\;k=1,...,L\label{P1E1}\\
\int_{\bar{\Omega}}d\mu_{k,t}\left(  x\right)   &  \leq
A\;\;,\;\;k=1,...,L\;\;,\;\;a.e\;t\in\left[  0,\infty\right)  \label{P1E2}%
\end{align}
for some $A>0.$

We will assume also that these measures can be approximated in the weak
topology by means of sequences $\left\{  d\mu_{k,t}^{\varepsilon}\left(
x\right)  dt\right\}  $as $\varepsilon\rightarrow0^{+},$ where $d\mu
_{k,t}^{\varepsilon}\left(  x\right)  =U_{\varepsilon}\left(  x,t\right)  dx,$
and $U_{\varepsilon}\in C^{\infty}\left(  \bar{\Omega}\times\mathbb{R}%
^{+}\right)  .$ It will be always understood that convergence takes place for
suitable subsequences.

Specific examples would be the sequences $\left\{  f_{\varepsilon}\left(
u^{\varepsilon}\right)  dxdt\right\}  ,\;\left\{  \left[  u^{\varepsilon
}+\varepsilon\left(  u^{\varepsilon}\right)  ^{\frac{7}{6}}\right]
dxdt\right\}  ,$ that converge respectively to $d\mu_{t}^{-}dt,\;d\mu_{t}%
^{+}dt.$ We could also consider sequences like $\left\{  u^{\varepsilon
}dxdt\right\}  $ that converge to $d\mu_{t}dt,$ but since in this case
oscillations do not take place, the formalism presented below would be trivial
and the resulting measure valued Young measures would be suitable Dirac masses.

\bigskip

Given measures $\left\{  \mu_{k}\right\}  _{k=1}^{L}$ satisfying (\ref{P1E1}),
(\ref{P1E2}), test functions $\varphi_{k,j}\in C\left(  \bar{\Omega}\right)
,\;k=1,...,L,\;j=1,...,M_{k},\;M_{k}\geq1$ as well as $T\in\left(
0,\infty\right)  $ we define the following functional:
\begin{align*}
L_{\left\{  \varphi_{k,j}\right\}  }^{\varepsilon}  &  :C_{0}\left(
\mathbb{R}^{M}\times\left[  0,T\right]  \right)  \rightarrow\mathbb{R\;\;}%
,\mathbb{\;\;}M=\sum_{j=1}^{L}M_{j}\\
\Phi &  \rightarrow L_{\left\{  \varphi_{k,j}\right\}  }^{\varepsilon}\left[
\Phi\right]  \ \ ,\ \ \Phi\in C\left(  \left[  -B,B\right]  ^{M}\times\left[
0,T\right]  \right)
\end{align*}
where:
\begin{equation}
L_{\left\{  \varphi_{k,j}\right\}  ,T}^{\varepsilon}\left[  \Phi\right]
=\int_{0}^{T}\Phi\left(  \int_{\Omega}\varphi_{1,1}d\mu_{1}^{\varepsilon
},...,\int_{\Omega}\varphi_{1,M_{1}}d\mu_{1}^{\varepsilon},\int_{\Omega
}\varphi_{2,1}d\mu_{2}^{\varepsilon},...,\int_{\Omega}\varphi_{2,M_{2}}%
d\mu_{2}^{\varepsilon},...,\int_{\Omega}\varphi_{L,M_{L}}d\mu_{L}%
^{\varepsilon},t\right)  dt \label{P1E3}%
\end{equation}

Due to (\ref{P1E1}), (\ref{P1E2}) it follows that $\left|  \int_{\Omega
}\varphi_{k,j}d\mu_{k}^{\varepsilon}\right|  \leq
B,\;k=1,...,L,\;j=1,...,M_{k}$ for some $B$ depending only on $A,\;\left\|
\varphi_{k,j}\right\|  _{L^{\infty}\left(  \bar{\Omega}\right)  }.$ Therefore
the functional defined in (\ref{P1E3}) can be considered as a functional in
$C\left(  \left[  -B,B\right]  ^{M}\times\left[  0,T\right]  \right)  $ and it
satisfies:
\[
\left|  L_{\left\{  \varphi_{k,j}\right\}  ,T}^{\varepsilon}\left[
\Phi\right]  \right|  \leq T\left\|  \Phi\right\|  _{C\left(  \left[
-B,B\right]  ^{M}\times\left[  0,T\right]  \right)  }%
\]

Therefore, there exists a Radon measure $d\lambda_{\left\{  \varphi
_{k,j}\right\}  ,T}^{\varepsilon}$ such that:
\[
L_{\left\{  \varphi_{k,j}\right\}  ,T}^{\varepsilon}\left[  \Phi\right]
=\int_{\left[  -B,B\right]  ^{M}\times\left[  0,T\right]  }\Phi\left(
\xi,t\right)  d\lambda_{\left\{  \varphi_{k,j}\right\}  ,T}^{\varepsilon
}\left(  \xi,t\right)
\]
where:
\[
\xi=\left(  \xi_{1,1},...,\xi_{1,M_{1}},\xi_{2,1},...,\xi_{2,M_{2}}%
,...\xi_{L,M_{L}}\right)
\]

The compactness of $\left[  -B,B\right]  ^{M}\times\left[  0,T\right]  $
implies that for suitable subsequences:
\[
d\lambda_{\left\{  \varphi_{k,j}\right\}  ,T}^{\varepsilon}\rightharpoonup
d\lambda_{\left\{  \varphi_{k,j}\right\}  ,T}%
\]

Therefore (for subsequences):
\[
L_{\left\{  \varphi_{k,j}\right\}  ,T}^{\varepsilon}\left[  \Phi\right]
\rightarrow\int_{\left[  -B,B\right]  ^{M}\times\left[  0,T\right]  }%
\Phi\left(  \xi,t\right)  d\lambda_{\left\{  \varphi_{k,j}\right\}  ,T}\left(
\xi,t\right)
\]

Extending the measure $d\lambda_{\left\{  \varphi_{k,j}\right\}  ,T}$ by zero
for $\left\vert \xi_{k,j}\right\vert >B$ we can ensure the existence of a
family of measures $d\lambda_{\left\{  \varphi_{k,j}\right\}  ,T}$ such that:
\begin{equation}
L_{\left\{  \varphi_{k,j}\right\}  ,T}^{\varepsilon}\left[  \Phi\right]
\rightarrow\int_{\mathbb{R}^{M}\times\left[  0,T\right]  }\Phi\left(
\xi,t\right)  d\lambda_{\left\{  \varphi_{k,j}\right\}  ,T}\left(
\xi,t\right)  \label{Z8E3}%
\end{equation}

The family of measures $\left\{  \lambda_{\left\{  \varphi_{k,j}\right\}
,T}\right\}  $ will be denoted as measure valued Young measures. Notice that
they allow to compute weak limits for the weak limit of any finite sequence of
"macroscopic" magnitudes obtained using sequences of measures $d\mu
_{k}^{\varepsilon}.$

Our goal is to reduce the computation of the measures $d\lambda_{\left\{
\varphi_{k,j}\right\}  ,T}$ in the case of the limits of sequences $\left\{
f_{\varepsilon}\left(  u^{\varepsilon}\right)  dxdt\right\}  ,\;\left\{
\left[  u^{\varepsilon}+\varepsilon\left(  u^{\varepsilon}\right)  ^{\frac
{7}{6}}\right]  dxdt\right\}  $ to the Young measures associated to the
sequences that yield the masses near the singular set $S_{t}.$

\bigskip

\subsection{On the family of Young measures describing the aggregation of
$f_{\varepsilon}\left(  u^{\varepsilon}\right)  ,\ u^{\varepsilon}%
+\varepsilon\left(  u^{\varepsilon}\right)  ^{\frac{7}{6}}.$}

\bigskip

We now consider the first regularization, and we define a family of Young
measures labelled by the spatial and time positions and describing the weak
limits of the weak limits associated to $\left\{  f_{\varepsilon}\left(
u^{\varepsilon}\right)  \right\}  ,\ \left\{  u^{\varepsilon}+\varepsilon
\left(  u^{\varepsilon}\right)  ^{\frac{7}{6}}\right\}  .$ We will include
also the dependence on the sequence $\left\{  u^{\varepsilon}\right\}  $ that
will yield a trivial Dirac mass contribution, but we will include it by completedness.

\bigskip

In order to obtain a general result describing the possible limits of
nonlinear functionals associated to the sequences $\left\{  f_{\varepsilon
}\left(  u^{\varepsilon}\right)  dxdt\right\}  $ we will need detailed
information on the behaviour of these sequences near the singular set.

\bigskip

The singular set varies in a continuous manner due to Lemma \ref{contS}. We
consider the set of continuous functions on $S,$ $C\left(  S\right)  .$ Since
$S$ is a closed set in $\overline{\Omega}\times\left[  0,\infty\right)  ,$
given $\psi\in C\left(  S\right)  $ it is possible to find a continuous
extension of $\psi$ to $\overline{\Omega}\times\left[  0,\infty\right)  .$ We
will denote as $\tilde{\psi}\in C\left(  \overline{\Omega}\times\mathbb{R}%
^{+}\right)  $ a generic continuous extension of $\psi.$ Our goal is to define
a family of measures that will describe the structure of oscillations of the
sequence $\left\{  f_{\varepsilon}\left(  u^{\varepsilon}\right)
dxdt\right\}  $ near $S.$

For each fixed $\delta>0,\ \varepsilon>0$, $L>0$ and any family of functions
$\psi_{1},...,\psi_{N},\varphi_{1},...,\varphi_{N}\in C\left(  S\right)  ,$ we
consider any family of continuous extensions of these functions $\tilde{\psi
}_{1},...,\tilde{\psi}_{N},\tilde{\varphi}_{1},...,\tilde{\varphi}_{N}\in
C\left(  \overline{\Omega}\times\mathbb{R}^{+}\right)  $ supported in the set
$S+B_{\sigma}\left(  0\right)  ,$ with $\sigma>0$ small. We then define a
family of functionals in $C\left(  \left(  \left[  0,B\right]  \right)
^{2N}\times\left[  0,T\right]  \right)  $ by means of:
\begin{align}
C\left(  \left(  \overline{\mathbb{R}}^{+}\right)  ^{2N}\right)   &
\rightarrow\mathbb{R}\nonumber\\
\phi &  \rightarrow M_{\left\{  \tilde{\psi}_{m}\right\}  _{m=1}^{N}%
}^{\varepsilon,\sigma}\left[  \phi\right]  =\label{Z8E2}\\
&  =\int_{0}^{T}\phi\left(  \int_{\Omega}\tilde{\psi}_{1}f_{\varepsilon
}\left(  u^{\varepsilon}\right)  dx,...,\int_{\Omega}\tilde{\psi}%
_{N}f_{\varepsilon}\left(  u^{\varepsilon}\right)  dx,\int_{\Omega}%
\tilde{\varphi}_{1}u^{\varepsilon}dx,...,\int_{\Omega}\tilde{\varphi}%
_{N}u^{\varepsilon}dx,t\right)  dt\nonumber
\end{align}

\begin{equation}
\left\vert M_{\left\{  \tilde{\psi}_{m},\tilde{\varphi}_{m}\right\}
_{m=1}^{N}}^{\varepsilon,\sigma}\left[  \phi\right]  \right\vert \leq
T\left\Vert \phi\right\Vert _{C\left(  \left[  0,B\right]  ^{M}\times\left[
0,T\right]  \right)  } \label{Z8E1}%
\end{equation}

Therefore, there exist Radon measures $\nu_{\left\{  \tilde{\psi}_{m}%
,\tilde{\varphi}_{m}\right\}  _{m=1}^{N}}^{\varepsilon,\sigma}\in M^{+}\left(
\left[  0,B\right]  ^{2N}\times\left[  0,T\right]  \right)  $ such that:
\[
M_{\left\{  \tilde{\psi}_{m},\tilde{\varphi}_{m}\right\}  _{m=1}^{N}%
}^{\varepsilon,\sigma}\left[  \phi\right]  =\int_{\left[  0,B\right]
^{2N}\times\left[  0,T\right]  }\phi d\nu_{\left\{  \tilde{\psi}_{m}%
,\tilde{\varphi}_{m}\right\}  _{m=1}^{N}}^{\varepsilon,\sigma}%
\]

The weak compactness of the sequence $\left\{  \nu_{\left\{  \tilde{\psi}%
_{m},\tilde{\varphi}_{m}\right\}  _{m=1}^{N}}^{\varepsilon,\sigma}\right\}  $
implies the existence of measures $\left\{  \nu_{\left\{  \tilde{\psi}%
_{m},\tilde{\varphi}_{m}\right\}  _{m=1}^{N}}^{\sigma}\right\}  ,$
$\nu_{\left\{  \tilde{\psi}_{m},\tilde{\varphi}_{m}\right\}  _{m=1}^{N}}$ such
that:
\begin{align*}
\nu_{\left\{  \tilde{\psi}_{m},\tilde{\varphi}_{m}\right\}  _{m=1}^{N}%
}^{\varepsilon,\sigma}  &  \rightharpoonup\nu_{\left\{  \tilde{\psi}%
_{m},\tilde{\varphi}_{m}\right\}  _{m=1}^{N}}^{\sigma}\ \ \text{as\ \ }%
\varepsilon\rightarrow0\\
\nu_{\left\{  \tilde{\psi}_{m},\tilde{\varphi}_{m}\right\}  _{m=1}^{N}%
}^{\sigma}  &  \rightharpoonup\nu_{\left\{  \tilde{\psi}_{m},\tilde{\varphi
}_{m}\right\}  _{m=1}^{N}}\ \ \text{as\ \ }\sigma\rightarrow0
\end{align*}
for suitable subsequences. Then:
\begin{equation}
M_{\left\{  \tilde{\psi}_{m},\tilde{\varphi}_{m}\right\}  _{m=1}^{N}%
}^{\varepsilon,\sigma}\left[  \phi\right]  \rightarrow\int_{\left[
0,B\right]  ^{2N}\times\left[  0,T\right]  }\phi d\nu_{\left\{  \tilde{\psi
}_{m},\tilde{\varphi}_{m}\right\}  _{m=1}^{N}} \label{Z8E4}%
\end{equation}

We now remark that the measures $\nu_{\left\{  \tilde{\psi}_{m},\tilde
{\varphi}_{m}\right\}  _{m=1}^{N}}$ depend only on the values of the functions
$\left\{  \tilde{\psi}_{m},\tilde{\varphi}_{m}\right\}  _{m=1}^{N}$ at the
singular set. Indeed, for any extension of the functions $\left\{  \psi
_{m},\varphi_{m}\right\}  _{m=1}^{N}$ to $S+B_{\sigma}\left(  0\right)  $ we
can choose $\sigma_{0}$ small such that the functions $\left\{  \tilde{\psi
}_{m},\tilde{\varphi}_{m}\right\}  _{m=1}^{N}$ differ from the values of
$\left\{  \psi_{m},\varphi_{m}\right\}  _{m=1}^{N}$ in the closest point in
$S$ by an arbitrarily small amount. On the other hand we have the estimates:
\begin{align*}
\int_{\Omega\setminus\left[  S+B_{\sigma_{0}}\left(  0\right)  \right]
}\tilde{\psi}_{m}f_{\varepsilon}\left(  u^{\varepsilon}\right)  dx  &  \leq
C\int_{\left[  S+B_{\sigma}\left(  0\right)  \right]  \setminus\left[
S+B_{\sigma_{0}}\left(  0\right)  \right]  }u^{\varepsilon}dx\\
\int_{\Omega\setminus\left[  S+B_{\sigma_{0}}\left(  0\right)  \right]
}\tilde{\varphi}_{m}u^{\varepsilon}dx  &  \leq C\int_{\left[  S+B_{\sigma
}\left(  0\right)  \right]  \setminus\left[  S+B_{\sigma_{0}}\left(  0\right)
\right]  }u^{\varepsilon}dx
\end{align*}
and the right hand side of these formulas can be made arbitrarily small,
uniformly on $\varepsilon,$ if $\sigma$ is small for $a.e.$ $t$. It then
follows that taking the limit in the order indicated above we obtain:
\begin{equation}
\nu_{\left\{  \psi_{m},\varphi_{m}\right\}  _{m=1}^{N}} \label{Z8E5}%
\end{equation}

The measures $\left\{  \nu_{\left\{  \psi_{m},\varphi_{m}\right\}  _{m=1}^{N}%
}\right\}  $ that can be extended to describe the possible oscillations of the
sequences $\left\{  f_{\varepsilon}\left(  u^{\varepsilon}\right)  \right\}
,\ \left\{  u^{\varepsilon}\right\}  $ near the singular set. Since nontrivial
oscillations can take place only at the singular set, they are sufficient to
describe the measures $\left\{  \lambda_{\left\{  \varphi_{k,j}\right\}
,T}\right\}  $ described above. Moreover, since the functions $\int_{\Omega
}\tilde{\varphi}_{m}u^{\varepsilon}dx$ change continuously in the macroscopic
scale, they do not contribute to the oscillations. Therefore, there exist
measures $\mathcal{V}_{_{\left\{  \psi_{m}\right\}  _{m=1}^{N}}}\in\left(
C\left(  \mathbb{R}_{+}^{N}\times\mathbb{R}_{+}\right)  \right)  ^{\ast}$ such
that:
\begin{equation}
d\nu_{\left\{  \psi_{m},\varphi_{m}\right\}  _{m=1}^{N}}\left(  \xi
_{1},...,\xi_{N},\eta_{1},...,\eta_{N},t\right)  =d\mathcal{V}_{_{\left\{
\psi_{m}\right\}  _{m=1}^{N}}}\left(  \xi_{1},...,\xi_{N},t\right)
{\displaystyle\prod\limits_{m=1}^{N}}
\delta_{\int_{S_{t}}\tilde{\varphi}_{m}d\mu_{t}}\left(  \eta_{m}\right)
\label{S8E6}%
\end{equation}

Therefore, the only non trivial measures that need to be computed are
$\left\{  \mathcal{V}_{_{\left\{  \psi_{m}\right\}  _{m=1}^{N}}}\right\}  .$
Intuitively these measures describe the statistical distribution of the
oscillations as well as their correlations at different points of the singular set.

\bigskip

Notice that, as mentioned above, it is possible to compute the measures
$\left\{  \lambda_{\left\{  \varphi_{k,j}\right\}  ,T}\right\}  $ in
(\ref{Z8E3}) in terms of the simplest family of measures $\left\{
\mathcal{V}_{_{\left\{  \psi_{m}\right\}  _{m=1}^{N}}}\right\}  .$ More
precisely, suppose that we define a family of measures $\left\{
\lambda_{\left\{  \varphi_{k,j}\right\}  ,T}\right\}  $ using as measures
$\left\{  \mu_{m}^{\varepsilon}\right\}  $ the sequences $\left\{
f_{\varepsilon}\left(  u^{\varepsilon}\right)  \right\}  ,\ \left\{
u^{\varepsilon}\right\}  .$ Suppose that we define measures $\left\{
\lambda_{\left\{  \varphi_{k,j}\right\}  ,T}\right\}  $ by means of:
\[
L_{\left\{  \varphi_{k,j}\right\}  ,T}^{\varepsilon}\left[  \Phi\right]
=\int_{0}^{T}\Phi\left(  \int_{\Omega}\varphi_{1,1}f_{\varepsilon}\left(
u^{\varepsilon}\right)  dx,...,\int_{\Omega}\varphi_{1,M_{1}}f_{\varepsilon
}\left(  u^{\varepsilon}\right)  dx,\int_{\Omega}\varphi_{2,1}u^{\varepsilon
}dx,...,\int_{\Omega}\varphi_{2,M_{2}}u^{\varepsilon}dx,t\right)  dt
\]
and limits:
\begin{align*}
L_{\left\{  \varphi_{k,j}\right\}  ,T}^{\varepsilon}\left[  \Phi\right]   &
\rightarrow L_{\left\{  \varphi_{k,j}\right\}  ,T}^{\varepsilon}\left[
\Phi\right] \\
&  \rightarrow\int_{0}^{T}\int\Phi\left(  \sigma_{1,1},...\sigma_{1,M_{1}%
},\sigma_{2,1},...\sigma_{2,M_{2}},t\right)  d\lambda_{\left\{  \varphi
_{k,j}\right\}  ,T}\left(  \sigma_{1,1},...\sigma_{1,M_{1}},\sigma
_{2,1},...,\sigma_{2,M_{2}},t\right)
\end{align*}
as $\varepsilon\rightarrow0.$

Given a function $\Phi\in C^{M_{1}+M_{2}+1}\left(  \mathbb{R}^{M_{1}}%
\times\mathbb{R}^{M_{2}}\times\mathbb{R}_{+}\right)  $ and real numbers
$\left\{  \alpha_{1,j}\right\}  _{j=1}^{M_{1}}$ we define:
\[
\mathcal{T}_{\left\{  \alpha_{1,j}\right\}  _{j=1}^{M_{1}}}\left(
\Phi\right)  \left(  \sigma_{1,1},...\sigma_{1,M_{1}},\sigma_{2,1}%
,...\sigma_{2,M_{2}},t\right)  =\Phi\left(  \sigma_{1,1}+\alpha_{1,1}%
,...\sigma_{1,M_{1}}+\alpha_{1,M_{1}},\sigma_{2,1},...\sigma_{2,M_{2}%
},t\right)
\]

Using the convergence properties of the sequences $\left\{  u^{\varepsilon
}\right\}  ,$ $\left\{  f_{\varepsilon}\left(  u^{\varepsilon}\right)
\right\}  $ outside the singular set, as well as the definition of the
measures $\left\{  \mathcal{V}_{_{\left\{  \psi_{m}\right\}  _{m=1}^{N}}%
}\right\}  $ we obtain the following representation formula:
\begin{align*}
&  \int_{0}^{T}\int\Phi\left(  \sigma_{1,1},...\sigma_{1,M_{1}},\sigma
_{2,1},...\sigma_{2,M_{2}},t\right)  d\lambda_{\left\{  \varphi_{k,j}\right\}
,T}\left(  \sigma_{1,1},...\sigma_{1,M_{1}},\sigma_{2,1},...,\sigma_{2,M_{2}%
},t\right) \\
&  =\int_{0}^{T}\int\mathcal{T}_{\left\{  \int_{\Omega\setminus S_{t}}%
\varphi_{1,j}udx\right\}  _{j=1}^{M_{1}}}\left(  \Phi\right)  \cdot\\
&  \cdot\left(  \sigma_{1,1},...,\sigma_{1,M_{1}},\int_{\Omega}\varphi
_{2,1}d\mu_{t},...,\int_{\Omega}\varphi_{2,M_{2}}d\mu_{t},t\right)
d\mathcal{V}_{_{\left\{  \psi_{m}\right\}  _{m=1}^{N}}}\left(  \sigma
_{1,1},...,\sigma_{1,M_{1}},t\right)
\end{align*}
This formula provides the desired representation formula for nonlinear
functions of limits of sequences $\left\{  f_{\varepsilon}\left(
u^{\varepsilon}\right)  \right\}  ,\;\left\{  u^{\varepsilon}+\varepsilon
\left(  u^{\varepsilon}\right)  ^{\frac{7}{6}}\right\}  ,\ \left\{
u^{\varepsilon}\right\}  $ in terms of the measures $\mathcal{V}_{_{\left\{
\psi_{m}\right\}  _{m=1}^{N}}}.$

\end{document}